\gdef\thmhead@plain#1#2#3{%
  \thmname{#1}\thmnumber{\@ifnotempty{#1}{ }#2}%
  \thmnote{ {\mdseries#3}}}
\let\thmhead\thmhead@plain
\theoremstyle{plain}
\newtheorem*{thmii}{Theorem}
\newtheorem*{conj}{Conjecture}
\newtheorem{thm}{Theorem}[section]
\newtheorem{lem}[thm]{Lemma}
\newtheorem{prop}[thm]{Proposition}
\newtheorem{crr}[thm]{Corollary}
\theoremstyle{definition}
\newtheorem{dfn}[thm]{Definition}
\newtheorem{rmk}[thm]{Remark}
\newtheorem{assu}{Assumption}
\theoremstyle{remark}
\def\alinea#1{\hfill\break%
  \hbox to \parindent{\hss{\upshape{\bf #1)}}\enspace}\ignorespaces}
\def\bul{\hfill\break\hbox to\parindent{\hss$\bullet$\enspace}\ignorespaces}
\def\mod{\operatorname{mod}}
\def\l{\lambda}
\def\C{\mathbf{C}}
\def\D{\mathbf{D}}
\def\N{\mathbf{N}}
\def\M{\mathbf{M}}
\def\R{\mathbf{R}}
\def\S{\mathbf{S}}
\def\Z{\mathbf{Z}}
\def\CC{\mathcal{C}}
\def\HH{\mathcal{H}}
\def\MM{\mathcal{M}}
\def\NN{\mathcal{N}}
\def\Cap_#1{\bigcap\limits_{#1}}
\def\Cup_#1{\bigcup\limits_{#1}}
\def\ol{\overline}
\newcommand{\dbas}{{\circ \! \circ}}
\def\cqfdsymb{\relax\protect\ifmmode\else\unskip\nobreak\fi
\quad\hfill$\bgroup
\vcenter{\hrule\hbox{\vrule\@height.6em\kern.6em\vrule}\hrule}\egroup$}
\def\cqfd{\cqfdsymb\endtrivlist}
\gdef\rom#1{\leavevmode\skip@\lastskip\unskip\/%
        \ifdim\skip@=\z@\else\hskip\skip@\fi{\normalshape#1}}
\begin{document}

\title{Newton maps as matings of cubic polynomials}
\author{\sc Magnus Aspenberg and Pascale Roesch}

\begin{abstract}
In this paper we prove existence of matings between a large class of renormalizable cubic polynomials with one fixed critical point and another cubic polynomial having  two fixed critical points. The resulting mating is a Newton map. Our result is the first part towards a conjecture by Tan Lei, stating that all (cubic) Newton maps can be described as matings or captures.
\end{abstract}

\date{\today}
\maketitle

\section{Introduction}

The notion of {\it matings} was introduced in~\cite{Do} as a way to partially parameterize
the space of rational maps of a degree $d \geq 2$ with pairs of polynomials of the same degree $d$. Roughly speaking, the construction
 is to glue the (supposedly locally connected) filled Julia sets $K_1$ anf $K_2$ of a pair of polynomials $f_1$ and $f_2$  along
 their boundaries in reverse order. If no topological obstructions occur
 the resulting set is homeomorphic to the sphere, where $f_1$ and $f_2$ induces a new map $f_1 \uplus_F f_2$ from the sphere to itself.
 This map would then be the (topological) mating of $f_1$ and $f_2$. If one can turn this map into a rational map with a homeomorphic
 change of variables,  then we speak of a conformal mating of $f_1$ and $f_2$. The precise definitions follow.

Our  paper is, to a large part, motivated by such a description  of rational maps of degree~$3$ and a paper by Tan Lei \cite{TL},
where she studied cubic Newton maps. Cubic Newton maps are maps of the form $$N(z)=z-\frac{P(z)}{P'(z)},$$
where $P$ is a cubic polynomial. In \cite{TL} Tan Lei gave a full description of post critically finite Newton maps of degree $3$ in terms of matings and captures. In the same paper she conjectured  that the set of all cubic Newton maps can be completely described in terms of matings and captures. Our paper  answers her conjecture for a large class of maps which are neither  post-critically finite nor hyperbolic, namely when the map admits a
{\it quadratic-like} restriction around its free critical point, hence is {\it renormalizable}.

The study of the remaining maps in the cubic family and (corresponding) Newton maps not covered in this paper is planned in a forthcoming paper.
 Combining these results with L. Tan's result, we hope to describe all Newton maps with locally connected Julia set as matings or captures.

Several works on mating polynomials have been done in degree $2$. Let us recall some related facts for degree two maps. Douady and
Hubbard stated the following conjecture.
\begin{conj}
The points $c_1$ and $c_2$
do not lie in conjugate limbs of the Mandelbrot set if and only if $f_{c_1}(z) = z^2 + c_1$ and $f_{c_2}(z) = z^2 + c_2$ are (conformally)
mateable.
\end{conj}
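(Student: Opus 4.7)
The plan is to treat the two implications separately, following the strategy pioneered by Rees--Shishikura--Tan for postcritically finite parameters and then attempting to extend it by quasiconformal surgery and limit arguments.

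For the easier ``only if'' direction, I would argue by exhibiting a topological obstruction whenever $c_1$ and $c_2$ lie in conjugate limbs. Recall that the $p/q$-limb of the Mandelbrot set is characterized by the fact that the $\alpha$-fixed point of $f_c$ has rotation number $p/q$, and correspondingly the external rays at angles $\theta$ and $\theta + k/(2^q-1)$ (for suitable $\theta, k$) land at this fixed point. If $c_1$ lies in the $p/q$-limb and $c_2$ in the $(-p/q)$-limb, then the ray-identifications used to glue $K_{c_1}$ to $\ol{K_{c_2}}$ (namely $\theta \sim -\theta$) force a periodic cycle of external ray pairs to collapse. This produces a pseudo-equivalence with a periodic cycle -- a Moore/Levy obstruction -- so the topological mating is not Hausdorff, and \emph{a fortiori} there can be no conformal mating. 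The key technical step here is to identify the ray portrait at the $\alpha$-fixed points and check that the gluing relation $\theta \sim -\theta$ produces a nontrivial cycle precisely in the conjugate-limb case.

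For the harder ``if'' direction, I would proceed in two stages. First, treat the case where both $c_1$ and $c_2$ are postcritically finite (and hence the Julia sets are locally connected). Form the topological mating $f_{c_1} \uplus_F f_{c_2}$ on the quotient of $K_{c_1} \sqcup \ol{K_{c_2}}$ by ray-equivalence. By Moore's theorem applied to the ray-equivalence relation, the quotient is a topological sphere, and the induced map is a degree-$2$ branched covering with a finite postcritical set. Then invoke Thurston's characterization of rational maps: one must check that there is no Thurston obstruction (i.e.\ no invariant multicurve with Thurston matrix spectral radius $\geq 1$). Using Tan Lei's ``no Levy cycle'' analysis and the non-conjugate-limb hypothesis, every candidate obstruction can be shown to lift the equator and hence be removable, so the map is realized by a rational map, necessarily of the form $z^2+c$ on each hemisphere's complement of the glued Julia sets; this is the conformal mating.

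To extend from the postcritically finite case to general parameters, I would use parameter-space perturbation: a generic $c$ in the non-conjugate-limb locus lies in either a hyperbolic component or on the boundary. For hyperbolic parameters, deform the attracting cycle structure to a postcritically finite center using quasiconformal surgery, perform the mating there, and then deform back, keeping track of the conformal structure via the Measurable Riemann Mapping Theorem. For non-hyperbolic parameters, approximate by PCF parameters and extract a limit, which requires carrying quantitative control (bounded geometry of puzzle pieces, \emph{a priori} bounds) on the candidate matings. The main obstacle, and the reason this direction of the conjecture is genuinely hard, is precisely this last step: passing to the limit demands local connectivity of the Julia sets involved (the $\mathrm{MLC}$-type issue), and the absence of ``ghost'' limbs along which the mating could degenerate in the limit. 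I expect compactness of mating loci and rigidity of the gluing relation under perturbation to be the central technical hurdles, and this is why the conjecture remains open in full generality even though all the above ingredients are known in appropriate subcases.
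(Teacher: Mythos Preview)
This statement is a \emph{Conjecture} in the paper, not a theorem; the paper does not provide a proof, and indeed explicitly frames it as an open problem attributed to Douady and Hubbard. The paper only records that the postcritically finite case has been settled by Tan, Rees, and Shishikura, that hyperbolic cases follow by quasiconformal deformation, and that various further special cases (Siegel quadratics of bounded type, the families $V_2$ and $V_3$) have been treated---but it does not claim or attempt a proof of the full conjecture.

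Your proposal is therefore not comparable to any proof in the paper. What you have written is a reasonable survey of the known strategy: the obstruction argument for the ``only if'' direction is essentially correct in outline, and the Thurston-theoretic approach for postcritically finite parameters is exactly the Tan--Rees--Shishikura result the paper cites. But your extension to general parameters is, as you yourself acknowledge, not a proof: the limiting argument you sketch requires MLC-type control that is not available in general, and this is precisely why the conjecture remains open. So there is no genuine gap to diagnose beyond the one you already identify; the point is simply that the paper makes no claim to have closed it.
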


The post-critically finite case is settled by works of L. Tan, M. Rees and M. Shishikura \cite{TL2} and \cite{Shi}.
By quasi-conformal deformations, matings between hyperbolic polynomials follow from these works.
Concerning post-critically infinite non-hyperbolic matings, M. Yampolsky and S. Zakeri \cite{YZ} showed the existence of matings
between Siegel quadratic polynomials, where the rotation number is of bounded type. Several works have been done in the family $V_2$,
being rational maps having a fixed period $2$ super-attracting cycle. This family describes matings between the so called ``star-like'' basilica polynomial $f(z) = z^2-1$ and other polynomials
not in the $1/2$-limb of the Mandelbrot set, see e.g. \cite{AY}, \cite{Timorin}, \cite{Dudko}. The family $V_3$,
being a family of maps having a fixed period $3$ super-attracting cycle, seems a lot more complicated
(see e.g. \cite{ReesV3}).

For higher degree, most relevant for this paper is the work by  Tan Lei, which concerns the post-critically finite case (see also \cite{Mimat}, \cite{Shi2}).
She gives  in this case the  complete following description.
 \begin{thm}[(Tan Lei)]
There is a set $A$ of cubic polynomials, and a subset $Y$ of the filled Julia set of $f_{\dbas}(z)= z (z^2 + 3/2)$ and a surjective
mapping $M$ onto the set of postcritically finite cubic Newton maps such that: for $g \in A$, the map $M(g)$ is Thurston equivalent
to the mating $f \uplus_F g$ and for $y \in Y$ the map $M(y)$ is a capture.
\end{thm}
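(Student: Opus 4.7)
The plan is to partition the post-critically finite cubic Newton maps by the fate of their free critical point and match each combinatorial type with either a mating or a capture, then invoke Thurston's theorem to realize each topological model as an actual rational map.

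First I would fix notation: any PCF cubic Newton map $N=N_P$ has the three roots of $P$ as super-attracting fixed critical points; the fourth (``free'') critical point $c$ has a finite forward orbit. Split the PCF Newton maps into two classes: the \emph{mating class}, where the orbit of $c$ avoids all three immediate basins of the roots, and the \emph{capture class}, where some iterate $N^n(c)$ eventually lies in one of those immediate basins. This is a clean dichotomy because the immediate basins are open and invariant.

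For the mating class, construct the associated polynomial $g\in A$ as follows. The distinguished polynomial $f_{\dbas}(z) = z(z^2+3/2)$ carries two fixed super-attracting critical points $\pm i/\sqrt{2}$, so its filled Julia set provides a combinatorial model for two of the three basins of $N$ simultaneously; the remaining basin together with the pre-periodic orbit of $c$ must be encoded by a second cubic polynomial $g$ with a single fixed super-attracting critical point. I would extract $g$ from the puzzle and Hubbard-tree structure of $N$ restricted to one side of the natural Jordan curve separating the two ``symmetric'' basins from the third, record the external angles of the post-critical rays, and then realize this combinatorial datum as an honest cubic polynomial via Thurston's theorem. The topological mating $f_{\dbas} \uplus_F g$ is then set up by gluing the filled Julia sets along their boundaries following the ray identifications dictated by $N$. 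For the capture class, I would dually define $y \in Y \subset K(f_{\dbas})$ as the combinatorial landing point (in the pinched-disc model of $f_{\dbas}$) that records where the free critical orbit of $N$ lands inside one of the symmetric basins; the capture map $M(y)$ is then the surgery of $f_{\dbas}$ that creates a new super-attracting basin at the orbit of $y$, producing a cubic rational map with three super-attracting fixed points, i.e.\ a cubic Newton map.

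The main obstacle will be the realization step: one must show that the topological mating or capture so constructed is Thurston-equivalent to the starting Newton map $N$, and in particular carries no Thurston obstruction. I would invoke Thurston's characterization and rigidity theorem, and rule out obstructing multicurves by exploiting the rigid configuration of three fixed super-attracting basins of a cubic Newton map, which essentially forbids Levy cycles; remaining obstructions can be treated by the standard arc-intersection argument adapted to cubic matings. Once realizability is secured on each side, surjectivity of $M$ follows by exhausting the finitely many combinatorial types of the critical orbit and matching each with exactly one element of $A \cup Y$.
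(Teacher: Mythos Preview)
This theorem is not proved in the paper; it is stated as a result of Tan Lei and attributed to \cite{TL}. There is therefore no proof in the paper against which to compare your proposal.

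As a sketch of Tan Lei's argument your outline has the right architecture---partition the post-critically finite Newton maps by whether the free critical point is eventually captured by a root basin, model the non-captured case as a mating with $f_{\dbas}$ and the captured case as a capture, then invoke Thurston rigidity---but it remains far from a proof. The substantive content of Tan Lei's work lies precisely in the steps you wave past: the explicit combinatorial construction of the polynomial $g$ from the landing angles (``head angles'') associated to $N$, the verification that the resulting topological mating or capture carries no Thurston obstruction, and the identification of the Thurston class with $N$ itself. Your assertion that the configuration of three fixed super-attracting basins ``essentially forbids Levy cycles'' is not an argument; in degree three the obstruction analysis is delicate and occupies a significant portion of \cite{TL}. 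Likewise, ``extract $g$ from the puzzle and Hubbard-tree structure'' and ``standard arc-intersection argument adapted to cubic matings'' are placeholders, not proofs. If you intend to reconstruct the result you will need to carry out each of these steps in detail, for which the paper \cite{TL} is the appropriate reference.
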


\begin{figure}
  \begin{center}
  \includegraphics[scale=0.1]{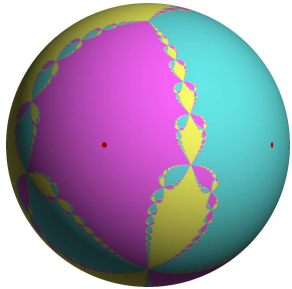}
  \end{center}
  \caption{The Julia set of  a cubic Newton map.}
  \label{Newton map}
\end{figure}

Capture components are not matings, but rather components where the free critical point lies in the basin of attraction of a super-attracting cycle. We will not discuss them in this paper.
In connection to the above result, Tan Lei's conjectured:
\begin{conj}
The fundamental part of the cubic Newton family is homeomorphic to the quotient of a well determined subset of the $a$-family union a specific subset of the filled Julia set of $f_{\dbas}$, by the equivalence relation generated by external rays.
\end{conj}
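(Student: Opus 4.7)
The strategy is to construct a candidate map from the disjoint union of the distinguished subset $A$ of the $a$-family and the distinguished subset $Y$ of the filled Julia set of $f_{\dbas}$ to the fundamental part of cubic Newton parameter space, show that it factors through the external-ray equivalence, and then prove bijectivity (and hence, by compactness, the claimed homeomorphism). On the mating side I would send each $a \in A$ to the Newton map realizing the conformal mating $f_a \uplus_F f_{\dbas}$; on the capture side I would send each $y \in Y$ to the Newton map determined by the capture portrait of $y$, extending Tan Lei's postcritically finite capture map.

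The mating side is supplied by the main result announced in the abstract: for a large renormalizable class of $a$-parameters one first constructs a topological mating by gluing the two filled Julia sets along ray equivalence in reverse order, then rules out a Levy obstruction (the two super-attracting fixed critical points of $f_{\dbas}$ forbid any obstruction coming from its side, and renormalizability localizes remaining obstructions to a quadratic-like restriction near the free critical point of $f_a$, reducible by straightening to the Douady--Hubbard mateability criterion for quadratics), and finally promotes the topological mating to a rational map via Thurston's theorem in the postcritically finite subcase and via a pinching or quasiconformal surgery argument in the general renormalizable subcase. The capture side follows Tan Lei's postcritically finite construction, extended to nearby parameters by a holomorphic motion argument (Ma{\~n}{\'e}--Sad--Sullivan) along natural quasiconformal deformations inside hyperbolic components.

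The principal obstacle is injectivity along the boundary between the two regimes: when a mating degenerates because the free critical orbit is captured into an invariant basin, one must show that the external-ray identification in the source corresponds exactly to the coincidence of the two Newton maps produced by the two constructions. Verifying this requires a careful comparison of landing patterns of external rays in the $a$-plane with access patterns in the filled Julia set of $f_{\dbas}$, and ultimately relies on combinatorial rigidity for the cubic Newton family together with local connectivity of the corresponding puzzle pieces. The existence result of the present paper is what makes this boundary identification tractable in the renormalizable regime, since it produces a conformal realization of the mating up to the limits of the renormalization locus and hence the matching landing patterns needed to pass to the quotient; the non-renormalizable boundary parameters, and hence the full conjecture, are deferred to the announced forthcoming work, where they can be combined with Tan Lei's postcritically finite classification to upgrade the bijection to a homeomorphism.
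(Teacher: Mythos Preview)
The statement you are attempting to prove is presented in the paper as an open \emph{conjecture} of Tan Lei, not as a theorem with a proof. The paper is explicit about this: immediately after stating it, the authors write that their result ``is aimed as a first step towards Tan Lei's conjecture,'' and the abstract says the same. What the paper actually proves are Theorems~\ref{resultat1} and~\ref{resultat2}, which establish the existence of the mating $f_a \uplus f_{\dbas}$ for parameters $a$ in the renormalizable locus $RC$ (with locally connected Julia set) and on $\partial \HH_0$. There is no proof of the conjecture in the paper to compare your proposal against.

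Your proposal is not a proof either, and you say so yourself: you end by deferring ``the non-renormalizable boundary parameters, and hence the full conjecture'' to the ``announced forthcoming work.'' So what you have written is a program, not a proof. Even as a program it contains substantive gaps beyond the one you acknowledge. You invoke Thurston's theorem for the post-critically finite subcase and ``pinching or quasiconformal surgery'' for the general renormalizable subcase, but Thurston's theorem does not apply directly in the non-post-critically-finite setting, and you give no indication of how the surgery would be controlled or why it would converge to a rational map with the required combinatorics. You also assert that the capture map extends from Tan Lei's post-critically finite construction by a holomorphic motion argument, but this would at best cover hyperbolic components, not their boundaries, and the conjecture concerns the full fundamental domain.

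It is also worth noting that the paper's actual method for the partial result it does prove is quite different from what you sketch: rather than go through obstructions and Thurston rigidity, the authors build the semi-conjugacies $\psi_a$ and $\psi_{\dbas}$ directly, using B\"ottcher coordinates on the basins, the straightening maps on the small Julia sets, and a combinatorial encoding by puzzle pieces and itineraries to extend and verify continuity. The ray-equivalence compatibility is then read off from the itinerary classes (Proposition~\ref{p:itineraryclass} and Proposition~\ref{p:rayequivalence}). None of Thurston's theorem, Levy cycles, or pinching deformations appears.
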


In other words, conjecturally, every cubic Newton map is, up to affine conjugacy, either a mating between the double-basilica $f_{\dbas}$ and some $f_a$ or a capture.

The main novelty (and difficulty)  in our work  with respect to Tan Lei's work is  that the maps we consider are neither
post-critically finite nor hyperbolic maps (they can be obtained by quasi-conformal deformations).
Our result is aimed as a first step towards Tan Lei's conjecture.

Let us  recall now some definitions to be able to state precisely our theorem.

\subsection*{Acknowledgements}
We thank Tan Lei for leading us into this project. We gratefully acknowledge funding from ANR Grant No. ANR-13-BS01-0002
and Folke Lanner's Fond.

\subsection{Definition of mating}  An excellent introduction  to matings can be found in~\cite{Mimat}.
There are several definitions of matings. The most commonly used definition is purely topological. However, for  our purpose, the most useful is the one  introduced by Yampolsky and Zakeri in \cite{YZ} (also used in \cite{AY}).
 We recall them  now.

 Let $f_1,f_2$ be two monic polynomials of the same degree $d$. Denote by $K_j$ the filled-in Julia set of $f_j$, {\it i.e.} $K_j:=\{z\in \C\mid f_j^n(z)\nrightarrow \infty\}$.  We suppose $K_j$ connected and locally connected. Consequently  the complement is conformally isomorphic to the complement of the disk. We can choose   this conformal map $\Phi_j:\C\setminus \overline \D \to \C \setminus K_j$   tangent to identity at infinity and define   rays  in $  \C \setminus K_j$ as the images  of $\{re^{i2\pi t}\mid r\ge 1\}$. We denote these rays by $R_j(t)$. Note that  this conformal map $\Phi_j$ extends continuously   to the boundary ({\it i.e.} to $\mathbb S^1$)
 by   Carath\'eodory's Theorem.

  Let $S^2$ be the unit sphere in $\C\times\R$.
 Identify each complex plane $\C$ containing $K_i$ (dynamical plane of $f_i$), with the northern hemisphere $\mathbb{H}_+$ for $i=1$ and southern hemisphere $\mathbb{H}_-$ for $i=2$, via the gnomic projections,
\[
\nu_1: \C \rightarrow \mathbb{H}_+ \qquad \nu_2:\C \rightarrow \mathbb{H}_- ,
\]
where $\nu_1(z) = (z,1)/\sqrt{|z|^2+1}$ and $\nu_2(z) = (\bar{z},-1)/\sqrt{|z|^2+1}$. This makes $\nu_2$ equal to $\nu_1$ composed with a 180 degree rotation around the $x$-axis.

It is now not hard to check that the ray $\nu_1(R_1(t))$ of angle $t$ in the northern hemisphere land at the point $(e^{2\pi i t},0)$ on the equator (the unit circle in the plane between the hemispheres). Similarly the ray $\nu_2(R_2(-t))$ on the southern hemisphere of angle $-t$ lands at the point $(e^{2\pi i t},0)$ also. The functions $\nu_i \circ f_i \circ \nu_i^{-1}$ from one hemisphere onto itself are well defined. Moreover, if we approach the equator along the two rays with angle $t$ and $-t$ respectively, both maps $\nu_1 \circ f_1 \circ \nu_1^{-1}$ and $\nu_2 \circ f_2 \circ \nu_2^{-1}$ are going to converge to the same map $(z,0) \rightarrow (z^2,0)$ on the equator. Hence we can glue the two maps together along the equator to form a well defined smooth map  from $S^2$ onto itself. This map, denoted by $f_1 \uplus f_2$ is called the {\em formal mating} of $f_1$ and $f_2$.

\begin{figure}[h]
  \begin{center}
  \includegraphics[scale=0.2]{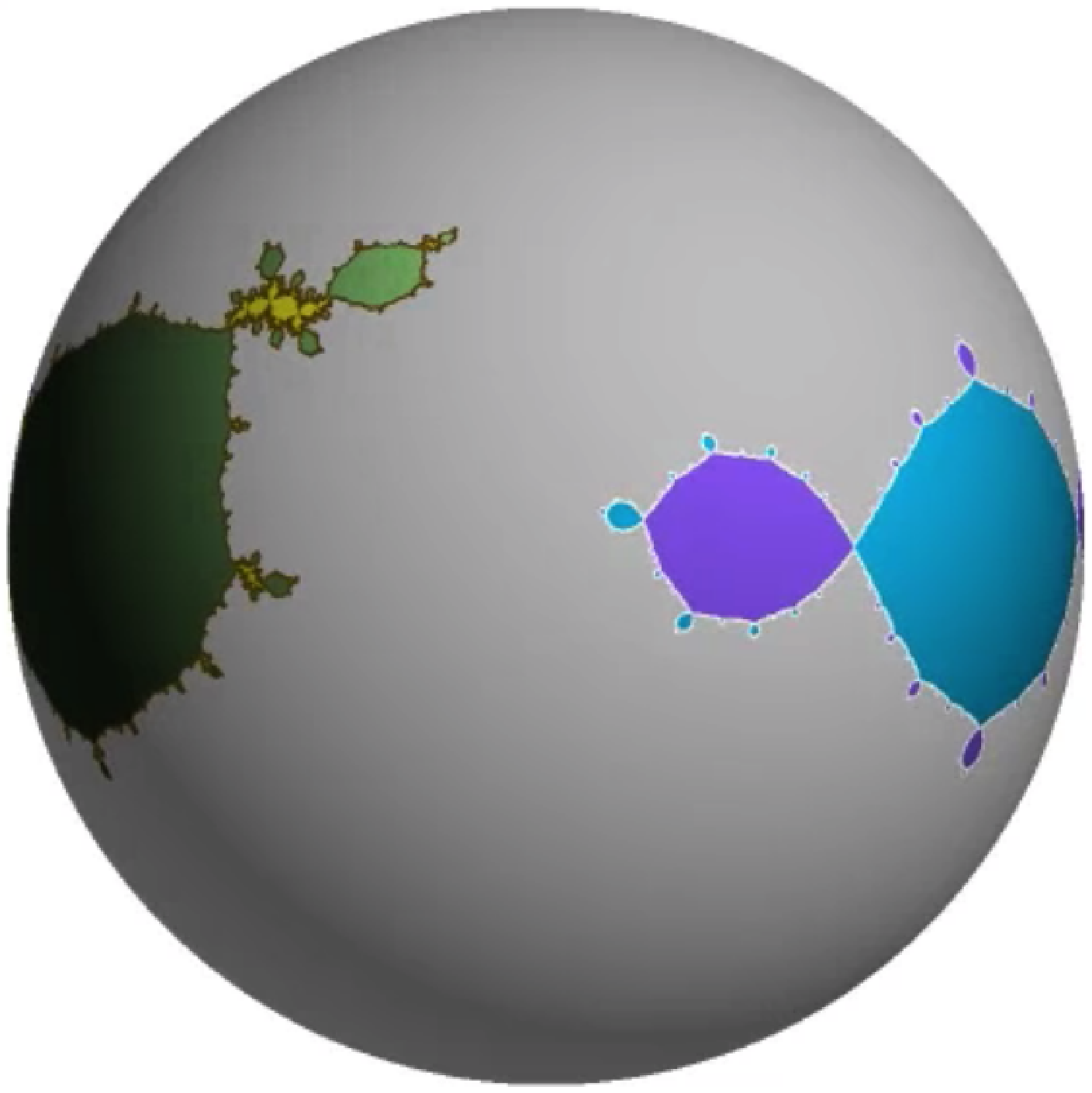}
 \includegraphics[scale=0.2]{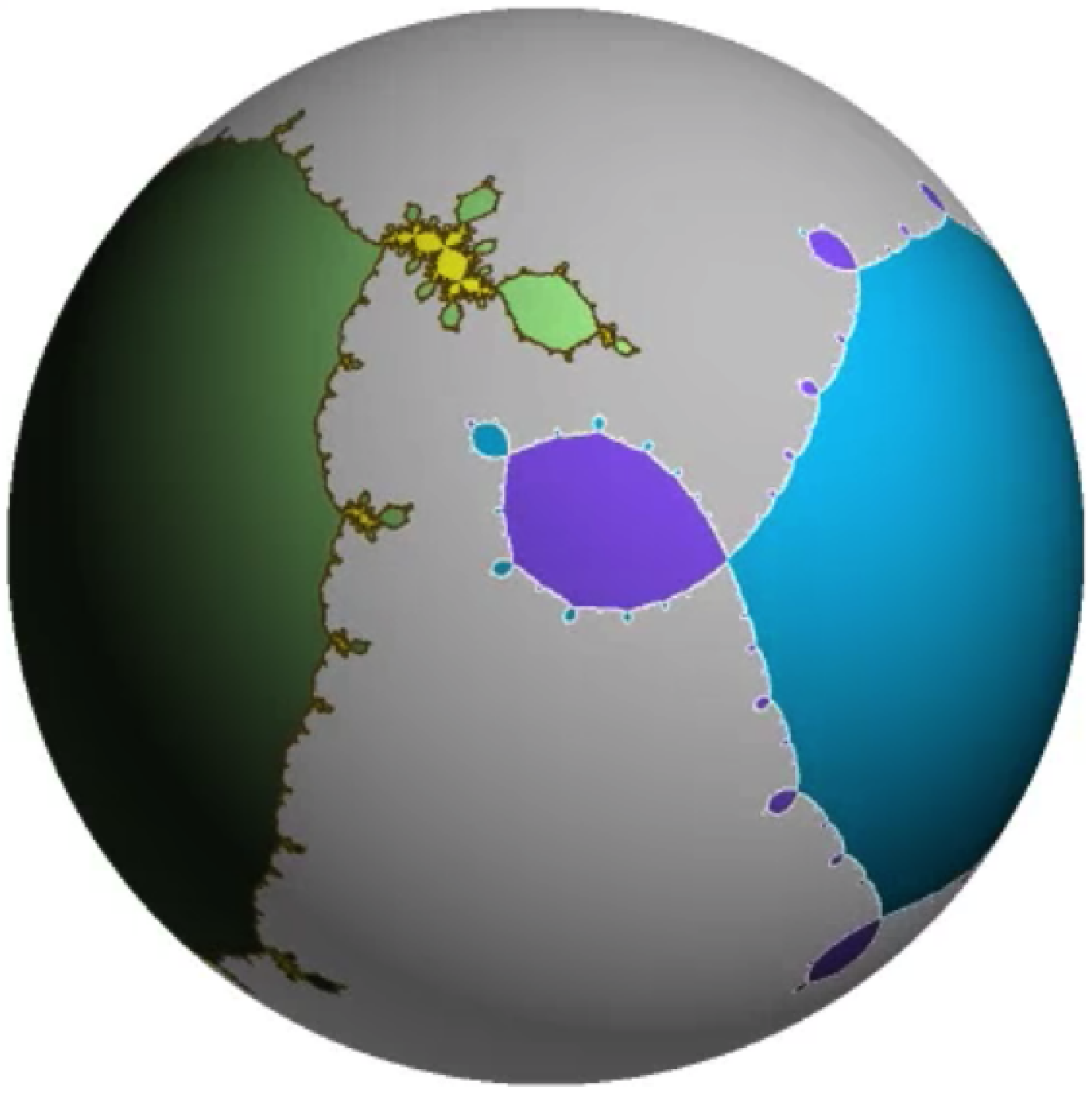}
\includegraphics[scale=0.2]{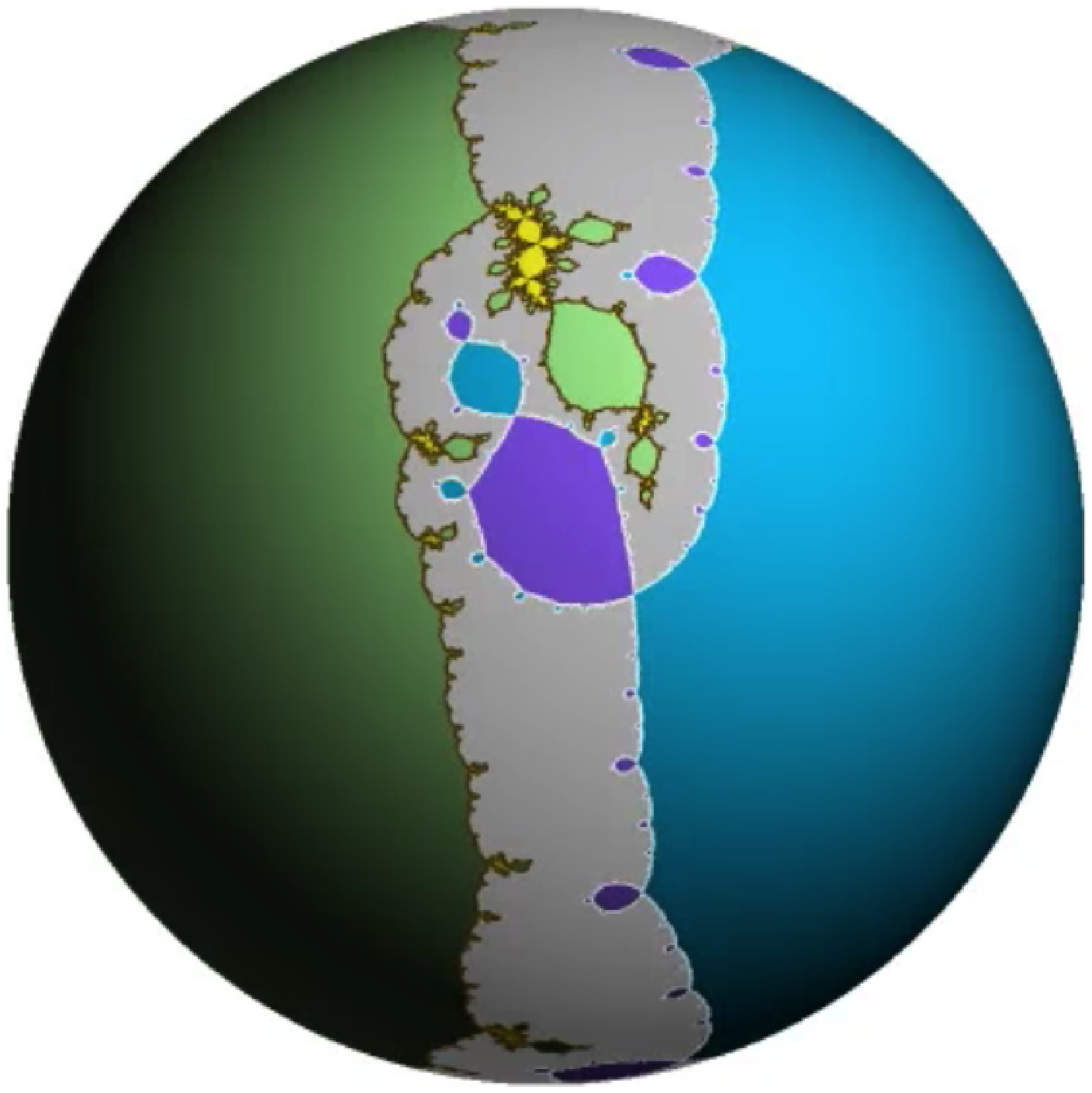}
\includegraphics[scale=0.2]{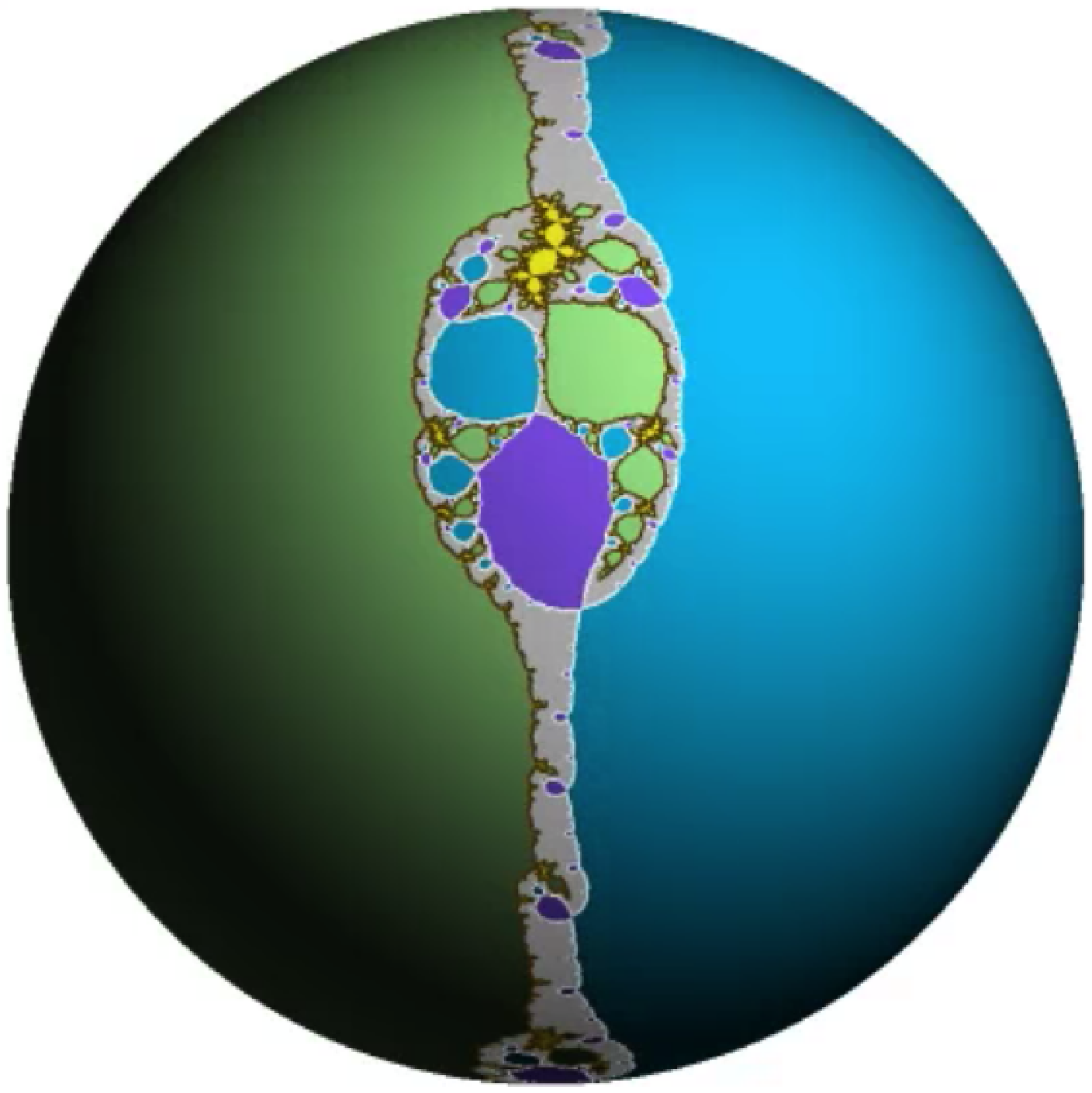}
\includegraphics[scale=0.2]{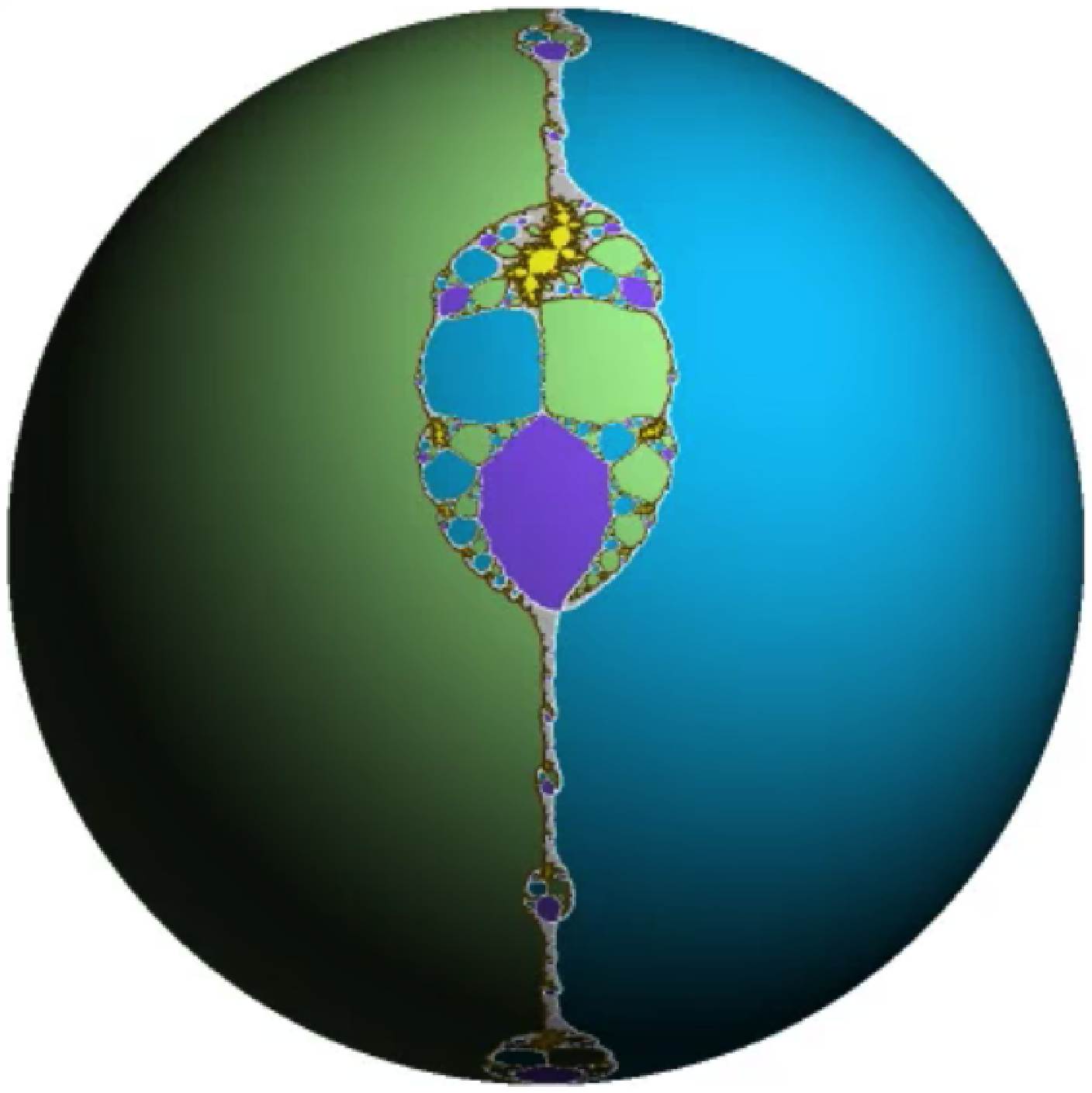}  \end{center}
  \caption{Sketch of the mating (images courtesy of A. Ch\'eritat). }
    \label{Mating movie}
\end{figure}

Define the ray equivalence relation to be the smallest
equivalence relation $\sim_r$ on $S^2$  such that the closure
of the image $\nu_1(R_1(t))$, as well as the closure of
$\nu_2(R_2(-t))$ lies in a single equivalence class.
The map induced on the quotient $S^2 / \sim_r$  by  $f_1 \uplus f_2$
is called the {\it topological mating} of $f_1$ and $f_2$
and denoted by $f_1 \coprod f_2$.

Note however, that this is not the standard way to define topological mating. It is in general defined as a map acting on the space obtained by  gluing the filled in Julia sets $K_i$ along their boundaries.

Suppose now that $S^2 / \sim_r$ is homeomorphic to the sphere $S^2$ and denote by  $\pi_F: S^2 \rightarrow S^2 / \sim_r$ the natural projection.  We say that $f_1$ and $f_2$ are {\em conformally mateable} if there exist a homeomorphism $h$ and a rational map $R$ such that the following diagram commutes
\[
\begin{CD}
S^2    @>     f_1 \coprod f_2 >>    S^2 \\
@V h VV                @VV h V \\
\hat{\C}   @>     R>>   \hat{\C}
\end{CD}
\]
and such that the maps $h \circ \pi_F \circ \nu_j $ are holomorphic on the interior of $K_j$ ( the maps $ \pi_F \circ \nu_j$ are complex charts for $S^2$).
If such $R$ is unique up to M\"obius conjugacy, we refer to it as {\it the mating of $f_1$ and $f_2$}.

The presentation of the topological mating through the formal mating has the advantage that we can make use of Moore's theorem (see~\cite{Moore}); this theorem gives a criterium on the equivalence relation to get a topological  sphere as quotient.

\begin{thm}[(R.L. Moore)]
Let $\sim$ be any topologically closed equivalence relation on $S^2$,
with more than one equivalence class and with only connected
equivalence classes. Then $S^2 / \sim$ is homeomorphic to $S^2$ if and
only if each equivalence class is non separating.
Moreover let $\pi: S^2 \rightarrow S^2 / \sim $ denote the natural projection. In the positive case above we may choose the homeomorphism
$h: S^2 / \sim \ \rightarrow  S^2$ such that the composite map $h \circ \pi$ is a uniform limit of homeomorphisms.
\end{thm}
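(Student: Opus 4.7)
The plan is to handle necessity by a direct connectivity argument and then devote the bulk of the work to sufficiency via Moore's approximation strategy.

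For necessity, suppose some equivalence class $C$ separates $S^2$ into disjoint non-empty open sets $U_1, U_2$ with $S^2 \setminus C = U_1 \cup U_2$. Because every other equivalence class is connected, it must lie entirely in $U_1$ or entirely in $U_2$; hence $(S^2/{\sim}) \setminus \{[C]\}$ decomposes as $\pi(U_1) \sqcup \pi(U_2)$, a nontrivial disjoint union of open sets. If $S^2/{\sim}$ were homeomorphic to $S^2$, removing a single point would leave a connected set, a contradiction.

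For sufficiency I would uniformly approximate $\pi$ by homeomorphisms of $S^2$. Closedness of $\sim$ yields upper semicontinuity of the decomposition $\DD$ into equivalence classes: for every $C \in \DD$ and every open $V \supset C$, the saturation $\{x : [x] \subset V\}$ is open. Fix a metric $d$ on $S^2$. Inductively in $n$, I would build a finite collection $\mathcal{U}_n$ of pairwise disjoint closed Jordan disks in $S^2$, each of diameter less than $1/n$, whose union contains every equivalence class of diameter exceeding $1/n$, with each boundary disjoint from the classes it encloses. The existence of small Jordan disk neighborhoods of a non-separating continuum in $S^2$ is a standard planar fact, resting on connectedness of its complement; upper semicontinuity is what allows one to arrange disjointness and guarantees that every sufficiently large class is eventually captured. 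Then I would define $h_n : S^2 \to S^2$ as a homeomorphism that squeezes each disk of $\mathcal{U}_n$ onto a disk of diameter $\eps_n \to 0$, equals the identity outside a slight enlargement, and is calibrated so that $\sup_x d(h_n(x), h_{n-1}(x)) < 2^{-n}$. The sequence $(h_n)$ is then uniformly Cauchy with continuous surjective limit $g : S^2 \to S^2$. Its fibers coincide with the equivalence classes (points in the same class eventually lie in a common $\mathcal{U}_n$-disk, while points in distinct classes are eventually separated by disjoint disks), so $g = h \circ \pi$ for a bijection $h$, which is a homeomorphism since $S^2/{\sim}$ is compact and $S^2$ is Hausdorff. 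By construction, $h \circ \pi$ is the uniform limit of the homeomorphisms $h_n$.

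The main obstacle is the coupled inductive construction of $(\mathcal{U}_n, h_n)$. One must simultaneously invoke upper semicontinuity to find disjoint Jordan disk neighborhoods when infinitely many small classes cluster near a given one, align $\mathcal{U}_{n+1}$ inside $\mathcal{U}_n$ so that $h_{n+1}$ perturbs $h_n$ only on regions of small diameter, and verify that no class is split in the limit. This last point is precisely where the non-separation hypothesis is essential: a separating class would prevent the chosen disk neighborhoods from collapsing without creating self-crossings in the limit map. Managing these requirements through a careful exhaustion of the possibly uncountable family $\DD$ is the technical heart of Moore's argument.
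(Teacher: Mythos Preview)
The paper does not prove this theorem. Moore's theorem is stated as a classical result and attributed to R.~L.~Moore via the reference \cite{Moore}; it is invoked only as a black box to justify that the quotient $S^2/\!\sim_r$ is a sphere when the ray-equivalence classes are non-separating. There is therefore no proof in the paper to compare your attempt against.

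For what it is worth, your outline follows the standard strategy: necessity via the obvious separation argument, and sufficiency by approximating the projection as a uniform limit of homeomorphisms that successively crush small Jordan-disk neighborhoods of the large classes. This is indeed the spirit of Moore's original proof and of the modern treatments (e.g.\ Daverman, \emph{Decompositions of Manifolds}). The genuinely delicate part, as you correctly flag, is the inductive construction of the nested families $\mathcal{U}_n$ together with the homeomorphisms $h_n$ so that the limit has exactly the prescribed fibers; your sketch names the right ingredients (upper semicontinuity, non-separation to get Jordan-disk neighborhoods, calibration of the perturbations) but stops short of carrying out the induction, which is where all the work lies.
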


We now give an equivalent definition of conformal mating, which seems to originate from Hubbard and    used by   Yampolsky-Zakeri (see also~\cite{MP} for more details). We will adopt  this definition in the present paper.
\begin{dfn} The two polynomials $f_1$ and $f_2$ are said {\em conformally mateable}, or just mateable, if
 there exist a rational map $R$ and  two semi-conjugacies $\phi_j: K_j \rightarrow \hat{\C}$ conformal on the interior of $K_j$, such that  $\phi_1(K_1)\cup \phi_2(K_2)=\hat{\C}$ and  \[ \forall (z,w)\in K_i\times K_j, \quad \phi_i(z)= \phi_j(w) \iff z \sim_r w.\] 

The rational map  $R$ is called  a {\em conformal mating}. Moreover, $R$ is topologically conjugate to the topological mating.
\end{dfn}

A semi-conjugacy is a continuous map satisfying a conjugacy relation without being necessarily injective.

\subsection{Statement of results}
A {\it cubic Newton map} is a rational map of degree $3$ of the form $$N(z)=z-\frac{P(z)}{P'(z)}$$ where $P$ is a cubic polynomial. The roots of $P$ should be distinct and are critical fixed points. Therefore, if $N$ arises as a mating of two polynomials of degree $3$, one polynomial will  have two critical fixed points and the other one should have at least one.

\begin{figure}[ht]
 \begin{center}
 \includegraphics[scale=0.4, angle=90]{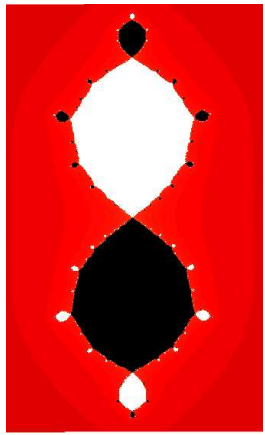}
  \end{center}
  \caption{The filled Julia set of the double basilica: $K(f_\dbas)$.}
  \label{Basilique}
\end{figure}

Hence,  up to affine conjugacy, we may choose $f_{\dbas}(z)=z(z^2+\frac32)$, which we  call the {\em double-basilica}. It has two super-attracting fixed points. The second polynomial lies in  the family  $f_a(z)=z^2(z+3a/2)$ for $a\in \C$. It always has a super-attracting fixed point at $0$. The other free critical point is $-a$.
The Julia set of $f_\dbas$ is connected and locally connected (see figure~\ref{Basilique}). For the polynomial $f_a$
we  concentrate on the connectedness locus denoted  by $\mathcal C$:$$\mathcal C:=\{a\in \C\mid K(f_a)\  \hbox{ is connected}\}\quad  \hbox{(see figure~\ref{Parameter-fa})}.$$

\begin{figure}[ht]
  \begin{center}
  \includegraphics[scale=0.4]{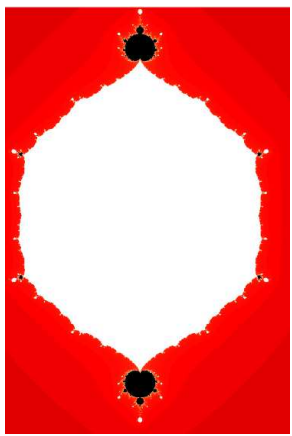}
  \end{center}
  \caption{The connectedness locus for $f_a$ is the complement of the red. The set $\HH_0$ is the central white region.  Small (black) Mandelbrot copies are attached to it.}
  \label{Parameter-fa}
\end{figure}

In the connectedness locus, the Julia set of $f_a$ is not always locally connected. We consider the set $$\HH:=\{a\in \C\mid f_a^n(-a)\to 0\}. $$
The maps in $\HH$ are  hyperbolic and have  locally connected Julia set. Let  $\HH_0$ be  the connected component containing the parameter $a=0$ of $\HH$ (the big white citrus component in
Figure~\ref{Parameter-fa}).

\begin{figure}[ht]
  \begin{center}
 \includegraphics[scale=0.62]{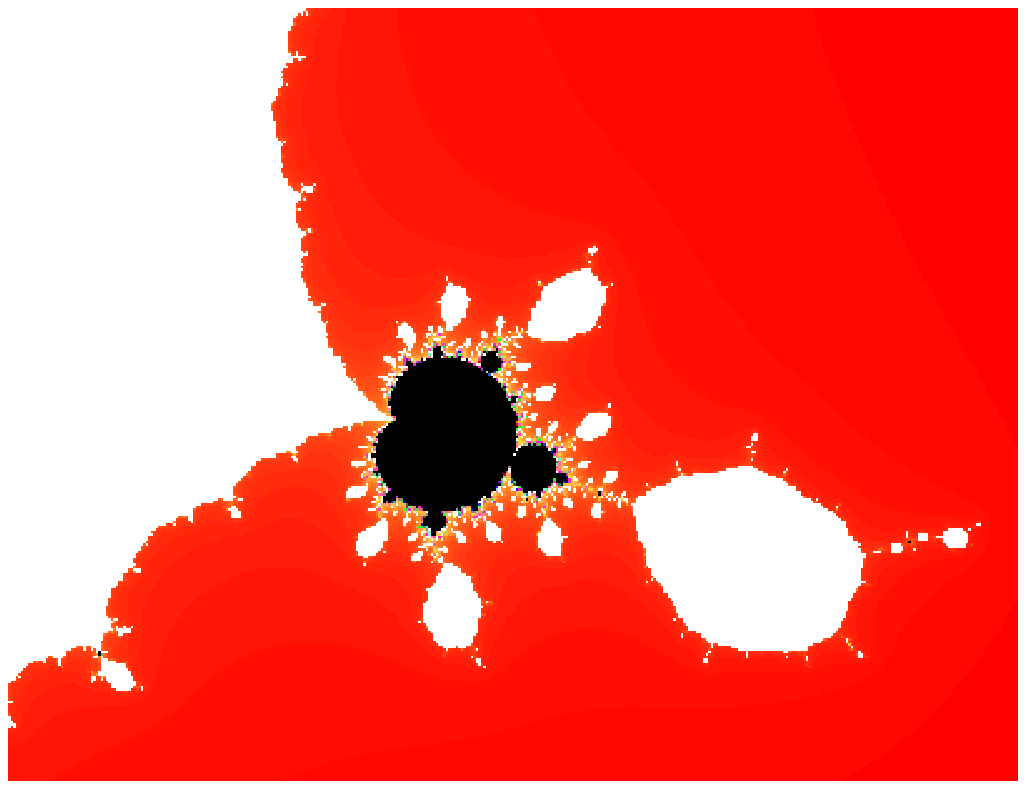}
\includegraphics[scale=0.3661]{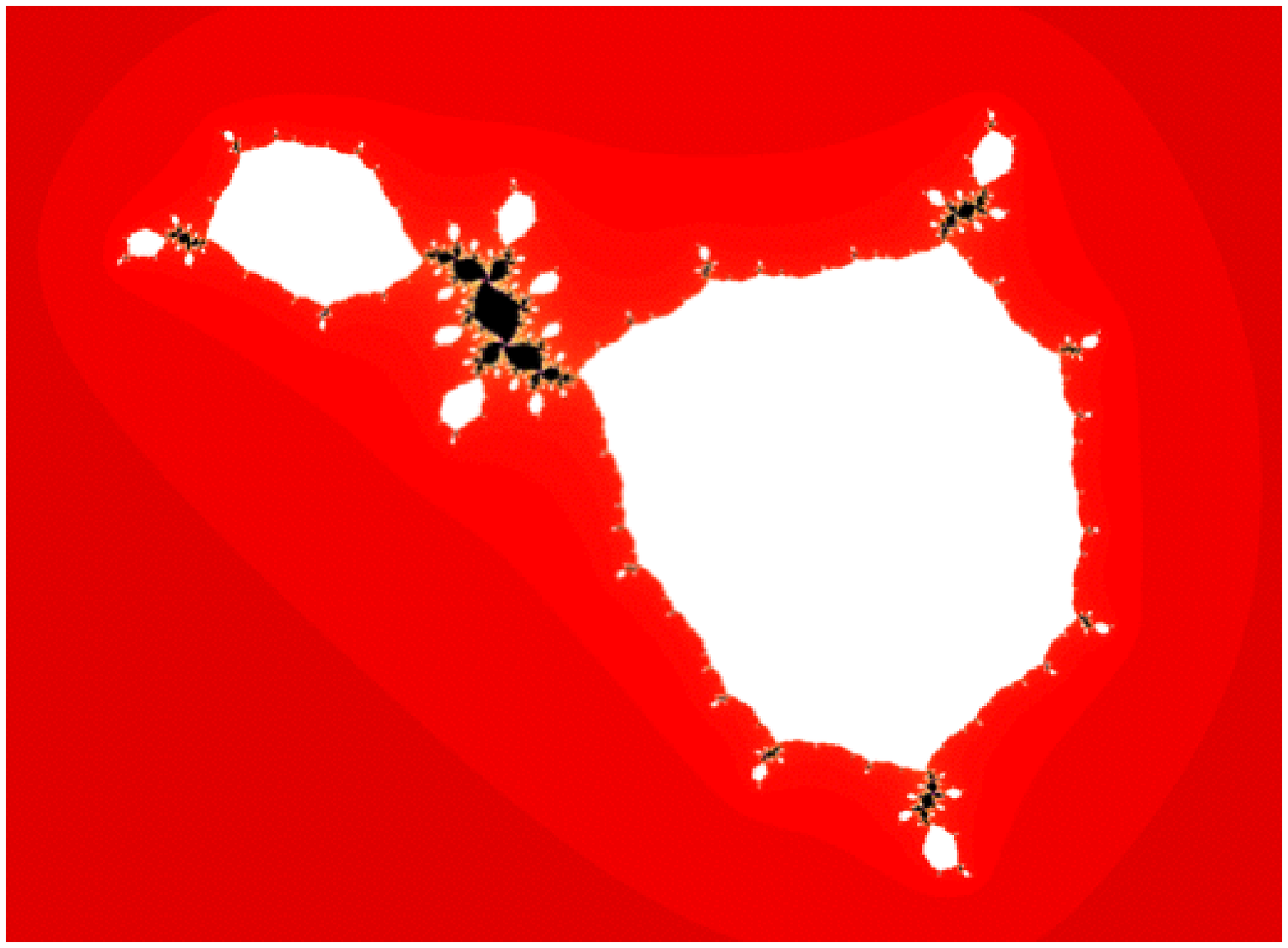}
  \end{center}
  \caption{Some copy of the Mandelbrot set attached to $\HH_0$ on the left and, on the right,  a Julia set  for the map $f_a$ in this copy. }
    \label{Parameterzoom}
\end{figure}

The boundary of $\HH_0$ is a Jordan curve (see~\cite{RoeschENS}) and can be parameterized nicely  by
a map $t\in \S^1\mapsto a(t)$ which contain some dynamical information. Each connected component of $\CC\setminus \overline{\HH_0}$  is attached to $\overline{\HH_0}$  by a parameter $a(t)$ for $t$ in some subset $T\subset \S^1$.
Moreover, each such parameter $a(t)$ with $t\in T$ is the cusp of a Mandelbrot copy, {\it i.e.} the image by some homeomorphism of the Mandelbrot set $\M:=\{c\in \C\mid J(z^2+c) \ \hbox {is connected}\}$, the cusp being the image of $c=1/4$. See~\cite{RoeschENS} for more details.

\begin{figure}[ht]
  \begin{center}
 \includegraphics[scale=0.5]{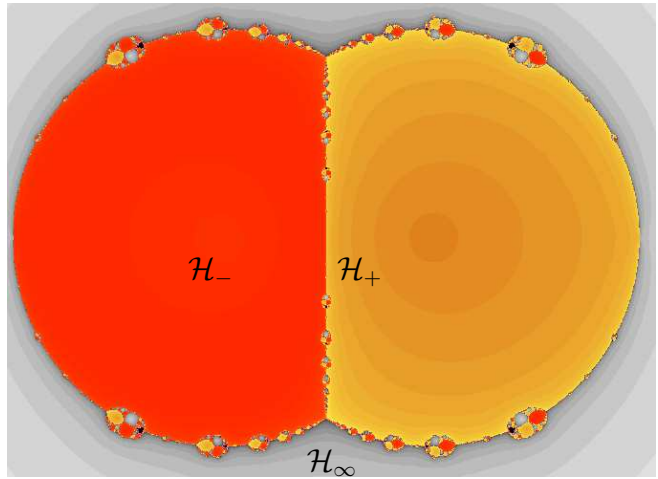}
  \end{center}
  \caption{Parameter space of cubic Newton maps. }
    \label{Parameternewt}
    \vskip -10 em \hskip -3 em $\HH_-$ \hskip 3 em  $\HH_+$  \vskip 5 em $\HH_\infty$
    \vskip 2 em
\end{figure}

Concerning the  cubic Newton maps, a description is given in~\cite{RoeschAnnals}.
Any cubic Newton map can be conjugated to a rational map of the form $N_\lambda(z)=z-\frac{P_\lambda(z)}{P_\lambda'(z)}$ where $P_\lambda(z)=(z+1/2-\lambda)(z+1/2+\lambda)(z-1)$. For $\lambda\notin\{-3/2,0,3/2\}$ it has three critical fixed points,  the roots of $P_\lambda$, denoted by $\hbox{Root}(\lambda)$,   the fourth critical point being the
point $x_0=0$. Denote by $\HH_+$, $\HH_-$, $\HH_\infty$  the connected component  of the set
$$\{\lambda\in \C\mid N^n_\lambda(0)\to \hbox{Root}(\lambda)\}$$ containing $1/2$, $-1/2$, $\infty$ respectively.
The boundary of this component $\HH_u$ is a Jordan curve (see~\cite{Roeschcras}). It  satisfies similar  properties as $\HH_0$\,;  there exist a parametrization $\lambda_u(t)$ (containing  dynamical information), such that for $t$ in some set  $T_u'$, there is a copy of the Mandelbrot set attached to $\partial \HH_u$ at the point $\lambda_u(t)$.

\begin{dfn} Let $RC$ denote the parameters  $a\in \CC$ belonging to the Mandelbrot copies attached to $\HH_0$ except the ones containing the points $a_\pm=\pm\frac{4i}3$ for the cubic family.  Let $RN$ denote the parameters in $\C$ belonging to the Mandelbrot copies attached
to $\HH_- $ for the Newton map.
\end{dfn}
The main result is the following\,:
\begin{thm}\label{resultat1}For any parameter $a\in RC$ the polynomials  $f_a$ and $f_{\dbas}$ are conformally mateable if $J(f_a)$ is locally connected. Moreover, up to conjugation by a M\"obius transformation there exists a  cubic Newton map realizing this mating. \end{thm}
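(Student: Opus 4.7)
Since $a\in RC$, the polynomial $f_a$ is renormalizable around its free critical point $-a$, and by Douady--Hubbard straightening the renormalization is hybrid-equivalent to some $p_c(z)=z^2+c$ with $c\in\M$. Thus $a$ lies in a Mandelbrot copy $\MM_a\subset\CC$ attached to $\HH_0$ via a straightening map $\chi\colon\M\to\MM_a$, and there is a parallel Mandelbrot copy $\MM_\lambda\subset RN$ attached to $\HH_-$ via some $\mu\colon\M\to\MM_\lambda$. My plan is to construct, for each $a\in RC$ with $J(f_a)$ locally connected, a Newton map $N_\lambda$ with $\lambda=\mu(c)\in\MM_\lambda$ together with semi-conjugacies $\phi_1\colon K(f_{\dbas})\to\wh{\C}$ and $\phi_2\colon K(f_a)\to\wh{\C}$ satisfying the Yampolsky--Zakeri definition of conformal mating.

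I would anchor the construction at the center $a_0=\chi(0)$ of $\MM_a$. Here $f_{a_0}$ is post-critically finite, so Tan Lei's theorem produces a PCF Newton map $N_{\lambda_0}$ with $\lambda_0=\mu(0)$ realizing the mating $f_{a_0}\uplus f_{\dbas}$, and supplies the associated semi-conjugacies $\phi_1^{(0)},\phi_2^{(0)}$. The explicit restriction to copies other than those containing $a_\pm=\pm 4i/3$ is precisely what makes Tan Lei's combinatorial description compatible with the Newton side and prevents accidental identifications coming from the third basin. For hyperbolic $a=\chi(c)$ with $c$ in the interior of a hyperbolic component of $\M$, I would extend the mating by quasi-conformal surgery: keep the outer combinatorics of $N_{\lambda_0}$ fixed and implant the dynamics of $p_c$ on the renormalization domain via a Beltrami coefficient supported on the preimages of the free critical Fatou component. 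Solving the Beltrami equation and invoking combinatorial rigidity for the fixed-point/critical-orbit portrait of a cubic Newton map shows the result is indeed a Newton map in $\MM_\lambda$ together with semi-conjugacies obtained by transporting $\phi_1^{(0)},\phi_2^{(0)}$ through the surgery.

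For the non-hyperbolic, non-PCF parameters $a\in\MM_a$ with $J(f_a)$ locally connected, I would then argue by limits. Choose hyperbolic $a_n\to a$ in $\MM_a$ and pass to a subsequence so that the normalized Newton maps $N_{\lambda_n}$, with $\lambda_n=\mu(\chi^{-1}(a_n))$, converge to some $N_\lambda$ with $\lambda\in\overline{\MM_\lambda}$. The delicate step is to pass the semi-conjugacies $\phi_1^{(n)},\phi_2^{(n)}$ to the limit. Equicontinuity should follow from a Yoccoz puzzle argument on the $f_a$-side (local connectivity of $J(f_a)$ yields a uniform modulus of continuity in terms of puzzle-piece depth) together with matching puzzle control on $N_{\lambda_n}$, using that Fatou components in the root basins are Jordan domains with controlled geometry. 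The limits $\phi_1,\phi_2$ are then continuous, holomorphic on the interior of the filled Julia sets, and send identified points to identified points by a passage-to-the-limit for landing external rays.

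The main obstacle is this last limiting step, specifically showing that no ray identifications are created or destroyed in the limit. Concretely one must check both directions: if $\phi_i^{(n)}(z_n)=\phi_j^{(n)}(w_n)$ with $z_n\to z$ and $w_n\to w$, then $\phi_i(z)=\phi_j(w)$, and conversely any identification at the limit arises from identifications along the sequence. This reduces to controlling the landing behavior of external rays of the small quadratic-like Julia set inside $K(f_a)$ as $c_n\to c$ in $\M$; local connectivity of $J(f_a)$ and the exclusion of the exceptional $a_\pm$ copies are both needed to keep the ray combinatorics stable. Once this is done, openness of the Newton condition and combinatorial invariance of the Mandelbrot copy ensure $N_\lambda$ remains a Newton map in $\MM_\lambda$, and uniqueness up to M\"obius conjugacy is automatic from the conformality of $\phi_1,\phi_2$ on the interior of the filled Julia sets.
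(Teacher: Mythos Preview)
Your strategy is genuinely different from the paper's, and the difference is not merely cosmetic: the paper does \emph{not} pass through Tan Lei's PCF result, does not use quasi-conformal surgery to propagate from centers, and above all does not take any limit in the parameter. Instead, for each fixed $a\in RC$ with $J(f_a)$ locally connected it sets $\lambda=\NN(a)$ once and for all (via the two straightening homeomorphisms $\chi_t,\chi_t^N$) and builds the semi-conjugacies $\psi_a,\psi_{\dbas}$ \emph{directly}. The mechanism is purely combinatorial: one defines forward-invariant graphs $\Gamma_a,\Gamma_{\dbas},\Gamma_\lambda$, the associated three-piece partitions $\Delta_0,\Delta_1,\Delta_2$, and puzzle pieces $P^{\bullet}_{\epsilon_0\cdots\epsilon_n}$; one proves that these nests shrink to points (Lemmas~2.12, 2.25, 2.36); one builds $\psi_a,\psi_{\dbas}$ first on the ``skeleton'' $\Lambda^\infty$ (closures of immediate basins, the small Julia set, and all their preimages) using B\"ottcher coordinates and the straightening maps, then extends to all of $K(f_a),K(f_{\dbas})$ by declaring $\psi_a(z)=\bigcap_n \overline{P^{N_\lambda}_{\epsilon_0\cdots\epsilon_n}}$ for any itinerary of $z$ (Lemma~4.3). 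Proposition~4.1 (itinerary classes for $f_a$ or $f_{\dbas}$ coincide iff they do for $N_\lambda$) and Proposition~5.3 ($z\sim_r w\iff\psi_\alpha(z)=\psi_\beta(w)$) then give exactly the Yampolsky--Zakeri mating conditions for this \emph{single} triple $(f_a,f_{\dbas},N_\lambda)$.

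Your plan, by contrast, lives or dies on the limiting step, and as written that step is a genuine gap rather than a routine verification. You need uniform equicontinuity of the families $\phi_i^{(n)}$ on the \emph{moving} domains $K(f_{a_n})$, and you appeal to ``a Yoccoz puzzle argument on the $f_a$-side'' for this; but the shrinking of puzzle pieces in Lemmas~2.25 and~2.36 is proved for a fixed parameter, with no uniformity in $a$ or $\lambda$, and obtaining such uniformity as $a_n\to a$ with $a$ non-hyperbolic is exactly where the difficulty concentrates (moduli of annuli in the puzzle may degenerate). More seriously, the assertion that ``no ray identifications are created or destroyed in the limit'' is the entire content of the theorem, and you only restate it as ``controlling the landing behaviour of external rays of the small quadratic-like Julia set as $c_n\to c$''. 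This is not a reduction to something known: continuity of landing points under parameter variation fails in general, and it is precisely to avoid this issue that the paper works at a fixed parameter and encodes ray equivalence through itineraries, which are stable by construction. If you want to salvage your route you would essentially have to reprove the paper's Proposition~4.1 in order to control the limit, at which point the direct argument is both shorter and cleaner.
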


With the same techniques, we also prove:
\begin{thm}\label{resultat2} For any parameter $a\in \partial \HH_0 $, $a \neq \pm \frac{4i}{3}$, the maps $f_a$
and $f_\dbas$ are conformally mateable, and their mating is a cubic Newton map.
\end{thm}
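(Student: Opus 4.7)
The plan is to mirror the proof of Theorem~\ref{resultat1}, replacing the renormalization step by a direct combinatorial analysis. For $a \in \partial\HH_0$ there is no quadratic-like restriction around $-a$, but by Roesch's parametrization of $\partial\HH_0$ by a map $t \mapsto a(t)$ carrying dynamical information, each such $a$ has an explicit ray portrait: the external rays landing at the critical orbit of $f_a$ are determined by $t$, and the free critical point $-a$ lies on $\partial B_0$, where $B_0$ is the immediate basin of $0$.

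First I would prove that $J(f_a)$ is locally connected for every $a \in \partial\HH_0 \setminus \{\pm \frac{4i}{3}\}$. Because $-a$ is not captured in any proper sub-hyperbolic piece, a Yoccoz puzzle argument built from the rays bounding $\HH_0$ should force the nested puzzle pieces to shrink to points, yielding both local connectivity and an explicit inventory of ray landings. With $J(f_a)$ and $J(f_\dbas)$ locally connected, the formal mating $f_a \uplus f_\dbas$ is defined together with the ray equivalence relation $\sim_r$. The central step, exactly as in the proof of Theorem~\ref{resultat1}, is Moore's criterion: every equivalence class of $\sim_r$ must be closed, connected and non-separating. This is verified by matching the ray portrait of $f_a$ across the equator with the external rays of $f_\dbas$ that land at the corresponding boundary points of its two super-attracting basins. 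The excluded parameters $\pm \frac{4i}{3}$ are precisely the $t$-values whose ray portrait of $f_a$ collides with that of $f_\dbas$ so as to produce a separating (``Levy-type'') equivalence class, which is the only genuine topological obstruction in $\partial\HH_0$.

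To upgrade the topological mating to a conformal one realized by a Newton map, I would rely on a continuity argument rather than repeating the surgery of Theorem~\ref{resultat1}. The small Mandelbrot copies comprising $RC$ accumulate on $\partial\HH_0$, so one can pick $a_n \in RC$ with $a_n \to a$; Theorem~\ref{resultat1} then yields cubic Newton maps $N_n$ realizing the matings $f_{a_n} \uplus_F f_\dbas$, and compactness of the Newton parameter slice produces a subsequential limit $N$. The main obstacle is stability: one must pass the semi-conjugacies $\phi_j^{(n)} : K_j^{(n)} \to \hat{\C}$ to a limit and show that no ray equivalence is either lost or gained in the process. This is where the uniform local connectivity of $K(f_{a_n})$ near $\partial\HH_0 \setminus \{\pm \frac{4i}{3}\}$, together with the continuity of $t \mapsto a(t)$, has to be exploited, and it is precisely this uniformity that breaks down at $a = \pm \frac{4i}{3}$, explaining the excluded parameters.
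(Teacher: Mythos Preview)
Your proposal diverges from the paper's proof in a fundamental way, and the divergent part contains a genuine gap.

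The paper does \emph{not} use a limiting argument. Instead it repeats the construction of Theorem~\ref{resultat1} almost verbatim. For $a=a(t)\in\partial\HH_0$ (with $t\notin T$) it defines the target Newton map directly as $N_{\lambda}$ with $\lambda=\lambda(t)$, using the explicit parametrizations of $\partial\HH_0$ and $\partial\HH_-$. The graphs $\Gamma_a$ and $\Gamma_\lambda$ are modified by replacing the small Julia set with the single critical point $-a$ (resp.\ $0$), which now sits on $\partial A_1$ (resp.\ $\partial B_1$), and by adding $\overline{A_1}$ (resp.\ $\overline{B_1}$) to recover forward invariance. With these graphs one defines puzzle pieces exactly as before, reproves that nests shrink to points (citing the non-renormalizable puzzle results of \cite{RoeschENS} and \cite{RoeschAnnals}), and builds the semi-conjugacies $\psi_a,\psi_\dbas$ by the same pull-back scheme as in Lemma~\ref{l:conjugacy} and Lemma~\ref{l:semiconjugacy}. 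No Moore-criterion verification and no limits are needed: the conformal mating is produced directly. The cusp parameters $a(t)$ with $t\in T$ are already covered by Theorem~\ref{resultat1}.

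Your third paragraph is where the real problem lies. Taking $a_n\in RC$ with $a_n\to a$ and extracting a subsequential limit $N$ of the Newton maps $N_n$ does not, by itself, show that $N$ is a mating of $f_a$ with $f_\dbas$. You would need the semi-conjugacies $\phi_j^{(n)}$ to converge to continuous semi-conjugacies for $N$, and for that you invoke ``uniform local connectivity of $K(f_{a_n})$ near $\partial\HH_0$''. This is neither proved in the paper nor known in the literature; establishing uniform moduli of local connectivity across a varying family of Julia sets is typically as hard as the original problem. Moreover, limits of matings are not automatically matings: ray-equivalence classes can collapse or merge in the limit, and ruling this out requires precisely the kind of direct control that the paper obtains by building $\psi_a$ for the limiting parameter itself. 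Finally, even your second paragraph stops short: verifying Moore's criterion yields only that $S^2/\!\sim_r$ is a sphere, i.e.\ a \emph{topological} mating; it does not produce the rational map or the conformal structure, so you would still need a separate argument to identify the conformal mating with a specific Newton map.
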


Let $NRC = \{ a \in \partial \HH_0 : \text{$a$ is not a cusp of a Mandelbrot copy} \}$.
So for $a \in NRC$, $f_a$ is not renormalizable around its free critical point $-a$.
Combining the results of Theorems \ref{resultat1} and \ref{resultat2} we have a map
$$\NN: RC \cup NRC   \to RN \cup \partial \Omega_-$$
which assigns to any cubic polynomial of $RC \cup NRC$ with locally connected Julia set,  a Newton map in $RN \cup \Omega_-$ such that this map is the mating of $f_a$ with $f_\dbas$.

\begin{rmk}The two Mandelbrot copies taken away in the set $RC$ (where $a = \pm 4 i/3$ are the cusps) have the property that for any parameter $a$ there,
the external rays $R(1/2)$ and $R(0)$ of angles $1/2$ and $0$ respectively land at the repelling (or parabolic) fixed point. This is also the case for the  double-basilica polynomial $f_{\dbas}(z) = z (z^2 + 3/2)$.  Clearly the two pairs of rays separate the sphere into two sets and $S^2/\sim_r$ cannot be homeomorphic to $S^2$.
Hence the mating between $f_a$ and $f_\dbas$ in this case does not exist by this   topological obstruction. \end{rmk}

The main part of the paper is devoted to the proof of Theorem \ref{resultat1}, while the proof of Theorem \ref{resultat2} is given in Section 7.

\section{Dynamical planes}
We first present the universal model given by B\"ottcher maps, then we study  the dynamical planes of the polynomials:
$f_\dbas(z)=z(z^2+3/2)$ and $f_a(z)=z^2(z+3a/2)$ and of the family of Newton maps $N_\l(z)=z-\frac{P_\l(z)}{P'_\l(z)}$ where
$P_\l(z)=(z+1/2-\l)(z-1/2+\l)(z-1)$.
Note  that  the cubic polynomial $f_\dbas$ belongs  to the family $f_a$  (up to conjugacy).
Indeed, for $a=\pm i\sqrt2$, the map $f_a$ has two fixed critical points, therefore it has to be conjugated to $f_\dbas$.
Nevertheless,  we give a separate study because the notations are different, and we hope that by concentrating on the case when
$a \neq \pm i \sqrt{2}$ the arguments will be more transparent.

\subsection{Preliminaries}

In this section we recall some basic facts about dynamics, but we refer to~\cite{M1} for more details.

Let $f$ be a rational map. Recall that the {\it Julia set} $J(f)$ of $f$ is the closure of repelling periodic orbits
(or, equivalently, the minimal compact totally invariant set containing at least $3$ points).
A {\it Fatou } component is by definition a connected component of the complement of the Julia set $J(f)$.

If  a  Fatou component  $U$ contains a critical point   which is  fixed by $f$ and contains no other critical points,
then $U$ is simply connected. Moreover, if the  degree of the critical point is $d\ge 2$, then  there exists a
Riemann map $\phi:U\to \D$ which satisfies  $\phi(f(z))=\phi(z)^d$.
If $d=2$  the map $\phi$ is unique. If $d>2$ there are multiple choices.

In this paper, a degree  $d>2$ will appear only for cubic polynomials at the point of $\infty$.
 In this   case
with the additional  assumption   that   $\phi$  is tangent to identity  at   $\infty$,  there is no choice on $\phi$.
In all cases, we call  this unique map  $\phi:U\to \D$  the  {\it B\"ottcher map} of $U$.

It allows to define  polar coordinates on $U$:\begin{itemize}
\item  a {\it ray}  of angle $t$  which is the set   $$\phi^{-1}( e^{2\pi i t }[0,1[) \hbox{ with }t \in \mathbf R/\mathbf Z$$
\item an  {\it equipotential} of level $r$ which is the set  $$\phi^{-1}( re^{2\pi i [0,1[ })\hbox{ with }r \in [0,1[.$$
\end{itemize}

Moreover, by a Theorem of Carath\'eodory, the conformal map $\phi^{-1}: \D \to U$ extends continuously to the boundary as soon as it is locally connected. As a consequence,  we may say
that a ray with angle $t$  {\it lands}  at  the point  $\gamma(t):=\phi^{-1}( e^{2\pi i t })$ for $t\in \R/\Z$.

In particular, for a cubic polynomial $P$, if the Julia set is locally connected, we have $\gamma(3t)=P(\gamma(t))$;
therefore  we introduce  now the triadic expansion for an angle.
\begin{dfn}
For any sequence $\{\epsilon_i\}_{i=0}^\infty$ of $\tilde{\Sigma}=\{0,1,2\}^\N$
we associate the angle  $$\theta =\displaystyle \sum_{i=1}^\infty
\frac{\epsilon_{i-1}}{3 ^i}\mod 1\hbox{ in }\R/\Z.$$
Let  $\sim$   be the  equivalence  relation on $\tilde{\Sigma}$ given by
 $$\overline 0\sim\overline 2, (\epsilon_0\cdots\epsilon_n1\overline 0)\sim (\epsilon_0\cdots\epsilon_n0\overline 2),  (\epsilon_0\cdots\epsilon_n2\overline 0)\sim (\epsilon_0\cdots\epsilon_n1\overline 2) $$   The  sequence $\epsilon_0\cdots\epsilon_n$ can be empty.
 Let  $\Sigma=\
 \tilde{\Sigma} /\sim$ be the quotient and denote by $[(\epsilon_i)] $ the projection in $\Sigma$ of a sequence of $(\epsilon_i)\in  \tilde{\Sigma}.$
 \end{dfn}The previous map  factors to  $\Sigma$.
 \begin{lem} The map  $\theta: \Sigma\to \R/\Z $ defined
  by     $$\theta(x)=\displaystyle \sum_{i=1}^\infty
\frac{\epsilon_{i-1}}{3 ^i}\mod 1 \hbox{ for  any } x=[\{\epsilon_i\}_{i=0}^\infty]\in \Sigma $$ is a bijection.
\end{lem}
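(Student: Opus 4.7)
The plan is to verify the three standard properties that make $\theta$ a bijection: well-definedness on $\Sigma$, surjectivity onto $\R/\Z$, and injectivity on $\Sigma$.

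For well-definedness, I would first check that the map $\tilde\theta:\tilde\Sigma\to \R/\Z$ defined by the same formula respects each elementary identification in $\sim$. The geometric series identity
\[
\sum_{i=n+2}^\infty \frac{2}{3^i} \;=\; \frac{1}{3^{n+1}}
\]
immediately gives $\tilde\theta(\epsilon_0\cdots\epsilon_n 1\overline 0)=\tilde\theta(\epsilon_0\cdots\epsilon_n 0\overline 2)$ and $\tilde\theta(\epsilon_0\cdots\epsilon_n 2\overline 0)=\tilde\theta(\epsilon_0\cdots\epsilon_n 1\overline 2)$, and also $\tilde\theta(\overline 0)=0\equiv 1=\tilde\theta(\overline 2)\pmod 1$. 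Since $\sim$ is generated by these elementary relations, $\tilde\theta$ factors through the quotient $\Sigma$.

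For surjectivity, given $t\in[0,1)$, the greedy algorithm produces a triadic expansion: at step $i$ pick $\epsilon_{i-1}\in\{0,1,2\}$ so that the remainder stays in $[0,3^{-i})$. This yields some $(\epsilon_i)\in\tilde\Sigma$ with $\tilde\theta((\epsilon_i))=t$, so the class $[(\epsilon_i)]\in\Sigma$ maps to $t$. The angle $0\in\R/\Z$ is covered by $[\overline 0]=[\overline 2]$.

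The heart of the proof, and the step I expect to be the most delicate, is injectivity. Suppose $(\epsilon_i),(\epsilon'_i)\in\tilde\Sigma$ with $\tilde\theta((\epsilon_i))=\tilde\theta((\epsilon'_i))$. Let $n$ be the smallest index where they differ; I may assume $\epsilon_n<\epsilon'_n$. Splitting off the common prefix $\epsilon_0\cdots\epsilon_{n-1}$ reduces the problem to
\[
\frac{\epsilon_n}{3^{n+1}}+\sum_{i>n}\frac{\epsilon_{i-1}}{3^i} \;=\; \frac{\epsilon'_n}{3^{n+1}}+\sum_{i>n}\frac{\epsilon'_{i-1}}{3^i},
\]
and since each tail lies in $[0,3^{-n}]$ with equality only for the all-$0$ or all-$2$ tail, one checks that $\epsilon'_n-\epsilon_n=1$, the tail of $(\epsilon_i)$ equals $\overline 2$, and the tail of $(\epsilon'_i)$ equals $\overline 0$. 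This is exactly one of the two elementary identifications, so $[(\epsilon_i)]=[(\epsilon'_i)]$ in $\Sigma$. The only remaining case is when the two sequences agree modulo wrap-around in $\R/\Z$ (the angle $0$), which forces $\{(\epsilon_i),(\epsilon'_i)\}=\{\overline 0,\overline 2\}$ and is handled by the first relation. The main obstacle is organising this case analysis cleanly so that every pair of distinct representatives of the same triadic rational is accounted for by exactly one elementary relation, which is what makes the equivalence $\sim$ precisely the kernel of $\tilde\theta$.
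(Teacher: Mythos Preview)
Your proof is correct and takes a genuinely different route from the paper's. The paper does not verify well-definedness, surjectivity, and injectivity of $\theta$ separately; instead it constructs the inverse map directly, by sending an angle $t$ to its \emph{itinerary} with respect to the partition $\{]0,\tfrac13[,\ ]\tfrac13,\tfrac23[,\ ]\tfrac23,1[\}$ under multiplication by $3$, treating non-triadic and triadic angles in separate cases. Your approach is the standard algebraic verification via geometric series and a tail-comparison argument for injectivity; the paper's approach is dynamical and has the advantage of introducing the itinerary viewpoint that is used throughout the rest of the paper (indeed, the inverse map is christened $\epsilon(\theta)$ immediately afterward). Your argument is arguably cleaner as a self-contained proof of the lemma, but you might still want to note at the end that the inverse of $\theta$ is precisely the itinerary map, since that is what the subsequent sections rely on.
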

\proof
 We build   the converse map as follows.
 Let  $t\in \R/\Z$  be any angle which is not triadic ({\it i.e.} not of the form $\displaystyle  \frac k{3^N}$).
We have a unique sequence  in $\tilde{\Sigma}$ defined  by the ``itinerary''  of $t$ with respect to the partition $\displaystyle \left\{0,\frac 13, \frac 23\right\}$
as follows:    $\epsilon=\{\epsilon_i\}_{i=0}^\infty$ where  $$3^it\in I_{\epsilon_i}= \left]\frac{\epsilon_i}3, \frac{\epsilon_i+1}3\right[\quad \forall i\ge 0.$$
 Note that for any non triadic $t\in \R/\Z$, the sequence  $\epsilon$ is not eventually $\overline 0$ nor $\overline 2$.
To reach a  contradiction assume that $\epsilon$  is  $\overline 0$ so that $3^it\in ]0,\frac 13[ $  and then  $0<t<\frac 1{3^{n+1}}$. When $n$ goes to infinity we obtain that $0<t\le 0$, a contradiction.

Now consider the triadic angles. First, we associate to the angle $0$ the two sequences $\overline{0}$ and $\overline 2$.
Then, for the angle $1/3$ we associate the sequences$1\overline{0}$ and $0\overline 2$, for the angle $2/3$ we associate the sequences  $2\overline{0}$ and $1\overline 2$.
Now any other triadic angle $\theta$ is an  iterated pre-image (under the multiplication by $3$) of $1/3$ or $2/3$. Let us  define $\epsilon_i$ by $ 3^i \theta\in I_{\epsilon_i}$ for $i<n$  where $ 3^n \theta\in \{1/3,2/3\}$. Then we concatenate the sequence $\epsilon_0\cdots \epsilon_{n-1}$ with the two sequences associates to  $ 3^{n} \theta$.

 Note that we found  two  equivalent sequences for triadic angles. So the pre-image of $t$ under $\theta$ is well defined as the equivalence class  of these  two sequences.
\endproof

\begin{dfn}\label{d:itangle}
Let  us  call the  {\it itinerary } of $\theta\in \R/\Z$ the unique $x\in \Sigma$ such that $\theta(x)=\theta$ and write $\epsilon(\theta):=x$.\end{dfn}

\subsection{The double basilica}

\begin{figure}[ht]
  \begin{center}
 \includegraphics[scale=0.52 ]{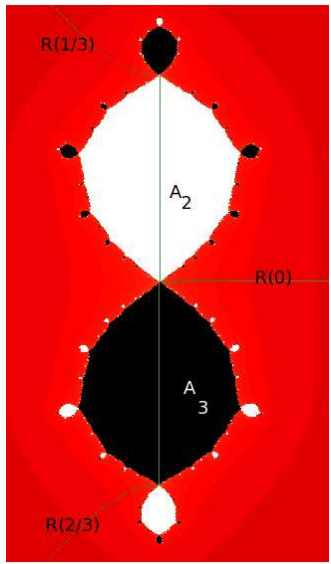}
  \end{center}
  \caption{The rays $R_\dbas(0)$ landing at the fixed point $p=0$,   $R_\dbas(1/3)$ landing at its preimage $p'$ and $R_\dbas(2/3)$ landing at the  other preimage $p''$, as well as internal rays. }
    \label{Basiliquerays}
\end{figure}

The map $f_\dbas(z)=z(z^2+\frac 32)$ has  two finite critical fixed points
$\pm \frac{i}{\sqrt2}$. Denote by $A_2$ and  $A_3$, the Fatou components containing  $i/\sqrt2$ and $-i/\sqrt2$ respectively (these  domains are also called the {\it  immediate basins of attraction} of the corresponding points). Denote by  $R^i_\dbas(t)$ the ray of angle $t$  and by  $E^i_\dbas (v)$ the equipotential of level $v$  in $A_i$. The {\it external} rays and equipotentials, corresponding to the unbounded Fatou component, are denoted by
 $R^\infty_\dbas(t)$ and   $E^\infty_\dbas(v)$.

The third finite fixed point called $p_\dbas$ is a common point of their boundaries: $$p_\dbas=\partial A_2\cap \partial A_3.$$
Indeed, the rays $R^\infty_\dbas(0)$, $R^\infty_\dbas(1/2)$,  $R^2_\dbas(0)$ and
  $R^3_\dbas(0)$  are fixed by $f_\dbas$, so  they land at the sole fixed point belonging to  the Julia set $J(f_\dbas)$, {\it i.e.} at $p_\dbas=0$.
\vskip 1em
A point of the Julia set  is called  {\em bi-accessible} if exactly two external rays land at it.
\begin{lem} \label{biaccess}
The bi-accessible points of $J(f_\dbas)$ are exactly  the iterated pre-images of $p_\dbas=0$.

Moreover, no other point of $J(f_\dbas)$ is the landing point of at least two   external rays.
\end{lem}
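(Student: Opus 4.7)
The lemma has two assertions: (i) every iterated preimage of $p_\dbas$ is bi-accessible; (ii) no other point of $J(f_\dbas)$ is the landing point of two or more external rays. I handle them in turn.

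For (i), the only angles fixed by $t\mapsto 3t$ on $\R/\Z$ are $0$ and $1/2$, so $R^\infty_\dbas(0)$ and $R^\infty_\dbas(1/2)$ are fixed by $f_\dbas$ and therefore land at a fixed point of $f_\dbas$ in $J(f_\dbas)$, which the discussion preceding the lemma identifies as $p_\dbas$. To see that no further external ray lands at $p_\dbas$, invoke the classical theorem of Douady and Hubbard: the rays landing at a repelling fixed point form a finite set permuted by $f_\dbas$ with a common combinatorial period; since two rays of period one are already exhibited, every ray landing at $p_\dbas$ must be fixed, hence has angle $0$ or $1/2$. For a preimage $q$ of $p_\dbas$ under $f_\dbas^n$, the critical orbits of $f_\dbas$ lie in $A_2\cup A_3$ and miss $p_\dbas$, so $f_\dbas^n$ is locally injective at $q$ and the two rays at $p_\dbas$ lift through the local inverse to two distinct rays landing at $q$. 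A counting argument closes (i): there are exactly $2\cdot 3^n$ angles $t$ with $3^n t\in\{0,1/2\}$, distributed among the $3^n$ preimages of $p_\dbas$ with at least two at each, forcing exactly two at each preimage.

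For (ii), I use a Yoccoz-type puzzle argument. Take as level-zero graph $\Gamma_0$ the union of the external rays $R^\infty_\dbas(0), R^\infty_\dbas(1/2)$, the internal rays $R^2_\dbas(0), R^3_\dbas(0)$ landing at $p_\dbas$, and sufficiently deep equipotentials in each of the three invariant Fatou components. Then $\Gamma_0$ cuts a topological disk containing $K(f_\dbas)$ into two closed level-zero puzzle pieces. Pulling back by $f_\dbas^n$ defines the level-$n$ puzzle, whose vertices on $J(f_\dbas)$ are exactly the points of $\bigcup_{k\le n} f_\dbas^{-k}(p_\dbas)$. Since $f_\dbas$ is post-critically finite with both critical points in attracting Fatou components, $f_\dbas$ is expanding on a neighborhood of $J(f_\dbas)$, so the diameters of the nested puzzle pieces $P_n(z)$ containing any $z\in J(f_\dbas)\setminus\bigcup_n f_\dbas^{-n}(p_\dbas)$ tend to zero. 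The set of external angles $t$ for which $R^\infty_\dbas(t)$ accumulates on $P_n(z)$ is a single arc of $\R/\Z$ whose length also tends to zero, so at most one external ray lands at $z$; by local connectivity of $J(f_\dbas)$ and Carath\'eodory's theorem exactly one does.

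The principal obstacle is the shrinking of puzzle pieces to points. This relies on uniform expansion of $f_\dbas$ near $J(f_\dbas)$ (standard in the post-critically finite hyperbolic setting) combined with a careful description of how puzzle-piece boundaries --- concatenations of arcs of internal and external rays and of equipotentials --- behave under pull-back by $f_\dbas^n$, so that the arcs of angles carried by successive pieces genuinely nest and contract. Once shrinking is in hand, the rest of assertion (ii) reduces to the counting in (i) and continuity of the landing map.
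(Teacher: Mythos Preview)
Your argument is correct, but the paper handles part (ii) by a quite different and more elementary device than your puzzle construction. Rather than invoking hyperbolic expansion and shrinking of nested pieces, the paper argues directly by ``sector tripling'': given $x$ with two rays $R^\infty_\dbas(t), R^\infty_\dbas(t')$ landing on it, take the minimal sector $S$ between them; by minimality $S$ cannot contain both $R^\infty_\dbas(0)$ and $R^\infty_\dbas(1/2)$, hence misses the critical points, so $f_\dbas(S)$ is again a sector with triple the angular width. Iterate until the sector first contains $R^\infty_\dbas(1/3)$ (or $R^\infty_\dbas(2/3)$); then its bounding curve must cross the fixed curve $\overline{R^\infty_\dbas(0)}\cup\overline{R^\infty_\dbas(1/3)}\cup\overline{R^i_\dbas(0)}\cup\overline{R^i_\dbas(1/2)}$, which can only happen at the preimage $p'_\dbas$ of $p_\dbas$, forcing $x$ to be an iterated preimage of $p_\dbas$. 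This is entirely self-contained and needs no puzzle machinery. Your Yoccoz-puzzle route is a legitimate alternative and is in fact closer in spirit to what the paper does later for the non-hyperbolic maps $f_a$ and $N_\lambda$ (Lemmas~\ref{l:itidbas}, \ref{l:itifa}, \ref{Newtpuzzle}), where shrinking of pieces genuinely requires work; but for the hyperbolic $f_\dbas$ it imports more than the lemma needs. For part (i) the two proofs are essentially the same: the rotation-number argument at $p_\dbas$ is identical, and your counting argument for preimages is a clean pigeonhole reformulation of the paper's observation that two distinct rays with the same image must share a critical landing point.
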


\begin{proof}
Let $x$ be a point with at least two external rays landing at it. Denote two of them by $R^\infty_\dbas(t)$ and $R^\infty_\dbas(t')$. We can assume that in the smallest interval of $\S^1\setminus \{t,t'\}$ there is no other angle $t''$  such that $R^\infty_\dbas(t'')$ lands at $x$. Denote by $\tau$ this smallest interval and  define the sector  $S$ to be the connected component of
$\C\setminus (\ol{R^\infty_\dbas(t)}\cup\ol{R^\infty_\dbas(t')})$ containing the rays $R^\infty_\dbas(u)$ for $u\in \tau$.
Note that if $S$ contains the rays $R^\infty_\dbas(0)$ it has to contain also $R^\infty_\dbas(1/2)$ and therefore
$\tau$ is not the smallest interval.  Hence, $S$ cannot contain neither $A_2$ nor $A_3$ where this rays land.
 Therefore the image of the sector $S$ is a sector between $3t$ and $3t'$.  If there is no   critical point  in the sector,   the  image of such a sector is still a sector. But since the size of the interval
 is multiplied by $3$ each time, some sector has to contain a critical point. Take the last image of the sector not containing the ray $R^\infty_\dbas(0)$.
 It is a sector containing $R^\infty_\dbas(1/3)$ (or $R^\infty_\dbas(2/3)$). Then the curve $\ol{R^\infty_\dbas(3^Nt)}\cup\ol{R^\infty_\dbas(3^Nt')}$ (bouding this sector) has to cross the curve $\ol{R^\infty_\dbas(0)}\cup\ol{R^\infty_\dbas(1/3)}\cup\ol{R^i_\dbas(0)}\cup\ol{R^i_\dbas(1/2)}$ where $\ol{R^\infty_\dbas(t)}$ is landing at the boundary of $A_i$ (in order to separate the rays). This is possible only at $p'$,  being the landing point of $R_\dbas(1/3)$. Therefore,
 $x$ is an iterated pre-image of $p_\dbas$.

 Now,  any external ray landing at $p=0$ has to be fixed by $f_\dbas$ (because all the rays landing at the same point have the same rotation number see~\cite{GM}). For this reason no other ray than  $R_\dbas(0)$ and  $R_\dbas(1/2)$
 land at $p=0$. By pull back, no points are accessible by more than two rays (otherwise two rays would have the same image and their landing point would be critical).
\end{proof}
\begin{crr}\label{c:biacc-angle}
The biaccessible points are exactly the landing points of external rays with triadic angles {\rm(}{\it i.e.} angle of the form $\displaystyle \frac k{3^m}${\rm)}.
\end{crr}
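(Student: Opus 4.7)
The plan is to apply Lemma~\ref{biaccess}, which identifies the biaccessible points of $J(f_\dbas)$ as exactly the iterated preimages of $p_\dbas=0$. It then suffices to show that a point $x\in J(f_\dbas)$ is an iterated preimage of $p_\dbas$ if and only if at least one external ray with triadic angle $k/3^m$ lands at $x$. Note that $p_\dbas$ itself is the common landing point of the two fixed rays $R^\infty_\dbas(0)$ and $R^\infty_\dbas(1/2)$; only the first of these is triadic in the narrow sense $k/3^m$, so the assertion is not that every ray landing at a biaccessible point has triadic angle, but merely that at each such point at least one triadic ray lands.

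For the direction ``triadic angle $\Rightarrow$ biaccessible landing point'', I would take $t=k/3^m$ and let $x$ denote the landing point of $R^\infty_\dbas(t)$. Since $f_\dbas$ acts on external angles by $s\mapsto 3s$, applying $f_\dbas^m$ sends $x$ to the landing point of $R^\infty_\dbas(3^m t)=R^\infty_\dbas(0)$, namely $p_\dbas$. Thus $x$ is an iterated preimage of $p_\dbas$, and Lemma~\ref{biaccess} gives biaccessibility.

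For the reverse inclusion, I would take a biaccessible $x$ with $f_\dbas^m(x)=p_\dbas$ and pull back the fixed ray $R^\infty_\dbas(0)$ through the branches of $f_\dbas^{-m}$. Because both critical points $\pm i/\sqrt{2}$ of $f_\dbas$ lie in $A_2\cup A_3$ and hence never map to $p_\dbas$, the iterate $f_\dbas^m$ is unramified over $p_\dbas$: there are exactly $3^m$ distinct preimages of $p_\dbas$, and the $3^m$ rays $R^\infty_\dbas(k/3^m)$, $k=0,\dots,3^m-1$, land bijectively at them. Since $x$ is one of these preimages, at least one triadic ray lands at $x$.

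The one step that deserves a bit of care is the ``unramified pullback'' claim underlying this bijection between rays of denominator dividing $3^m$ and $m$-th preimages of $p_\dbas$; this reduces to the fact that the grand orbit of $p_\dbas$ avoids the post-critical set of $f_\dbas$, which is immediate from the containment of the critical orbits in the super-attracting basins $A_2$ and $A_3$.
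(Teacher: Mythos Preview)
Your proof is correct and follows precisely the line of reasoning the paper intends: the corollary is stated without proof, as an immediate consequence of Lemma~\ref{biaccess}, and your argument supplies exactly the missing details. The key step you flag --- that the $3^m$ rays $R^\infty_\dbas(k/3^m)$ land bijectively on the $3^m$ distinct $m$-th preimages of $p_\dbas$ --- is justified by your observation that $f_\dbas^m$ is a local homeomorphism at each such preimage (the critical points being confined to $A_2\cup A_3$), so the preimage of the single germ $R^\infty_\dbas(0)$ at $p_\dbas$ is a single ray germ at each preimage point. Your caveat that only one of the two rays at each biaccessible point need be triadic (e.g.\ $0$ versus $1/2$ at $p_\dbas$) is well observed and consistent with the intended reading of the statement.
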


\begin{figure}[ht]
  \begin{center}
 \includegraphics[scale=0.52]{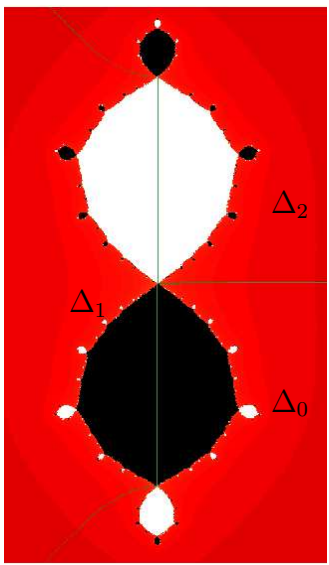}
  \end{center}
  \caption{The set $\Delta_1$ meets both immediate basins $A_2,A_3$. The set  $\Delta_2$ intersects $A_2$, but $\Delta_0$ does not. }
    \label{Partition}
    \vskip -16 em \hskip 8 em  $\Delta_2$
\vskip 2em \hskip -5 em $\Delta_1$
\vskip 2em \hskip 8 em  $\Delta_0$
\vskip 8 em
\end{figure}

\begin{dfn}\label{d:itidbas} Using the B\"ottcher map, we can define the itinerary class  $\epsilon_\dbas(z)$ of a point $z$  in the Julia set  $J(f_\dbas)$ as the set of  itineraries   $\epsilon(-t)\in \Sigma$ where $t\in\{\gamma_\dbas ^{-1}(z)\}$.
\end{dfn}

In particular
\begin{itemize}
 \item  $\epsilon_\dbas(p_\dbas)=\{[\overline 1],[\overline 2]=[\overline 0]\}$\,;
\item for $p'_\dbas=\gamma_\dbas(1/3)$   and $p''_\dbas=\gamma_\dbas(2/3)$ be   the pre-images of  $p_\dbas$. Then  we have    $$\epsilon_\dbas(p'_\dbas)=\{[2\overline 1], [1\overline 0]=[0\overline 2]\} \hbox{ and }\epsilon_\dbas(p''_\dbas)=\{[0\overline 1], [2\overline 0]=[1\overline 2]\}.$$
\end{itemize}

\begin{crr}Let $R_\dbas^\infty(t)$ be   a ray landing at   $z\in J(f_\dbas) $, then $\epsilon(-t) \in \epsilon_\dbas(z)$.
\end{crr}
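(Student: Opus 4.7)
The plan is to observe that the corollary is essentially an unfolding of Definition \ref{d:itidbas}, once one identifies the landing map with the Carath\'eodory boundary extension of the B\"ottcher coordinate at $\infty$.

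First I would recall that $J(f_\dbas)$ is locally connected (as mentioned in Section~1 for the double-basilica), so that the inverse B\"ottcher map $\phi_\dbas^{-1}\colon \hat\C\setminus\ol\D\to\hat\C\setminus K(f_\dbas)$ extends continuously to $\mathbb{S}^1$. By construction the landing map is $\gamma_\dbas(t)=\phi_\dbas^{-1}(e^{2\pi i t})$, and the external ray $R^\infty_\dbas(t)=\phi_\dbas^{-1}(e^{2\pi i t}[1,\infty[)$ lands at $\gamma_\dbas(t)$. Hence the hypothesis that $R^\infty_\dbas(t)$ lands at $z$ is exactly the assertion $\gamma_\dbas(t)=z$, equivalently $t\in\gamma_\dbas^{-1}(z)$.

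Next, I would simply appeal to Definition \ref{d:itidbas}, which declares
\[
\epsilon_\dbas(z)=\bigl\{\epsilon(-s)\in\Sigma\ \big|\ s\in\gamma_\dbas^{-1}(z)\bigr\}.
\]
Taking $s=t$ yields $\epsilon(-t)\in\epsilon_\dbas(z)$, which is the claim.

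There is no real obstacle here: the statement is a direct re-reading of the definition. The only mild subtlety is the handling of triadic angles, where the itinerary is not unique in $\tilde\Sigma$; but this is precisely what the quotient $\Sigma=\tilde\Sigma/{\sim}$ is designed to absorb, so $\epsilon(-t)$ is a well-defined element of $\Sigma$ in every case and the inclusion $\epsilon(-t)\in\epsilon_\dbas(z)$ holds regardless of whether $-t$ is triadic or not.
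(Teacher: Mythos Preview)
Your proposal is correct and matches the paper's treatment: the corollary is stated without proof because it is an immediate unpacking of Definition~\ref{d:itidbas}, exactly as you describe. There is nothing more to add.
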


Now we define a partition of $\C$ related to the triadic partition in the basin of $\infty$.

\begin{dfn} Let  $\Gamma_\dbas$ be the following graph\,:
$$\Gamma_\dbas=\overline{R_\dbas^\infty(0)}\cup \overline{R_\dbas^\infty(1/3)}\cup \overline{R_\dbas^\infty(2/3)}\cup\overline{ R^2_\infty(0) }\cup \overline{R^2_\infty(1/2)} \cup \overline{R^3_\infty(0) }\cup \overline{R^3_\infty(1/2)}. $$
It  cuts the sphere $\overline \C$ in three open connected components. Let  $\Delta^\dbas_1$ be the  component which  intersects both $A_2$ and $A_3$.
Denote by $\Delta^\dbas_0$ the component which  intersects only  $A_3$ and  $\Delta^\dbas_2$  the one that intersects only $A_2$.
\end{dfn}

This partition allows us  visualize the  itinerary classes of the points in  $J(f_\dbas)$. Some points of $J(f_\dbas)$ belong to the closure of   more than one  component $\Delta^\dbas_i$.

Note that any component of  $f^{-1}(\Delta^\dbas_{i})$ belongs to exactly one $ \Delta^\dbas_{j}$, since $\Gamma_\dbas$ is forward invariant.
Hence the intersection $f^{-1}(\Delta^\dbas_{i})\cap \Delta^\dbas_{j}$ just determines the component.

\begin{dfn} For any sequence $(\epsilon_i)_{i\in \N}\in \{0,1,2\}^\N$   we define $\Delta^\dbas_{\epsilon_0\ldots \epsilon_n}$  by the relation $$\Delta^\dbas_{\epsilon_0\ldots \epsilon_n} = f_{\dbas}^{-n}(  \Delta^\dbas_{\epsilon_n})\cap\Delta^{\dbas}_{\epsilon_{0} \ldots \epsilon_{n-1}}.$$
\end{dfn}

\begin{lem} \label{itin-dbas}
For any  point $z\in J(f_\dbas) $  we have $$[(\epsilon_i)_{i\in \mathbf N}]\in \epsilon_\dbas(z)
\iff z\in \bigcap_{n\in \mathbf N} \overline{\Delta^\dbas_{\epsilon_0\ldots \epsilon_n}}.$$
 \end{lem}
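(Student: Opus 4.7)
The plan is to translate the itinerary data, which is defined through external rays in the basin of~$\infty$, into the cell data coming from the partition, and then invoke the Carath\'eodory extension $\gamma_\dbas$ to conclude. The engine is the following dictionary: for $t\notin\{0,1/3,2/3\}$, the external ray $R^\infty_\dbas(t)$ is contained in $\Delta^\dbas_j$ precisely when $-t\in I_j$, i.e.\ when the first digit of $\epsilon(-t)$ is $j$. This is a geometric observation: the graph $\Gamma_\dbas$ --- consisting of the three fixed external rays $R^\infty_\dbas(0), R^\infty_\dbas(1/3), R^\infty_\dbas(2/3)$ together with the four internal rays in $A_2$ and $A_3$ --- subdivides the sphere into the three cells $\Delta^\dbas_0, \Delta^\dbas_1, \Delta^\dbas_2$, and the indices were chosen to respect the triadic intervals after the negation $t\mapsto -t$. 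A straightforward induction on $n$, using $f_\dbas(R^\infty_\dbas(t))=R^\infty_\dbas(3t)$ and the pullback definition $\Delta^\dbas_{\epsilon_0\cdots\epsilon_n}=f_\dbas^{-n}(\Delta^\dbas_{\epsilon_n})\cap \Delta^\dbas_{\epsilon_0\cdots\epsilon_{n-1}}$, then upgrades this base case to: $R^\infty_\dbas(t)\subset \Delta^\dbas_{\epsilon_0\cdots\epsilon_n}$ if and only if $\epsilon(-t)$ begins with $\epsilon_0,\ldots,\epsilon_n$.

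With this dictionary in hand, the forward direction is immediate. If $[(\epsilon_i)]\in\epsilon_\dbas(z)$, I would choose $t\in\gamma_\dbas^{-1}(z)$ with $\epsilon(-t)=[(\epsilon_i)]$; the ray $R^\infty_\dbas(t)$ is contained in every $\Delta^\dbas_{\epsilon_0\cdots\epsilon_n}$ and lands at $z$, hence $z\in\overline{\Delta^\dbas_{\epsilon_0\cdots\epsilon_n}}$ for all~$n$.

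The reverse direction is the longer half. Given $z\in \bigcap_n \overline{\Delta^\dbas_{\epsilon_0\cdots\epsilon_n}}$, I would first note that, thanks to Lemma~\ref{biaccess}, the set $\gamma_\dbas^{-1}(z)$ consists of at most two angles. Next I would verify that $\overline{\Delta^\dbas_{\epsilon_0\cdots\epsilon_n}}\cap J(f_\dbas)$ coincides exactly with the landing set of those rays $R^\infty_\dbas(t)$ whose angle $-t$ admits $\epsilon_0\cdots\epsilon_n$ as a prefix of some representative in $\tilde\Sigma$ of its class in $\Sigma$ --- the boundary rays of the cell corresponding precisely to the triadic identifications. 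Hence for each $n$ some $t_n\in\gamma_\dbas^{-1}(z)$ satisfies this prefix condition; by finiteness of $\gamma_\dbas^{-1}(z)$ and nestedness of the prefix conditions, a single $t\in\gamma_\dbas^{-1}(z)$ works for all $n$. Therefore $\epsilon(-t)=[(\epsilon_i)]$, which shows $[(\epsilon_i)]\in\epsilon_\dbas(z)$.

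The part I expect to be the most delicate is the careful handling of the biaccessible points, that is, the iterated preimages of $p_\dbas$, which sit on the common boundary of two partition cells. At such a point the two distinct landing rays contribute a priori two different sequences to $\epsilon_\dbas(z)$, and $z$ belongs simultaneously to the closures of several $\Delta^\dbas_{\epsilon_0\cdots\epsilon_n}$-cells. The technical content is to verify that the triadic identifications built into $\Sigma$ --- such as $\epsilon_0\cdots\epsilon_n 1\overline 0\sim \epsilon_0\cdots\epsilon_n 0\overline 2$ --- are exactly what is needed for the boundary case of the dictionary, so that nothing is over- or under-counted on either side of the equivalence claimed in the lemma.
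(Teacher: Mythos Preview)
Your forward direction is exactly the paper's argument: one shows by induction that the landing ray $R^\infty_\dbas(t)$ with $\epsilon(-t)=[(\epsilon_i)]$ sits inside every $\overline{\Delta^\dbas_{\epsilon_0\cdots\epsilon_n}}$, using $f_\dbas(R^\infty_\dbas(t))=R^\infty_\dbas(3t)$ and the properness identity $\overline{\Delta^\dbas_{\epsilon_0\cdots\epsilon_n}}=f_\dbas^{-n}(\overline{\Delta^\dbas_{\epsilon_n}})\cap\overline{\Delta^\dbas_{\epsilon_0\cdots\epsilon_{n-1}}}$.

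For the reverse direction your route genuinely diverges from the paper. The paper's written proof of this lemma in fact only establishes the implication $\Rightarrow$; the converse is effectively obtained afterwards in Lemma~\ref{l:itidbas}, where hyperbolicity of $f_\dbas$ forces the puzzle pieces $\overline{P^\dbas_{\epsilon_0\cdots\epsilon_n}}$ to shrink to a single point, and then $\Rightarrow$ applied to the ray with itinerary $(\epsilon_i)$ pins down that point. Your argument instead stays at the level of the partition and uses the combinatorial input from Lemma~\ref{biaccess}: since $\gamma_\dbas^{-1}(z)$ is finite (at most two angles, each with at most two $\tilde\Sigma$-representatives), a pigeonhole over the nested prefix conditions produces a single angle $t\in\gamma_\dbas^{-1}(z)$ whose itinerary is $[(\epsilon_i)]$. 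This is correct and more elementary --- it avoids invoking the hyperbolic metric --- at the price of having to check carefully that $\overline{\Delta^\dbas_{\epsilon_0\cdots\epsilon_n}}\cap J(f_\dbas)$ is precisely the landing set you describe, including the boundary points on preimages of the internal rays (which, as you note, are exactly the iterated preimages of $p_\dbas$ and are handled by the identifications in $\Sigma$). The paper's approach, by contrast, bypasses that boundary bookkeeping entirely by appealing to shrinking.
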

 \proof  Since $f_\dbas$ is proper then $$\overline{\Delta^\dbas_{\epsilon_0\ldots \epsilon_n} }= f_{\dbas}^{-n}(\overline{  \Delta^\dbas_{\epsilon_n}})\cap\overline{\Delta^{\dbas}_{\epsilon_{0} \ldots \epsilon_{n-1}}}.$$

  Now  from defintion we have the equivalence  that $[ (\epsilon_i)_{i\in \mathbf N}] \in \epsilon_\dbas(z) $ if and only if $  R_\dbas^\infty(t) \hbox{ lands at } z$ for $t=-\theta(\epsilon)$ so that $\epsilon(-t)=\{\epsilon_i\}_i$.  Then the ray $f_\dbas^j(R_\dbas^\infty(t))\subset\ol{ \Delta^\dbas_{\epsilon_j}}$ for every $j\ge 0$. We prove  by induction that $R_\dbas^\infty(t)\subset \overline{\Delta^{\dbas}_{\epsilon_{0} \ldots \epsilon_{n}}}$. This is clear for $n=0$. Assume that it is true for  some $n$, that is $R_\dbas^\infty(t)\subset \overline{\Delta^{\dbas}_{\epsilon_{0} \ldots \epsilon_{n}}}$. Since  $f_\dbas^{n+1}(R_\dbas^\infty(t))\subset\ol{ \Delta^\dbas_{\epsilon_{n+1}}}$,   then $R_\dbas^\infty(t))\subset f_{\dbas}^{-(n+1)}(\overline{  \Delta^\dbas_{\epsilon_{n+1}}}) $. Therefore    $R_\dbas^\infty(t)\subset \overline{\Delta^{\dbas}_{\epsilon_{0} \ldots \epsilon_{n+1}}}$
  since
  $\overline{\Delta^\dbas_{\epsilon_0\ldots \epsilon_{n+1}} }= f_{\dbas}^{-(n+1)}(\overline{  \Delta^\dbas_{\epsilon_{n+1}}})\cap\overline{\Delta^{\dbas}_{\epsilon_{0} \ldots \epsilon_{n}}}.$
  Finally we get that
  $$
   z\in \bigcap_{n\in \mathbf N} \overline{\Delta^\dbas_{\epsilon_0\ldots \epsilon_n}}.$$  \endproof

 We now define {\it puzzle pieces}.

 \begin{dfn} Let   $R\in ]0,1[$ and define $V_0$ to  be  the connected component  containing $J(f_\dbas)$ of $$\C\setminus (E^\infty_\dbas(R)\cup E^2_\dbas(R)
\cup E^3_\dbas(R))$$ (the  complement of the equipotentials of level $R$) and denote by   $V_n$ the preimage $f_\dbas^{-n} (V_0)$.
A {\it puzzle piece of level $n$ }  with itinerary $\epsilon_\dbas=\{\epsilon_i\}_{i=0}^\infty$ is  the set
 $$P^\dbas_{\epsilon_0 \dots \epsilon_n}= V_n\cap \Delta^\dbas_{\epsilon_0\cdots \epsilon_n}.$$
\end{dfn}

\begin{rmk}
Any point
$z\in J(f_\dbas)\setminus \cup f_\dbas^{-n}(p_\dbas)$ belongs to a unique  nested sequence  $(P^\dbas_{\epsilon_0 \dots \epsilon_n})$ whereas  points in $ \bigcup_{n\ge } f_\dbas^{-n}(p_\dbas)$  belong to a finite number of nested sequences  $(\overline {P^\dbas_{\epsilon_0 \dots \epsilon_n}})$
\end{rmk}

\begin{lem}\label{l:itidbas} For any sequence   $\{\epsilon_i\}_{i=0}^\infty$  in $\{0,1,2\}^\N$, the intersection
 $$\bigcap_{n\in \N}\overline{P^\dbas_{\epsilon_0 \dots \epsilon_n}}$$
 reduces to one point.  Moreover, for this point $z$  we have  $[\{\epsilon_i\}_{i=0}^\infty]\in \epsilon_\dbas(z)$.
 \end{lem}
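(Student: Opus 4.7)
Since $(V_n)$ and $(\Delta^\dbas_{\epsilon_0\dots\epsilon_n})$ are decreasing in $n$, the sequence of closed puzzle pieces is decreasing as well, and my plan is to pin down the nested intersection $K := \bigcap_{n\in\N} \overline{P^\dbas_{\epsilon_0 \dots \epsilon_n}}$ by first establishing that $K$ is nonempty, then that $K \subset J(f_\dbas)$, and finally invoking Lemma~\ref{itin-dbas} together with the bijection $\theta \colon \Sigma \to \R/\Z$ of Definition~\ref{d:itangle}.

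For nonemptiness I would exhibit a candidate point. The class $[\{\epsilon_i\}_{i=0}^\infty]$ corresponds, via $\theta$, to the unique angle $t := -\theta([\{\epsilon_i\}_{i=0}^\infty]) \in \R/\Z$, and local connectedness of $J(f_\dbas)$ yields the landing point $z := \gamma_\dbas(t) \in J(f_\dbas)$. By Lemma~\ref{itin-dbas} this $z$ belongs to $\overline{\Delta^\dbas_{\epsilon_0 \dots \epsilon_n}}$ for every $n$, and forward invariance of $J(f_\dbas)$ under $f_\dbas$ puts $z$ in $V_n$ for every $n$. Since $V_n$ is an open neighborhood of $z$, one gets $z \in \overline{V_n \cap \Delta^\dbas_{\epsilon_0 \dots \epsilon_n}} = \overline{P^\dbas_{\epsilon_0 \dots \epsilon_n}}$, hence $z \in K$.

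The geometric step is to prove $\bigcap_n \overline{V_n} = J(f_\dbas)$, which at once gives $K \subset J(f_\dbas)$. I would argue basin by basin using the B\"ottcher conjugacy: a point $w \in A_i$ (with $i \in \{2,3,\infty\}$) of B\"ottcher modulus $r \in (0,1)$ lies in $\overline{V_n}$ precisely when $r \ge R^{1/d_i^n}$, where $d_i = 2$ for $i = 2, 3$ and $d_\infty = 3$. Since $R < 1$ the thresholds $R^{1/d_i^n}$ tend to $1$, so $w$ eventually leaves $\overline{V_n}$. Knowing $K \subset J(f_\dbas)$, any $w \in K$ also lies in $\bigcap_n \overline{\Delta^\dbas_{\epsilon_0 \dots \epsilon_n}}$, so Lemma~\ref{itin-dbas} gives $[\{\epsilon_i\}_{i=0}^\infty] \in \epsilon_\dbas(w)$; this already proves the ``moreover'' assertion, and injectivity of $\epsilon \colon \R/\Z \to \Sigma$ then forces $w = \gamma_\dbas(t) = z$, so $K = \{z\}$.

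The step I anticipate as the main obstacle is the shrinking statement $\bigcap_n \overline{V_n} = J(f_\dbas)$: though geometrically unsurprising, it has to be verified in all three superattracting basins with their differing local degrees, and one must take a little care at the equipotential boundaries. Once this geometric input is in place, everything else is a direct repackaging of Lemma~\ref{itin-dbas} and the $\theta \leftrightarrow \epsilon$ bijection.
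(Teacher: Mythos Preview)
Your argument is correct but takes a different route from the paper. The paper dispatches the lemma in two lines by invoking hyperbolicity: since every critical point of $f_\dbas$ is a super-attracting fixed point, $f_\dbas$ is hyperbolic, so there is an expanding metric near $J(f_\dbas)$ and nested puzzle pieces shrink exponentially; the itinerary statement then follows from Lemma~\ref{itin-dbas}.

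Your approach is combinatorial rather than metric: you identify the intersection as $\{\gamma_\dbas(t)\}$ by recovering the angle $t$ from the itinerary via the bijection $\theta$, and then use Lemma~\ref{itin-dbas} together with $\bigcap_n \overline{V_n} = J(f_\dbas)$ to see that any point of $K$ must land at that same angle. This avoids any metric input. One small omission: your verification of $\bigcap_n \overline{V_n} = J(f_\dbas)$ treats only the immediate basins $A_2, A_3, A_\infty$; you should add that every Fatou point is eventually mapped into one of these, which immediately extends the argument. What the paper's approach buys is an actual diameter estimate and, more importantly, a template that it later adapts (with substantially more work) to the non-hyperbolic settings of $f_a$ and $N_\lambda$ in Lemmas~\ref{l:itifa} and~\ref{Newtpuzzle}, where for $N_\lambda$ there is no external landing map and one genuinely needs shrinking from other sources. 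What your approach buys is a self-contained proof entirely within the itinerary framework already built, with no appeal to hyperbolic geometry.
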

\proof
Since $f_\dbas$ is a hyperbolic polynomial, there is an expanding hyperbolic metric on a neighbourhood of the Julia set. This implies that
puzzle pieces shrink to points exponentially fast. The last statement follows from Lemma \ref{itin-dbas}. 
\endproof

\subsection{The cubic family }

We now consider the following family of cubic polynomials: $$f_a(z)= z^2(z+3a/2) \text{ with } a\in \C.$$
There are two finite critical points, $0$ and $-a$. Since $0$ is fixed, denote by $A_1$ the Fatou component containing $0$ (the immediate basin of attraction of $0$). Note that $A_1$ depends on $a$. Recall that when $-a\notin A_1$ then there is a  (unique) B\"ottcher coordinate $\phi_a: A_1 \to \D$. Let $R_a^1(t)$ be the rays in $A_1$ of angle $t$ and $E_a^1(v)$ the equipotential of level $v$ in $A_1$. Let $A_\infty$ be the unbounded Fatou component. If $-a \notin A_\infty$ we similarly can use the B\"ottcher  map $\phi_a^\infty:A_\infty \rightarrow \D$ to define external rays and equipotentials. In this case let $R_a^\infty (t)$ be the external ray of angle $t$ and $E_a^\infty(v)$ be the equipotential of level $v$ in $A_\infty$.
We assume that the Julia set $J(f_a)$ is locally connected so that the inverse of the B\"ottcher maps   $\phi_a^{-1}:\D\to  A_1 $ and  $\phi^\infty _a:   \D\to A_\infty$ extends continusously  to the circle and define maps  $\delta_a:\mathbf S^1\to \partial A_1$ and $\gamma_a:\mathbf S^1\to \partial A_\infty$.

\begin{lem}\label{real} Suppose that $-a \notin A_1 \cup A_\infty$. For $a>0$  the rays   $R_a^{\infty}(0)$ and  $R_a^{0}(0)$ land at the same point $\delta_a(0)$.
\end{lem}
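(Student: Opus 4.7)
First I would exploit the real symmetry of $f_a$. Since $a>0$, the polynomial $f_a(z)=z^2(z+3a/2)$ has real coefficients and commutes with $z\mapsto\bar z$, so $A_1$ and $A_\infty$ are both invariant under complex conjugation. The B\"ottcher map at infinity is normalised to be tangent to the identity there, and $\phi_a\colon A_1\to\D$ is the unique solution of $\phi_a\circ f_a=(\phi_a)^2$ with $\phi_a(0)=0$; a short local computation gives $\phi_a(z)=\tfrac{3a}{2}z+O(z^2)$. Both normalisations are preserved by the involution $\phi\mapsto\bigl(z\mapsto\overline{\phi(\bar z)}\bigr)$, so uniqueness forces $\phi_a(\bar z)=\overline{\phi_a(z)}$ and $\phi_a^\infty(\bar z)=\overline{\phi_a^\infty(z)}$. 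In particular, each of the rays $R_a^1(0)$ and $R_a^\infty(0)$ is contained in the real axis.

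Next I would analyse the real-line dynamics of $f_a$. The equation $f_a(x)-x=x\bigl(x^2+\tfrac{3a}{2}x-1\bigr)=0$ has a unique positive root $\alpha$, and one has $f_a'(x)=3x(x+a)>0$ on $(0,\infty)$, $f_a(x)<x$ on $(0,\alpha)$, and $f_a(x)>x$ on $(\alpha,\infty)$. Hence iterates of any point in $(0,\alpha)$ decrease monotonically to $0$, while iterates of any point in $(\alpha,\infty)$ escape monotonically to $+\infty$. This shows $(0,\alpha)\subset A_1$ and $(\alpha,+\infty)\subset A_\infty$, with the common boundary point $\alpha\in\partial A_1\cap\partial A_\infty$.

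Finally I would match the two rays to these real intervals. Because $\phi_a$ is real-symmetric with $\phi_a'(0)>0$, the preimage $\phi_a^{-1}([0,1))$ is a real-analytic simple arc issuing from $0$ into $\R_{>0}\cap A_1$. Since $(0,\alpha)$ is the connected component of $A_1\cap\R_{>0}$ whose closure contains $0$, this arc must coincide with $(0,\alpha)$; hence $R_a^1(0)=(0,\alpha)$ lands at $\alpha$, so $\delta_a(0)=\alpha$. The same argument applied to the real-symmetric B\"ottcher coordinate at infinity (which is positive on $\R_{>0}$ near $\infty$ by the identity-tangent normalisation) identifies $R_a^\infty(0)$ with $(\alpha,+\infty)$, with landing point $\gamma_a(0)=\alpha$. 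Both rays therefore land at the common point $\delta_a(0)=\alpha$.

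The only subtle point will be the normalisation bookkeeping: one must verify that the positive radius in each B\"ottcher disc corresponds to the \emph{positive} real tangent direction at the relevant fixed point, so that $R_a^1(0)$ and $R_a^\infty(0)$ lie on the positive real axis rather than on a rotate of it. This is pinned down by the positivity of the coefficient $3a/2$ of $z^2$ in $f_a$ (making $\phi_a'(0)>0$) together with the identity-tangent normalisation at infinity; everything else is elementary real one-dimensional dynamics.
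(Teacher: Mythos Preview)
Your proof is correct and follows essentially the same route as the paper: exploit that $f_a$ is a real polynomial for $a>0$, analyse the one-dimensional dynamics on $\R_{>0}$ to find the positive repelling fixed point $\alpha$ with $(0,\alpha)\subset A_1$ and $(\alpha,\infty)\subset A_\infty$, and then use the B\"ottcher normalisations to identify the two zero-angle rays with these real intervals. The paper's version is terser---it simply asserts that the identity-tangent normalisation at infinity forces $R_a^\infty(0)=[q_a,+\infty)$ and then argues that $q_a$ is the unique fixed point on $\partial A_1$---whereas you work out the real symmetry of both B\"ottcher coordinates explicitly and pin down the sign of $\phi_a'(0)=\tfrac{3a}{2}$, which makes the orientation bookkeeping completely transparent.
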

\begin{proof}If we take a real $a>0$, the map is real an can be easily studied.  There are three fixed points; $z=0$, $z=q_a > 0$ and $z=q_a' < 0$. The intervals $[q_a,+\infty)$ and $(-\infty,q_a']$ are fixed by $f_a$. Since $\phi^\infty_a$ is tangent to the identity at $\infty$ it follows that $R_a^{\infty}(0)=[q_a,+\infty)$, $R_a^{\infty}(1/2)=(-\infty,q_a']$. Moreover, the map $f_a(x)-x$ changes signs between the points $q_a',0,q_a'$. This implies that $f_a(x) < x$ for $x \in (0, q_a)$. Since $a > 0$ and $x > 0$ $f_a(x) > 0$ and hence every point in $(0,q_a)$ converges to zero under iteration. Hence $(0,q_a) \subset A_1$. Since $-a \notin A_1$, $q_a$ is the only fixed point on the boundary of $A_1$ and therefore $R_a^\infty(0)$ lands at $q_a$.
\end{proof}

\begin{crr}\label{correal} For $a\notin A_1 \cup A_\infty$ with $\Re e(a)>0$, the rays  $R_a^{\infty}(0)$ and $R_a^{0}(0)$ land at the same point $\delta_a(0)$.
\end{crr}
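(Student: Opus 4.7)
The plan is to upgrade Lemma \ref{real} from real positive $a$ to all $a$ in the right half-plane satisfying $-a\notin A_1\cup A_\infty$, via a continuity argument in the parameter. Since the angle $0$ is fixed under multiplication by $3$, both rays $R_a^\infty(0)$ and $R_a^0(0)$ are $f_a$-invariant curves, and by the Douady--Hubbard theorem on landing of rational rays each lands at a repelling or parabolic fixed point of $f_a$. The finite fixed points of $f_a$ are $0$ (attracting, hence not accessible from the Julia set by either ray) together with the two roots of $z^2+\tfrac{3a}{2}z-1=0$. The discriminant $\tfrac{9}{4}a^2+4$ does not vanish on $\{\Re e(a)>0\}$, so these two roots can be followed as globally defined holomorphic functions $q_a,q_a'$ on the right half-plane, with $q_a>0>q_a'$ when $a$ is real positive.

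Next I would show that the landing points of $R_a^\infty(0)$ and $R_a^0(0)$ depend continuously on $a$ throughout $\Omega:=\{a:\Re e(a)>0,\;-a\notin A_1\cup A_\infty\}$. Both B\"ottcher coordinates $\phi_a^\infty$ and $\phi_a$ are holomorphic in $a$ wherever they are defined, so each ray deforms continuously on compact subsets of its Fatou component; because the landing point lies in the finite set $\{q_a,q_a'\}$, and $q_a, q_a'$ move holomorphically without colliding on $\{\Re e(a)>0\}$, the landing point is locally constant in the labelling. Combining this with Lemma \ref{real}, which identifies the common landing point as $q_a$ on the positive real axis, yields the conclusion on the connected component of $\Omega$ meeting $(0,+\infty)$. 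One then propagates the conclusion to every $a\in\Omega$ by joining $a$ to the positive real axis by a path in $\Omega$, using that $q_a\neq q_a'$ everywhere on the right half-plane to rule out a switch of labelling.

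The main obstacle I foresee is the continuity of ray landings at non-hyperbolic parameters, in particular when $q_a$ becomes parabolic, together with checking path-connectedness of $\Omega$. For the former, one approximates from within hyperbolic components, where the landing is manifestly continuous by holomorphic motion, and passes to the limit using stability of the repelling (or parabolic) fixed point at $q_a$ and uniform convergence of the B\"ottcher coordinates on compacta. For the latter, the excluded set $\{-a\in A_1\}$ is contained in the hyperbolic set $\HH$ and does not separate the right half-plane, so every $a\in\Omega$ may indeed be connected to the real axis by a path lying in $\Omega$, which completes the argument.
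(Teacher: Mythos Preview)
Your approach is essentially the paper's: both argue via stability of the landing of the fixed rays, together with the observation that the two non-zero fixed points of $f_a$ never collide on $\{\Re e(a)>0\}$ (your discriminant computation is equivalent to the paper's remark that the only parameters with a double fixed point are $a=\pm 4i/3$), and then propagate from the real base case of Lemma~\ref{real}. Note that your ``main obstacle'' concerning parabolic parameters simply does not arise: your own discriminant computation already shows there are no parabolic fixed points of multiplier~$1$ in the right half-plane, so the Douady--Hubbard stability criterion (cited by the paper from \cite{DH1}) applies throughout and no approximation from hyperbolic components is needed.
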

\begin{proof} We want to prove that in $(A_1 \cup A_\infty)^c$, $\Re e(a) > 0$, the closure of the dynamical rays $R_a^{\infty}(0)$ and $R_a^{0}(0)$ are stable.  Rays are stable as long  as their closure do no meet neither an iterated pre-image of  the critical point nor of a parabolic point (see~\cite{DH1}).The rays $R_a^{\infty}(0)$ and $R_a^{0}(0)$ both land at a fixed points, which obviously are not critical. Assume now that such a fixed point is parabolic. Then it must have multiplier equal to $1$. The only parameter $a$ such that $f_a$ has a double fixed point is equal to $\pm 4i/3$. But since $\Re e(a) > 0$ this cannot happen.

By the above lemma the rays $R_a^{\infty}(0)$ and $R_a^{0}(0)$ land at the common point $\delta_a(0)$ for $a > 0$, $a\notin A_1 \cup A_\infty$. Hence they have to do that throughout $(A_1 \cup A_\infty)^c$ for $\Re e(a) > 0$.
\end{proof}

Define the {\em filled Julia set} by $K(f_a) = \overline{\C} \setminus A_\infty$. The following proposition comes from ~\cite{RoeschENS}.

\begin{prop} Without assuming  the local connectivity of $J(f_a)$ we have that
if $-a\notin  A_1\cup A_\infty$ then, $\partial A_1$ is a Jordan curve. Therefore we can use the  parameterization by the extension $\delta_a(t)=\phi_a^{-1}(e^{2i\pi t })$.

Moreover, let $L$ be a non empty connected component of $K(f_a) \setminus \overline{A_1}$. Then $\overline{L} \cap \overline{A_1}$ is only one point.
\end{prop}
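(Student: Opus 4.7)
\medskip
\noindent\textbf{Proof plan.}
My plan is to follow the Roesch--Yin puzzle strategy, in the spirit of the proof already used for the double basilica but now with only one fixed critical point in $A_1$ and a free critical point $-a$ lying outside $A_1\cup A_\infty$. The first step is to locate a fixed point $\beta$ common to $\partial A_1$ and $\partial A_\infty$. The internal ray $R^1_a(0)$ is fixed by $f_a$ (since $f_a|_{A_1}$ is conjugated to $z\mapsto z^2$ by the B\"ottcher map and $0$ is the only fixed angle), so it lands at a fixed point of $\partial A_1$; similarly $R^\infty_a(0)$ and $R^\infty_a(1/2)$ are fixed external rays landing at fixed points of $\partial A_\infty$. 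A continuity/pull‑back argument along the complement of $A_1\cup A_\infty$ (extending the real-parameter computation of Lemma \ref{real} by stability of rays, exactly as in Corollary \ref{correal}, using that the only parameter where a fixed point becomes parabolic with multiplier $1$ is $a=\pm 4i/3$) yields a single fixed point $\beta$ at which $R^1_a(0)$ and one of the external fixed rays co-land.

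The next step is to build an equivariant graph $\Gamma_0 = \overline{R^\infty_a(0)}\cup \overline{R^\infty_a(1/2)}\cup\overline{R^1_a(0)}\cup E^\infty_a(R)\cup E^1_a(R)$ for a fixed equipotential level $R\in (0,1)$, and puzzle pieces $P_n(z)$ of depth $n$ as connected components of $\C\setminus f_a^{-n}(\Gamma_0)$. By construction each $z\in\partial A_1$ belongs to a nested sequence $(P_n(z))_{n\ge 0}$. The main point to establish is the \emph{shrinking} of these pieces:
\[
\bigcap_{n\ge 0}\overline{P_n(z)}=\{z\}\qquad\text{for every } z\in\partial A_1.
\]
This is where the proof does its work. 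For those $z$ whose forward orbit never visits a definite neighborhood of the free critical point $-a$, shrinking follows from the expansion of the hyperbolic metric on $\C\setminus(\overline{A_\infty}\cup\overline{A_1})$ (a classical Fatou-style argument). The hard case—and the main obstacle—is when the orbit of $z$ accumulates at $-a$: here one runs a Branner--Hubbard--Yoccoz tableau argument on the critical piece $P_n(-a)$. Because $-a\notin A_1\cup A_\infty$, the critical tableau has controlled combinatorics and the nested annuli $P_n(-a)\setminus\overline{P_{n+1}(-a)}$ whose moduli sum to $+\infty$ provide shrinking along the critical tower; univalent pull-backs then transport the shrinking to every other point. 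This is exactly the content of the technical lemmas of \cite{RoeschENS} which this proposition quotes.

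Once the pieces shrink, $\partial A_1$ is locally connected at each point, so $\phi_a^{-1}\colon\D\to A_1$ extends continuously to $\overline\D$ as a surjection $\delta_a\colon \S^1\to\partial A_1$. Injectivity of $\delta_a$ is now automatic: if two angles $t\ne t'$ had $\delta_a(t)=\delta_a(t')=z$, the two internal rays $R^1_a(t),R^1_a(t')$ would separate $\partial A_1$ into two arcs each meeting infinitely many distinct puzzle pieces $P_n(z)$, contradicting $\bigcap_n\overline{P_n(z)}=\{z\}$. Hence $\partial A_1$ is a Jordan curve. Finally, let $L$ be a connected component of $K(f_a)\setminus\overline{A_1}$ and suppose, for contradiction, that two distinct points $z_1,z_2\in\overline{L}\cap\overline{A_1}$ exist. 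Pick the internal rays $R^1_a(t_i)$ with $\delta_a(t_i)=z_i$; each such ray together with its landing point cuts the sphere in two open regions. Connectedness of $L$ forces $L$ to be contained in one of the two sectors cut by $\overline{R^1_a(t_1)}\cup\overline{R^1_a(t_2)}$, and the Jordan arc $\overline{R^1_a(t_1)}\cup\overline{R^1_a(t_2)}$ together with a path in $L$ joining $z_1$ to $z_2$ bounds a topological disk whose interior must meet $A_1$; but the puzzle structure (pulled back from the base piece containing $\beta$) shows that the only way $L$ can touch $\partial A_1$ at two points is if these points are pre-periodic pinch points arising from the single postcritical pinching of $\beta$. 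A pull-back argument then reduces the claim to the base level, where $\overline{L}\cap\overline{A_1}=\{\beta\}$ is a single point, completing the proof.
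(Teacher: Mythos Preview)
The paper does not actually prove this proposition: the sentence immediately preceding it reads ``The following proposition comes from~\cite{RoeschENS}'', and no argument is supplied. Your proposal is, in outline, a sketch of the proof in that reference---build a Yoccoz-type puzzle adapted to $A_1$, establish shrinking of nests via a Branner--Hubbard--Yoccoz tableau/moduli argument on the critical tower, deduce local connectivity of $\partial A_1$, upgrade to a Jordan curve, and then control how limbs attach. So you are not offering an alternative to the paper's route; you are filling in what the paper outsources to \cite{RoeschENS}.

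Two parts of your sketch are underspecified. For injectivity of $\delta_a$, the sentence ``the two internal rays $R^1_a(t),R^1_a(t')$ would separate $\partial A_1$ into two arcs each meeting infinitely many distinct puzzle pieces $P_n(z)$, contradicting $\bigcap_n\overline{P_n(z)}=\{z\}$'' is not quite an argument. The clean way is: if $z\notin\bigcup_n f_a^{-n}(\beta)$, then $z$ lies in the interior of every $P_n(z)$ and $P_n(z)\cap A_1$ is a single dyadic sector between consecutive rays of angle $k/2^n,(k+1)/2^n$; both $R^1_a(t)$ and $R^1_a(t')$ must enter this sector near $z$, forcing $t,t'$ into the same dyadic interval of length $2^{-n}$ for every $n$, hence $t=t'$. (Preimages of $\beta$ are handled by the expanding semiconjugacy $\delta_a(2s)=f_a(\delta_a(s))$: a nondegenerate fiber would double to cover $S^1$.)

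Your final paragraph on single-point attachment is the weakest: the claim that two attachment points must be ``pre-periodic pinch points arising from the single postcritical pinching of $\beta$'' and the subsequent ``reduction to the base level'' are assertions, not arguments. A direct route (closer to \cite{RoeschENS}) is: if $\overline L$ met $\partial A_1$ at $z_1\ne z_2$, take an arc $\alpha\subset\overline L$ from $z_1$ to $z_2$ and one of the two subarcs $\gamma\subset\partial A_1$ joining them; the Jordan curve $\alpha\cup\gamma$ bounds a disk $D$ disjoint from $A_1$ with $\partial D\subset K(f_a)$, hence $D\subset K(f_a)\setminus\overline{A_1}$ and therefore $D\subset L$, which forces $\gamma\subset\overline L$. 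Iterating gives $\partial A_1\subset\overline L$, contradicting that $\partial A_1\subset\partial A_\infty$ is accessible from $A_\infty$ while $L$ is a bounded component of $K(f_a)\setminus\overline{A_1}$ whose closure cannot contain all of $\partial A_1$. Your reduction-to-$\beta$ idea can be made to work, but as written it is not a proof.
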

\begin{dfn}
For $t \in \R/\Z$, if $\delta_a(t)$ belongs to the closure of a connected component of $K(f_a) \setminus \overline A_1$, we call this closed connected component $L_t^a$, otherwise we define $L_t^a$ to be $\delta_a(t)$.
In other words,
$$K(f_a)=\overline{ A_1}\cup\bigsqcup_{t\in \R/\Z} L_t^a.$$
We call $t_0$ the {\em critical angle} if $-a \in L_{t_0}^a$.
\end{dfn}

\begin{prop}
If $-a \notin A_1 \cup A_\infty$ then the set $L_t^a$ is not empty if and only if $2^n t = t_0 (\mod 1)$ , for some $n \geq 0$.
\end{prop}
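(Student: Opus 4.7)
The plan is to analyze how $f_a$ permutes the limbs $\{L_t^a\}$, using the identity $f_a(\delta_a(t)) = \delta_a(2t)$ on $\partial A_1$.

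For the direction $2^n t = t_0 \Rightarrow L_t^a$ is non-trivial, I would argue by pull-back. By definition $L_{t_0}^a$ contains the free critical point $-a$, and non-triviality of $L_{t_0}^a$ follows since $-a \notin \overline{A_1}$ (the degenerate sub-case $-a \in \partial A_1$ being vacuous). For $n \ge 1$ with $2^n t = t_0$, $\delta_a(t)$ is a preimage of $\delta_a(t_0)$ under $f_a^n$; lifting a path in $L_{t_0}^a$ from $\delta_a(t_0)$ to $-a$ via the appropriate branch of $f_a^{-n}$ at $\delta_a(t_0)$ gives a path starting at $\delta_a(t)$ and ending at a preimage of $-a$. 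Since $\overline{A_1} \cap L_{t_0}^a = \{\delta_a(t_0)\}$, the lifted path stays in $(K(f_a) \setminus \overline{A_1}) \cup \{\delta_a(t)\}$, showing $L_t^a$ contains a point other than $\delta_a(t)$.

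For the converse, I would argue by contradiction. Suppose $L_t^a$ is non-trivial but $2^n t \neq t_0$ for every $n \geq 0$. First establish forward-invariance of non-triviality: taking a point $x \in L_t^a \setminus \overline{A_1}$ with $f_a(x) \notin \overline{A_1}$, connectedness of $f_a(L_t^a) \ni \delta_a(2t)$ places $f_a(x) \in L_{2t}^a$, so every $L^n := L_{2^n t}^a$ is non-trivial. Since $-a \notin L^n$ and $0 \in A_1 \not\subset L^n$, no critical point of $f_a$ lies in any $L^n$; hence $f_a \colon L^n \to L^{n+1}$ is a degree-$1$ holomorphic branched cover, i.e.\ a homeomorphism. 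Two sub-cases now yield the contradiction. If $\{2^n t\}$ under doubling is eventually periodic with $L^* := L^m = L^{m+k}$, then $f_a^k$ is a self-homeomorphism of $L^*$ with repelling boundary fixed point $\delta_a(2^m t)$ (multiplier $\geq 2^k > 1$, since doubling is repelling on $\partial A_1$); the Fatou--Shishikura inequality forbids attracting, super-attracting, and parabolic periodic Fatou components inside $L^*$, and $f_a^k$-invariance rules out wandering Fatou domains there, so $L^* \subset J(f_a)$; Koenigs linearization at the repelling fixed point combined with the global homeomorphism $f_a^k$ then forces $L^*$ to contract to $\{\delta_a(2^m t)\}$ under $f_a^{-k}$, contradicting non-triviality. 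If the orbit is wandering, the $L^n$ are infinitely many pairwise disjoint non-trivial limbs attached at distinct points of $\partial A_1$; by local connectivity of $J(f_a)$ their diameters tend to zero, while Koebe distortion applied to $f_a^n$ on a univalent neighborhood of $L^0$ (available because no critical points lie in the forward orbit), together with expansion of $f_a$ near $\partial A_1$, forbids this shrinking.

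The main obstacle will be the eventually periodic sub-case: rigorously ruling out every type of $f_a$-periodic Fatou cycle inside $L^*$ and then carrying the Koenigs-rigidity argument through to force $L^*$ to collapse to its boundary fixed point.
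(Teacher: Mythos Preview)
The paper does not actually prove this proposition. Like the proposition immediately preceding it, it is stated without proof and is drawn from the results of \cite{RoeschENS}; there is no argument in the present paper to compare your attempt against.

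On the merits of your sketch: the forward direction (pulling back a path in $L_{t_0}^a$ along the appropriate inverse branch) is fine. The converse, however, has real gaps. In the eventually periodic sub-case, the step ``K\"onigs linearization at the repelling fixed point combined with the global homeomorphism $f_a^k$ then forces $L^*$ to contract to $\{\delta_a(2^m t)\}$ under $f_a^{-k}$'' does not follow: K\"onigs linearization is only local, and a self-homeomorphism of a continuum with one repelling fixed point need not have the inverse iterates of every point converge to that fixed point. You would need an additional ingredient (for instance, a polynomial-like restriction of degree one on a definite neighborhood of $L^*$, or the shrinking of pull-backs established in \cite{RoeschENS}) to conclude that $L^*$ is a point. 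In the wandering sub-case you invoke local connectivity of $J(f_a)$ to get $\mathrm{diam}(L^n)\to 0$, but the proposition is stated under the sole hypothesis $-a\notin A_1\cup A_\infty$ (note that the preceding proposition in the paper explicitly does \emph{not} assume local connectivity); moreover, the Koebe-plus-expansion heuristic is not a proof: univalence of $f_a^n$ on a neighborhood of $L^0$ does not by itself yield a uniform lower bound on $\mathrm{diam}(L^n)$, since the modulus of the univalent neighborhood may degenerate. If you want a self-contained argument, the cleanest route is the one in \cite{RoeschENS}: show directly that if $t\neq t_0$ then $f_a$ maps $L_t^a$ homeomorphically onto $L_{2t}^a$, and combine this with the shrinking of nested pull-backs (no critical points in the orbit) to force triviality.
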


\begin{lem}
If $t_0 \in (0,1)$ the rays $R_a^{\infty}(0)$ and $R_a^{\infty}(1/2)$ cannot land at the same point.
\end{lem}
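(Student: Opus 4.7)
The plan is to argue by contradiction. Assume the two rays share a common landing point $p$; then $p$ is a fixed point of $f_a$, since multiplication by $3$ fixes both $0$ and $1/2$ modulo $1$ and each ray is $f_a$-invariant. The closure $\Gamma := \overline{R_a^{\infty}(0)} \cup \overline{R_a^{\infty}(1/2)}$ is a Jordan curve through $p$ and $\infty$, dividing $\hat{\C}$ into two Jordan domains $U_1, U_2$. Via the B\"ottcher coordinate on $A_\infty$, the sector of $A_\infty$ with external angles in $(0, 1/2)$ lies in $U_1$ and the sector with angles in $(1/2, 1)$ lies in $U_2$. The invariance $f_a(\Gamma) = \Gamma$ forces each connected image $f_a(U_i)$ to lie in a single $U_j$, so $f_a$ either preserves the pair $(U_1, U_2)$ setwise or swaps them.

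The contradiction I have in mind comes from a simple preimage count. Pick any $y \in A_\infty \cap U_2$ with external angle $t_y \in (1/2, 1)$. Since the B\"ottcher coordinate conjugates $f_a|_{A_\infty}$ to the cube map $\zeta \mapsto \zeta^3$, the three preimages of $y$ lie in $A_\infty$ on the rays of external angles $t_y/3$, $(t_y+1)/3$, $(t_y+2)/3$. For $t_y \in (1/2, 1)$ these three angles fall respectively in $(1/6, 1/3) \subset (0, 1/2)$, in $(1/2, 2/3) \subset (1/2, 1)$, and in $(5/6, 1) \subset (1/2, 1)$; so exactly one preimage of $y$ lies in $U_1$ and two lie in $U_2$.

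This $1$-$2$ split is incompatible with both dynamical possibilities. In the preserving case, $f_a(U_2) \subset U_2$ would require all three preimages of $y \in U_2$ to sit inside $U_2$, contradicting the preimage detected in $U_1$. In the swapping case, $f_a(U_2) \subset U_1$ would force all three preimages of $y$ to lie in $U_1$, contradicting the two preimages in $U_2$. The main point one needs to check carefully is that $A_\infty \cap U_i$ really coincides with the announced angular sector; this holds because $\Gamma \cap A_\infty$ is the union of the two rays with $\infty$, and removing this locus from $A_\infty$ leaves precisely the two open angular sectors parametrized by $(0,1/2)$ and $(1/2,1)$. The hypothesis $t_0 \in (0,1)$ does not intervene in the argument I have sketched; it may be used in the authors' own proof, for instance to exclude a degenerate parabolic configuration such as the excluded parameters $a = \pm 4i/3$ where $p$ would fail to be repelling.
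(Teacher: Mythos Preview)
Your argument contains a genuine gap at the step ``the invariance $f_a(\Gamma) = \Gamma$ forces each connected image $f_a(U_i)$ to lie in a single $U_j$''. Forward invariance $f_a(\Gamma)=\Gamma$ gives only $\Gamma \subset f_a^{-1}(\Gamma)$, not $f_a^{-1}(\Gamma)\subset\Gamma$. Since $f_a$ has degree $3$, the full preimage $f_a^{-1}(\Gamma)$ is strictly larger than $\Gamma$: for instance the external ray of angle $1/3$ lies inside one of the $U_i$ and is mapped onto $R_a^\infty(0)\subset\Gamma$. Thus $f_a(U_i)$ meets $\Gamma$, and being open it meets both $U_1$ and $U_2$. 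The preserve/swap dichotomy therefore fails, and your $1$--$2$ preimage split yields no contradiction; it is exactly what one expects when $f_a(U_i)$ covers both sides.

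The paper's proof uses the hypothesis $t_0\in(0,1)$ in an essential way. One lets $W$ be the component of $\hat\C\setminus\Gamma$ \emph{not} containing $A_1$ and shows that $-a\in W$: two of the preimage rays (say those of angles $2/3$ and $5/6$) lie in $W$ and land at a common preimage of $\delta_a(0)$ which is not the one on $\partial A_1$; the region in $W$ between these two rays and $\Gamma$ then maps onto its image with degree $2$, so it contains a critical point, necessarily $-a$. The contradiction now comes from the structure of $K(f_a)$: the limb $L_{t_0}^a$ containing $-a$ is connected, lies in $K(f_a)$, and can meet $\Gamma$ only at $\delta_a(0)$; since $t_0\neq 0$ it avoids $\Gamma$ entirely and hence sits in $W$, yet its attaching point $\delta_a(t_0)\in\partial A_1$ lies on the other side. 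So the hypothesis is not a technicality to rule out parabolic parameters, as you suggest, but the actual source of the contradiction.
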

\begin{proof}
Suppose the contrary, i.e. $R_a^{\infty}(0)$ and $R_a^{\infty}(1/2)$ land at the same point $\delta_a(0)$. These two rays cut out a dynamical wake $W$ defined by the connected component of the complement not containing the basin $A_1$. This sector has to contain a critical point so $-a$. Indeed, the external rays of angle $2/3$ and $5/6$   (or $1/6$ and $1/3$)  belong to $W$ and land at a pre-image of $\delta_a(0)$ (the other one being on the boundary of $A_1$ so not in $W$). The region  between these two rays and $R_a^{\infty}(0)\cup \ol{R_a^{\infty}(1/2)}$ is then a disk map onto its image with degree $2$. The conclusion follows.
\end{proof}

Recall that a map $f$ is $k$-{\it renormalizable}  around a critical point $c$ if there are two topological disks $U, V$ containing $c$, with $\ol U\subset V$ and $f^k:U\to V$ is a proper holomorphic map satisfying $f^{kn}(c)\in V$ for all $n\ge 0$. The renormalization is the map $f^k$ and its filled Julia set is $K:=\bigcap_{n\in \N} f^{-kn}(U)$ (it is connected). By~\cite{DH2}, there exists a unique $c\in \C$ and a quasi-conformal homeomorphism $\sigma_f$ defined on a neighborhood of $K$ such that $\sigma_f(K)=K(P_c)$ where $P_c(z)=z^2+c$ and $\sigma_f\circ f^k=P_c\circ \sigma_f$.

\vskip 1em
In order to define the itineraries as before we consider the following assumptions for $f_a$.
\begin{assu}\label{assumcubic} \phantom{.} \quad
\begin{itemize}
\item The Julia set $J(f_a)$ is locally connected
\item $t_0 \neq 0$
\item $-a \notin A_1 \cup A_\infty$,
\item The critical angle $t_0$ is $k$-periodic under multiplication by $2$
\item $f_a$ is $k$-renormalisable around $-a$ and that its filled Julia set $K_a$ intersects $\overline{A_1}$ only at $\delta_a(t_0)$.
\end{itemize}
\end{assu}

\begin{dfn}\label{d:itidfa} Using the B\"ottcher map, we can define the itinerary class  $\epsilon_a(z)$ of a point $z$  in the Julia set  $J(f_a)$ as the set of  itineraries   $\epsilon(t)\in \Sigma$ where $t\in\{\gamma_a ^{-1}(z)\}$.
\end{dfn}

In particular
\begin{itemize}
 \item  For $p_a=\delta_a(0)=\gamma_a(0)$ we have $\epsilon_a(p_a)=\{[\overline 2]=[\overline 0]\}$\,;
\item for $p'_a=\gamma_a(1/3)$   and $p''_a=\gamma_a(2/3)$    the pre-images of  $p_a$. Then  we have    $$\epsilon_\dbas(p'_a)=\{[1\overline 0]=[0\overline 2]\} \hbox{ and }\epsilon_\dbas(p''_a)=\{ [2\overline 0]=[1\overline 2]\}.$$
\end{itemize}

\begin{crr}\label{c:angleitin}Let $R_a^\infty(t)$ be   a ray landing at   $z\in J(f_a) $, then $\epsilon(t) \in \epsilon_a(z)$. Conversely,  if $\epsilon \in \epsilon_a(z)$ then    $R_a^\infty(\theta)$  lands at   $z\in J(f_a) $ where $\theta=\theta(\epsilon)$.
\end{crr}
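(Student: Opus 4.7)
The plan is to observe that this corollary is essentially a tautological reformulation of Definition \ref{d:itidfa}, combined with the bijection $\theta\from \Sigma \to \R/\Z$ of Lemma 2.1 and the existence of continuous boundary landing provided by Assumption \ref{assumcubic}.

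First, for the forward implication, I would note that under Assumption \ref{assumcubic} the Julia set $J(f_a)$ is locally connected, so by Carath\'eodory's theorem the inverse B\"ottcher map $\phi_a^\infty\from \overline{\D} \to \overline{A_\infty}$ extends continuously, and its restriction to $\S^1$ gives the landing map $\gamma_a\from \S^1 \to \partial A_\infty = J(f_a)$. By construction of the external ray $R_a^\infty(t)$, the statement ``$R_a^\infty(t)$ lands at $z$'' is synonymous with $\gamma_a(t)=z$, i.e.\ $t \in \gamma_a^{-1}(z)$. Applying Definition \ref{d:itidfa} we immediately obtain $\epsilon(t) \in \{\epsilon(s) \mid s \in \gamma_a^{-1}(z)\} = \epsilon_a(z)$.

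For the converse, suppose $\epsilon \in \epsilon_a(z)$. By Definition \ref{d:itidfa} there exists an angle $t \in \gamma_a^{-1}(z)$ with $\epsilon(t) = \epsilon$. Since the map $\theta\from \Sigma \to \R/\Z$ is the inverse bijection of $\epsilon(\cdot)$ (Definition \ref{d:itangle} together with the lemma preceding it), we get $\theta(\epsilon) = t$. Therefore the ray $R_a^\infty(\theta(\epsilon)) = R_a^\infty(t)$ lands at $\gamma_a(t)=z$, as required.

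I do not expect a genuine obstacle here: the only delicate point is bookkeeping between the three equivalent encodings of a boundary point (the angle $t \in \R/\Z$, its triadic itinerary class $\epsilon(t)\in \Sigma$, and the landing point $\gamma_a(t)\in J(f_a)$). Local connectivity of $J(f_a)$, built into Assumption \ref{assumcubic}, is what legitimises moving freely between the first and third, while the bijectivity lemma for $\theta$ handles the first and second. No further dynamical input is needed.
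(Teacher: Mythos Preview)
Your proof is correct and mirrors the paper's own argument almost verbatim: both simply unwind Definition~\ref{d:itidfa} via $\gamma_a(t)=z \iff t\in\gamma_a^{-1}(z)$ and invoke the bijectivity of $\theta$ (Definition~\ref{d:itangle}) to recover the angle from the itinerary.
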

\proof  If  $R_a^\infty(t)$ is  landing at   $z\in J(f_a) $, then $\gamma_a(t)=z$.   Then  $\epsilon(t)\in \epsilon_a(z)$ because  $t\in\{\gamma_a ^{-1}(z)\}$. Now, if $\epsilon$ belongs to $\epsilon_a(z)$ then  by definition $\epsilon=\epsilon(t)$ for some $t=\theta(\epsilon)$ (see Defintion~\ref{d:itangle}).
Then by definition we have $t\in\{\gamma_a ^{-1}(z)\}$. So that $z$ is the landing point of $R^\infty_a(t)$.\endproof

By definition we get a characterization of multiply accessible points:
\begin{crr}\label{c:biacc}
$z\in J(f_a)$ is  multilply accessible   if and only if  its itinerary class is not reduced to one point.
\end{crr}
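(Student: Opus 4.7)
The plan is to unpack both sides of the equivalence using the definition of $\epsilon_a(z)$ together with the bijectivity of $\theta\colon \Sigma\to \R/\Z$ established in the earlier lemma. By definition, $z\in J(f_a)$ is multiply accessible exactly when the fiber $\gamma_a^{-1}(z)\subset \S^1$ contains at least two distinct angles, and $\epsilon_a(z)=\{\epsilon(t):t\in \gamma_a^{-1}(z)\}$. So the question is whether the correspondence between the angles landing at $z$ and the elements of $\epsilon_a(z)$ preserves cardinality (at least in the sense of being $\geq 2$ on both sides).

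First I would treat the forward implication. Suppose $z$ is multiply accessible, with $t_1\neq t_2$ in $\gamma_a^{-1}(z)$. Since $\theta\circ \epsilon=\mathrm{id}$ on $\R/\Z$ by the bijection lemma, the classes $\epsilon(t_1),\epsilon(t_2)\in \Sigma$ must be distinct; otherwise applying $\theta$ would force $t_1=t_2$. Hence $\epsilon_a(z)$ contains at least two elements.

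For the converse, I would invoke Corollary~\ref{c:angleitin}: if $\epsilon^{(1)},\epsilon^{(2)}\in \epsilon_a(z)$ are distinct, then each $R_a^\infty(\theta(\epsilon^{(i)}))$ lands at $z$, and since $\theta$ is injective the two landing angles $\theta(\epsilon^{(1)})\neq \theta(\epsilon^{(2)})$ are distinct, so $z$ is multiply accessible.

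I do not expect any real obstacle; all the substantive content has already been packaged into Corollary~\ref{c:angleitin} (which uses local connectivity of $J(f_a)$ via Assumption~\ref{assumcubic} to ensure $\gamma_a$ extends continuously to $\S^1$) and into the bijectivity of $\theta$. The corollary is essentially a translation of those two facts through the definition of the itinerary class.
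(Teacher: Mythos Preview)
Your proposal is correct and follows essentially the same approach as the paper's proof: both directions are obtained by combining the bijectivity of $\theta\colon\Sigma\to\R/\Z$ with Corollary~\ref{c:angleitin}, so that distinct angles in $\gamma_a^{-1}(z)$ correspond bijectively to distinct elements of $\epsilon_a(z)$.
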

\proof  If $\epsilon,\epsilon'$ define $t=\theta(\epsilon)$ and $t'=\theta(\epsilon')$. Then  $z$ is the landing point of $R^\infty_a(t)$ and $R^\infty_a(t')$ if  and only if $\epsilon(t), \epsilon(t')\in\epsilon_a(z)$ by Corrollary~\ref{c:angleitin}. Moreover,
by the formula of $\theta(x)$ the angles $t$ and $t'$ are different if and only if $\epsilon\neq\epsilon'$.
\endproof

Now we define a partition of $\C$ related to the triadic partition in the basin of $\infty$.

\begin{dfn} Let  $A_1'$ denote the pre-image of $A_1$ and $R_a'(t)=f_a^{-1}(R_a^0(t))\cap A_1'$.
Let $\Gamma_a$ be the following set\,:
$$\Gamma_a=\overline{R^0_a(0)} \cup \overline{R_a^\infty(0)} \cup \overline{R_a^\infty(1/3)} \cup \overline{R^0_a(1/2)} \cup \overline{R_a^\infty(2/3)} \cup \overline{R'_a(0)} \cup \Gamma'_a,$$ $$
\hbox{ where }\displaystyle  \Gamma'_a=\bigcup_{n\ge 0} f_a^n \left( R'_a(2t_0)\cup K_a\right).$$
\end{dfn}

\begin{figure}[h]
  \begin{center}
  \includegraphics[scale=0.52]{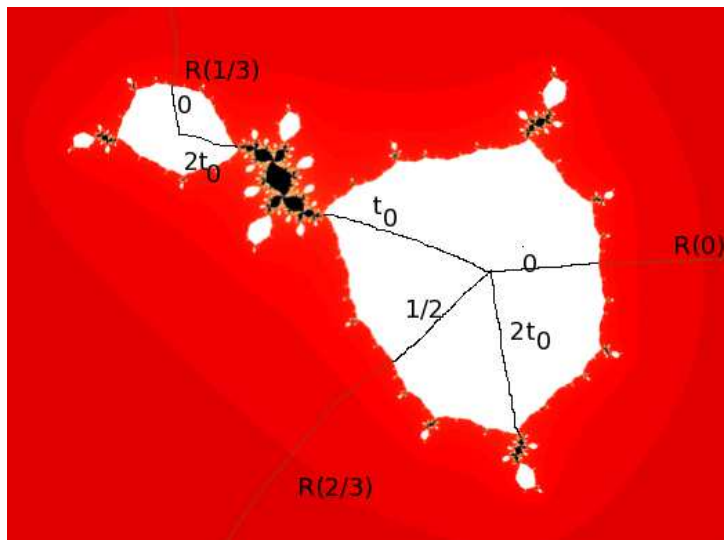}
 \end{center}
  \caption{ The graph $\Gamma_a$}
    \label{graph}
\end{figure}

\begin{lem} \label{component}
The set $\Gamma_a$ is connected. Moreover, the set $\C \setminus \Gamma_a$ is a union three open connected and simply connected components.
\end{lem}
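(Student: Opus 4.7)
I will first establish connectedness of $\Gamma_a$ by identifying the pairwise incidences of its constituent rays and the orbit of $K_a$, and then count the complementary components by a combinatorial argument on $S^2 = \overline{\C}$ (equivalently, by an explicit description of the three angular sectors at $\infty$).

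\emph{Connectedness.} All three external rays share $\infty$ as a common endpoint; $R_a^0(0)$ and $R_a^0(1/2)$ meet at the critical fixed center $0 \in A_1$; $R'_a(0)$ and $R'_a(2t_0)$ meet at the center $-3a/2 \in A_1'$. By Corollary~\ref{correal}, $R_a^\infty(0)$ and $R_a^0(0)$ share the fixed landing $q_a = \delta_a(0)$. Since the three preimages of $q_a$ under the cubic $f_a$ are $q_a$, a point $\delta_a(1/2) \in \partial A_1$, and a point $\delta'_a(0) \in \partial A_1'$, and since by the lemma just above neither $R_a^\infty(1/3)$ nor $R_a^\infty(2/3)$ can land at $q_a$, these two external rays land at $\delta_a(1/2)$ and $\delta'_a(0)$, matching the landings of $R_a^0(1/2)$ and $R'_a(0)$ respectively. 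For $\Gamma'_a$, the $k$-periodicity of $t_0$ under doubling lets us rewrite
\[
\Gamma'_a \;=\; R'_a(2t_0) \;\cup\; \bigcup_{j=0}^{k-1}\bigl(R_a^0(2^j t_0) \cup f_a^j(K_a)\bigr),
\]
where each $f_a^j(K_a)$ is a connected topological disk (filled Julia set of a quadratic-like restriction with locally connected Julia set) attached to $\overline{A_1}$ at the single point $\delta_a(2^j t_0)$ by Assumption~\ref{assumcubic}, and this is precisely the landing of $R_a^0(2^j t_0)$. The $k$ internal rays $R_a^0(2^j t_0)$ share the vertex $0$, so $\Gamma'_a$ is connected and is joined to the simple part of $\Gamma_a$ through $0$ and through $-3a/2$ (via $R'_a(2t_0)$).

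\emph{Three simply connected components.} View $\Gamma_a \cup \{\infty\}$ as a CW complex on $S^2$: the $0$-cells are the ray endpoints and landing points, the $1$-cells are the rays and the Jordan boundary curves $\partial f_a^j(K_a)$, and the $2$-cells are the $k$ interiors of the disks $f_a^j(K_a)$ together with the open complementary regions we wish to enumerate. Euler's formula $V - E + F = 2$ then yields three complementary open regions, which are exactly the extensions into $\C$ of the three angular sectors at $\infty$ delimited by consecutive external rays. Each region is simply connected because its boundary is a single cyclic concatenation of ray segments and Jordan arcs along $\partial f_a^j(K_a)$, and the ``tree of topological disks'' $\Gamma'_a$ (each disk being singly attached to $\overline{A_1}$) creates no internal loops.

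\emph{Main obstacle.} The chief subtleties are (i) the correct identification of the landings of $R_a^\infty(1/3)$ and $R_a^\infty(2/3)$ at preimages of $q_a$, which relies on the previous lemma forbidding both $R_a^\infty(0)$ and $R_a^\infty(1/2)$ landing at $q_a$, and (ii) the verification that $\Gamma'_a$ is genuinely tree-like and does not further subdivide any angular sector---this rests crucially on the single-point attachment of each $f_a^j(K_a)$ to $\overline{A_1}$ guaranteed by Assumption~\ref{assumcubic}, together with the forward dynamics of the $K_a$-orbit.
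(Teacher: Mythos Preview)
Your connectedness argument is more detailed than the paper's (which simply says it is ``obvious from the construction'') and is essentially correct.

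The second half has a genuine gap. You assert that each $f_a^j(K_a)$ is a ``connected topological disk'' and propose to use $\partial f_a^j(K_a)$ as a $1$-cell in a CW structure on $S^2$. But $\partial K_a$ is (a quasiconformal copy of) the Julia set of some $z^2+c$; under Assumption~\ref{assumcubic} this Julia set is locally connected, but it is in general \emph{not} a Jordan curve---think of the basilica, the rabbit, or any dendrite parameter. Hence $\partial f_a^j(K_a)$ is not a single $1$-cell, the interior need not be a $2$-cell, and the Euler computation is not well posed as written. You also never actually carry out the count of $V,E,F$, so even granting the CW structure the conclusion ``three'' is only asserted. Your final sentence about simple connectivity (``boundary is a single cyclic concatenation of ray segments and Jordan arcs along $\partial f_a^j(K_a)$'') fails for the same reason. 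One clean fix in your spirit: first collapse each full continuum $f_a^j(K_a)$ to a point---by Moore's theorem the quotient is still $S^2$---and then the image of $\Gamma_a$ is a genuine finite planar graph on which Euler's formula applies.

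The paper avoids all of this with two short observations. For simple connectivity: if $D_j$ is any component of $\C\setminus\Gamma_a$, then $S^2\setminus D_j=\Gamma_a\cup\bigcup_{n\neq j}\ol{D_n}$ is connected (since $\Gamma_a$ is), and an open subset of $S^2$ with connected complement is simply connected. For the count: it exhibits three connected sets $E_1,E_2,E_3\subset\Gamma_a\cup\{\infty\}$, each joining $\infty$ to $0$ and pairwise meeting only at these two points, so their complement has exactly three components; the remaining pieces of $\Gamma_a$ (the iterates $f_a^j(K_a)$ for $j\ge1$ together with their attaching internal rays) are non-separating continua hanging off $\ol{A_1}$ at single points---your ``tree-like'' intuition here is correct---and hence do not subdivide any region. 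This argument never requires anything about the internal structure of $K_a$ beyond its being full, connected, and singly attached.
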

\proof
The fact that $\Gamma_a$ is  compact and connected is obvious from the construction.
Its complement consists of open connected components $D_1,D_2,\ldots$. But $\Gamma_a \cup (\cup_{n \neq j} D_n )$ is connected for each $j$ and also equal to the complement of $D_j$, hence each $D_j$ is simply connected.

Note also that, on the Riemann sphere, the sets $E_1 = \{ \infty \} \cup R_a^{\infty}(1/3) \cup R_a'(0) \cup R_a'(2t_0) \cup K_a \cup R_a^0(t_0)$, $E_2 = \{ \infty \} \cup R_a^0(0) \cup R_a^{\infty}(0)$ and $E_3 = \{ \infty \} \cup R_a^0(1/2) \cup R_a^{\infty}(2/3)$ are connected, connect $\infty$ with $0$ and meet only at $0$ and $\infty$. Hence their complement consists of three components. \endproof

Let us now label these components.
\begin{dfn}
Let $\Delta^a_2$ be the component from Lemma \ref{component} containing $f_a(K_a)$ in its closure. Denote  the two others by  $\Delta^a_0$,   $\Delta^a_1$, where $\Delta^a_1$ contains $R_a^\infty(1/2)$.
\end{dfn}

\begin{figure}[ht]
  \begin{center}
 \includegraphics[scale=0.5]{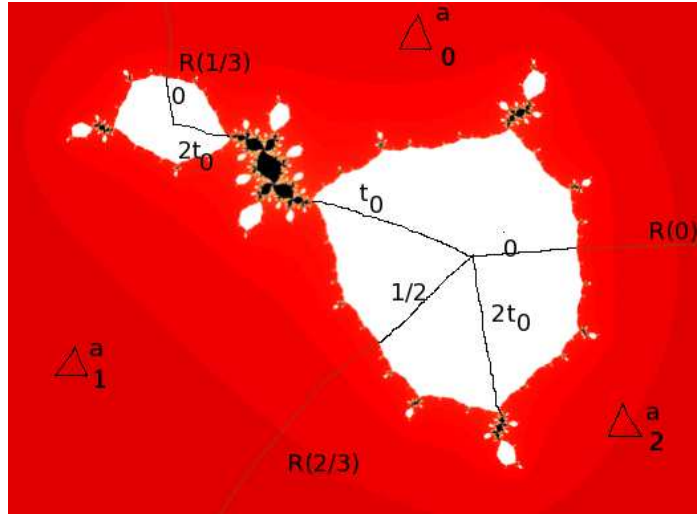}
  \end{center}
  \caption{Partition by $\Gamma_a$}
    \label{Partition2}
\end{figure}

This partition allows us  visualize the  itineraries of the points in  $J(f_a)$. Some points of $J(f_a)$ belong to the closure of   more than one  component $\Delta^a_i$.

We now give another formulation of this fact.
Note that any component of  $f^{-1}(\Delta^a_{i})$ belongs to exactly one $ \Delta^a_{j}$, since $\Gamma_a$ is forward invariant.
Hence the intersection $f^{-1}(\Delta^a_{i})\cap \Delta^a_{j}$ just determines the component.

\begin{dfn} For any sequence $(\epsilon_i)_{i\in \N}\in \{0,1,2\}^\N$   we define $\Delta^a_{\epsilon_0\ldots \epsilon_n}$  by the relation $$\Delta^a_{\epsilon_0\ldots \epsilon_n} = f_{a}^{-n}(  \Delta^a_{\epsilon_n})\cap\Delta^{a}_{\epsilon_{0} \ldots \epsilon_{n-1}}.$$
\end{dfn}

\begin{lem}\label{l:itfa} For any  point $z\in J(f_a) $  we have $$[(\epsilon_i)_{i\in \mathbf N}]\in \epsilon_a(z)
\iff z\in \bigcap_{n\in \mathbf N} \overline{\Delta^a_{\epsilon_0\ldots \epsilon_n}}.$$
 \end{lem}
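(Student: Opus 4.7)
The plan is to proceed exactly as in the proof of Lemma~\ref{itin-dbas}, adapting the argument to the partition $\{\Delta^a_i\}$ associated with $\Gamma_a$. First I would record that, since $f_a$ is proper and $\Gamma_a$ is forward invariant, one has
$$\overline{\Delta^a_{\epsilon_0\ldots \epsilon_n}} \;=\; f_a^{-n}\bigl(\overline{\Delta^a_{\epsilon_n}}\bigr)\cap \overline{\Delta^a_{\epsilon_0\ldots \epsilon_{n-1}}},$$
so the recursion defining these pieces passes to closures.

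For the forward implication, suppose $[(\epsilon_i)]\in\epsilon_a(z)$. By Corollary~\ref{c:angleitin} the ray $R^\infty_a(t)$ lands at $z$ for $t=\theta([(\epsilon_i)])$, in particular $\epsilon(t)=(\epsilon_i)$. By construction $\Gamma_a$ contains $\overline{R^\infty_a(0)}\cup\overline{R^\infty_a(1/3)}\cup\overline{R^\infty_a(2/3)}$, and the labelling of the three components of $\mathbf{C}\setminus\Gamma_a$ is chosen so that a ray $R^\infty_a(s)$ with $s\in I_j=\bigl]\tfrac{j}{3},\tfrac{j+1}{3}\bigr[$ is contained in $\overline{\Delta^a_j}$. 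Hence $f_a^{\,j}(R^\infty_a(t))=R^\infty_a(3^jt)\subset \overline{\Delta^a_{\epsilon_j}}$ for every $j\ge 0$. A straightforward induction on $n$ using the displayed recursion above then yields $R^\infty_a(t)\subset\overline{\Delta^a_{\epsilon_0\ldots\epsilon_n}}$ for all $n$, and taking the landing point produces $z\in\bigcap_n\overline{\Delta^a_{\epsilon_0\ldots\epsilon_n}}$.

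For the converse, assume $z\in\bigcap_n\overline{\Delta^a_{\epsilon_0\ldots\epsilon_n}}$. By Assumption~\ref{assumcubic} the Julia set $J(f_a)$ is locally connected, so at least one external ray $R^\infty_a(s)$ lands at $z$; this gives $[(\epsilon'_i)]:=\epsilon(s)\in\epsilon_a(z)$. The forward direction places $z$ in $\bigcap_n\overline{\Delta^a_{\epsilon'_0\ldots\epsilon'_n}}$ as well. If the two nested sequences coincide we are done. Otherwise $z$ lies on the common boundary of two distinct nested sequences, hence on an iterated preimage of the graph $\Gamma_a$. Every such boundary piece is a union of external rays (and, possibly, pieces of $\Gamma'_a$ or $\partial A_1$), so several rays land at $z$, and by Corollary~\ref{c:biacc} its itinerary class $\epsilon_a(z)$ contains more than one element. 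Tracking how these rays distribute among the three sectors of $\Gamma_a$ shows that every combinatorial itinerary compatible with a nested sequence containing $z$ is realized by such a ray; in particular $[(\epsilon_i)]\in\epsilon_a(z)$.

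The main obstacle is this last step: checking that the itineraries carried by points on shared boundaries are exactly the admissible classes. The delicate case is on $K_a$ and its iterated preimages, where $\Gamma_a$ incorporates the auxiliary set $\Gamma'_a$ built from the filled Julia set of the renormalization. I expect this to be handled by combining the structural result $\overline{K_a}\cap\overline{A_1}=\{\delta_a(t_0)\}$ (given by the proposition preceding Assumption~\ref{assumcubic}) with the fact that rays landing at a common point share the same orbit of combinatorial itineraries, so the extra boundary pieces coming from $\Gamma'_a$ do not introduce spurious itinerary classes outside $\epsilon_a(z)$.
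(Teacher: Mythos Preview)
Your forward implication is correct and follows the paper's proof essentially verbatim: record the closure identity $\overline{\Delta^a_{\epsilon_0\ldots\epsilon_n}}=f_a^{-n}(\overline{\Delta^a_{\epsilon_n}})\cap\overline{\Delta^a_{\epsilon_0\ldots\epsilon_{n-1}}}$, note that $f_a^j(R^\infty_a(t))\subset\overline{\Delta^a_{\epsilon_j}}$, and induct.

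For the converse, however, you are working harder than necessary and not quite closing the argument. The paper's own proof of this lemma in fact establishes only the forward direction; the reverse implication is obtained afterwards, once Lemma~\ref{l:itifa} shows that each nest $\bigcap_n\overline{P^a_{\epsilon_0\ldots\epsilon_n}}$ reduces to a single point. With that in hand the converse is immediate: given $z\in J(f_a)$ in $\bigcap_n\overline{\Delta^a_{\epsilon_0\ldots\epsilon_n}}$, set $w=\gamma_a(\theta([(\epsilon_i)]))$; then $[(\epsilon_i)]\in\epsilon_a(w)$ by definition, the forward direction puts $w$ in the same nest, and since the nest (intersected with $J(f_a)$) is a single point one gets $w=z$.

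Your direct attempt at the converse runs into exactly the difficulty you flag: from $z$ lying on an iterated preimage of $\Gamma_a$ you cannot conclude that several \emph{external} rays land at $z$, because the boundary of the pieces contains portions of $\partial A_1$, internal rays, and the small filled Julia set $K_a$, none of which are external rays. So the appeal to Corollary~\ref{c:biacc} is not justified, and the final ``tracking how these rays distribute'' step is not a proof. The clean fix is to defer the converse to the shrinking result rather than trying to analyse the boundary structure of $\Gamma_a$ directly.
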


 \proof  It follows from the fact that since $f_a$ is proper then $$\overline{\Delta^a_{\epsilon_0\ldots \epsilon_n} }= f_{a}^{-n}(\overline{  \Delta^a_{\epsilon_n}})\cap\overline{\Delta^{a}_{\epsilon_{0} \ldots \epsilon_{n-1}}}.$$

  Now  from defintion we have the equivalence  that $[ (\epsilon_i)_{i\in \mathbf N}] \in \epsilon_a(z) $ if and only if $  R_a^\infty(t) \hbox{ lands at } z$ for $t=\theta(\epsilon)$ so that $\epsilon(t)=\{\epsilon_i\}_i$.  Then the ray $f_a^j(R_a^\infty(t))\subset\ol{ \Delta^a_{\epsilon_j}}$ for every $j\ge 0$. We prove  by induction that $R_a^\infty(t)\subset \overline{\Delta^{a}_{\epsilon_{0} \ldots \epsilon_{n}}}$. This is clear for $n=0$. Assume that it is true for  some $n$, that is $R_a^\infty(t)\subset \overline{\Delta^{a}_{\epsilon_{0} \ldots \epsilon_{n}}}$. Since  $f_a^{n+1}(R_a^\infty(t))\subset\ol{ \Delta^a_{\epsilon_{n+1}}}$,   then $R_a^\infty(t))\subset f_{a}^{-(n+1)}(\overline{  \Delta^a_{\epsilon_{n+1}}}) $. Therefore    $R_a^\infty(t)\subset \overline{\Delta^{a}_{\epsilon_{0} \ldots \epsilon_{n+1}}}$
  since
  $\overline{\Delta^a_{\epsilon_0\ldots \epsilon_{n+1}} }= f_{a}^{-(n+1)}(\overline{  \Delta^a_{\epsilon_{n+1}}})\cap\overline{\Delta^{a}_{\epsilon_{0} \ldots \epsilon_{n}}}.$

  Finally we get that
  $$
   z\in \bigcap_{n\in \mathbf N} \overline{\Delta^a_{\epsilon_0\ldots \epsilon_n}}.$$  \endproof

We now define puzzle pieces for the map $f_a$.
\begin{dfn} Let   $R\in ]0,1[$ and define $W_0$ to  be  the connected component  containing $J(f_a)$ of $$\C\setminus (E^\infty_a(R)\cup E^1_a(R))$$ (the  complement of the equipotentials of level $R$) and denote by   $W_n$ the preimage $f_a^{-n} (W_0)$.
A {\it puzzle piece of level $n$ }  with itinerary $\epsilon_a=\{\epsilon_i\}_{i=0}^\infty$ is  the set
 $$P^a_{\epsilon_0 \dots \epsilon_n}= W_n\cap \Delta^a_{\epsilon_0\cdots \epsilon_n}.$$
\end{dfn}

Similar to the double basilica we have the following.
\begin{lem}\label{l:itifa} For any sequence   $\{\epsilon_i\}_{i=0}^\infty$  in $\{0,1,2\}^\N$, the intersection
 $$\bigcap_{n\in \N}\overline{P^a_{\epsilon_0 \dots \epsilon_n}}$$
 reduces to one point.  Moreover, for this point $z$  we have  $[\{\epsilon_i\}_{i=0}^\infty]\in \epsilon_a(z)$.
 \end{lem}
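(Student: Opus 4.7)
The plan is to deduce the statement from Lemma~\ref{l:itfa} together with the standing hypothesis that $J(f_a)$ is locally connected, mimicking the proof of Lemma~\ref{l:itidbas} but with the external ray–landing dictionary in place of hyperbolic expansion. Given a sequence $\{\epsilon_i\}_{i=0}^\infty$, I will set $t := \theta(\{\epsilon_i\}) \in \R/\Z$ and $z := \gamma_a(t) \in J(f_a)$; local connectivity ensures, via Carath\'eodory, that $z$ is the unique landing point of the external ray $R_a^\infty(t)$.

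For the existence half I will show $z \in \overline{P^a_{\epsilon_0 \cdots \epsilon_n}}$ for every $n$. Since $J(f_a)$ is totally invariant and contained in $W_0$, one has $J(f_a) \subset W_n$ for every $n$, so $W_n$ is an open neighborhood of $z$; Lemma~\ref{l:itfa} gives $z \in \overline{\Delta^a_{\epsilon_0 \cdots \epsilon_n}}$, and therefore every neighborhood of $z$ meets $\Delta^a_{\epsilon_0 \cdots \epsilon_n} \cap W_n = P^a_{\epsilon_0 \cdots \epsilon_n}$. The additional conclusion $[\{\epsilon_i\}] \in \epsilon_a(z)$ is then immediate from Lemma~\ref{l:itfa} applied to $z$.

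For uniqueness, let $w \in \bigcap_n \overline{P^a_{\epsilon_0 \cdots \epsilon_n}}$ and argue $w = z$. Since $\overline{P^a_{\epsilon_0 \cdots \epsilon_n}} \subset \overline{W_n} \subset f_a^{-n}(\overline{W_0})$, the entire forward orbit of $w$ is trapped in the compact set $\overline{W_0}$. If $w$ lies in $A_\infty$ or in an iterated preimage of $A_1$, then $f_a^n(w)$ tends to $\infty$ or to $0$, each separated from $\overline{W_0}$ by the defining equipotentials $E_a^\infty(R)$, $E_a^1(R)$, a contradiction. If $w$ is an interior point of any Fatou component contained in $\bigcup_{n \geq 0} f_a^n(K_a) \subset \Gamma_a$, then $w$ has an open neighborhood inside $\Gamma_a$ and therefore lies in no $\overline{\Delta^a_i}$, again a contradiction. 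Hence $w \in J(f_a)$, and from $w \in \bigcap_n \overline{\Delta^a_{\epsilon_0 \cdots \epsilon_n}}$ Lemma~\ref{l:itfa} yields $[\{\epsilon_i\}] \in \epsilon_a(w)$; Corollary~\ref{c:angleitin} then forces $R_a^\infty(t)$ to land at $w$, so $w = z$ by uniqueness of ray landing.

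The main delicacy I anticipate concerns the bounded Fatou components coming from the renormalization and, especially, their non-periodic preimages: ruling these out requires the fact that $\Gamma_a$ was deliberately enlarged to absorb $\bigcup_{n \geq 0} f_a^n(K_a)$, combined with the annular shrinking of $W_n$ towards $\partial A_1$ and $\partial A_\infty$ (whose bounding equipotentials in $A_1$ and $A_\infty$ are at levels $R^{1/2^n}$ and $R^{1/3^n}$ respectively, both tending to $1$). Once those alternatives are exhausted, the ray-landing dictionary closes the proof without any appeal to a hyperbolic-metric expansion estimate, which is unavailable since $f_a$ is genuinely renormalizable.
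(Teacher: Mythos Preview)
Your approach is correct and genuinely different from the paper's. The paper does not use the ray-landing dictionary directly; instead it imports an auxiliary puzzle (from \cite{RoeschENS}, \cite{DR}) whose nests are already known to shrink to points or to iterated preimages of $K_a$, refines the original puzzle by this auxiliary one, and uses a counting argument (each original piece contains at most $K$ refined pieces at every depth, since $f_a^n$ is univalent on $P^a_{\epsilon_0\cdots\epsilon_n}$) to transfer shrinking. For nests accumulating on $K_a$ it then invokes the straightening map $\sigma$ to the quadratic $P_c$ and the assumed local connectivity of $J(P_c)$ to finish. Your argument short-circuits all of this: since local connectivity of $J(f_a)$ is already among the standing assumptions, you produce the candidate point $z=\gamma_a(t)$ at once and then exclude competitors by classifying Fatou components. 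This is more direct and uses no external puzzle machinery; the paper's route, by contrast, essentially re-traces the mechanism by which local connectivity is \emph{proved} in the cited references, so it is conceptually less circular even though it is longer.

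One clarification is needed. Your third case, as written, only covers $w$ lying in a Fatou component inside some $f_a^j(K_a)$, and the closing paragraph's appeal to ``annular shrinking of $W_n$'' is not the right tool for the strictly preperiodic preimages of such components (those shrinking equipotentials live only in preimages of $A_1$ and $A_\infty$). The clean fix is the same idea you already used, pushed forward: if $w$ lies in the interior of any bounded Fatou component not in the basin of $0$, then some iterate $f_a^m(w)$ lies in a periodic Fatou component inside $\bigcup_{j\ge 0} f_a^j(K_a)\subset\Gamma_a$, hence in the \emph{interior} of $\Gamma_a$. Since $\overline{\Delta^a_{\epsilon_0\cdots\epsilon_m}}\subset f_a^{-m}(\overline{\Delta^a_{\epsilon_m}})$ (this is the relation used in the proof of Lemma~\ref{l:itfa}) and interior points of $\Gamma_a$ lie in no $\overline{\Delta^a_j}$, one gets $w\notin\overline{\Delta^a_{\epsilon_0\cdots\epsilon_m}}$, a contradiction. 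With that adjustment your argument is complete.
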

\begin{proof}
 We want to consider another puzzle where we know that puzzle pieces have the desired property and then compare it to the original puzzle for $f_a$. In~\cite{RoeschENS} and in ~\cite{DR}  it is proved (using a special puzzle) that the Julia set is locally connected as soon as the small Julia set is locally connected. There we use another graph which is the following:  $$\tilde \Gamma_0 = \bigcup_{i\ge 0}R_a^0(2^it)\cup \ol{R_a^\infty(3^i t')}\quad \tilde \Gamma_n=f_a^{-n}(\tilde \Gamma_0).$$
 The angle  $t$ is  any periodic angle  (periodic by multiplication by $2$)   with period sufficiently large and $t'$ is defined such that   $R_a^\infty(t')$ lands at the same point as $R_a^0(t)$.
    The puzzle pieces considered there are connected  components of $W_n\setminus  \tilde \Gamma_n$. Let us call the puzzle for $f_a$ defined by the graph $\Gamma_a$ in this paper {\em the original puzzle} and the puzzle from the graph $\tilde \Gamma_0$ the {\em  new Puzzle}.

Denote by $ S_n(z)$ the puzzle piece of depth $n$ for this graph which contains the point $z$. From~\cite{RoeschENS} and in ~\cite{DR}, we know  that the intersection    $\cap _{n\ge 0} \ol{S_n(z)}$ is either the point $z$ or a preimage of the small Julia set $K_a$.

Consider the {\em refinement} of the original puzzle with the new puzzle; i.e. the intersection of all puzzle pieces from both the
original puzzle and the new puzzle. In other words, refined puzzle pieces are the connected
components of the complement of the union of the graphs $\tilde \Gamma_0$ and $\Gamma_a$. Clearly, nests of refined puzzle pieces
shrink to points since the pieces $S_n(z)$ do. Note also that the original puzzle pieces never contain critical points.
Hence $f_a^n$ is univalent on any original puzzle piece $P$ depth $n$.
It is easy to see that there is some $K < \infty$ such that any original puzzle piece of depth $0$ consists of at most
$K$ refined puzzle pieces of depth $0$. Since $f_a^n$ is univalent on original puzzle pieces of depth $n$ it follows that any original
puzzle piece of depth $n$ consists of at most $K$ refined pieces of depth $n$, for all $n \geq 0$.
Hence, the nest of the original puzzle pieces also shrink to points or (subsets of) iterated preimages of the small Julia set.

In the first case, the intersection $ \cap_{n\ge 0} \ol{P_{\epsilon_0\cdots \epsilon_n}}$ is clearly reduced to exactly one point.

In the second case  $\bigcap_{n\in \N} \ol{P^a_{\epsilon_0\cdots \epsilon_n}}\subset K_a$. We consider the straightening map
$\sigma$ defined in a neighborhood of $K_a$ to a neighborhood of $K(P_c)$ conjugating $f_a$ to $P_c$ for some $c$.
Then the image  $\sigma( \ol{P^a_{\epsilon_0\cdots \epsilon_n}})$ will be included in a puzzle piece of level $n$ of the nest defined as follows.
Let $\phi_{Pc}$ be  the B\"ottcher coordinate at infinity  of  the quadratic polynomial $P_c$.
It is then clear that $$\phi_{Pc}(\sigma( \ol{P^a_{\epsilon_0\cdots \epsilon_n}}))\subset G^a_{\epsilon_0\cdots \epsilon_n}$$ where   $$G^a_{\epsilon_0\cdots \epsilon_n}=
\{z\in \C\mid \arg(z)\in [t_n,t'_n], \ 1\le \vert z\vert \le R^{1/2^n}\} $$ for some dyadic $t_n,t'_n$ with $\vert t_n-t'_n\vert\le 1/2^n$ and  for some $R>1$.
 Since we assume that the Julia set of $P_c$ is locally connected, we can conclude 
that these nests of puzzle pieces  for $P_c$ shrinks to a
point and therefore since $\sigma$ is an homeomorphism, the corresponding nests of (original) puzzle pieces shrinks to a point.

The second  part  of the Lemma comes from Lemma~\ref{l:itfa} since  $$\overline{P^a_{\epsilon_0 \dots \epsilon_n}}= \overline {W_n}\cap \overline {\Delta^a_{\epsilon_0\cdots \epsilon_n}}.$$
\end{proof}

 Note that external rays with  triadic angles  do not land at multiple accessible points  as for $f_\dbas$.  More precisely, we describe now  the points with several itineraries which are exactly the points with multiple  external accesses.

\begin{lem}\label{l:multiaccess}
If several external rays land at the same point of $J(f_a)$,  then some iterate of this point belongs to $K_a$.
\end{lem}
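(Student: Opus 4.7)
The plan is to exploit the itinerary encoding to convert the hypothesis of multiple external rays into multiple distinct itineraries, and then push this information forward along $f_a$ until the first divergence forces an iterate onto the connection graph $\Gamma_a$, from where it can be driven into the small Julia set $K_a$. Concretely, I would pick two distinct external rays $R^\infty_a(t)$ and $R^\infty_a(t')$ landing at $z$. By Corollary~\ref{c:angleitin} the itineraries $\epsilon(t), \epsilon(t')$ both lie in $\epsilon_a(z)$, and they define distinct classes in $\Sigma$ because $\theta : \Sigma \to \R/\Z$ is a bijection with $\theta(\epsilon(s)) = s$. Choose representatives $(\epsilon_i), (\epsilon'_i) \in \tilde{\Sigma}$; since the classes differ, no pair of representatives is equivalent, so they differ at some index, and I take $n_0$ minimal with this property. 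Applying Lemma~\ref{l:itfa}, the point $w := f_a^{n_0}(z)$ belongs to $\overline{\Delta^a_{\epsilon_{n_0}}} \cap \overline{\Delta^a_{\epsilon'_{n_0}}}$, which is contained in $\Gamma_a$ since the three $\Delta^a_i$ are the connected components of $\C \setminus \Gamma_a$ and $\epsilon_{n_0} \neq \epsilon'_{n_0}$. Forward invariance of $J(f_a)$ then gives $w \in \Gamma_a \cap J(f_a)$.

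Next I would decompose $\Gamma_a \cap J(f_a)$ into the forward orbit $\bigcup_{m \ge 0} f_a^m(K_a)$ of the small Julia set, and the finite set $L$ of landing points on $J(f_a)$ of the remaining rays constituting $\Gamma_a$, namely $R^\infty_a(0), R^\infty_a(1/3), R^\infty_a(2/3)$, the internal rays $R^0_a(0), R^0_a(1/2), R'_a(0)$, and the iterates of $R'_a(2t_0)$ whose landings are not already in the $K_a$-orbit. If $w$ lies in $\bigcup_m f_a^m(K_a)$, then invariance of $K_a$ under $f_a^k$ produces some $j \ge 0$ with $f_a^{n_0+j}(z) \in K_a$, which is the desired conclusion. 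The remaining task is to rule out $w \in L$.

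The step I expect to be the main obstacle is this exclusion of $L$, which I would do case by case. The leverage is that $w$ still admits two distinct external accesses $R^\infty_a(3^{n_0}t), R^\infty_a(3^{n_0}t')$: the angles are necessarily distinct, since if they agreed, a short check with the equivalence relation defining $\Sigma$ combined with the common prefix $\epsilon_0 \ldots \epsilon_{n_0-1}$ would force $\epsilon(t) = \epsilon(t')$. So it suffices to show every point of $L$ has at most one external access. At $p_a$ this combines the earlier lemma excluding $R^\infty_a(1/2)$ with the standard rotation-number argument for rays landing at a repelling fixed point; the preimages $\gamma_a(1/3), \gamma_a(2/3)$ of $p_a$ on $\partial A_\infty$ then each receive a single external ray by a preimage count under $\times 3$. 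Landing points of internal rays that lie on $\partial A_1$ or $\partial A'_1$ but not on $\partial A_\infty$ admit no external access, since any such access would map after finitely many iterations to an external access of $p_a$ already accounted for. Finally, for $m \ge 1$ the landings of $f_a^m(R'_a(2t_0))$ coincide with $\delta_a(2^{m+1}t_0) \in f_a^{m+1}(K_a)$ and thus lie back in the $K_a$-orbit, so only the landing of $R'_a(2t_0)$ itself might be genuinely new, and it admits no external access by the same preimage-counting argument applied to $\delta_a(2t_0)$. This excludes $L$ and completes the proof.
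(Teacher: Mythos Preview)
Your argument is correct and reaches the same intermediate conclusion as the paper --- some iterate of the point lies on $\Gamma_a$ and still receives two distinct external rays --- but by a different mechanism. The paper argues geometrically: the two rays and their common landing point form a curve $C$ separating the plane, and since the pullbacks $\Gamma_n=f_a^{-n}(\Gamma_a)$ contain external rays at all triadic angles, some connected $\Gamma_n$ must meet both sides of $C$, forcing $x\in\Gamma_n$ and hence $f_a^n(x)\in\Gamma_a$. Your route via the first index of divergence of the two itineraries is the symbolic translation of this crossing argument, phrased directly in terms of the partition machinery (Lemma~\ref{l:itfa}); it is tidier in that it avoids the implicit check that each $\Gamma_n$ is connected.

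Where you go further than the paper is in the last step. The paper simply asserts that a point of $\Gamma_a\cap J(f_a)$ receiving two external rays must lie in the forward orbit of $K_a$, whereas you actually verify that $p_a,p_a',p_a''$ each carry a single external ray (combining Lemma~2.16 with the rotation-number constraint at the fixed point and a preimage count under $\times 3$). Two cosmetic remarks: the case ``landing points on $\partial A_1$ or $\partial A'_1$ but not on $\partial A_\infty$'' is vacuous, since $\partial A_1\subset J(f_a)=\partial A_\infty$; and the landing point of $R'_a(2t_0)$ is already in $K_a$ (it is the point $K_a\cap\partial A'_1$), so no separate exclusion is needed there. Neither affects the validity of your proof.
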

\begin{proof}
Suppose $x$ is the  landing point of  external rays of angles $\theta_1$ and $\theta_2$.  Then some preimage   $\Gamma_n= f_a^{-n}(\Gamma_a)$,  of the graph for $f_a$ will cross this curve $C=R_a^\infty(\theta_1)\cup \ol{R_a^\infty(\theta_2)}$. Indeed, either  $\Gamma_a$ crosses $C$ or $\Gamma_a$ belongs to one connected component  $U$ of $\C\setminus C$. In the last case, when pulling back  one ray of the form $p/3^n$ will be in $\C\setminus U$. Then,
since for $m \leq n$  we have $\Gamma_m \subset \Gamma_n$, the curve $C$ crosses $\Gamma_n$.

 Hence we must have $x \in \Gamma_n$. Hence $f_a^n(x) \in \Gamma_a$. So both rays $3^n\theta_1$ and $3^n\theta_2$ land at $f_a^n(x)$ in $\Gamma_a$. The only possibility for this is when $f_a^n(x) \in K_a$ or an iterate of $K_a$.
\end{proof}

\subsection{Cubic Newton maps}

Any cubic Newton map can be conjugated to a rational map of the form $N_\lambda(z)=z-\frac{P_\lambda(z)}{P_\lambda'(z)}$ where $P_\lambda(z)=(z+1/2-\lambda)(z+1/2+\lambda)(z-1)$. For $\lambda\notin\{-3/2,0,3/2\}$ it has three critical fixed points,  the roots of $P_\lambda$  and a the fourth critical point being the
point $x_0=0$. We will describe the dynamics under the following assumptions corresponding to parameters described in the next section.
\begin{figure}[!h]
  \begin{center}
 \includegraphics[scale=0.4]{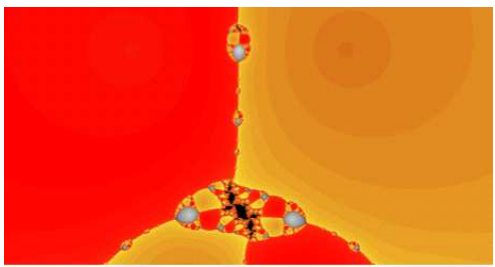}
  \end{center}
  \caption{The Julia set of a renormalizable  Newton map}
    \label{lapinattache}
\end{figure}

If the critical point $x_0=0$ is in the immediate basin of attraction of one of the roots of $P_\l$, then $N_\l$ is quasi conformally conjugated to a  cubic polynomial.

We suppose  that  $x_0=0$ is not in the immediate basin of attraction of one of the roots of $P_\l$.
Then, in each of the fixed immediate basin there is a  B\"ottcher coordinate which  is uniquely defined and  this defines  internal rays.
The following lemmas can be found in~\cite{TL} and also in~\cite{ RoeschAnnals}.
\begin{lem}The three rays of angle $0$ land at $\infty$. \label{threerays}
\end{lem}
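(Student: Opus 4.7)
The plan is to show that each of the three internal rays of angle $0$ is fixed setwise by $N_\l$, apply the standard landing theorem for fixed internal rays of super-attracting basins, and then observe that $\infty$ is the only fixed point of $N_\l$ available on the Julia set.

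First I would set up the B\"ottcher coordinates. By the assumption that $x_0=0$ is not in the immediate basin of any root, each immediate basin $B_i$ of a root $r_i$ of $P_\l$ contains only the critical fixed point $r_i$, which is simple (since the roots of $P_\l$ are simple). As recalled in the preliminaries, this gives a unique B\"ottcher coordinate $\phi_i \from B_i \to \D$ with $\phi_i(r_i)=0$ and $\phi_i(N_\l(z))=\phi_i(z)^2$. The internal ray of angle $0$ in $B_i$ is then $R_i(0)=\phi_i^{-1}([0,1))$, and the conjugacy relation gives $N_\l(R_i(0))=R_i(0)$, so each of the three rays is invariant.

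Next I would invoke the standard ray landing theorem for fixed internal rays of super-attracting basins (proved by a Poincar\'e-metric pull-back argument on $B_i$, and standard in the references already cited in the paper): the ray $R_i(0)$ has a unique landing point $p_i\in\partial B_i$, which is a fixed point of $N_\l$ lying on $J(N_\l)$.

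Finally, I would determine the fixed points of $N_\l$ on $\hat\C$. As a degree-$3$ rational map, $N_\l$ has exactly four fixed points. Three of them are the roots of $P_\l$, which are super-attracting and hence lie in the Fatou set, not on $\partial B_i\subset J(N_\l)$. The fourth is at $\infty$: since $P_\l(z)/P'_\l(z)=z/3+O(1)$ near $\infty$, one has $N_\l(z)=\tfrac23 z+O(1)$, so $\infty$ is a fixed point with multiplier $3/2$, in particular repelling and lying on the Julia set. Thus $p_i=\infty$ for each $i$, which gives the claim. The only potential obstacle is the appeal to the landing theorem, but this is classical and essentially the same argument used throughout the paper to land periodic rays at repelling periodic points.
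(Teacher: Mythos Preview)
Your argument is correct. The paper does not actually prove this lemma in-line; it simply attributes the result to \cite{TL} and \cite{RoeschAnnals} (``The following lemmas can be found in~\cite{TL} and also in~\cite{RoeschAnnals}''), so there is no detailed proof in the paper to compare against. Your route---show each ray $R_i(0)$ is $N_\l$-invariant via the B\"ottcher conjugacy, invoke the landing theorem for fixed rays to get a fixed landing point on $J(N_\l)$, and then count fixed points to see that $\infty$ is the only one on the Julia set---is the standard one and is essentially what the cited references do.
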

 \begin{lem} \label{tworays}
 If the critical point $x_0=0$ is  not on the boundary of the three immediate fixed basins then only two of the three rays of angle $1/2$ land at  the same point (which is a preimage of $\infty$).
 \end{lem}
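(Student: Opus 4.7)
The plan is to combine B\"ottcher dynamics inside each fixed immediate basin with the Jordan-curve property of the basin boundaries from~\cite{RoeschAnnals}, and then to exclude a triple coincidence via a Riemann--Hurwitz count. Denote by $B_i$ the immediate basin of each root $r_i$ of $P_\lambda$. The B\"ottcher coordinate on $B_i$ conjugates $N_\lambda$ to $z\mapsto z^2$, so the internal ray $R^i_\lambda(1/2)$ is sent by $N_\lambda$ onto $R^i_\lambda(0)$, which lands at $\infty$ by Lemma~\ref{threerays}. Hence $R^i_\lambda(1/2)$ lands at some $q_i\in N_\lambda^{-1}(\infty)$. Under the hypothesis $x_0=0\notin\partial B_i$, the results in~\cite{RoeschAnnals} give that $\partial B_i$ is a Jordan curve, so the B\"ottcher map extends to a homeomorphism $\delta_i\colon S^1\to\partial B_i$ semiconjugating $N_\lambda|_{\partial B_i}$ to the doubling map. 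The two preimages of $\infty=\delta_i(0)$ on $\partial B_i$ are then exactly $\delta_i(0)=\infty$ and $\delta_i(1/2)=q_i$, so $q_i$ is a finite preimage of $\infty$, i.e.\ a pole of $N_\lambda$. Since $P'_\lambda$ is a quadratic coprime to $P_\lambda$, there are exactly two poles $p,p'$, and pigeonhole forces at least two of $q_1,q_2,q_3$ to coincide.

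It remains to show that $q_1,q_2,q_3$ are \emph{not} all equal. Assume by contradiction $q_1=q_2=q_3=p$. The three arcs $\eta_i=\overline{R^i_\lambda(0)}\cup\{r_i\}\cup\overline{R^i_\lambda(1/2)}\subset\overline{B_i}$ each join $\infty$ to $p$ through $r_i$, and together form a planar embedding of $K_{2,3}$ with vertex set $\{\infty,p,r_1,r_2,r_3\}$; by Euler's formula this graph $\mathcal{X}$ splits $\hat\C$ into three topological disks $F_{12},F_{13},F_{23}$, each bounded by two of the $\eta_i$. Since $x_0=0$ lies in neither $B_i$ nor $\partial B_i$, it lies outside $\bigcup_i\overline{B_i}\supset\mathcal{X}$, so $x_0$ belongs to exactly one face, say $F_{12}$. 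Under $N_\lambda$ the boundary of $F_{ij}$ maps into the tree $Y_0=\bigcup_k\overline{R^k_\lambda(0)}$, in fact onto the simple arc $\overline{R^i_\lambda(0)}\cup\overline{R^j_\lambda(0)}$ joining $r_i$ to $r_j$ through $\infty$, and
\[
N_\lambda|_{F_{ij}}\colon F_{ij}\longrightarrow\hat\C\setminus\bigl(\overline{R^i_\lambda(0)}\cup\overline{R^j_\lambda(0)}\bigr)
\]
is a proper holomorphic map between two topological disks. Riemann--Hurwitz then gives $\deg N_\lambda|_{F_{12}}=2$ (the critical point $x_0$ contributing ramification $1$) and $\deg N_\lambda|_{F_{13}}=\deg N_\lambda|_{F_{23}}=1$. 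Counting preimages of a generic $y\in\hat\C\setminus Y_0$ across the three faces yields $2+1+1=4$, which contradicts $\deg N_\lambda=3$.

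The genuinely technical step is the Riemann--Hurwitz count. What makes it go through cleanly is that $Y_0$ is a tree in $\hat\C$ (three arcs meeting only at $\infty$), so the complement of any two of its edges is a topological disk, turning each restriction $N_\lambda|_{F_{ij}}$ into a proper map between disks for which Riemann--Hurwitz pins down the degree in terms of the critical points inside. The hypothesis that $x_0=0$ is neither in nor on the boundary of any $B_i$ is used twice: it yields the Jordan-curve basin boundaries needed in the first paragraph and simultaneously forces $x_0$ into the interior of one of the three faces $F_{ij}$, which is precisely what drives the degree count to produce the contradiction.
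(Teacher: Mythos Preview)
The paper does not prove this lemma; it is imported from \cite{TL} and \cite{RoeschAnnals}, so there is no in-paper argument to compare against. What follows concerns only the internal soundness of your proposal.

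The first paragraph is essentially fine. One remark: invoking the Jordan-curve property of $\partial B_i$ just to get $q_i\neq\infty$ is heavier than necessary. Since $N_\lambda$ is a local homeomorphism at the repelling fixed point $\infty$, distinct ray-germs at $\infty$ must have distinct image-germs; but $R_i(0)$ and $R_i(1/2)$ both map to $R_i(0)$, so they cannot both land at $\infty$.

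The genuine gap is in the Riemann--Hurwitz step. You assert that each
\[
N_\lambda|_{F_{ij}}\colon F_{ij}\longrightarrow\hat\C\setminus\bigl(\overline{R_i(0)}\cup\overline{R_j(0)}\bigr)
\]
is proper, but you never check that no interior point of $F_{ij}$ lands on $\overline{R_i(0)}\cup\overline{R_j(0)}$, and in fact this fails for exactly one face. The full $N_\lambda$-preimage of $\overline{R_i(0)}$ has three arcs: $\overline{R_i(0)}$ and $\overline{R_i(1/2)}$ (both on your graph $\mathcal X$), together with a third arc $\overline{R_i'(0)}$ in the extra preimage component $B_i'$ of $B_i$. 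A local-degree check at $\infty$ and at $p$ (both simple preimages of $\infty$) shows that this third arc must land at the \emph{other} pole $p'$. Since $p'\notin\mathcal X$, it lies in exactly one face $F$, and then all three rays $R_i'(0)$ lie inside $F$. For that face your map is not even into the claimed target (points of $R_i'(0)\subset F$ map onto $R_i(0)$), so the degree you assign is unjustified and the $2{+}1{+}1=4$ count collapses.

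The strategy is salvageable. Replace the three separate targets by the single disk $D=\hat\C\setminus Y_0$, and delete from the face $F$ containing $p'$ the interior tripod $Y'=\bigcup_i\overline{R_i'(0)}$. Now all three restrictions are genuinely proper onto $D$; the two faces not containing $p'$ are still disks, but $F\setminus Y'$ is an annulus. Riemann--Hurwitz on this annulus reads $0=d-c$, with $c\in\{0,1\}$ the number of free critical points inside (one checks $x_0\notin Y'$ since $N_\lambda|_{B_i'}$ has degree $1$). Thus either $d=0$ (impossible for a nonconstant proper map) or $d=1$ with a critical point (impossible for a degree-one map). Either way the contradiction is recovered.
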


 \begin{dfn} Assume that  the critical point $x_0=0$ is not on the closure of the immediate basins of the roots. We call the immediate basins $B_1,B_2,B_3$ such that  the rays $R_1(0), R_2(0), R_3(0)$ (in $B_1, B_2, B_3$ respectively) meets in this cyclic order at $\infty$ and $R_1(1/2)$ lands at the same point as $R_2(1/2)$.

 Denote by $W_i$ the pre-image of $B_i$ distinct from $B_i$ and by $R'_i(t_0)=N_\lambda^{-1}(R_i(t))\cap W_i$. Then $R_1(0), R_2(0)$ land at the same point as $R_3(1/2)$ and  $R'_1(0), R'_2(0)$ land at the same point as $R_3(0)$.
 \end{dfn}

The following property is proved    in the article~\cite{RoeschAnnals}.
\begin{thm} If the critical point $0$ is not in the immediate basin of the roots, then the boundary of each connected component of the fixed basins of attraction is a Jordan curve.
\end{thm}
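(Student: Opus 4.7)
\medskip

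\noindent\textbf{Proof proposal.} The plan is to reduce the statement to the two classical criteria characterizing a Jordan-curve boundary: local connectivity of $\partial B_i$ together with injectivity of the Carath\'eodory extension of the B\"ottcher map $\phi_i\colon B_i\to \D$. First I would check that each immediate basin $B_i$ is simply connected. Since the only critical point $N_\l$ has in $B_i$ is the root itself (the free critical point $0$ lies neither in $B_i$ nor in any other immediate root-basin by assumption) and this critical point is fixed of local degree $2$, the Riemann--Hurwitz formula applied to the proper map $N_\l\from B_i\to B_i$ forces $B_i$ to be simply connected and $N_\l|_{B_i}$ to be a degree-$2$ branched cover with a single ramification point. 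Consequently the B\"ottcher coordinate $\phi_i$ is uniquely defined and conjugates $N_\l|_{B_i}$ to $z\mapsto z^2$ on $\D$, and internal rays $R_i(t)$ and equipotentials $E_i(v)$ are well defined.

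Next I would build a puzzle for $N_\l$ whose boundary graph controls $\partial B_i$. Using Lemma~\ref{threerays} and Lemma~\ref{tworays}, the rays $R_1(0),R_2(0),R_3(0)$ form a natural invariant graph at $\infty$, and together with the finite pre-images $R'_j(0)$ (landing at the $N_\l$-preimages of $\infty$) plus a suitably chosen equipotential of each immediate basin, they cut the sphere into finitely many topological disks $V^{(0)}_1,\dots,V^{(0)}_N$ that do not contain the free critical orbit in an essential way (the free critical point $0$ is disjoint from the graph, up to choice of equipotential). Taking successive preimages $V^{(n)}_{\alpha}=N_\l^{-n}(V^{(0)}_{\alpha})$ gives a nested sequence of puzzle pieces at every point $x\in\partial B_i$. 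To obtain local connectivity of $\partial B_i$ it suffices to show that the nest of puzzle pieces around every $x\in\partial B_i$ shrinks to $\{x\}$. Away from the post-critical set this is immediate from the Gr\"otzsch-type modulus argument (or the Ma\~n\'e hyperbolicity lemma), and at pre-images of $0$ I would use the classical Yoccoz shrinking argument: either $x$ is accumulated by the free critical orbit (which is ruled out since $0\notin \ol{B_i}$ follows from a careful use of the assumption, or the orbit of $0$ stays uniformly away from $\partial B_i$), or a modulus estimate through annuli in the puzzle gives the shrinking.

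Once local connectivity is established, $\phi_i^{-1}$ extends continuously to $\ol\D$; to upgrade this to a Jordan curve I would rule out any identifications. Suppose two internal rays $R_i(t_1)$, $R_i(t_2)$ land at a common point $z\in\partial B_i$; their union cuts the sphere into two open regions $U_1,U_2$. Each region must contain another immediate basin $B_j$ (or a pre-image thereof), because otherwise forward iteration of the disk bounded by $R_i(t_1)\cup R_i(t_2)\cup\{z\}$ would contradict the mapping degree of $N_\l|_{B_i}=2$ combined with the location of the critical points. Using that rays landing at the same point have a common period under $z\mapsto z^2$ (so $t_1,t_2$ are rational) and that the corresponding ray-pair must eventually map to the cycle through $\infty$, one checks that the combinatorial configuration forced by Lemmas~\ref{threerays}--\ref{tworays} is incompatible with two rays of $B_i$ meeting, unless the separating disk contains the free critical point $0$ in its interior or on its boundary. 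Combined with the hypothesis that $0\notin\ol{B_i}$ (deduced from the fact that $0$ does not lie in any immediate root-basin together with the sector analysis), this yields the desired contradiction.

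The main obstacle is the shrinking of puzzle pieces at boundary points that are accumulated by the orbit of $0$: the standard hyperbolic expansion argument fails there. I expect to handle this exactly as in Roesch's treatment of non-hyperbolic renormalizable parameters---namely by a tableau/Branner--Hubbard-style argument showing that, because $0$ is a simple critical point and because the graph $\Gamma$ was chosen to be forward invariant, any non-shrinking tail would force $0$ to be recurrent in $B_i$, contradicting the hypothesis. All other steps are routine once the puzzle and its modulus estimates are in place.
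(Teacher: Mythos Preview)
The paper does not prove this theorem at all; it simply imports it from \cite{RoeschAnnals} with the sentence ``The following property is proved in the article~\cite{RoeschAnnals}.'' So there is no proof in the present paper to compare against. Your proposal is an attempt to reconstruct the argument of that cited work, and while the overall strategy (puzzle construction, Yoccoz-type shrinking, then ruling out ray identifications) is indeed the one used there, your sketch has a genuine gap that you should be aware of.

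You repeatedly invoke $0\notin\ol{B_i}$, and even that the forward orbit of $0$ stays uniformly away from $\partial B_i$. Neither follows from the stated hypothesis, which is only that $0$ is not in the \emph{open} immediate basins. In fact the theorem must cover the case $0\in\partial B_1$: by Proposition~\ref{Newtrays}, for every parameter $\lambda\in\partial\HH_-$ with $t/2$ non-periodic, the free critical point $0$ is the landing point of the internal ray $R_1(t/2)$, hence lies on $\partial B_1$. Your shrinking argument (``the orbit of $0$ stays uniformly away from $\partial B_i$'') and your injectivity argument (``the separating disk contains $0$, contradicting $0\notin\ol{B_i}$'') both break down precisely there. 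The actual proof in \cite{RoeschAnnals} handles this via considerably more elaborate graphs (the type~I, II, III graphs alluded to in the proof of Lemma~\ref{Newtpuzzle} of the present paper), together with a tableau analysis that does not assume the critical orbit avoids $\partial B_i$; the simple graph you describe, built only from the rays of angle~$0$ and their immediate preimages, is not sufficient to run the shrinking argument in the recurrent or boundary-critical cases.
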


Therefore  we can extend in each of the fixed immediate basin the B\"ottcher coordinate (which  is uniquely defined).
We get a parametrization of the boundary by landing point of internal rays.

 In order to define itineraries as before we need to define puzzle pieces and therfore some graph.
 In order to do this we will consider the following assumptions:

\begin{assu}\phantom{.} \quad
\label{assumNewt}
\begin{itemize}
\item The Newton map is $k$-renormalizable around the free  critical point $x_0=0$, of filled Julia set denoted by $K_\lambda$\,;
\item The filled Julia set $K_\lambda$ intersects $\partial B_1$\,;
\item The filled Julia set $K_\lambda$  is locally connected.
\end{itemize}
\end{assu}

Note that since the Newton map is renormalizable, the critical point $x_0$ belongs to $K_\l$ so it cannot be on the closure of the immediate basin of attraction of the roots.
The following lemmas are proved in~\cite{RoeschAnnals}.
\begin{lem} The intersection $\partial B_1\cap K_\lambda$ reduces to exactly one point called  $\beta_\lambda$. There exists  a $k$-periodic angle $t_0$ such that
$R_1(t_0)$ lands on  at $\beta_\lambda$.
\end{lem}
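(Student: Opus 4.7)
The plan is to exploit the quadratic-like structure of $N_\lambda^k$ on $K_\lambda$ together with the Jordan-curve structure of $\partial B_1$. Write $I := K_\lambda \cap \partial B_1$; by hypothesis $I \neq \emptyset$, and by Assumption~\ref{assumNewt} the restriction $N_\lambda^k : U \to V$ is a degree-two polynomial-like map with filled Julia set $K_\lambda$. Since the free critical point $x_0 = 0$ is not attracted to any root under iteration, $K_\lambda$ is disjoint from the open basin $B_1$, so $K_\lambda$ lies in the closed exterior of the Jordan curve $\partial B_1$ and $I \subset \partial B_1$. The local degree of $N_\lambda$ at the root in $B_1$ is $2$ (as for any Newton map near a simple root), so in the B\"ottcher coordinate of $B_1$ the restriction $N_\lambda|_{\partial B_1}$ is conjugate to $z \mapsto z^2$ on $S^1$, and by Carath\'eodory the parametrization $t \mapsto \gamma_1(t)$ of $\partial B_1$ by landing points of internal rays is well-defined and continuous.

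Next I would establish forward invariance of $I$: both $\partial B_1$ (as the boundary of a fully invariant basin) and $K_\lambda$ (by polynomial-like invariance) are $N_\lambda^k$-forward invariant, hence so is $I$. In the B\"ottcher parametrization this says that the set of angles of $I$ is forward invariant under $z \mapsto z^{2^k}$.

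The core of the argument is to show $I$ reduces to a single point. I would argue by contradiction: assume $p_1 \neq p_2$ both lie in $I$. Since $K_\lambda$ is compact, connected, and disjoint from the open disk $B_1$, pick a path in $K_\lambda$ joining $p_1$ to $p_2$; combined with either of the two arcs of $\partial B_1 \setminus \{p_1, p_2\}$ it produces a Jordan curve bounding a disk whose interior contains an open sub-arc $\alpha$ of $\partial B_1$. Since $N_\lambda|_{\partial B_1}$ is conjugate to doubling, some forward iterate $N_\lambda^n(\alpha)$ covers all of $\partial B_1$, and in particular meets $I$; pulling back and using the fact that $\partial B_1$ cannot further pierce the complementary component of $K_\lambda$ between $p_1$ and $p_2$ would produce a contradiction. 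A cleaner way to make this rigorous is to pass to the straightening of $N_\lambda^k$, which identifies $K_\lambda$ with a quadratic filled Julia set $K(Q_c)$; under this identification $I$ corresponds to a compact, connected, $z \mapsto z^{2}$-invariant subset of $\partial K(Q_c)$ lying on the boundary of a single accessible attachment point, hence a singleton (the $\alpha$-type fixed point of the polynomial-like map). This uniqueness step is the main obstacle of the proof.

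Let $\beta_\lambda$ denote this unique intersection point. By invariance and uniqueness, $N_\lambda^k(\beta_\lambda) = \beta_\lambda$; in the B\"ottcher parametrization, $\beta_\lambda = \gamma_1(t_0)$ where $t_0$ satisfies $2^k t_0 \equiv t_0 \pmod 1$, so $R_1(t_0)$ lands at $\beta_\lambda$ and $t_0$ has period dividing $k$ under doubling. To pin down the period as exactly $k$, I would suppose the period is some $k' < k$ with $k' \mid k$; then $N_\lambda^{k'}(\beta_\lambda) = \beta_\lambda \in K_\lambda$, forcing $\beta_\lambda \in K_\lambda \cap N_\lambda^{-k'}(K_\lambda)$. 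By primitivity of the renormalization the small Julia sets $K_\lambda, N_\lambda(K_\lambda), \ldots, N_\lambda^{k-1}(K_\lambda)$ are pairwise disjoint (except possibly at finitely many repelling points, which are not in the invariant set $I$), so $k' \equiv 0 \pmod k$ and thus $k' = k$. This yields the desired $k$-periodic angle $t_0$.
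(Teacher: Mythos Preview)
The paper does not give its own proof of this lemma; it simply attributes the result to \cite{RoeschAnnals}. So there is no in-paper argument to compare against, and your outline is being judged on its own merits. The overall architecture is reasonable (forward invariance of $I := K_\lambda \cap \partial B_1$, then $I$ is a singleton, then read off the periodic angle), but the central step is not actually carried out.

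The uniqueness of the intersection point is the entire content of the lemma, and both of your sketches stop short. In the Jordan-curve version you end at ``would produce a contradiction'' without producing one: the fact that an arc of $\partial B_1$ eventually covers all of $\partial B_1$ under angle-doubling does not, by itself, forbid $K_\lambda$ from touching $\partial B_1$ at two or more points. In the straightening version you assume $I$ is connected and corresponds to a ``single accessible attachment point'' of $K(Q_c)$, but neither is justified --- that is exactly the statement to be proved. The argument in \cite{RoeschAnnals} goes through puzzle/limb structure on $\partial B_1$ to confine $K_\lambda$ to a single limb, which is what is missing here. Two further points: under the straightening, $\beta_\lambda$ corresponds to the $\beta$-fixed point of the quadratic model (the non-separating one, landing point of the $0$-ray), not the $\alpha$-fixed point as you wrote; and your exact-period-$k$ argument invokes primitivity of the renormalization, which is not part of Assumption~\ref{assumNewt}. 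The conclusion still holds in the satellite case, but because only the $\alpha$-type point can be shared among the small Julia sets in the cycle, not the $\beta$-type point $\beta_\lambda$.
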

\begin{lem}
The ray  $R'_1(2t_0)$ lands at $\beta'_\lambda= \partial W_1\cap K_\lambda$.\end{lem}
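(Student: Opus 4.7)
The plan is to transport the preceding lemma to $\partial W_1$ via the restriction $N_\lambda|_{W_1}\colon W_1 \to B_1$.

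First, I verify that this restriction is a conformal isomorphism extending to a homeomorphism of closures. A direct computation shows $N_\lambda'(r) = 0$ and $N_\lambda''(r) \ne 0$ at each root $r$ of $P_\lambda$, so $r$ is a simple critical point of $N_\lambda$, making $N_\lambda|_{B_1}\colon B_1 \to B_1$ a degree-two branched cover. Since $N_\lambda$ has global degree three and $W_1$ is the remaining pre-image component of $B_1$, Riemann--Hurwitz forces $N_\lambda|_{W_1}$ to have degree one, hence be a biholomorphism. The theorem of~\cite{RoeschAnnals} quoted just above guarantees that $\partial B_1$, and by pull-back $\partial W_1$, are Jordan curves, so the map extends to a homeomorphism $\overline{W_1} \to \overline{B_1}$.

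Next I determine the landing point of $R'_1(2t_0)$. By the definition of rays in $W_1$ as pull-backs, $N_\lambda$ carries $R'_1(2t_0)$ onto $R_1(2t_0) = N_\lambda(R_1(t_0))$, which in turn lands at $N_\lambda(\beta_\lambda) \in \partial B_1$ (the image of the landing point of $R_1(t_0)$). Through the boundary homeomorphism of the previous step, $R'_1(2t_0)$ must land at the unique pre-image of $N_\lambda(\beta_\lambda)$ in $\partial W_1$; I denote this point by $\beta'_\lambda$.

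Finally I identify $\beta'_\lambda$ with $\partial W_1 \cap K_\lambda$. Let $K_\lambda^{(1)} := N_\lambda(K_\lambda)$, the next small filled Julia set in the cycle of the renormalization. Applying the preceding lemma with $K_\lambda^{(1)}$ in place of $K_\lambda$ yields $\partial B_1 \cap K_\lambda^{(1)} = \{N_\lambda(\beta_\lambda)\}$; via the boundary homeomorphism this forces $\partial W_1 \cap K_\lambda$ to have at most one point, necessarily $\beta'_\lambda$. For existence, I use that the cyclic element $K_\lambda^{(k-1)}$ satisfies $N_\lambda(K_\lambda^{(k-1)}) = K_\lambda$ together with the invariance $N_\lambda^k(K_\lambda) = K_\lambda$ and the connectedness of $K_\lambda$: the pre-image component of $K_\lambda^{(1)}$ sitting univalently inside $W_1$ (which arises because the two-to-one branching of $N_\lambda$ over $K_\lambda^{(1)}$ is absorbed inside $K_\lambda$ itself) must meet $\partial W_1$ at $\beta'_\lambda$, and this univalent component is itself an element of the grand orbit of $K_\lambda$, placing $\beta'_\lambda$ in $K_\lambda$.

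The main obstacle I foresee is the existence part of the third step: ruling out that the univalent pre-image copy of $K_\lambda^{(1)}$ lying in $W_1$ might be a ``satellite'' component disjoint from the grand orbit of $K_\lambda$, in which case $\beta'_\lambda$ would miss $K_\lambda$. Making the identification with a cyclic element rigorous calls for the puzzle construction for the Newton map, built along the lines of the graph $\Gamma_a$ of the earlier section, together with the invariance properties of the small filled Julia sets; these combined fix the combinatorial location of the relevant pre-image component inside the orbit of $K_\lambda$.
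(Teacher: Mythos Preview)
The paper does not prove this lemma; it attributes it (together with the preceding one) to \cite{RoeschAnnals}. So there is no in-paper proof to compare against, and I assess your argument on its own merits.

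Your steps 1 and 2 are correct and clean: $N_\lambda|_{W_1}$ is indeed a conformal isomorphism onto $B_1$ extending to the Jordan-curve boundaries, and the landing point of $R'_1(2t_0)$ is the unique $\partial W_1$-preimage of $N_\lambda(\beta_\lambda)$. The uniqueness half of step 3 is also fine.

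The existence half of step 3, however, is where the argument breaks. You speak of ``the pre-image component of $K_\lambda^{(1)}$ sitting univalently inside $W_1$'', but $W_1$ is a Fatou component while $K_\lambda^{(1)}$ lies in the Julia set, so no preimage component can sit inside $W_1$. More importantly, the \emph{univalent} preimage component of $K_\lambda^{(1)}$ under $N_\lambda$ is by definition \emph{not} $K_\lambda$ (which carries the critical point and maps with degree two), so even if that univalent copy touched $\partial W_1$, its attaching point would not lie in $K_\lambda$. Your appeal to the grand orbit only places $\beta'_\lambda$ in some preimage of $K_\lambda^{(1)}$, not in $K_\lambda$ itself; you yourself flag this as the obstacle, and the puzzle machinery you propose would not resolve it.

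The fix is much simpler than what you attempt. Since the free critical point $0$ belongs to $K_\lambda$, the restriction $N_\lambda\colon K_\lambda \to K_\lambda^{(1)}$ has degree two. Hence $N_\lambda(\beta_\lambda)\in K_\lambda^{(1)}$ has two preimages in $K_\lambda$: the point $\beta_\lambda$ itself, and a second point $\tilde\beta \ne \beta_\lambda$ (distinct because $\beta_\lambda$ is not the critical point of the quadratic-like map). The preceding lemma gives $K_\lambda \cap \partial B_1 = \{\beta_\lambda\}$, so $\tilde\beta \notin \partial B_1$. But every $N_\lambda$-preimage of a point of $\partial B_1$ lies in $\partial B_1 \cup \partial W_1 = \partial(N_\lambda^{-1}(B_1))$; hence $\tilde\beta \in \partial W_1 \cap K_\lambda$. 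This set is therefore nonempty, and by your uniqueness argument it equals $\{\beta'_\lambda\}$.
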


\begin{dfn} Under the assumption~\ref{assumNewt} we can define the following graphs.
  $$\Gamma_\lambda:=\Cup _{j=1}^{3}(\overline{ R_j(0)} \cup \overline{ R_j(1/2)} )\Cup _{j\ge 0} f^j(\Gamma') \hbox{ where }\Gamma'= R_{B'_1}(0)\cup R_{B'_1}(t_0)\cup K_\lambda.$$
\end{dfn}

Using the same proof as in   Lemma~\ref{component}  we get
\begin{lem} The set $\Gamma_\l$ is connected. Moreover, the set $\C \setminus \Gamma_\l$ is a union of three open connected and simply connected components.
\end{lem}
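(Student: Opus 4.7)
The plan is to follow the template of the proof of Lemma~\ref{component} in three stages.

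First, I would verify that $\Gamma_\lambda$ is compact and connected. Compactness comes from the fact that the forward orbit $\{N_\lambda^j(\Gamma')\}_{j\ge 0}$ cycles through only finitely many configurations: since $t_0$ is $k$-periodic under doubling and $K_\lambda$ is $N_\lambda^k$-invariant, the iterates reuse the finitely many rays $\overline{R_{B'_1}(0)}, \overline{R_{B'_1}(t_0)}, \overline{R_1(t_0)}, \overline{R_1(2t_0)}, \ldots, \overline{R_1(2^{k-1}t_0)}$ and the $k$ small Julia sets $N_\lambda^j(K_\lambda)$. For connectedness, $\infty$ belongs to $\overline{R_j(0)}$ for $j=1,2,3$, to $\overline{R_3(1/2)}$, and to $\overline{R_{B'_1}(0)}$; the basin centers $z_j,z'_1$ and the common landing point $p$ of $\overline{R_1(1/2)}$ and $\overline{R_2(1/2)}$ link the basic rays together; and each iterate $N_\lambda^j(\Gamma')$ is connected and meets the rest of $\Gamma_\lambda$ both at $\infty$ (through $\overline{R_1(0)}$) and through the ray $\overline{R_1(2^{j-1}t_0)}$, which joins $z_1$ to $N_\lambda^{j-1}(K_\lambda)$ at its contact with $\partial B_1$.

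Next, the simple connectivity of each complementary component follows by the exact argument of Lemma~\ref{component}: if $(D_j)$ denotes the open components of $\hat{\C}\setminus\Gamma_\lambda$, then $\hat{\C}\setminus D_j=\Gamma_\lambda\cup\bigcup_{n\ne j}D_n$ is connected (the boundary of each $D_n$ lies inside the connected set $\Gamma_\lambda$), hence $D_j$ is simply connected.

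Finally, to pin down that there are exactly three such components, I would exhibit a skeleton inside $\Gamma_\lambda$ whose complement in $\hat{\C}$ already has three components, and then argue that the remaining pieces attach as trees adorned with blobs that do not subdivide further. Since here $\infty$ is shared by three fixed immediate basins (rather than being the centre of its own basin, as in the cubic case $f_a$), the natural choice is the pair of Jordan loops based at $\infty$
\[
L_1 = \overline{R_1(0)} \cup \overline{R_1(1/2)} \cup \overline{R_2(1/2)} \cup \overline{R_2(0)} \quad\text{and}\quad L_2 = \overline{R_3(0)} \cup \overline{R_3(1/2)},
\]
which meet only at $\infty$ and form a figure-eight whose complement in $\hat{\C}$ has exactly three components. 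The main obstacle is then the combinatorial bookkeeping for the remaining parts of $\Gamma_\lambda$: in particular the ``double attachment'' of $K_\lambda$ (via $\overline{R_{B'_1}(t_0)}$ at $\beta'_\lambda$ and, at iteration $j=k$, via $\overline{R_1(t_0)}$ at $\beta_\lambda$), together with the cycle of blobs $N_\lambda^j(K_\lambda)$ each joined to $z_1$ by the single ray $\overline{R_1(2^{j}t_0)}$ landing at its distinct contact point $\beta_\lambda^{(j)}\in\partial B_1$, must be shown to attach in a tree-like fashion. This is where I would invoke the single-point intersection statements $\partial B_1\cap K_\lambda=\{\beta_\lambda\}$ and $\partial W_1\cap K_\lambda=\{\beta'_\lambda\}$ established in the preceding lemmas, together with the simple connectivity of each $N_\lambda^j(K_\lambda)$, to conclude that no additional complementary component is created.
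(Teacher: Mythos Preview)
Your first two stages (connectedness and simple connectivity) are fine and mirror the paper's argument for Lemma~\ref{component} exactly. The problem is in the third stage: your set $L_2 = \overline{R_3(0)} \cup \overline{R_3(1/2)}$ is \emph{not} a Jordan loop based at $\infty$. The ray $R_3(0)$ does land at $\infty$, but $R_3(1/2)$ lands at a \emph{different} point $p_\infty' \in \partial B_3$, namely the preimage of $\infty$ on $\partial B_3$ distinct from $\infty$ (recall that $\partial B_3$ is a Jordan curve and the B\"ottcher conjugacy on $B_3$ is to $z\mapsto z^2$, so the rays of angle $0$ and $1/2$ land at distinct boundary points). Thus $L_1\cup L_2$ is a Jordan curve with an arc hanging off it at $\infty$, and its complement in $\hat{\C}$ has only two components, not three. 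Your figure-eight picture would require all six rays $R_j(0), R_j(1/2)$ to land at $\infty$, which is simply not the case here.

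The fix is to follow the template of Lemma~\ref{component} more literally and produce a theta-graph: three arcs joining two common endpoints. A convenient choice of endpoints is $\infty$ and the centre $p_1$ of $B_1$, with
\[
E_1=\overline{R_1(0)},\qquad
E_2=\overline{R_2(0)}\cup\overline{R_2(1/2)}\cup\overline{R_1(1/2)},
\]
\[
E_3=\overline{R_3(0)}\cup\overline{R_3(1/2)}\cup\overline{R_{B_1'}(0)}\cup\overline{R_{B_1'}(t_0)}\cup K_\lambda\cup\overline{R_1(t_0)}.
\]
Here $E_3$ passes from $\infty$ through $B_3$ to $p_\infty'$, then through $W_1$ to $\beta_\lambda'$, through $K_\lambda$ to $\beta_\lambda$, and finally along $R_1(t_0)$ to $p_1$. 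These three arcs meet only at $\infty$ and $p_1$, so their complement has exactly three components; the remaining pieces of $\Gamma_\lambda$ (the rays $R_1(2^j t_0)$ and the iterates $N_\lambda^j(K_\lambda)$ for $1\le j\le k-1$) hang off $p_1$ as trees, exactly as you describe, and create no further subdivision.
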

\begin{dfn}
Denote by $\Delta^{N_\l}_0$, $\Delta^{N_\l}_1$, $\Delta^{N_\l}_2$ the connected component of $\widehat \C\setminus \Gamma_\lambda$ such that the boundary of $\Delta^{N_\l}_0$ and  $\Delta^{N_\l}_1$ intersect $K_\lambda$,   $\Delta^{N_\l}_0$ containing $B'_2$ and $\Delta^{N_\l}_2$ being disjoint from $B_3$.
\end{dfn}

\begin{dfn} For any sequence $(\epsilon_i)_{i\in \N}\in \{0,1,2\}^\N$ we define $\Delta^{N_\l}_{\epsilon_0\ldots \epsilon_n}$ by induction over the length of the sequence $\epsilon_0\ldots \epsilon_n$ by the relation $\Delta^{N_\l}_{\epsilon_0\ldots \epsilon_n} = N_{\lambda}^{-n}(  \Delta^{N_\l}_{\epsilon_n})\cap\Delta^{N}_{\epsilon_{0} \ldots \epsilon_{n-1}}$.
\end{dfn}

\begin{dfn}\label{d:puzzleNewt}
Let $$U_n=\hat{\C}\setminus N^{-n}(\cup_{i=1,2,3}\phi_i^{-1}(\{z\in \C\mid \vert z\vert <r\}))$$ with $r<1$ and $\phi_i$ is the B\"ottcher coordinate in $B_i$.
Define  the puzzle piece of finite itinerary $\epsilon_0 \dots \epsilon_n$ as $$P^{N_\l}_{\epsilon_0 \dots \epsilon_n}=U_n\cap \Delta^{N_\l}_{\epsilon_0\cdots \epsilon_n}.$$
\end{dfn}
\begin{lem} \label{Newtpuzzle}
Any nest of  puzzle pieces shrinks to a point.
\end{lem}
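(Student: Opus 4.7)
The plan is to mirror the proof of Lemma~\ref{l:itifa}, with $K_\lambda$ playing the role of the little Julia set $K_a$ and the straightening theorem replacing the role of renormalization of $f_a$. The first step is to introduce, as an auxiliary object, the finer ``Roesch'' graph used in~\cite{RoeschAnnals} to prove local connectivity of $J(N_\lambda)$ under the hypothesis that $K_\lambda$ is locally connected. That graph $\tilde\Gamma_0$ is built from an internal ray $R_j(t)$ in each immediate basin $B_j$ at an angle $t$ periodic under doubling with sufficiently large period, together with the external rays landing at the same points, and its preimages $\tilde\Gamma_n:=N_\lambda^{-n}(\tilde\Gamma_0)$ cut out the fine puzzle pieces $\tilde S_n(z)$. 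The key input from~\cite{RoeschAnnals} is that $\bigcap_{n\ge 0}\overline{\tilde S_n(z)}$ is either $\{z\}$ or is contained in some iterated preimage $N_\lambda^{-m}(K_\lambda)$.

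Next I would compare the two puzzles through their common refinement, whose pieces are the connected components of $\widehat\C\setminus(\Gamma_\lambda\cup\tilde\Gamma_0)$ together with their preimages under iterates of $N_\lambda$. By construction of $\Gamma_\lambda$ the free critical point $x_0=0$ lies in $K_\lambda\subset\Gamma_\lambda$, and each root of $P_\lambda$ sits inside the open disk removed from $U_0$; hence no critical point lies in the interior of any original puzzle piece. It follows that $N_\lambda^n$ restricts to a univalent map on each original depth-$n$ piece onto its depth-$0$ image. Since there are only finitely many depth-$0$ pieces of either puzzle, there is a uniform constant $K$ such that each original piece of depth~$0$ contains at most $K$ refined pieces of depth~$0$; the univalence of $N_\lambda^n$ propagates this bound to every depth. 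Consequently the original nests shrink in the same manner as the refined nests, so it suffices to treat the two cases of the auxiliary input.

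In the first case the refined nest shrinks to a single point, and then so does the containing original nest. In the second case the intersection lies in $N_\lambda^{-m}(K_\lambda)$ for some $m$; pushing forward by $N_\lambda^m$ reduces to an intersection inside $K_\lambda$. I would then invoke the straightening theorem of Douady--Hubbard~\cite{DH2} to produce a quasi-conformal conjugacy $\sigma$, defined on a neighborhood of $K_\lambda$, from $N_\lambda^k$ to a quadratic polynomial $P_c$. Under the B\"ottcher coordinate of $P_c$, the image $\sigma\bigl(\overline{P^{N_\lambda}_{\epsilon_0\cdots\epsilon_n}}\bigr)$ is trapped in a sector of angular width at most $1/2^{n/k}$ and radial extent at most $R^{1/2^{n/k}}$, exactly as for the cubic family. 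The hypothesis that $K_\lambda$ is locally connected transfers via $\sigma$ to local connectivity of $K(P_c)$, so these sector-like pieces shrink to a point, and pulling back by $\sigma^{-1}$ completes the argument.

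The step I expect to be the main obstacle is the uniform comparison of the two puzzles: one has to check carefully that $\Gamma_\lambda$ and $\tilde\Gamma_0$ interact cleanly and that the forward-invariance of $\Gamma_\lambda$, combined with the fact that every critical point of $N_\lambda$ sits on or is avoided by $\Gamma_\lambda\cup(U_0)^c$, is strong enough to secure univalence of $N_\lambda^n$ on every depth-$n$ original piece in the presence of the critical orbit of $x_0=0$ trapped inside $K_\lambda$. This is the Newton-map analogue of the delicate bookkeeping carried out for $f_a$ in Lemma~\ref{l:itifa}, and the renormalization hypothesis together with the condition $K_\lambda\cap\partial B_1\neq\emptyset$ from Assumption~\ref{assumNewt} are precisely what make it work.
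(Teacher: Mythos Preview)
Your proposal is essentially correct and follows the same approach as the paper: compare the original Newton puzzle to the auxiliary Roesch puzzle from~\cite{RoeschAnnals} via their common refinement, use univalence of $N_\lambda^n$ on original depth-$n$ pieces to propagate a uniform bound $K$ on the number of refined pieces per original piece, then handle the residual case inside $K_\lambda$ via the straightening map and local connectivity of $K(P_c)$. One minor imprecision worth flagging: the auxiliary graphs in~\cite{RoeschAnnals} are not built from external rays (there are none for a Newton map) but rather from internal rays in $B_1,B_2$ (type~$I$) and from \emph{articulated rays}---infinite chains of internal rays through iterated preimages of $B_1,B_2$---combined with a type~$I$ graph (type~$II$); the paper invokes both types explicitly, but the rest of your argument is unaffected.
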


\begin{figure}[ht]
  \begin{center}
 \includegraphics[scale=0.52]{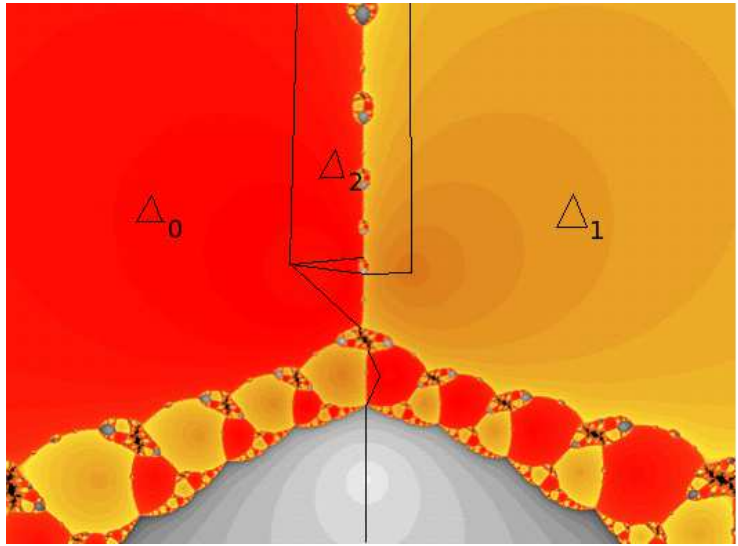}
  \end{center}
  \caption{The graph $\Gamma_\lambda$ and the sets $\Delta_j^{N_\lambda}$}
    \label{PartitionN}
\end{figure}

\proof The proof is similar to the proof of Lemma \ref{l:itifa}. Now, in this proof we will call the puzzle for the Newton map
the {\em original puzzle}, to distinguish it from a puzzle, which we will refer to as type $I$ or $II$, developed in Section 5 in
\cite{RoeschAnnals}. We want to consider a refinement of
the original puzzle with the puzzle of type $I$ or $II$ and use results in \cite{RoeschAnnals} to prove that indeed the
original puzzle also shrinks to points or iterated preimages of the small Julia set.

Let  $G(\theta)=\cup_{j\ge 0}N^j(R_1(\theta)\cup R_2(-\theta))$  be some graph for some periodic  angle $\theta$ with high period.
The type $I$ graph is just the union $$I(\theta)= \partial V_0\cup(G(\theta)\cup R_1(0)\cup R_2(0)\cup R_3(0))\cap V_0.$$
The type $II$ graph combine a graph of the type $G(\eta)$  together with the  so-called  articulated rays.
The articulated rays  are curves formed by an infinite sequence of closure of interior rays in the iterated pre-image of $B_1$ and $B_2$
(cf. axes of bubble rays in \cite{AY}).  Let $\gamma(\zeta)$ be such an  articulated ray (see Section 4 in \cite{RoeschAnnals}),
using  some periodic angle $\zeta$. Then  any point of this curve will eventually be mapped in the graph    $ G(\zeta)$.
The graph of type $II$ is the union $G(\zeta)\cup \bigcup_{j\ge 0}N_\l^j(\gamma(\zeta))\cup G(\eta)$.
It is forward invariant for $\eta $ and $\zeta$ periodic with high period. The puzzle pieces of level $n$ of type $I$ or $II$
defined by these  graphs are the connected components of the  complement of $N^{-n}(I(\theta))$ and $N^{-n}(II(\theta, \zeta))$
respectively (see also Definition 5.1 in \cite{RoeschAnnals}).

 \begin{figure}[ht]
  \begin{center}
 \includegraphics[scale=0.9]{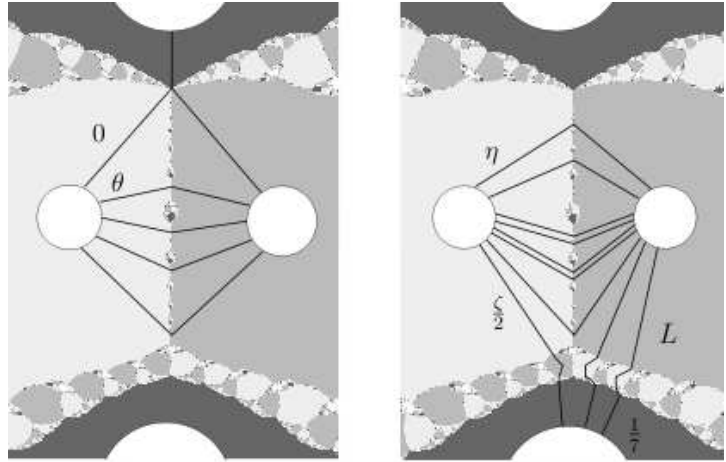}
  \end{center}
  \caption{Graphs of type $I$ and $II$ for the Newton method}
    \label{graphs}
\end{figure}

In~\cite{RoeschAnnals} (Proposition 8.13) it is  proven that  any nest  of such  puzzle pieces of type $I$ or type $II$ either shrinks to a point or
to  an iterated pre-image of the filled Julia set $K_\l$.
We now use this puzzle to show that any nest of puzzle pieces $P_{\epsilon_0,\ldots,\epsilon_n}$ also shrinks to points or iterated
preimages of the small Julia set.
First consider the refined puzzle consisting of all components in the complement of the union of the graphs $\Gamma_\l$ and the
graph of types $I$ and $II$. Clearly, any nest of puzzle pieces in this refined puzzle must shrink to points or subsets of
iterated preimages of the small Julia set according the results in \cite{RoeschAnnals} about the puzzle of type $I$ or $II$.
Note that each refined puzzle piece is contained in a puzzle piece  $P_{\epsilon_0,\ldots,\epsilon_n}$. Since original pieces
$P_{\epsilon_0,\ldots,\epsilon_n}$ are simply connected and do not contain critical points the map
$N_\l^n$ on all original puzzle pieces $P_{\epsilon_0, \ldots, \epsilon_n}$ is univalent for all depths $n$.
It is clear that the number of refined puzzle pieces of depth $0$ inside some puzzle piece $P_{\epsilon_0}$ is bounded
by some constant $K < \infty$. Since $N_\l^n$ is injective on all puzzle pieces $P_{\epsilon_0,\ldots,\epsilon_n}$, the number
of refined puzzle pieces of depth $n$ inside any $P_{\epsilon_0,\ldots,\epsilon_n}$ at depth $n$ is at most $K$ for all depths $n$.
It follows that the nest $P_{\epsilon_0,\ldots,\epsilon_n}$ also shrinks to points or a (subset of) iterated preimages of the small
Julia set.

Assume that we are in the second case and that our nests of type $I$ and $II$ contains $K_\lambda$.
Then $\bigcap_{n\in \N} \ol{P^{N_\l}_{\epsilon_0\cdots \epsilon_n}}\subset K_\lambda$. We consider the straightening map
$\sigma$ defined in a neighborhood of $K_\lambda$ to a neighborhood of $K(P_c)$ conjugating $N_\l$ to $P_c$ for some $c$.
Then the image  $\sigma( \ol{P^{N_\l}_{\epsilon_0\cdots \epsilon_n}})$ will be included in a puzzle piece of level $n$ of the nest defined as follows.
Let $\phi_{Pc}$ be  the B\"ottcher coordinate at infinity  of  the quadratic polynomial $P_c$.
It is then clear that $$\phi_{Pc}(\sigma( \ol{P^{N_\l}_{\epsilon_0\cdots \epsilon_n}}))\subset G_{\epsilon_0\cdots \epsilon_n}$$
where   $$G_{\epsilon_0\cdots \epsilon_n}=
\{z\in \C\mid \arg(z)\in [t_n,t'_n], \ 1\le \vert z\vert \le R^{1/2^n}\} $$ for some dyadic $t_n,t'_n$ with $\vert t_n-t'_n\vert\le 1/2^n$ and  for some $R>1$.
 Since we assume that the Julia set of $P_c$ is locally connected, we can conclude
that these nests of puzzle pieces  for $P_c$ shrinks to a point and therefore since $\sigma$ is an homeomorphism, the corresponding nest of (original) puzzle pieces
shrinks to a point.
\cqfd

\begin{lem}\label{l:puzzleitin} Let  $\{\epsilon_i\}_{i=0}^\infty, \{\epsilon'_i\}_{i=0}^\infty$ be two sequences in the same class in $\Sigma$, in other words  $[\{\epsilon_i\}_{i=0}^\infty]=[\{\epsilon'_i\}_{i=0}^\infty]$.
Then we have the property that $$\bigcap_{n\ge 0} \overline{P^{N_\l}_{\epsilon_0\cdots\epsilon_n}}=
\bigcap_{n\ge 0} \overline{P^{N_\l}_{\epsilon'_0\cdots\epsilon'_n}}.$$
\end{lem}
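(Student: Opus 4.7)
The equivalence relation $\sim$ on $\tilde\Sigma$ is generated by the three elementary moves (i) $\overline{0}\sim\overline{2}$, (ii) $\alpha_0\cdots\alpha_n 1\overline{0} \sim \alpha_0\cdots\alpha_n 0\overline{2}$, and (iii) $\alpha_0\cdots\alpha_n 2\overline{0} \sim \alpha_0\cdots\alpha_n 1\overline{2}$, for any finite word $\alpha_0\cdots\alpha_n$. By transitivity of equality it suffices to prove the lemma when $\{\epsilon_i\}$ and $\{\epsilon'_i\}$ are related by a single such move. By Lemma~\ref{Newtpuzzle} each of the two intersections is a single point, which I denote $\phi(\epsilon)$ and $\phi(\epsilon')$; the recursive definition of $\Delta^{N_\l}_{\epsilon_0\cdots\epsilon_n}$ together with continuity of $N_\l$ yields the shift intertwining $N_\l(\phi(\epsilon)) = \phi(\sigma\epsilon)$, where $\sigma$ is the shift on $\tilde\Sigma$.

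My strategy is to show $\phi(\epsilon) \in \overline{P^{N_\l}_{\epsilon'_0\cdots\epsilon'_n}}$ for every $n$; this forces $\phi(\epsilon) \in \{\phi(\epsilon')\}$ and gives the claimed equality. Writing $P^{N_\l}_{\epsilon'_0\cdots\epsilon'_n} = U_n \cap \bigcap_{i=0}^n N_\l^{-i}(\Delta^{N_\l}_{\epsilon'_i})$, this inclusion reduces to (a) $\phi(\epsilon) \in \overline{U_n}$ (automatic from the original nest) and (b) $\phi(\sigma^i\epsilon) \in \overline{\Delta^{N_\l}_{\epsilon'_i}}$ for each $0 \le i \le n$. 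Indices with $\epsilon_i = \epsilon'_i$ make (b) trivial; at the remaining ``germ positions'' $\sigma^i\epsilon$ is one of $\overline{0}$, $\overline{2}$, $1\overline{0}$, $0\overline{2}$, $2\overline{0}$, $1\overline{2}$, and I must verify that the corresponding $\phi$-value lies in the appropriate $\overline{\Delta^{N_\l}_j}$.

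For the germs $\overline{0}$ and $\overline{2}$ the shift identity makes $\phi(\overline{0})$ and $\phi(\overline{2})$ fixed points of $N_\l$; since the roots of $P_\l$ are excluded from $\bigcap_n \overline{U_n}$, both equal $\infty$. Lemma~\ref{threerays} asserts that all three rays $R_j(0)$ land at $\infty$, and since each $R_j(0)$ is a boundary arc of $\Gamma_\lambda$ bounding the regions $\Delta^{N_\l}_j$, one obtains $\infty \in \overline{\Delta^{N_\l}_j}$ for every $j$. For the germs $1\overline{0}$ and $0\overline{2}$ the shift identity gives $N_\l(\phi(1\overline{0})) = N_\l(\phi(0\overline{2})) = \infty$, so both values are preimages of $\infty$; Lemma~\ref{tworays} and a local analysis at the corresponding pole together with Lemma~\ref{Newtpuzzle} identify both with the common landing point of $R_1(1/2)$ and $R_2(1/2)$, which lies on the arc of $\Gamma_\lambda$ separating $\Delta^{N_\l}_0$ from $\Delta^{N_\l}_1$ and hence in both closures. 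The germs $2\overline{0}$ and $1\overline{2}$ are handled symmetrically at the other pole of $N_\l$, using the additional rays of $\Gamma_\lambda$ landing there.

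The main obstacle is the local identification of $\phi(1\overline{0})$ as the pole rather than $\infty$: one must first rule out $\infty$ as a candidate. Since $\infty$ is a repelling fixed point of $N_\l$ and the three fixed rays $R_j(0)$ force $N_\l$ to preserve each of the three sectors it cuts out at $\infty$, no sequence approaching $\infty$ from within $\Delta^{N_\l}_1$ has its $N_\l$-image in $\Delta^{N_\l}_0$, so $\infty \notin \overline{P^{N_\l}_{1\overline{0}^k}}$ for $k \ge 1$. At the pole $p$ where $R_1(1/2)$ and $R_2(1/2)$ land, however, these two rays cut a neighborhood of $p$ into two local sectors, one inside $\Delta^{N_\l}_1$ that $N_\l$ carries into the $\Delta^{N_\l}_0$-sector at $\infty$; this sector is preserved by further iterates, producing a sequence in $\Delta^{N_\l}_{1\overline{0}^k} \cap U_k$ tending to $p$ for every $k$. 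Lemma~\ref{Newtpuzzle} then pins the single-point intersection down to $\{p\}$, so $\phi(1\overline{0}) = p$ as required.
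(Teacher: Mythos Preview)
Your approach is essentially the same as the paper's: identify the nests associated to the generating equivalences with the repelling fixed point $\infty$ and its two finite preimages, then pull back. The paper argues directly that $\bigcap_n \overline{P^{N_\l}_{0\cdots 0}} = \bigcap_n \overline{P^{N_\l}_{2\cdots 2}} = \{\infty\}$ and then inspects which depth-$1$ pieces touch each pole; your shift-intertwining formulation is a clean repackaging of the same computation.

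There is, however, a concrete labeling error in your third paragraph. In the paper's conventions the preimage $p_\infty''$ where $R_1(1/2)$ and $R_2(1/2)$ land carries the itineraries $\{2\overline{0},\,1\overline{2}\}$, while the itineraries $\{1\overline{0},\,0\overline{2}\}$ belong to the pole $p_\infty'$ on $\partial B_3$ (the landing point of $R_3(1/2)$). Consequently your sentence ``which lies on the arc of $\Gamma_\lambda$ separating $\Delta^{N_\l}_0$ from $\Delta^{N_\l}_1$'' is attached to the wrong pole, and the local sector analysis in your last paragraph is carried out at the wrong preimage of~$\infty$. The argument is easily repaired by swapping the roles of the two poles, but as written it does not match the geometry. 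Separately, your reduction of $\phi(\epsilon)\in\overline{P^{N_\l}_{\epsilon'_0\cdots\epsilon'_n}}$ to conditions (a) and (b) tacitly uses $\overline{\Delta^{N_\l}_{\epsilon'_0\cdots\epsilon'_n}}=\bigcap_{i\le n} N_\l^{-i}(\overline{\Delta^{N_\l}_{\epsilon'_i}})$; this is true (and used elsewhere in the paper) but is not automatic from general topology and deserves a word of justification.
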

\proof
First we prove it for $\overline 0 \sim \overline 2$ (equivalence relation in $\tilde{\Sigma}$). Observe that $\infty$ is a repelling fixed point and the preimage  by $N_\l$ of $\Delta^{N_\l}_0$  contained in $\Delta^{N_\l}_0$ meets $\infty$. Since puzzle pieces shrinks to points we must have that

$$\{\infty\} =  \bigcap_{n\ge 0} \overline{P^{N_\l}_{\underbrace{ 0 \ldots 0}_{\text{$n$ digits}}  }}. $$

By an analogous argument, also
$$\{\infty\} =  \bigcap_{n\ge 0} \overline{P^{N_\l}_{\underbrace{2 \ldots 2}_{\text{$n$ digits}}  }}.$$

To continue, first note that $\infty$ has two preimages other than itself. Let the preimage $p_{\infty}' \neq \infty$ of $\infty$ meet $B_3$ and let $p_{\infty}'' \neq \infty$ meet $B_1$ and $B_2$.
By inspection, $\Delta^{N_\l}_{10}$ and $\Delta^{N_\l}_{02}$ meet $p_{\infty}'$ and $\Delta^{N_\l}_{20}$ and $\Delta^{N_\l}_{12}$ meet $p_{\infty}''$. The preimage of the set $\Delta^{N_\l}_{\underbrace{1 0 \ldots 0}_{\text{$n+1$ digits}}  }$ and $\Delta^{N_\l}_{\underbrace{0 2 \ldots 2}_{\text{$n+1$ digits}}  }$ meeting
$p_{\infty}'$ again meet $p_{\infty}'$ so these nested sets shrink to the same point $p_{\infty}'$ and likewise the other two nested sequences $\Delta^{N_\l}_{ \underbrace{20 \ldots 0}_{\text{$n+1$ digits}}  }$ and $\Delta^{N_\l}_{ \underbrace{12 \ldots 2}_{\text{$n+1$ digits}}  }$ also shrink to the same point $p_{\infty}''$. In other words,
$$p_{\infty}' =  \bigcap_{n\ge 0} \overline{P^{N_\l}_{\underbrace{1 0 \ldots 0}_{\text{$n+1$ digits}}  }} = \bigcap_{n\ge 0} \overline{P^{N_\l}_{\underbrace{0 2 \ldots 2}_{\text{$n+1$ digits}}  }}, $$
and
$$p_{\infty}'' =  \bigcap_{n\ge 0} \overline{P^{N_\l}_{\underbrace{ 20 \ldots 0}_{\text{$n+1$ digits}}  }} = \bigcap_{n\ge 0} \overline{P^{N_\l}_{\underbrace{1 2 \ldots 2}_{\text{$n+1$ digits}}  }}. $$

Any $n$th preimage $z$ of $p_{\infty}'$ or $p_{\infty}''$ lies inside some $\Delta^{N_\l}_j$. Hence any such point $z$ meets $\Delta^{N_\l}_{\epsilon_0,\ldots,\epsilon_{n-1}, q}$ where $q$ belongs to $\{1\overline{0},0\overline{2}\}$ (if $z$ is a preimage of $p_{\infty}'$) or $\{2\overline{0},1\overline{2}\}$ (if $z$ is a preimage of $p_{\infty}''$).
\endproof
We can now  define itineraries for points of the Julia set $J(N_\lambda)=J_\lambda$ with respect to this partition, as for the cubic polynomials. Note that the points lying on the graph
are not  accessible by  external rays.

\begin{dfn}\label{d:itNewt}
  For any point $z$ in $J_\lambda$  we associate $\epsilon_N(z)$ which is   a collection of itineraries  in $\Sigma$  defined as follows
 $$[\{\epsilon_i\}_{i=0}^\infty]\in  \epsilon_N(z)\iff z= \bigcap_{n\ge 0} \overline{P^{N_\l}_{\epsilon_0\cdots\epsilon_n}}.$$

  \end{dfn}

\newpage
  \begin{lem}
  \phantom{.} \quad
\begin{enumerate}
\item To the point $\infty$ we get the  itineraries $\epsilon_N(z) = \{[\ol 0]=[\ol 2], [\ol 1]\}$.
\item The pre-images of $\infty$ other than $\infty$ itself, $p_{\infty}'$ and $p_{\infty}''$, have $3$ itineraries each by pull back; the one on the boundary of $B_3$ has itineraries $\epsilon_N(p_{\infty}') = \{ [0\ol 1], [0\ol 2]=[1\ol 0] \}$, and $\epsilon_N(p_{\infty}'') = \{ [2\ol 0]=[1\ol 2], [2\ol 1] \}$.
\item For any point $z \in \cup N_\lambda^{-n}(K_\lambda)$, there are at most two itineraries in $\epsilon_N(z)$. Let $\underline{\epsilon}=[(\epsilon_0 \cdots \epsilon_n\cdots)] \in \epsilon_N(z) $. Then
 the sets $\Delta^{N_\l}_{\epsilon_0\ldots \epsilon_n}$ are nested and $z\in \ol{\Delta^{N_\l}_{\epsilon_0\ldots \epsilon_n}}$.
\end{enumerate}
\end{lem}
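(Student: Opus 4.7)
My plan is to handle the three statements uniformly: for each distinguished point, determine which depth-$n$ puzzle pieces contain it in their closure, read off the associated itineraries in $\Sigma$, and apply the equivalences $\ol 0 \sim \ol 2$ (and iterated forms) to collapse them to equivalence classes. The essential inputs are Lemma~\ref{l:puzzleitin}, Lemma~\ref{Newtpuzzle}, and the local topology of $\Gamma_\lambda$ near $\infty$, its preimages, and $K_\lambda$.

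For (1), I would start from the fact that, by Lemma~\ref{threerays}, the three fixed internal rays $\ol{R_1(0)}, \ol{R_2(0)}, \ol{R_3(0)}$ all land at $\infty$ and are the only edges of $\Gamma_\lambda$ incident to $\infty$. Hence a punctured neighbourhood of $\infty$ meets exactly three sectors of $\wh\C \setminus \Gamma_\lambda$, one in each $\Delta^{N_\lambda}_j$. Because $\infty$ is a simple repelling fixed point, the local inverse branches produce three nested sequences of puzzle pieces $P^{N_\lambda}_{\underbrace{j\ldots j}_{n}}$ whose intersections are all $\{\infty\}$ by the shrinking statement in Lemma~\ref{Newtpuzzle}. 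The three constant itineraries $[\ol 0], [\ol 1], [\ol 2]$ then collapse under $\ol 0 \sim \ol 2$ to the pair $\{[\ol 0]=[\ol 2], [\ol 1]\}$.

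For (2), since $\infty$ is not a critical value of $N_\lambda$, each univalent local inverse branch transports the three-sector picture at $\infty$ to a three-sector picture at each of the two non-trivial preimages $p_\infty'$ and $p_\infty''$. This gives three depth-$1$ pieces at each point, hence three raw itineraries of the form $\epsilon_0 \omega$, where $\omega$ runs over the three itineraries at $\infty$ and $\epsilon_0$ is determined by which depth-$0$ region the piece sits in. Using the labelling ($\Delta^{N_\lambda}_0$ contains $B'_2$, $\Delta^{N_\lambda}_2$ is disjoint from $B_3$), together with $p_\infty' \in \partial B_3$ and $p_\infty'' \in \partial B_1 \cap \partial B_2$, I would identify the prefix $\epsilon_0$ for each of the three sectors in each case, and then collapse via $[0\ol 2]=[1\ol 0]$ and $[2\ol 0]=[1\ol 2]$ to obtain the claimed two-element sets.

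For (3), the defining property that $K_\lambda \subset \Gamma_\lambda$ and that $K_\lambda$ bounds only $\Delta^{N_\lambda}_0$ and $\Delta^{N_\lambda}_1$, not $\Delta^{N_\lambda}_2$, forces any $z \in K_\lambda$ to lie in the closure of at most two depth-$0$ pieces. Pulling back by suitable inverse branches of $N_\lambda^n$ preserves this bound at iterated preimages in $\bigcup N_\lambda^{-n}(K_\lambda)$, yielding at most two depth-$n$ puzzle pieces and hence at most two itineraries in $\epsilon_N(z)$. The nestedness $\ol{\Delta^{N_\lambda}_{\epsilon_0\ldots\epsilon_n}} \subset \ol{\Delta^{N_\lambda}_{\epsilon_0\ldots\epsilon_{n-1}}}$ and the membership $z \in \ol{\Delta^{N_\lambda}_{\epsilon_0\ldots\epsilon_n}}$ are both immediate from the recursive definition of these sets together with $z \in \ol{P^{N_\lambda}_{\epsilon_0\ldots\epsilon_n}} \subset \ol{\Delta^{N_\lambda}_{\epsilon_0\ldots\epsilon_n}}$. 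The main obstacle I anticipate is the pull-back at preimages of the critical point $x_0 \in K_\lambda$, where $N_\lambda$ is not locally injective; I would dispense with this by observing that, although a neighbourhood of such a point covers a neighbourhood of its image with a branched structure, the restriction of $K_\lambda \cap \Delta^{N_\lambda}_j$ to that neighbourhood lifts to at most two adjacent sectors, so the two-itinerary bound survives.
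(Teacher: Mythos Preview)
Your approach is essentially the same as the paper's: for each part you identify which depth-$0$ pieces $\ol{\Delta^{N_\lambda}_j}$ meet the point in question, pull back along the dynamics, and collapse via the relation on $\Sigma$. The paper's proof is equally brief and relies on the same observations, referencing Lemma~\ref{l:puzzleitin} for the identifications at $\infty$ and its preimages, and the fact that $K_\lambda$ lies only on $\partial\Delta^{N_\lambda}_0 \cap \partial\Delta^{N_\lambda}_1$ for part~(3).

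Two minor remarks. First, your concern in part~(3) about branching at preimages of the critical point $x_0$ is unnecessary: all critical points of $N_\lambda$ lie on the graph $\Gamma_\lambda$, so each open piece $\Delta^{N_\lambda}_j$ contains no critical points and the restriction $N_\lambda|_{\Delta^{N_\lambda}_j}$ is already univalent; the pull-back of the depth-$n$ pieces is therefore unambiguous. Second, the paper handles proper preimages of $K_\lambda$ slightly more directly than you do: rather than arguing that the two-sector bound is preserved under pull-back, it simply observes that any component of $N_\lambda^{-n}(K_\lambda)$ other than $K_\lambda$ and its forward images is disjoint from $\Gamma_\lambda$ and hence lies in a single $\Delta^{N_\lambda}_j$, so such points have only one itinerary. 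This is a cleaner route to the same conclusion, but your pull-back argument is valid as well.
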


\proof The first part goes exactly as in  the proof of the previous lemma:
  $$\{ \infty\} = \bigcap_{n\ge 0} \overline{P^{N_\l}_{\underbrace{1  \ldots 1}_{\text{$n$ digits}}  }}. $$

For the second part, note that  the sequences $1\overline 0$ and $  0\overline 2$ define the same itinerary in $\Sigma$ so that $[1\overline 0]=[0\ol 2]$. Now,  from the previous lemma, it follows that $[1\overline 0] $ lies in the itinerary class $\epsilon_N(p_{\infty}')$ and similarly $[2\overline 0 ]=[ \overline 2]$ lies in the itinerary class $\epsilon_N(p_{\infty}'')$.
The proof of the fact that also $[0 \overline 1]$ belongs to $\epsilon_N(p_{\infty}')$ and that $[2 \overline 1]$ belongs to $\epsilon_N(p_{\infty}'')$ is precisely the same argument as in the previous lemma, and therefore we leave it to the reader.

If a point $z \in K_{\lambda}$ then it is adjacent to at least one of $\Delta^{N_\l}_1$ or $\Delta^{N_\l}_0$, or both. It is clear that is cannot be adjacent to $\Delta^{N_\l}_2$. Hence there are at most two itineraries in $\epsilon_N(z)$. If $z$ belongs to another preimage of $K_{\lambda}$ then $z$ belongs to only one $\overline{\Delta^{N_\l}_j}$.
\cqfd

\section{Parameter planes}
\subsection{Newton parameter plane}\label{ParamNewton}

We describe  the parameter space of cubic Newton maps. A detailed study of this space  is given in~\cite{Roeschcras, Roesch-thesis}.
Since any cubic Newton map can be conjugated to a rational map of the form $N_\lambda(z)=z-\frac{P_\lambda(z)}{P_\lambda'(z)}$ where $P_\lambda(z)=(z+1/2-\lambda)(z+1/2+\lambda)(z-1)$ and  $\lambda\in \C\setminus \{-3/2,0,3/2\}$, our parameter  space is  $ \C\setminus \{-3/2,0,3/2\}$.
\vskip 1em
{\it Symmetries.}
The map  $N_\lambda$ is conformally conjugated to $N_{\lambda'}$  if and only if $\lambda'=s(\lambda)$ where $s$ is any element of the group 
 of M\"obius transformations permuting  the three points $\{-3/2,0,3/2\}$.
The fundamental domain for this group    is
 $$\Omega=\{ \lambda\in \C\setminus \{-3/2,0,3/2\}\quad \mid  \quad \vert \lambda-1/2\vert<1,  \  \vert \lambda+1/2\vert<1, \  \Im m( \lambda)>0\}.$$

\begin{figure}[ht]
  \begin{center}
 \includegraphics[scale=0.52]{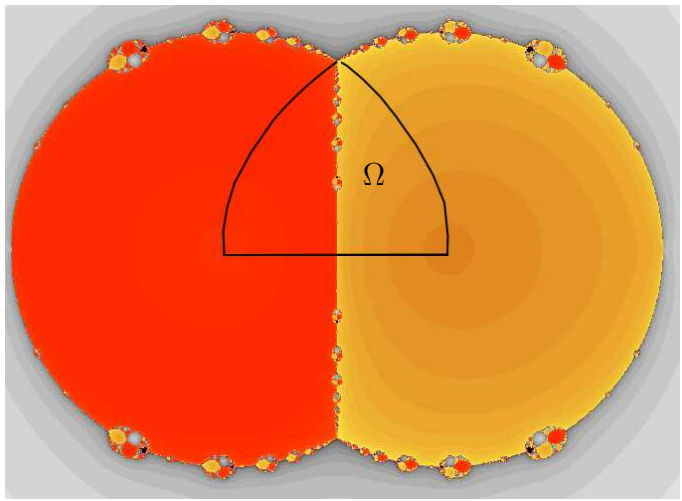}
  \end{center}
  \caption{Fundamental domain $\Omega$}
    \label{domainfunda}
    \vskip -14 em \hskip 2 em $\Omega$
\vskip 14 em

\end{figure}

Another symmetry is  given by  $\lambda\mapsto \ol{\lambda}$. Indeed,  $N_\lambda$ to $N_{\ol{\lambda}}$ are conjugated by $z\mapsto \ol{z}$. Therefore we restrict our   domain of study to  $$\Omega-=\Omega\cap \{z\mid \Re e(z)\le 0\}.$$

\vskip 1em
{\it Hyperbolic components.}  The roots of $P_\l$  $$Roots(\l)=\{-1/2+\l, -1/2+\l, 1\}$$are super attracting fixed points of $N_\l$. The hyperbolic  set $$\mathcal H=\{\l \in \C\mid N_\l^n(0)\to Roots(\l)\}$$
contains three principal hyperbolic components $\HH_-$, $\HH_+$   and $\HH_\infty$ around respectively  $-1/2$, $1/2$ and $\infty$.
The hyperbolic components $\HH_-$ and $\HH_+$ correspond through the map $\lambda\mapsto -\ol \lambda$ and  $\HH_-\subset \{\Re e(\lambda) \le 0\}$ as well as  $\HH_+\subset \{\Re e(\lambda) \ge 0\}$.

\vskip 1em
{\it Dynamics in $\Omega_-$.}
In $\Omega_-$, we call   $B_1$  the immediate basin of attraction of $p_1=-1/2-\lambda$, $B_2$ the one of $p_2=-1/2+\lambda$ and $B_3$ the one of $1$.
The boundaries  $\partial B_1$ and $\partial B_2$   intersect at the landing point of the rays  $R_1(0), R_2(0)$ and also at the landing point of the rays  $R_1(1/2),R_2(1/2) $.  The boundary  $\partial B_3$  intersects  $\partial B_1$ or $\partial B_2$  only at $\infty$ where  its ray $R_3(0)$  of angle $0$ lands. As a consequence of Lemmas \ref{threerays} and \ref{tworays} we get:

\begin{lem} For $\lambda \in \Omega_-$,  the set $\ol{R_1(0)\cup R_2(0)\cup R_1(1/2)\cup R_2(1/2)}$ is a Jordan curve and $t_0\in (0,1/2)$.
\end{lem}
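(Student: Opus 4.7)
The plan is to establish the two assertions in turn.

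\textbf{Jordan curve.} For each $j\in\{1,2\}$ set $\alpha_j:=\overline{R_j(0)}\cup\overline{R_j(1/2)}$. By Lemma~\ref{threerays} each $R_j(0)$ lands at $\infty$, and by the convention labeling the basins $B_1,B_2,B_3$ together with Lemma~\ref{tworays}, the rays $R_1(1/2)$ and $R_2(1/2)$ land at a common preimage $p_\infty\neq\infty$ of $\infty$. Since $\partial B_j$ is a Jordan curve, the B\"ottcher coordinate $\phi_j$ extends to a homeomorphism $\overline{B_j}\to\overline{\D}$, and $\alpha_j$ is the $\phi_j^{-1}$-image of the diameter $[-1,1]\subset\overline{\D}$. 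Hence each $\alpha_j$ is a Jordan arc from $\infty$ to $p_\infty$ passing through the fixed critical point $p_j$, with interior lying in $B_j$. Because $B_1\cap B_2=\emptyset$, we obtain $\alpha_1\cap\alpha_2=\{\infty,p_\infty\}$, so $\mathcal{J}:=\alpha_1\cup\alpha_2$ is a Jordan curve.

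\textbf{Location of $t_0$.} By the Jordan curve theorem, $\mathcal{J}$ separates $\hat{\C}$ into two simply connected components; since $B_3$ is connected and disjoint from $\overline{B_1}\cup\overline{B_2}$ away from $\{\infty,p_\infty\}\subset\mathcal{J}$, it lies in one of them, which we label $U_R$, and we denote the other by $U_L$. The Jordan curve $\partial B_1$ meets $\mathcal{J}$ only at $\infty$ and $p_\infty$, hence is cut into two arcs: the arc $\partial B_1^+$ of landing points of internal rays $R_1(t)$ with $t\in(0,1/2)$, and the arc $\partial B_1^-$ with $t\in(1/2,1)$. Using the cyclic order $R_1(0),R_2(0),R_3(0)$ at $\infty$ prescribed by the labeling, together with the orientation-preserving property of $\phi_1^{-1}$, one verifies that $\partial B_1^+\subset\overline{U_L}$ and $\partial B_1^-\subset\overline{U_R}$. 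Since $K_\lambda$ is connected and $K_\lambda\cap\overline{B_1}=\{\beta_\lambda\}$, it suffices to show that the free critical point $x_0=0\in K_\lambda$ lies in $U_L$. For $\lambda\in\Omega_-$, where $\Re e(\lambda)\leq 0$ and $\Im m(\lambda)>0$, a direct check locates $0$ on the opposite side of $\mathcal{J}$ from $p_3=1\in B_3\subset U_R$, so $0\in U_L$. Therefore $\beta_\lambda\in\partial B_1^+$ and $t_0\in(0,1/2)$.

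The main obstacle is the last step, certifying $0\in U_L$ uniformly over $\Omega_-$. I would confirm it at one convenient base parameter in $\Omega_-$ and propagate by continuity across the connected region $\Omega_-$: this is legitimate because the rays comprising $\mathcal{J}$ lie in $\overline{B_1}\cup\overline{B_2}$, a set avoided by $0\in K_\lambda\setminus\partial B_1$, so $0$ cannot cross $\mathcal{J}$ as $\lambda$ varies. The orientation identification $\partial B_1^+\subset\overline{U_L}$ is the other subtle point, readily handled by pushing forward an explicit curve joining the two arcs under $\phi_1^{-1}$.
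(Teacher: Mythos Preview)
Your Jordan-curve argument is correct and is essentially the paper's one-line justification via Lemmas~\ref{threerays} and~\ref{tworays}.

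For $t_0\in(0,1/2)$, however, your two key intermediate assertions are both false; the conclusion survives only because the errors cancel. The paper's own justification is: ``The fact that $t_0\in (0,1/2)$ comes from the position of the critical point in the connected component of the complement of the Jordan curve containing $B_3$.'' In other words, the free critical point $0$ lies in the \emph{same} component as $B_3$ (your $U_R$), not in $U_L$ as you assert. This is entirely parallel to the cubic-polynomial situation treated just afterwards in the paper, where the free critical point lies in the component containing the ``extra'' external ray $R_a^\infty(1/3)$. Consequently the arc $\partial B_1^+$ also sits in $\overline{U_R}$, not $\overline{U_L}$; your orientation deduction from the cyclic order at $\infty$ has the sign reversed. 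Your ``direct check'' that $0$ and $p_3=1$ lie on opposite sides of $\mathcal{J}$ is never actually carried out, and it is not true.

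The continuity argument you sketch is also shakier than presented. The blanket claim that $0$ avoids $\overline{B_1}\cup\overline{B_2}$ fails on $\HH_-\subset\Omega_-$, where $0\in B_1$; and the parameters for which $t_0$ is even defined (those satisfying Assumption~\ref{assumNewt}) do not form a connected subset of $\Omega_-$, so propagating from a single base parameter is not straightforward. The paper does not attempt this; it simply records that in $\Omega_-$ the critical point sits in the $B_3$-component, referring to the detailed analysis in~\cite{Roeschcras, Roesch-thesis}.
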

The fact that $t_0\in (0,1/2)$ comes from   the position of the critical point  in the connected component of the  complement of the Jordan curve   containing $B_3$.

 \vskip 1em
 {\it Parametrizations.}
\begin{lem} There  exists a map $\Phi_-: \mathcal \HH_-\cap \Omega_-\to \D\setminus [0,1]$ which is a conformal bijection given by the position of the critical value in B\"ottcher coordinate.
\end{lem}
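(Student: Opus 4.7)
The plan is to factor $\Phi_-$ through a more elementary B\"ottcher parametrization of the \emph{critical point} (rather than the critical value), using the functional equation of $\phi_{1,\lambda}$.

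\textbf{Step 1 (Reformulation).} For $\lambda\in \HH_-$, both the free critical point $0$ and its image $N_\lambda(0)$ lie in the immediate basin $B_1$ of $p_1=-1/2-\lambda$. Let $\phi_{1,\lambda}\colon B_1\to\D$ be the B\"ottcher coordinate normalized so that $\phi_{1,\lambda}'(p_1)>0$. Since $p_1$ is super-attracting with local degree $2$, the functional equation $\phi_{1,\lambda}(N_\lambda(z))=\phi_{1,\lambda}(z)^2$ evaluated at $z=0$ yields
\[\Phi_-(\lambda)=\phi_{1,\lambda}(N_\lambda(0))=\Psi(\lambda)^2,\qquad\text{where }\Psi(\lambda):=\phi_{1,\lambda}(0).\]
It therefore suffices to prove that $\Psi\colon\HH_-\to\D$ is a conformal bijection, and then to analyse its restriction to $\Omega_-$ followed by squaring.

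\textbf{Step 2 ($\Psi$ is a conformal bijection onto $\D$).} This is the Douady--Hubbard parametrization of a principal hyperbolic component. Holomorphy of $\Psi$ follows from the holomorphic dependence of $\phi_{1,\lambda}$ on $\lambda$ (constructed first on a small neighbourhood of $p_1$ via its power-series normalisation and then propagated dynamically). The map is proper onto $\D$: by the very definition of $\HH_-$, whenever $\lambda_n\to\partial\HH_-$ the forward orbit of $0$ under $N_{\lambda_n}$ ceases to remain in a compact subset of $B_1$, forcing $|\Psi(\lambda_n)|\to 1$. At the center $\lambda=-1/2$ one has $p_1=0$, so $\Psi(-1/2)=0$, and this zero is simple (checked by a direct differentiation of $N_\lambda(0)$ in $\lambda$ at $-1/2$). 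A proper holomorphic map of degree one is a conformal bijection.

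\textbf{Step 3 (Reduction to $\Omega_-$ and squaring).} The conjugation $z\mapsto \bar z$ intertwines $N_\lambda$ with $N_{\bar\lambda}$, so $\phi_{1,\bar\lambda}(\bar z)=\overline{\phi_{1,\lambda}(z)}$, which at $z=0$ gives $\Psi(\bar\lambda)=\overline{\Psi(\lambda)}$. Thus $\Psi$ carries $\HH_-\cap\{\Im m\,\lambda>0\}$ biholomorphically onto the open upper half-disk $\D_+:=\D\cap\{\Im m\,w>0\}$. Because $\HH_-$ is a neighbourhood of $-1/2$ contained in $\Omega\cup\bar\Omega$ and in the closed left half-plane, the inequalities defining $\Omega_-$ reduce on $\HH_-$ to $\Im m\,\lambda>0$, so $\HH_-\cap\Omega_-$ is exactly the upper half of $\HH_-$. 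Finally, $w\mapsto w^2$ is a conformal bijection $\D_+\to\D\setminus[0,1]$: injectivity on the upper half plane is immediate, and in polar form $w=re^{i\theta}$ with $r\in(0,1)$, $\theta\in(0,\pi)$, the image $r^2e^{2i\theta}$ ranges exactly over $\{\zeta\in\D\setminus\{0\}:\arg\zeta\in(0,2\pi)\}=\D\setminus[0,1]$. Composing yields the desired conformal bijection
\[\Phi_-=\Psi^2\big|_{\HH_-\cap\Omega_-}\colon\HH_-\cap\Omega_-\;\longrightarrow\;\D\setminus[0,1].\]

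\textbf{Main obstacle.} The crux is the degree-one/surjectivity statement for $\Psi$ in Step 2. Injectivity follows from standard quasi-conformal rigidity within one hyperbolic conjugacy class. Surjectivity onto $\D$ is proved by a quasi-conformal surgery: for any target $w\in\D$, one modifies the Blaschke-type model on $B_1$ so as to place the critical value at the point with B\"ottcher coordinate $w$, solves the associated Beltrami equation, and identifies the deformed rational map as some $N_\lambda\in\HH_-$. Alternatively, properness of $\Psi$ together with the Jordan-curve property of $\partial\HH_-$ from~\cite{Roeschcras} yields the degree-one count directly via a boundary argument.
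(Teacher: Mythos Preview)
Your factorization $\Phi_-=\Psi^2$ is a natural idea, but Step~2 contains a genuine error that makes the whole argument collapse. You claim that the zero of $\Psi$ at $\lambda=-1/2$ is simple. It is not. At $\lambda=-1/2$ the root $p_1=-1/2-\lambda$ coincides with the free critical point $0$, so $p_1$ becomes a super-attracting fixed point of local degree~$3$, not~$2$. Concretely, $N_\lambda''(p_1)=P''_\lambda(p_1)/P'_\lambda(p_1)=6p_1/\big((p_1-p_2)(p_1-p_3)\big)$, and since $p_1=-(\lambda+1/2)$ this vanishes to first order at $\lambda=-1/2$. The leading B\"ottcher coefficient is therefore $a_\lambda\sim 3(\lambda+1/2)$, and $\Psi(\lambda)\approx a_\lambda\cdot(0-p_1)\sim 3(\lambda+1/2)^2$: a \emph{double} zero. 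Hence $\Psi:\HH_-\to\D$ is a proper map of degree~$2$, not a conformal bijection. (There is also a minor issue in Step~1: for $\lambda\in\HH_-$ the B\"ottcher coordinate is \emph{not} defined on all of $B_1$, only on the sub-disk bounded by the critical equipotential through~$0$; this is exactly the point the paper's sketch emphasizes. Your normalization $\phi_{1,\lambda}'(p_1)>0$ is also ill-posed since this derivative is generally complex.)

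Step~3 contains a second, independent error: the identification $\HH_-\cap\Omega_-=\HH_-\cap\{\Im m\,\lambda>0\}$ is false. The point $-1/2$ lies \emph{on} the circle $|\lambda-1/2|=1$, so the open set $\HH_-$ (which contains $-1/2$) meets both sides of that circle. The correct picture is that the stabilizer of $\HH_-$ in the full symmetry group (the M\"obius group permuting $\{-3/2,0,3/2\}$ together with complex conjugation) has order~$4$: it is generated by $\lambda\mapsto\bar\lambda$ and the M\"obius involution $s(\lambda)=(\lambda+3/2)/(2\lambda-1)$ fixing $-1/2$ and $3/2$, and one checks that $s\circ\bar{\,\cdot\,}$ is precisely the reflection in the circle $|\lambda-1/2|=1$. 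Thus $\HH_-\cap\Omega_-$ is a \emph{quarter} of $\HH_-$, bounded by an arc of the real axis and an arc of that circle---which is why the paper notes that $\Phi_-$ sends both the real segment \emph{and} the circular arc to $[0,1]$. Your argument using only complex conjugation misses the M\"obius symmetry~$s$ entirely; repairing the proof requires bringing it in.
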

\begin{proof}[Sketch of proof] We give only the idea of the proof.  For more details see~\cite{Roeschcras, Roesch-thesis}. In  $\HH_-\cap\Omega_-$, the immediate basin of $p_1=-1/2-\lambda$ denoted by  $B_1$,  admits a unique B\"ottcher coordinate defined near
$P_1$ and  denoted by $\phi^\lambda_-$.
 This map is defined on some disk containing the critical point $0$ in its boundary whenever $0\in B_1$ and on   $B_1$ if $0\notin B_1$. Therefore the map $\Phi_-(\lambda)=\phi^\lambda_-(N_\lambda(0))$   is well defined for $\l\in \HH_-$.  This map defines a conformal bijection between
 $\HH_-\cap\Omega_-$ and $\D\setminus [0,1]$. 
 \end{proof}

Note that it extends to  a map from $\HH_-\cap\ol {\Omega_-}$ to $\D$, mapping the real
line to $[0,1]$ and the arc of circle also to $[0,1]$. Moreover this map extends to the closure of $\HH_-$  in $\ol{\Omega_-}$
because of the following result (see~\cite{Roeschcras} and \cite{Roesch-thesis}, Section 10.4) and Carath\'eodory's Theorem.

\begin{prop}   The boundary of $\mathcal H_-$ is a Jordan curve.
\end{prop}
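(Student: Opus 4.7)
The strategy is to promote the conformal bijection $\Phi_-: \HH_-\cap\Omega_- \to \D\setminus [0,1]$ of the previous lemma to a homeomorphism between the closed regions, and then use the symmetry $\lambda \mapsto \overline\lambda$ (which preserves $\HH_-$) to glue $\overline{\HH_-\cap\Omega_-}$ with its complex-conjugate copy across the real axis into a Jordan domain $\overline{\HH_-}$. The slit disk $\D\setminus[0,1]$ is simply connected, and in its prime-end compactification the boundary becomes a Jordan curve obtained from $\partial\D$ by opening up the slit into two copies of $[0,1]$. Thus, if $\Phi_-^{-1}$ admits a continuous extension to this prime-end boundary, Carathéodory's theorem gives a continuous surjection onto $\overline{\HH_-\cap\Omega_-}$; injectivity of that extension will then yield the desired Jordan-curve structure of $\partial\HH_-$ after the symmetric gluing.

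To implement this, I would first note that on the two sides of the slit $[0,1]$, the map $\Phi_-$ already extends continuously by the paper's preceding remark (mapping the real arc and the circular arc of $\partial(\HH_-\cap\Omega_-)$ to $[0,1]$ in two different ways). The non-trivial part is to extend $\Phi_-^{-1}$ continuously across $\partial\D$. For this I would invoke the cited results of \cite{Roeschcras} and \cite{Roesch-thesis}, Section~10.4, which establish that $\partial\HH_-$ is locally connected (or equivalently that every parameter ray $\Phi_-^{-1}(\{re^{2\pi i t}:0<r<1\})$ lands at a unique boundary parameter $\lambda(t)\in\partial\HH_-$ with $\lambda$ depending continuously on $t$). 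Injectivity of $t\mapsto\lambda(t)$ follows from a standard dynamical-to-parameter transfer: distinct angles produce incompatible ray portraits for $N_\lambda$ (pinning down either the orbit portrait of $\beta_\lambda$ or the combinatorics of an attached Mandelbrot copy), so no two angles can land at the same parameter. Applying Carathéodory's theorem then yields a homeomorphism $\overline{\D\setminus[0,1]}\to\overline{\HH_-\cap\Omega_-}$ at the level of prime-ends, and gluing with the complex-conjugate copy along the real-axis image of $[0,1]$ exhibits $\overline{\HH_-}$ as the homeomorphic image of a closed disk.

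The main obstacle is the landing and injectivity step, i.e.\ verifying that every radial parameter ray of $\Phi_-$ converges to a single point of $\partial\HH_-$ and that the landing map is injective. At structurally stable boundary parameters this follows from holomorphic motions and $\lambda$-lemma arguments, but at special boundary values (cusps of attached Mandelbrot copies, parabolic parameters, or infinitely renormalizable parameters) one needs the rigidity provided by the parameter puzzle machinery of \cite{Roeschcras, Roesch-thesis}. Once the landing is granted as a black box from those works, the rest of the proof is essentially topological: Carathéodory's theorem plus the conjugation symmetry produce the Jordan-curve conclusion without further dynamical input.
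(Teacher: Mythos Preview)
The paper does not actually prove this proposition: it is stated as a known result with the citation ``(see~\cite{Roeschcras} and \cite{Roesch-thesis}, Section 10.4) and Carath\'eodory's Theorem,'' and no further argument is given. Your proposal is therefore not competing with a proof in the paper but rather sketching what such a proof from the cited sources would look like.

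That said, your outline is sound and matches the architecture one would expect: the conformal map $\Phi_-$ plus local connectivity of the boundary (equivalently, landing of all parameter rays) gives, via Carath\'eodory, a continuous extension of $\Phi_-^{-1}$ to the prime-end boundary; injectivity of the landing map then upgrades this to a homeomorphism onto a Jordan disk, and the symmetry $\lambda\mapsto\overline\lambda$ takes care of the gluing across $\partial\Omega_-$. You are also right that the substantive content---landing and injectivity at the delicate parameters (cusps, parabolics)---is exactly what is supplied by the parameter-puzzle rigidity in \cite{Roeschcras,Roesch-thesis}, and you correctly flag this as a black box. So your proposal is consistent with the paper's treatment, only more explicit about the steps the cited references would have to provide.
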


By Carath\'eodory's Theorem we can extend $\Phi_-^{-1}$ to the boundary.
 This defines a map $\Phi_-^{-1}:\partial \HH_-\cap \ol{\Omega_-}\to \mathbb S $.

 \begin{dfn}  Let  the map  $\lambda:[0,1]\to \partial \HH_-\cap  \ol{\Omega_-}$
 given by   $\l(t)=\Phi_-^{-1}(e^{2i\pi t })$  be a parametrization of $\partial \HH_-\cap  \ol{\Omega_-}$.
 \end{dfn}

The following proposition is a consequence of \cite{Roesch-thesis}, (see Section 10.4, Lemma 7.5.1 and Corollary 7.6.10).
\begin{prop}  \label{Newtrays} Let $\l_0=\l(t)$ for some $t\in [0,1]$. We have the following dichotomy:
\begin{itemize}
\item  If $t/2$ is not periodic by multiplication by $2$ {\rm(}modulo $1${\rm)}, then the critical point $0$ is the
landing point of the ray $R_{1}(t/2)$\,; the map is not renormalizable around $0$\,;
\item If $t/2$ is $k$-periodic by multiplication by $2$,
then  the map is renormalizable around $0$ and $\l_0$ is the cusp of a copy of the Mandelbrot set noted $\M^N_t$.

Moreover, for any parameter $\l$ in $\M^N_t$ the map $N_\l$  is $k$-renormalizable
around $0$ with filled Julia set $K^\l$. The landing point of $R_1(t/2)$ is the    intersection  $\overline{B_1}\cap K^\l $.

\end{itemize}
\end{prop}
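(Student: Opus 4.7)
The plan is to read off the parametrization $\l(t)$ dynamically through the Böttcher coordinate and then feed the periodic cases into the Douady--Hubbard polynomial-like mapping theorem.

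\emph{Parameter--dynamical dictionary.} Since $\phi^\l_-$ conjugates $N_\l|_{B_1}$ to $w\mapsto w^2$, the relation $\Phi_-(\l)=\phi^\l_-(N_\l(0))=\phi^\l_-(0)^2$ shows that for $\l\in\HH_-\cap\Omega_-$ on the internal parameter ray of angle $t$ the critical point $0$ lies on the internal dynamical ray $R_1(t/2)$, the branch of the square root being fixed by continuity from the centre of $\HH_-$, where $\phi^\l_-(0)=0$. Letting $\l\to\l_0=\l(t)$ along this ray, and using Carathéodory's extension of both $\Phi_-^{-1}$ and $\phi^\l_-$ (applicable because $\partial B_1$ is a Jordan curve), I would conclude that $\phi^\l_-(0)$ accumulates on the boundary point $e^{i\pi t}$; dynamically, the critical point $0$ becomes, in the limit, attached to the landing point $y$ of $R_1(t/2)$ on $\partial B_1^{\l_0}$.

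\emph{Non-periodic case.} If $t/2$ is not periodic under doubling, then $y$ is neither periodic nor pre-periodic in any combinatorially useful way; at $\l_0$ the critical point $0$ coincides with $y$ on $\partial B_1^{\l_0}$, and its forward orbit stays on $\partial B_1$ with non-periodic combinatorics. Any polynomial-like restriction of $N_{\l_0}^k$ around $0$ would confine the forward orbit of $0$ to a bounded small filled Julia set attached to $\ol{B_1}$ at a single point, forcing eventual periodicity of the external combinatorics of $R_1(t/2)$. The non-periodic hypothesis rules this out, so $N_{\l_0}$ is not renormalizable around $0$.

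\emph{Periodic case and the Mandelbrot copy $\M^N_t$.} If $t/2$ has period $k$ under doubling, then $R_1(t/2)$ is invariant under $N_{\l_0}^k$ and its landing point $y$ is $k$-periodic. Because $0$ is a critical point, the forbidden ``super-attracting on the Julia set'' scenario is avoided only if $y$ is parabolic of multiplier $1$ and $0$ lives in a cauliflower-type Fatou leaf $U$ attached to $\partial B_1^{\l_0}$ at $y$. I would then apply Douady--Hubbard: choose disks $U_0\Subset V_0$ around the parabolic cycle on which $N_{\l_0}^k\colon U_0\to V_0$ is polynomial-like of degree $2$, straightening to $z^2+1/4$. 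This structure is stable under perturbation and produces a polynomial-like family over a parameter neighbourhood of $\l_0$; the straightening furnishes a homeomorphism $\chi$ from a closed set $\M^N_t$ onto $\M$ with $\l_0\mapsto c=1/4$. For $\l\in\M^N_t$ the ray $R_1(t/2)$ remains stable and lands at a $k$-periodic point that, via $\chi$, is identified with the $\beta$-fixed point of $z^2+\chi(\l)$, i.e.\ with the unique intersection $\ol{B_1}\cap K^\l$.

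\emph{Main obstacle.} The delicate technical step is the uniform construction of the polynomial-like family over a full neighbourhood of $\l_0$ and the verification that the straightening surjects onto all of $\M$, so that $\M^N_t$ is a complete copy rather than a fragment. This requires a choice of $(U_0,V_0)$ carefully matched to the combinatorics determined by $t/2$ together with a surgery/tuning argument, while one simultaneously monitors that the dynamical ray $R_1(t/2)$ stays stable throughout $\M^N_t$ so the landing description survives uniformly.
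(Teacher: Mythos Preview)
The paper does not give its own proof of this proposition: it is stated as ``a consequence of \cite{Roesch-thesis}, (see Section 10.4, Lemma 7.5.1 and Corollary 7.6.10)'' and is simply imported from those references. So there is no proof in the paper to compare your argument against line by line.

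That said, your outline follows the standard route that underlies the cited results, and the architecture is correct: the B\"ottcher dictionary identifying the parameter angle $t$ with the dynamical angle $t/2$ on $\partial B_1$, the parabolic nature of $\l_0$ when $t/2$ is periodic (hence $\l_0\mapsto 1/4$ under straightening), and the Douady--Hubbard polynomial-like family producing the copy $\M^N_t$. You also correctly flag the genuinely hard step, namely building the quadratic-like family over a full neighbourhood of $\l_0$ and showing the straightening is a homeomorphism onto all of $\M$; this is precisely the substance of the cited thesis sections, and it is not something one can wave through.

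Two places where your sketch is thinner than it should be. First, in the non-periodic case your argument that ``renormalizability would force eventual periodicity of the combinatorics of $R_1(t/2)$'' needs to be made precise: the clean statement is that $0\in\partial B_1$, the forward orbit of $0$ lies on $\partial B_1$ with doubling-angle itinerary $t/2, t, 2t,\ldots$, and a $k$-renormalization would force this orbit to be contained in a small filled Julia set meeting $\ol{B_1}$ at a single $k$-periodic point, contradicting aperiodicity. Second, your limiting argument ``$\phi^\l_-(0)\to e^{i\pi t}$ as $\l\to\l_0$'' mixes a moving conformal map with a moving domain; one needs holomorphic motion/stability of internal rays (and in the periodic case one must argue separately that $0\notin\partial B_1$ at $\l_0$, since it lies in the parabolic basin attached at $y$, not at $y$ itself). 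These are exactly the kinds of details handled in \cite{Roesch-thesis}.
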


\begin{dfn}Let $T\subset [0,1]$ be the set of $t$ such that $t/2$ is $k$-periodic by multiplication by $2$ for some $k\ge 2$.
\end{dfn}

Recall that in the introduction we define $$RN= \bigcup_{t\in T} \M^N_t.$$

Any parameter $\l$ in $RN$  such that $J(N_\l)$ (or equivalently $K^\l$) is locally connected satisfies   Assumption~\ref{assumNewt}.

\subsection{Cubic polynomial parameter plane}
Any cubic polynomial having a critical fixed point is conjugate by an affine map to one in the family
$f_a(z)=z^2(z+3a/2)$.  Moreover, $f_a$ and $f_{a'}$ are affine conjugated if and only if $a'=-a$. Note  that the map $z\mapsto \ol z$ also conjugates $f_a$ to $f_{\ol a}$. Therefore, we can restrict ourself to study the maps in  one of the four quadrants
of $\C$. Let    $$Q=\{a\in \C\mid \Re e(a)>0, \Im m(a)<0\}$$ be the lower right quadrant. We will restrict to $Q$ in what follows because of the following lemma.

\begin{lem}
In the quadrant  $Q $, the rays $R_a^0(1/2)$ and $R_a^\infty(2/3)$ land at the same point.  
Moreover, $t_0\in(0,1/2)$.
\end{lem}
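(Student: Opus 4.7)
I adapt the stability strategy of Corollary~\ref{correal}: first use a degree count to list the two possible identifications of the fibre $f_a^{-1}(\delta_a(0))$, then use the tangent directions of the rays at $0$ and $\infty$ to rule out the wrong one, and finally propagate throughout $Q$ by stability of rays.

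By Corollary~\ref{correal}, the rays $R_a^\infty(0)$ and $R_a^0(0)$ co-land at the fixed point $\delta_a(0)=\gamma_a(0)$ for every $a\in Q$ (with $-a\notin A_1\cup A_\infty$). Pulling back by the degree-three map $f_a$, the fibre $f_a^{-1}(\delta_a(0))$ admits two descriptions: internally as $\{\delta_a(0),\delta_a(1/2),\delta_a'(0)\}$ (landings of the preimages of $R_a^0(0)$ in $A_1$ and $A_1'$, where $\delta_a'(0)$ is the landing of $R_a'(0)$), and externally as $\{\delta_a(0),\gamma_a(1/3),\gamma_a(2/3)\}$. Hence
\[
\{\delta_a(1/2),\delta_a'(0)\}=\{\gamma_a(1/3),\gamma_a(2/3)\},
\]
leaving two a priori possible pairings.

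To single out the right one, I compare tangent directions at the starting points. The B\"ottcher coordinate of $A_1$ has the unique expansion $\phi_a(z)=(3a/2)z+O(z^2)$ (the leading coefficient being forced by $\phi_a\circ f_a=\phi_a^2$), so the internal ray $R_a^0(1/2)=\phi_a^{-1}([-1,0))$ leaves $0$ in the direction of $-2/(3a)$. For $a\in Q$, $\Re e(a)>0$ and $\Im m(a)<0$, so $1/a$ lies in the first quadrant and $-2/(3a)$ lies in the third quadrant, i.e.\ in the lower half-plane. The external ray $R_a^\infty(2/3)$ leaves $\infty$ asymptotically in direction $e^{4\pi i/3}$, also in the third quadrant, whereas $R_a^\infty(1/3)$ leaves in direction $e^{2\pi i/3}$, in the second quadrant. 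Now consider the planar graph $G_a:=f_a^{-1}(\overline{R_a^\infty(0)}\cup\overline{R_a^0(0)})$: it has $V=6$ vertices (preimages of $0$, $\infty$, $\delta_a(0)$) and $E=6$ edges (three preimages of each of the two arcs), so Euler's formula yields $F=2$ faces in $\hat\C\setminus G_a$, both topological disks. The \emph{southern} face---the one whose boundary consists of those edges whose tangent at each of its endpoints points south---is bounded by the 4-cycle $R_a^\infty(0)\cup R_a^0(0)\cup R_a^0(1/2)\cup R_a^\infty(2/3)$, as can be read off from the cyclic orders of edges at $\infty$, $0$ and $\delta_a(0)$ computed above. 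For this boundary to close up into a Jordan curve (as it must, being the boundary of a topological disk), the other endpoints of $R_a^0(1/2)$ and $R_a^\infty(2/3)$ must coincide; that is, $\delta_a(1/2)=\gamma_a(2/3)$. By the stability argument of Corollary~\ref{correal}---the only parabolic parameters $a=\pm4i/3$ lie on $\{\Re e(a)=0\}\not\subset Q$, and no critical collision occurs with these rays---the identification persists throughout $Q$.

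For the position of $t_0$, the B\"ottcher parametrization $t\mapsto\delta_a(t)$ of $\partial A_1$ is orientation-preserving and, for $a\in Q$, sends the upper semicircle $\{e^{2\pi it}:t\in(0,1/2)\}$ of $\partial\D$ to an arc of $\partial A_1$ lying, near $\delta_a(0)=q_a$, in the upper half-plane: multiplication by the leading coefficient $2/(3a)$ of $\phi_a^{-1}$ sends the upper half-plane to itself when $\Im m(1/a)>0$, as is the case for $a\in Q$. Since $\Im m(-a)=-\Im m(a)>0$ for $a\in Q$, the set $L_{t_0}^a\ni-a$ meets $\partial A_1$ on this upper arc, whence $t_0\in(0,1/2)$. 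The main obstacle is the combinatorial bookkeeping in paragraph three: carefully tracing the cyclic order of edges at each vertex of $G_a$ from the tangent-direction data so as to confirm that the southern face is indeed the claimed 4-cycle, and checking that this combinatorial picture remains valid throughout $Q$ and does not jump across a hidden singular locus.
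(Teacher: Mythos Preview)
Your setup is fine: the pullback of $\delta_a(0)$ gives exactly the dichotomy $\{\delta_a(1/2),\delta_a'(0)\}=\{\gamma_a(1/3),\gamma_a(2/3)\}$, and this is also where the paper starts. The gap is in how you resolve it.

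\textbf{The tangent-direction argument cannot work.} The only vertex of your graph $G_a$ with degree $\ge 3$ is $\infty$; every other vertex has degree $\le 2$, where the cyclic order of half-edges is trivial. So the ribbon-graph structure of $G_a$ records only the cyclic order $R_a^\infty(0),R_a^\infty(1/3),R_a^\infty(2/3)$ at $\infty$, and this order is the same in both pairings. Both pairings embed in the sphere with $V=6$, $E=6$, $F=2$, and both are compatible with all the local tangent data you compute. Concretely: in pairing A the $4$-cycle $R_a^\infty(0)\cup R_a^0(0)\cup R_a^0(1/2)\cup R_a^\infty(2/3)$ bounds a face and the tree $R_a^\infty(1/3)\cup R_a'(0)$ sits in the other; in pairing B the $4$-cycle uses $R_a^\infty(1/3)$ instead and the tree $R_a^\infty(2/3)\cup R_a'(0)$ sits in the other face. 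Nothing in your data rules out pairing B. Your ``southern face'' description also does not match: tracing the face to the right of $R_a^\infty(0)$ (the one containing external angles near $1^-$) and following it through $\delta_a(0)$ to $0$, the right side at $0$ lies in the direction $\arg(2/(3a))+\pi/2\in(\pi/2,\pi)$, i.e.\ the \emph{north-west}, corresponding to internal angles $(0,1/2)$---not the ``south'' you claim.

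What actually breaks the symmetry is the location of the free critical point $-a$: the face of $G_a$ containing $-a$ maps with degree $2$ to $\hat\C\setminus C$, the other with degree $1$. The paper uses precisely this (``the one containing the critical point also contains the other ray pre-image\ldots since the width in external B\"ottcher coordinate is greater than $1/3$'') together with an orientation/stability argument to pin down which side $-a$ lies on. Equivalently, once you know the right-hand face at $0$ has internal width $1/2$ (hence internal degree $1$), consistency forces external width $1/3$, i.e.\ $\theta=2/3$. Your proposal never invokes $-a$ or any degree count, so it cannot close.

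\textbf{The $t_0$ argument contains an error.} You assert that multiplication by $2/(3a)$ ``sends the upper half-plane to itself when $\Im(1/a)>0$''. This is false: multiplication by a complex number $c$ preserves the upper half-plane iff $c>0$ is real. For $a\in Q$ with $\Im(a)<0$ strictly, $2/(3a)$ lies in the open first quadrant, and e.g.\ $c\cdot(-1+i\varepsilon)$ has negative imaginary part for small $\varepsilon$. Moreover, the leading coefficient of $\phi_a^{-1}$ at $0$ tells you nothing about where $\delta_a(t_0)$ sits on $\partial A_1$, nor about which side of $J$ the limb $L^a_{t_0}\ni -a$ occupies. In the paper, $t_0\in(0,1/2)$ is a direct byproduct of the critical-point location: once $-a$ is shown to lie in the component of $\hat\C\setminus J$ containing the internal rays $R_a^0(t)$ for $t\in(0,1/2)$, the conclusion is immediate.
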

\begin{proof}
From the study of the real map in Lemma~\ref{real} we know that the critical point is on  the ray of angle $1/2$ of $A_1$ when $a\in \HH_0\cap \R^+$, in particular this ray crashes on the critical point.  When $a\notin \HH_0$, but $\Re e (a)>0$,  the rays $R_a^0(0)$ and $R_a^\infty(0)$ land at the same point. Therefore,  the ray $R^0_a(1/2)$
has to land at the same point as one pre-image $R^\infty_a(\theta)$  of  the ray $R_a^\infty(0)$ with
$\theta\in \{1/3,2/3\}$. The curve $\ol{R_a^0(0)\cup R_a^0(1/2)\cup R_a^\infty(0)\cup R_a^\infty(\theta)}$ separates the plane in two connected components. The one containing the critical point also contains the other ray pre-image
of $R_a^\infty(0)$ (because the width in the external B\"ottcher coordinate is greater than $1/3$ so the map cannot be injective). Now we fixed the quadrant $Q=\{a\in \C\mid \Re e(a)>0, \Im m(a)<0\}$. Since all the coordinates considered preserve the orientation, we deduce (by stability) that  critical point  belongs to the same component as the rays
$R^0_a(t)$ for $t\in (0,1/2)$. Thus the rays $R_a^0(1/2)$ and $R_a^\infty(2/3)$ land at the same point and the critical point belongs to the connected component of $\ol{R_a^0(0)\cup R_a^0(1/2)\cup R_a^\infty(0)\cup R_a^\infty(2/3)}$ containing $R_a^\infty(1/3)$.
\end{proof}

Note that  the two rays  $R_a^0(0)$ and $R_a^\infty(0)$ always land at the same point.

From \cite{RoeschENS} we get a parametrization of the hyperbolic component  $\HH_0$ which is the connected
component containing $0$ of $\HH=\{a\mid f^n_a(-a)\to 0\}$.
\begin{lem} There  exists a map $\Phi_0: \mathcal \HH_0\cap Q\to \D\setminus [0,1]$ which is a conformal bijection given by
the position of the critical value in B\"ottcher coordinate.
\end{lem}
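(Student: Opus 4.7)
The plan mirrors the argument sketched for $\Phi_-$ just above. For $a \in \HH_0 \setminus \{0\}$, since $f_a(z) = (3a/2)z^2 + z^3$ the fixed critical point $0$ has local degree $2$, so there is a unique B\"ottcher coordinate $\phi_a$ satisfying $\phi_a(f_a(z)) = \phi_a(z)^2$ and $\phi_a(z) = (3a/2)z + O(z^2)$, extending by the functional equation to a maximal domain $V_a \subset A_1$ on which it is univalent onto $\D$; its boundary contains the second critical point $-a$. By the very definition of $\HH_0$, the orbit of $-a$ converges to $0$, so the critical value $f_a(-a) = a^3/2$ lies in $V_a$ and
\[
\Phi_0(a) := \phi_a(f_a(-a)) = \phi_a(a^3/2) \in \D.
\]
At $a = 0$ the map $f_0(z) = z^3$ has local degree $3$ at $0$ with B\"ottcher coordinate $\phi_0 = \mathrm{id}$ and $f_0(-0) = 0$, so we set $\Phi_0(0) := 0$; the expansion $\Phi_0(a) = (3/4) a^4 + O(a^6)$ for $a \to 0$ shows that $\Phi_0 : \HH_0 \to \D$ is holomorphic with a zero of order exactly four at the origin.

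The next step is to show $\Phi_0$ is proper: as $a \to \partial \HH_0$, by definition of $\HH_0$ the critical orbit $(f_a^n(-a))$ ceases to converge to $0$, and consequently the critical value $a^3/2$ approaches the boundary of the B\"ottcher disk $V_a$, forcing $|\phi_a(a^3/2)| \to 1$. Combined with the zero of multiplicity four at the origin, properness makes $\Phi_0$ a branched covering $\HH_0 \to \D$ of degree $4$, ramified only at $a = 0$. In particular $\HH_0$ is simply connected.

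To descend to $Q$ and produce the slit, I exploit the two commuting involutive symmetries of the cubic family: $a \mapsto -a$ (affine conjugacy by $z \mapsto -z$) and $a \mapsto \bar{a}$ (conjugacy by $z \mapsto \bar{z}$). A direct verification of the normalized B\"ottcher equations gives $\phi_{-a}(z) = \phi_a(-z)$ and $\phi_{\bar{a}}(z) = \overline{\phi_a(\bar{z})}$, and hence
\[
\Phi_0(-a) = \Phi_0(a), \qquad \Phi_0(\bar{a}) = \overline{\Phi_0(a)}.
\]
The generated Klein four-group acts on $\HH_0$ with fundamental domain $Q \cap \HH_0$, and its fixed locus is the union of the real and imaginary axes. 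By Lemma~\ref{real} the restriction of $\Phi_0$ to the real axis in $\HH_0$ is real-valued and, by the local expansion $\Phi_0(a) \sim (3/4)a^4$, positive near $0$; the symmetry $\Phi_0(\bar{a}) = \overline{\Phi_0(a)}$ gives the analogous statement on the imaginary axis. Thus the image in $\ol{\D}$ of the symmetry axes intersected with $\ol{\HH_0}$ is exactly the closed slit $[0,1]$. Equivariance of the degree-$4$ covering $\Phi_0$ then forces its restriction to the fundamental domain $\HH_0 \cap Q$ to be a conformal bijection onto $\D \setminus [0,1]$.

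The principal obstacle in making the above rigorous is the properness step, which requires uniform control over how the B\"ottcher disk $V_a$ pinches off as $a$ approaches $\partial \HH_0$; the detailed analysis, together with the description of $\partial \HH_0$, has been carried out in~\cite{RoeschENS} for this cubic family, and I would quote those results directly.
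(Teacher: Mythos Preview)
Your proposal is correct and follows essentially the same approach as the paper: both define $\Phi_0(a)=\phi_a(f_a(-a))$ via the B\"ottcher coordinate of the critical value and defer the hard analytic content (properness/bijectivity) to~\cite{RoeschENS}. In fact you supply considerably more detail than the paper's sketch---the local expansion $\Phi_0(a)\sim\tfrac34 a^4$, the Klein four-group equivariance, and the degree/slit argument---so apart from a minor slip (the maximal univalent B\"ottcher domain $V_a$ maps onto a sub-disk $\{|w|<r_a\}$, not all of $\D$, though this does not affect the well-definedness of $\Phi_0$) your write-up is a faithful elaboration of what the paper only outlines.
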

\begin{proof}[Sketch of proof] We give the idea of the proof as follows. For details see \cite{RoeschENS}.
The immediate basin of $0$, denoted by  $A_1$,  admits a unique B\"ottcher coordinate defined near
$0$ and  denoted by
$\phi_a$. This map is defined on some disk containing the critical point $-a$ in its boundary whenever $-a\in A_1$ and on   $A_1$ if $-a\notin A_1$. Therefore the map $\Phi(a)=\phi_a(f_a(-a))$ is well defined for $a\in \HH_0$.  This map defines a conformal bijection between
$\HH_0\cap Q$ where $Q$ is any quadrant and $\D\setminus [0,1]$.
\end{proof}

\begin{figure}[h]
  \begin{center}
  \includegraphics[scale=0.52]{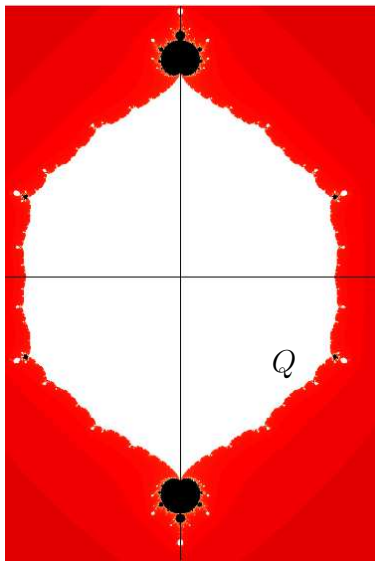}
 \end{center}
  \caption{Parameter space for $f_a$ and the quadrant $Q$}
    \label{graph2}
    \vskip - 10 em \hskip 6 em $Q$
\vskip 9 em
\end{figure}

As a consequence of \cite{RoeschENS}, we get the following three propositions (see Theorems 2, 3 and 5).
\begin{prop}   The boundary of $\mathcal H_0$ is a Jordan curve.
\end{prop}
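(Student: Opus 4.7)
The plan is to extend the conformal bijection $\Phi_0\colon \mathcal{H}_0\cap Q\to \D\setminus [0,1]$ from the previous lemma to a homeomorphism from the closure of $\mathcal{H}_0\cap Q$ onto $\overline{\D}$, and then reassemble the four symmetric copies (obtained from the symmetries $a\mapsto -a$ and $a\mapsto \overline{a}$) to see that $\partial\mathcal{H}_0$ is a Jordan curve. Concretely, I would show first that $\Phi_0^{-1}\colon \D\setminus[0,1]\to \mathcal{H}_0\cap Q$ extends continuously to $\partial \D$, which by Carath\'eodory's theorem is equivalent to proving that $\partial(\mathcal{H}_0\cap Q)$ is locally connected. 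Then I would show that the extension is injective on $\partial \D$, i.e. no two prime ends have the same impression.

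To prove local connectedness of $\partial\mathcal{H}_0$, I would use a parameter-space puzzle analogous to the dynamical puzzle constructed in Section 2 for $f_a$. The key observation is that in $\mathcal{H}_0\cap Q$ the B\"ottcher coordinate $\phi_a$ deforms holomorphically, so parameter rays $\mathcal R(t)=\Phi_0^{-1}(\{re^{2\pi i t}\mid 0<r<1\})$ are well-defined. Following the strategy of \cite{RoeschENS}, one shows landing of parameter rays and shrinking of parameter puzzle pieces by transporting the dynamical Yoccoz-type argument (which works because $-a$ is not in $A_1\cup A_\infty$ on $\partial\mathcal{H}_0\setminus\{a(t)\mid t\in T\}$) to parameter space via a standard holomorphic-motion / Mañé-Sad-Sullivan stability argument; at the points $a(t)$ for $t\in T$, where a Mandelbrot copy is attached, local connectivity of $\partial\mathcal{H}_0$ at the cusp reduces to the fact that the cusp of the Mandelbrot set itself is locally accessible.

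For injectivity of the boundary extension, I would argue parameter by parameter: suppose two prime ends $\zeta_1\neq\zeta_2\in \partial\D$ had impressions meeting at some $a^\ast\in\partial\mathcal H_0\cap\ol Q$. Two distinct parameter rays accumulating at $a^\ast$ would cut $Q$ into two pieces, and a wake argument (analogous to the proof of Lemma~\ref{biaccess}) would produce, for each of the two approach sectors, a distinct combinatorial datum for the dynamics of $f_{a^\ast}$ (encoded by the itinerary class of the critical value in the graph $\Gamma_{a^\ast}$), contradicting rigidity: the rational map $f_{a^\ast}$ has a unique such datum. One uses here that at $a^\ast\in\partial\mathcal H_0$ the critical orbit lies on $\partial A_1$, so the combinatorics is controlled by the landing angle of $R^0_{a^\ast}(1/2)$ together with the position of $-a^\ast$.

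The hardest step is unquestionably the first one, the local connectivity of $\partial\mathcal{H}_0$, because it is the global statement and requires a full parameter-space puzzle argument. The injectivity step is comparatively cheap once the wake/ray-landing structure in parameter space is established, and the reassembly into a global Jordan curve is an elementary gluing along the symmetry axes $\R\cup i\R$ using that $\Phi_0$ sends these real-analytic arcs to $[0,1]$ and its conjugate arcs. Once $\partial(\mathcal{H}_0\cap Q)$ is a Jordan arc with two distinguished symmetric endpoints on the fixed locus of the symmetries, the union of the four copies is automatically a Jordan curve.
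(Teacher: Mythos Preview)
The paper does not prove this proposition at all: it is stated without proof as one of three propositions that are ``a consequence of \cite{RoeschENS}'' (Theorems 2, 3 and 5 there). So there is nothing in the paper's own argument to compare your proposal against --- the authors simply import the result from the cited reference.

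Your sketch is a reasonable outline of the kind of parameter-puzzle argument that \cite{RoeschENS} carries out, and the overall architecture (Carath\'eodory extension via local connectivity of the boundary, then injectivity, then gluing along symmetry axes) is the standard one for results of this type. Whether the details match the proof in \cite{RoeschENS} cannot be judged from the present paper. If you want to justify the proposition within this paper rather than cite it, you would need to supply the full parameter-space puzzle construction and the shrinking argument, which is substantial; the authors evidently judged that to be outside the scope here.
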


By Carath\'eodory's Theorem we can extend $\Phi_0^{-1}$ to the boundary.
 This defines a map $\phi_0^{-1}:\partial \HH_0\cap Q\to \mathbb S \setminus\{1\}$.

 \begin{dfn}  Let  the map  $a:[0,1]\to \partial \HH_0\cap Q$   given by   $a(t)=\Phi_0^{-1}(e^{2i\pi t })$  be a parametrization of $\partial \HH_0\cap Q$.
 \end{dfn}
 \begin{prop}
Let $\mathcal L$ be a non empty connected component of $\mathcal C\setminus \overline{\HH_0}$. Then $\overline{\mathcal L} \cap \overline{\mathcal\HH_0}$ is only one point.
\end{prop}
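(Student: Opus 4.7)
The claim has two pieces: first that $\overline{\mathcal L}\cap\overline{\HH_0}$ is nonempty, and second that it reduces to a single point. For the first piece I would use the standard fact that the connectedness locus $\mathcal C$ is itself connected (shown \`a la Douady--Hubbard using the escape map $a\mapsto\phi^\infty_a(f_a(-a))$ outside $\mathcal C$). Since $\mathcal L$ is a proper component of $\mathcal C\setminus\overline{\HH_0}$ and $\mathcal C$ is connected, $\partial\mathcal L$ must meet $\overline{\HH_0}$.

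The heart of the proof is the uniqueness piece, where the plan is to import Douady--Hubbard's wake construction into this family. The tool is the parametrization $t\mapsto a(t)$ of $\partial\HH_0\cap Q$ by the extension of $\Phi_0^{-1}$. For $a\in\HH_0$ the critical value $f_a(-a)$ has a well-defined angle $t$ in the B\"ottcher coordinate of $A_1$, and as $a\to a(t)\in\partial\HH_0$ the critical value approaches the landing point $\delta_{a(t)}(t)$ of the internal ray of angle $t$. To each such $t$ I would associate, via the itinerary $\epsilon(t)\in\Sigma$, a pair of external parameter rays $\mathcal R(\theta^-_t),\mathcal R(\theta^+_t)\subset\C\setminus\mathcal C$, defined by the condition that $\phi^\infty_a(f_a(-a))$ lies on the corresponding external dynamical rays. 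By stability of external rays as long as they avoid postcritical points (Douady--Hubbard \cite{DH1}), these parameter rays can be continued on a full neighborhood of the relevant region and, by the transversality of $\Phi_0$ near $\partial\HH_0$, they land at the common point $a(t)\in\partial\HH_0$. Together with an arc of $\partial\HH_0$ they bound a \emph{parameter wake} $\mathcal W_t$.

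Once this wake structure is in place, the conclusion is geometric. Two parameter rays landing at $a(t)$ separate $\C\setminus\overline{\HH_0}$ into the wake $\mathcal W_t$ and its complement, and any wake $\mathcal W_{t'}$ with $t'\ne t$ is either disjoint from $\mathcal W_t$ or properly nested inside it; in particular two distinct wakes can meet $\overline{\HH_0}$ only at distinct boundary points. It then remains to check that the component $\mathcal L$ is contained in a single wake $\mathcal W_{t_0}$. This is where renormalization enters: for $a\in\mathcal L$ the free critical point has a well-defined combinatorics governed by a periodic kneading angle $t_0$ (the one produced by Proposition~\ref{Newtrays}'s analogue in the cubic family), and this combinatorics is constant on $\mathcal L$ by connectedness and by the fact that parameter rays cannot cross $\mathcal L\subset\mathcal C$. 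Hence $\mathcal L\subset\mathcal W_{t_0}$, and consequently $\overline{\mathcal L}\cap\overline{\HH_0}=\{a(t_0)\}$.

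The main obstacle is the bookkeeping for the parameter rays: one must prove that the two dynamical rays that co-land at the critical value for $a\in\partial\HH_0$ can actually be continued as parameter rays all the way to $\partial\HH_0$ without accidentally colliding with the (pre)critical set, and that their landing at $a(t)$ is well defined even at non-preperiodic $t$. This is the technical content of the wake theory carried out in \cite{RoeschENS}, and it relies crucially on the fact that $\partial\HH_0$ is a Jordan curve so that $\Phi_0$ extends homeomorphically to the boundary and the parameter-to-dynamical correspondence is bijective on the relevant arcs.
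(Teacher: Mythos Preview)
The paper does not give its own proof of this proposition: it is stated as a direct consequence of \cite{RoeschENS} (see the sentence preceding the three propositions: ``As a consequence of \cite{RoeschENS}, we get the following three propositions (see Theorems 2, 3 and 5)''). So there is nothing in the paper itself to compare your argument against.

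That said, your plan is a faithful outline of the wake-theoretic approach actually carried out in \cite{RoeschENS}. The key ingredients you identify --- the Jordan curve property of $\partial\HH_0$, the parametrization $t\mapsto a(t)$ via the extended B\"ottcher map, the construction of parameter external rays co-landing at $a(t)$ to cut out wakes $\mathcal W_t$, and the constancy of the combinatorial angle $t_0$ on a connected component $\mathcal L$ --- are exactly the ones used there. You also correctly flag the genuine technical burden: continuing the parameter rays to $\partial\HH_0$ and proving landing, including at non-preperiodic angles, which is where the bulk of the work in \cite{RoeschENS} lies. One small sharpening: rather than invoking a generic ``renormalization'' to fix the angle $t_0$ on $\mathcal L$, the argument in \cite{RoeschENS} pins down $t_0$ directly as the internal angle of the critical point on $\partial A_1$ (or equivalently the landing angle of the dynamical wake containing $-a$), which is well defined and locally constant off $\overline{\HH_0}$ without any appeal to quadratic-like restrictions.
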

\begin{dfn}
For $t \in [0,1]$, if $a(t)$ belongs to the closure of a connected component of $\overline{\mathcal L} \cap \overline{\mathcal\HH_0}$, we call this closed connected component $\mathcal L_t$, otherwise we define $\mathcal L_t$ to be $a(t)$.
In other words,
$$\mathcal C=\overline{ \mathcal\HH_0}\cup\bigsqcup_{t\in [0,1]} \mathcal L_t.$$
\end{dfn}

\begin{prop}  \label{Cubicrays} Let $a_0=a(t)$ for some $t\in [0,1]$. We have the following dichotomy:
\begin{itemize}
\item  If $t/2$ is not periodic by multiplication by $2$ {\rm(}modulo $1${\rm)}, then the critical point $-a_0$ is the landing point of the ray $R_{a_0}^0(t/2)$\,; the map is not renormalizable around $-a_0$\,;
\item If $t/2$ is $k$-periodic by multiplication by $2$,  then  the map is renormalizable around $-a_0$ and $a_0$ is the cusp of a copy of the Mandelbrot set noted $\M_t$.

 Moreover, for any parameter $a$ in $\M_t$ the map $f_a$  is $k$-renormalizable  around $-a$ with filled Julia set $K_{a}$. The landing point of $R_{a}^0(t/2)$ is the    intersection  $\overline{B_{a}}\cap K_a$.

\end{itemize}
\end{prop}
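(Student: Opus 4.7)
The plan is to transfer the parameter B\"ottcher coordinate $\Phi_0$ into dynamical B\"ottcher coordinates $\phi_a$ on $A_1$ via the defining relation $\phi_a(f_a(-a)) = \Phi_0(a)$ for $a \in \HH_0 \cap Q$. Using $\phi_a \circ f_a = (\phi_a)^2$, the critical point $-a$ satisfies $\phi_a(-a)^2 = e^{2\pi i t}$, so $\phi_a(-a) \in \{ e^{i\pi t}, e^{i\pi(t+1)} \}$; the earlier lemma constraining the critical angle to $(0,1/2)$ in the quadrant $Q$ picks the branch of argument $t/2$. Letting $a \to a_0 = a(t)$ from within $\HH_0 \cap Q$, I would conclude (using the Jordan curve property of $\partial A_1$ quoted from \cite{RoeschENS} and the stability of internal rays under perturbation) that the internal ray $R^0_{a_0}(t/2)$ lands exactly at $-a_0$.

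For the first bullet, assume $t/2$ is not periodic under doubling. By the previous step, $R^0_{a_0}(t/2)$ lands at $-a_0 \in \partial A_1$. Were $f_{a_0}$ renormalizable around $-a_0$, the small filled Julia set $K_{a_0}$ would contain $-a_0$ and the forward orbit of $-a_0$ would be confined to $K_{a_0}$; but forward invariance of $\partial A_1$ keeps the orbit on $\partial A_1$ while the dynamics there is the shift on external angles, and a quadratic-like restriction forces bounded combinatorics — incompatible with the orbit of $t/2$ under doubling being non-eventually-periodic. I would package this contradiction via the para-puzzle analysis in \cite{RoeschENS}.

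For the second bullet, assume $t/2$ is $k$-periodic under doubling. Step~1 identifies $-a_0$ as a super-attracting $k$-periodic point of $f_{a_0}$ whose immediate basin is disjoint from $A_1$. Following Douady--Hubbard, one builds a quadratic-like restriction $f_{a_0}^k : U_0 \to V_0$ with $-a_0$ its unique critical point, then spreads it by holomorphic motion to an analytic family $\{f_a^k : U_a \to V_a\}_{a \in \Lambda}$ over a neighborhood $\Lambda$ of $a_0$. The induced straightening map $\chi : \Lambda \to \M$ is continuous, sends $a_0$ to $1/4$, and $\chi^{-1}(\M) = \M_t$ is a Mandelbrot copy with $a_0$ at its cusp. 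For $a \in \M_t$, persistence of the ray $R^0_a(t/2)$ yields a landing point at the $\alpha$-fixed point of the renormalization, which is exactly the unique intersection $\overline{A_1} \cap K_a$ described in the preceding proposition.

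The key technical obstacle is controlling the convergence of rays and renormalization across the parameter boundary $\partial \HH_0$: the ray landing in Step~1 and the construction of the quadratic-like family in the second bullet both rest on the para-puzzle argument of \cite{RoeschENS}, which delivers the Jordan curve property of $\partial \HH_0$, the dictionary between parameter angles $t$ and dynamical angles $t/2$, and the holomorphic motion of the polynomial-like germs across the boundary. My plan thus reduces the proposition to invoking Theorems 2, 3, and 5 of \cite{RoeschENS} in the appropriate order, with the dichotomy controlled entirely by whether $t/2$ is periodic under doubling.
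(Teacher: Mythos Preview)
The paper gives no argument for this proposition beyond the sentence ``As a consequence of \cite{RoeschENS}, we get the following three propositions (see Theorems 2, 3 and 5),'' so your final reduction to exactly those theorems matches the paper's treatment, and at that level your proposal is correct.

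Your explanatory sketch for the second bullet, however, is internally inconsistent. You invoke Step~1 to conclude that $-a_0$ is the landing point of $R^0_{a_0}(t/2)$, hence lies on $\partial A_1 \subset J(f_{a_0})$, and then immediately assert that $-a_0$ is a super-attracting $k$-periodic point---but a super-attracting periodic point lies in the Fatou set. In fact you yourself write $\chi(a_0)=1/4$: the renormalization at the cusp straightens to $P_{1/4}$, so the $k$-periodic point on $\partial A_1$ is \emph{parabolic}, and that parabolic point---not $-a_0$---is the landing point of $R^0_{a_0}(t/2)$, while $-a_0$ sits in its parabolic basin. Your limiting argument from inside $\HH_0$ breaks down precisely in the periodic case because the domain $A_1(a)$ pinches as $a\to a_0$, so the limit of $-a$ does not lie on $\partial A_1(a_0)$. (A smaller slip: the intersection $\overline{A_1}\cap K_a$ is the $\beta$-fixed point of the renormalization---the landing point of the $0$-ray of the straightened quadratic---not the $\alpha$-fixed point.) None of this undermines your bottom line, since the paper does not attempt to reprove these facts either; but if you retain the sketch, the cusp picture needs to be corrected.
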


\begin{dfn}Let $T\subset [0,1]$ be the set of $t$ such that $t/2$ is $k$-periodic by multiplication by $2$ for some $k\ge 2$.
\end{dfn}

Recall that $$RC= \bigcup_{t\in T} \M_t.$$

Any parameter $a$ in $RC$  such that $J(f_a)$ is locally connected satisfies   Assumption~\ref{assumcubic}.
From~\cite{DR}, we know  that the Julia set $J(f_a)$ is locally connected   if and only if the small Julia set
$K_a$ is  locally connected. Hence parameters in $RC$ with  $K_a$
locally connected correspond to the parameters  satisfying assumption~\ref{assumcubic}.

\subsection{Correspondence between the parameter planes}
In this section we define a map $\NN$ from  $ RC\cap Q$ to  $  RN\cap \Omega_-$
for which the dynamics are similar. This map will be used in next section to construct the semi-conjugacies.

 For $a\in RC\cap Q$, the map $f_a$ is renormalizable and $a$ belongs to a copy $\M_t$ of $\M$ attached to $\partial \HH_0\cap Q$
 for some $t\in T$ (by previous section). For such $t\in T$,  the parameter $\lambda(t)$ in the Newton parameter plane  is a cusp of a copy $\M^N_t$ of the Mandelbrot set $\M$ (by section~\ref{ParamNewton}).

Denote by  $\chi_t: \M\to \M_t$, respectively  $\chi^N_t:\M \to \M^N_t$, the homeomorphisms between $\M$ and $\M_t$ or $\M^N_t$ respectively.
\begin{dfn}\label{mapN}Let $\NN:RC\cap Q\to RN\cap \Omega_-$ be defined by   $\NN(a)=\chi^N_t\circ \chi_t^{-1}(a)$.
\end{dfn}

\begin{rmk}
Using the symmetries  the map $\NN$ extends  to   $RC$ with image in   $ RN$.
Moreover, parameters in $RC$ with $K_a$ locally connected correspond  through the map $\NN$   to the parameters in $RN$  with $K^\l$ locally connected, {\it i.e.} those satisfying
assumption~\ref{assumNewt}.
\end{rmk}

\begin{lem} The map $\NN: RC \to RN$ is a bijection.
\end{lem}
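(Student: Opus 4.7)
The plan is to first prove bijectivity on the fundamental piece $\NN\colon RC \cap Q \to RN \cap \Omega_-$ from Definition~\ref{mapN}, and then extend to all of $RC$ via the symmetries mentioned in the preceding remark.

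First I would note that both source and target decompose as essentially disjoint unions indexed by the common set $T$:
\[
RC \cap Q \;=\; \bigsqcup_{t \in T} \M_t, \qquad RN \cap \Omega_- \;=\; \bigsqcup_{t \in T} \M^N_t.
\]
On the cubic side this follows from the cited proposition that each nonempty connected component of $\CC \setminus \overline{\HH_0}$ meets $\overline{\HH_0}$ in a single point $a(t)$, combined with the injectivity of the parametrization $t \mapsto a(t)$ of $\partial \HH_0 \cap Q$, so that distinct copies $\M_t, \M_{t'}$ have distinct cusps and are therefore disjoint. The analogous argument on the Newton side uses Proposition~\ref{Newtrays} together with the injectivity of $t \mapsto \l(t)$. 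In particular, every $a \in RC \cap Q$ determines a unique $t(a) \in T$ with $a \in \M_{t(a)}$, so $\NN(a) = \chi^N_{t(a)}\circ\chi_{t(a)}^{-1}(a)$ is well-defined.

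Bijectivity on the fundamental piece then follows formally. For surjectivity: given $\l \in RN \cap \Omega_-$, let $t$ be the unique element of $T$ with $\l \in \M^N_t$, set $c := (\chi^N_t)^{-1}(\l) \in \M$, and take $a := \chi_t(c) \in \M_t \subset RC \cap Q$; then $\NN(a) = \chi^N_t(c) = \l$. For injectivity: if $\NN(a_1) = \NN(a_2)$, the common image lies in $\M^N_{t(a_1)} \cap \M^N_{t(a_2)}$, so disjointness of the Newton copies forces $t(a_1) = t(a_2) = t$, and then $\chi^N_t \circ \chi_t^{-1}\colon \M_t \to \M^N_t$ is a composition of homeomorphisms, hence injective, yielding $a_1 = a_2$.

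Finally, I would extend $\NN$ to $RC \to RN$ using equivariance under the symmetry groups: the involutions $a \mapsto -a$ and $a \mapsto \bar a$ on the cubic side, and the M\"obius group permuting $\{-3/2,0,3/2\}$ together with $\l \mapsto \bar\l$ on the Newton side. These symmetries permute the Mandelbrot copies $\M_t$ (resp.\ $\M^N_t$) among themselves and commute with the intrinsic homeomorphisms $\chi_t, \chi^N_t$ with $\M$, so the map $\NN$ is equivariant and its extension is again a bijection. The only non-formal step is the disjointness of the Mandelbrot copies; once this is granted from \cite{RoeschENS} and \cite{Roesch-thesis} (which rely on $\partial\HH_0$ and $\partial\HH_-$ being Jordan curves with the ``one attachment point'' property), bijectivity is a tautology from the definition $\NN = \chi^N_t \circ \chi_t^{-1}$.
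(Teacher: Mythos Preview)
Your proof is correct and follows essentially the same approach as the paper: both construct the inverse $\chi_t \circ (\chi^N_t)^{-1}$ on each Mandelbrot copy. The paper's proof is a two-sentence sketch of surjectivity only, whereas you are more thorough in justifying disjointness of the copies (hence well-definedness), checking injectivity explicitly, and spelling out the extension by symmetries.
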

\proof Indeed, we could have done the construction starting  in $RN\cap \Omega_-$. Let $\l\in RN\cap \Omega_-$, by definition, $\l$ belongs to  a copy $\M^N_t$ of $\M$ attached by $\l(t)$ to $\partial {\mathcal H_-}$ with $t\in T$. Then the parameter $a(t)$ is the cusp of a Mandelbrot copy $\M_t$ in the parameter plane of $f_a$  and $a=\chi_t\circ( \chi^N_t)^{-1}(\l)$. It belongs to $RC\cap Q$.
\endproof

\section{Construction of the semi-conjugacies}

Let $a\in RC\cap Q$ and $\lambda=\NN(a)$. The maps $f_a$ and $N_\l$ are renormalizable of same period $k\ge 2$ around the critical points $-a$ and $0$ respectively.   Denote by $K_a$ and $K^\l$  the small Julia sets  of $f_a^k$ and $N_\l^k$ respectively.

By the definition of  $\lambda$  there exists some $t \in T$ such that $\lambda= \NN(a)= \chi^N_t\circ \chi_t^{-1}(a)$ so that   $f_a^k$ is conjugate on some neighborhood of $K_a$  to  $P_c(z)=z^2+c$  where $c=\chi_t^{-1}(a)$ and   the map $N^k_\l$ is also   conjugate on some neighborhood of $K^\l$  to  the same $P_c$ because  $\lambda=\NN(a)= \chi^N_t(c)$. Moreover, $K_a\cap \partial A_1=\delta_a(t/2)$ and $K^\l\cap \partial B_1$ is the landing point of  the ray $R_1(t/2)$.

Denote,  by  $\sigma_a:U_a\to W$  the straightening map of $f^k_a$ defined on a neighborhood $U_a$ of $K_a$ onto a neigborhood $W$ of $K(P_c)$.
Similarly   let $\sigma^\l:U^\l\to W$ be the straightening map of $N_\l^k$ defined on a neighborhood  $U^\l$ of $K^\l$ onto $W$.

We suppose now that the small Julia sets $K_a$ and $K^\l$ are locally connected, so that Assumption~\ref{assumcubic} and Assumption~\ref{assumNewt} are satisfied.

\begin{prop}\label{p:itineraryclass} Let $\epsilon, \epsilon'$ be itineraries in $\Sigma$.
Then, $\epsilon, \epsilon'$ belong  to the same itinerary class for $f_a$ or for $f_\dbas$    if and only if they
 belong  to the same itinerary class for $N_\l$, {\it i.e.},  $\bigcap_{n\in \N} \overline{P^{N_\l}_{\epsilon_0\cdots \epsilon_n}}=\bigcap_{n\in \N} \overline{P^{N_\l}_{\epsilon'_0\cdots \epsilon'_n}}$.
\end{prop}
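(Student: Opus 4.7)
My plan is to decompose the non-trivial content of the claim according to the type of orbit producing the multi-element itinerary classes. By the lemma immediately preceding this proposition together with Lemma \ref{biaccess} and Lemma \ref{l:multiaccess}, itinerary classes of size greater than one occur only at specific orbits: for $N_\l$, the iterated preimages of $\infty$ and of the $K^\l$-cycle; for $f_\dbas$, the iterated preimages of $p_\dbas$; for $f_a$, the iterated preimages of the $K_a$-cycle. Outside these orbits the class is a singleton in all three models and the statement is vacuous. The two non-trivial matchings to establish are then between the $N_\l$-classes at preimages of $\infty$ and the $f_\dbas$-classes at preimages of $p_\dbas$, and between the $N_\l$-classes at preimages of the $K^\l$-cycle and the $f_a$-classes at preimages of the $K_a$-cycle.

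For the first matching I would induct on the depth $n$ of the preimage. The base case $\epsilon_N(\infty)=\{[\overline 0],[\overline 1]\}=\epsilon_\dbas(p_\dbas)$ is direct, since both points are repelling fixed points lying on the common boundary of the three components of their respective graph complements (two of the three sequences from the three sectors collapse via $[\overline 0]=[\overline 2]$ in $\Sigma$). For the inductive step, each depth-$(n+1)$ preimage has its itinerary class obtained by prepending a symbol $\epsilon_0\in\{0,1,2\}$ to the class of its image at depth $n$, and I would verify that the full collection of identifying pairs produced at depth $n+1$ agrees on both sides, tracing through the graphs $\Gamma_\dbas$ and $\Gamma_\l$. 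For the second matching, the key tool is the pair of straightening maps $\sigma_a\colon U_a\to W$ and $\sigma^\l\colon U^\l\to W$, both conjugating the respective renormalization to the same $P_c$ (since $\l=\NN(a)=\chi^N_t\circ\chi_t^{-1}(a)$ forces a common $c=\chi_t^{-1}(a)$). The composition $h:=\sigma^\l\circ\sigma_a^{-1}$ is a topological conjugacy between $(K_a,f_a^k)$ and $(K^\l,N_\l^k)$, and I would verify that $h$ transports the local graph structure of $\Gamma_a$ near $K_a$ to that of $\Gamma_\l$ near $K^\l$: both graphs attach the small Julia set to the fixed immediate basin at the landing of the internal ray of the common $k$-periodic angle $t_0$. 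Consequently $\epsilon_a(z)=\epsilon_N(h(z))$ on the $K_a$-cycle, and shift-pullback extends the match to every iterated preimage.

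The main obstacle is the combinatorial verification in the first matching: the symbolic labels $\{0,1,2\}$ attached to the three preimages of $\infty$ for $N_\l$ are determined by the $N_\l$-dynamics and need not coincide ``labelwise'' with those attached to the preimages of $p_\dbas$ for $f_\dbas$. One has to check that the collections of identifying pairs nevertheless agree as subsets of $\Sigma\times\Sigma$. I would handle depth $1$ explicitly, where the four external-ray angles $1/6,1/3,2/3,5/6$ give all identifications on both sides and the four pairs can be compared directly, and then use the inductive pullback for higher depths; a parallel but simpler verification, using the graph structure near $K_a$ and $K^\l$ together with the straightening, handles the renormalization side.
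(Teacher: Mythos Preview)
Your plan is sound and would succeed, but it takes a noticeably different route from the paper's own proof, and it is worth seeing how.

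The paper does \emph{not} carry out a symbolic induction on the depth of preimages. Instead it first proves a separate structural lemma (Lemma~\ref{l:conjugacy}) constructing genuine topological conjugacies
\[
\psi_a\colon \Lambda_a^\infty\longrightarrow \Lambda_\l^\infty,
\qquad
\psi_\dbas\colon \bigcup_{n\ge 0} f_\dbas^{-n}(\overline{A_2}\cup\overline{A_3})\longrightarrow \bigcup_{n\ge 0} N_\l^{-n}(\overline{B_2}\cup\overline{B_3}),
\]
built from the extended B\"ottcher coordinates on $\overline{A_1},\overline{A_2},\overline{A_3}$ and $\overline{B_1},\overline{B_2},\overline{B_3}$, together with your composition $(\sigma^\l)^{-1}\circ\sigma_a$ on $K_a$, and then extended by pullback. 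The key point established there is that, by construction, $z\in\overline{\Delta_j^a}$ if and only if $\psi_a(z)\in\overline{\Delta_j^{N_\l}}$ (and similarly for $\psi_\dbas$); this single observation, combined with the conjugacy relation, replaces your entire depth induction and yields the puzzle--piece correspondence in one stroke. The proof of the Proposition is then short: for $(\Rightarrow)$ one invokes Lemma~\ref{l:multiaccess} (resp.\ Lemma~\ref{biaccess}) to place any multiply accessible $z$ in $\Lambda_a^\infty$ (resp.\ in $\bigcup f_\dbas^{-n}(\overline{A_2}\cup\overline{A_3})$) and applies $\psi_a$ (resp.\ $\psi_\dbas$); for $(\Leftarrow)$ one observes that a point of $J(N_\l)$ lying in two closed nests must lie on a preimage of $\Gamma_\l$, hence in the target of one of the two conjugacies, and pulls back.

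What this buys the paper, and what your approach foregoes, is twofold. First, the delicate ``labelwise'' verification you flag as the main obstacle simply disappears: because $\psi_\dbas$ is defined via B\"ottcher coordinates, it automatically sends the piece of $\Gamma_\dbas$ bounding $\Delta_j^\dbas$ to the piece of $\Gamma_\l$ bounding $\Delta_j^{N_\l}$, with matching $j$, so there is nothing to check by hand at depth~$1$. Second, the conjugacies $\psi_a,\psi_\dbas$ are not throwaway objects: they are precisely the maps extended in Lemma~\ref{l:semiconjugacy} to the semi--conjugacies that realise the mating. Your purely combinatorial route would establish the Proposition but would still leave the construction of these semi--conjugacies to be done separately. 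On the other hand, your approach has the virtue of making the symbolic content of the statement completely explicit, and your use of the common straightening $h=(\sigma^\l)^{-1}\circ\sigma_a$ is exactly what the paper does on $K_a$.
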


First recall that by Lemma~\ref{l:puzzleitin},  if two sequences
$(\epsilon_0\cdots \epsilon_n)$ and $(\epsilon'_0\cdots \epsilon'_n)$
define the same itinerary in $\Sigma $ (not itinerary class for $f_a$ nor $f_\dbas$ for the moment!)
then $$\bigcap_{n\in \N} \ol{P^{N_\l}_{\epsilon_0\cdots \epsilon_n}}=\bigcap_{n\in \N} \ol{P^{N_\l}_{\epsilon'_0\cdots \epsilon'_n}}.$$

 Proposition~\ref{p:itineraryclass} follows from the lemmas below.

Let $$\Lambda_a^0=  \overline{A'_1}\cup \overline{A_1}\cup \bigcup_{0\le n\le k}f_a^{n}( K_a),\quad    \Lambda_a^n= f_a^{-n}(\Lambda_a^0), \quad \Lambda_a^\infty=\bigcup_{n\ge 0}\Lambda_a^n.$$

Let $$\Lambda_\l^0=   \overline{W_1}\cup  \overline{B_1}\cup \bigcup_{0\le n\le k}N_\l^{n}( K^\l),\quad    \Lambda_\l^n= N_\l^{-n}(\Lambda_\l^0), \quad \Lambda_\l^\infty=\bigcup_{n\ge 0}\Lambda_\l^n.$$

\noindent
\begin{lem}\label{l:conjugacy} \phantom{.} \quad
\begin{enumerate} 
\item There exists a homeomorphism  $\psi_a:\Lambda_a^\infty\to \Lambda_\l^\infty$ which is a conjugacy   between   $f_a $ and $N_\lambda$ satisfying the following: For any itinerary  $\epsilon$,
$$z\in \Lambda_a^\infty\cap (\bigcap_{n\in \N} \overline{P^a_{\epsilon_0\cdots \epsilon_n}})  \iff \psi_a( z)\in \Lambda_\l^\infty\cap (\bigcap_{n\in \N} \overline{P^{N_\l}_{\epsilon_0\cdots \epsilon_n}}) \quad \forall n\ge 0.$$
\item
There exists a homeomorphism $\psi_\dbas:\bigcup_{n\ge 0}f_\dbas^{-n}(\ol A_2\cup \ol A_3) \to \bigcup_{n\ge 0}N_\lambda^{-n}(\ol B_2\cup \ol B_3),$  which is a conjugacy   between  $f_\dbas$ and $N_\lambda$ satisfying the following: For any itinerary  $\epsilon$, 
$$z\in \bigcup_{n\ge 0}f_\dbas^{-n}(\ol A_2\cup \ol A_3)\cap (\bigcap_{n\in \N} \overline{P^\dbas_{\epsilon_0\cdots \epsilon_n}})  \iff \psi_\dbas( z)\in \bigcup_{n\ge 0}N_\lambda^{-n}(\ol B_2\cup \ol B_3)\cap (\bigcap_{n\in \N}  \overline{P^{N_\l}_{\epsilon_0\cdots \epsilon_n}}) \quad \forall n\ge 0.$$
\end{enumerate}

\end{lem}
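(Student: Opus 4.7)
Both parts of the lemma follow the same template: first define the conjugacy on the ``basic pieces'' ($\ol{A_1}, \ol{A'_1}$ together with the cycle of small filled Julia sets for part (1), and $\ol{A_2}, \ol{A_3}$ for part (2)), verify that the definitions glue continuously at their finitely many intersection points, and then extend to all iterated preimages by inductively lifting through the dynamics. The two main tools are B\"ottcher coordinates on the super-attracting basins (which carry them conformally to $\D$) and the straightening maps $\sigma_a$ and $\sigma^\lambda$, which land in a common neighborhood $W$ of $K(P_c)$; the crucial input making the gluing possible is the combinatorial synchronization $\lambda = \NN(a) = \chi^N_t \circ \chi_t^{-1}(a)$.

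\textbf{Part (1): construction of $\psi_a$.} I would first define $\psi_a$ on $\ol{A_1}$ as $(\phi^{B_1}_\lambda)^{-1} \circ \phi_a$, where $\phi_a \colon A_1 \to \D$ and $\phi^{B_1}_\lambda \colon B_1 \to \D$ are the unique B\"ottcher coordinates; both extend continuously to the closures by Carath\'eodory's theorem since $\partial A_1$ and $\partial B_1$ are Jordan curves, and the resulting homeomorphism conjugates $f_a|_{\ol{A_1}}$ to $N_\lambda|_{\ol{B_1}}$. Since $f_a \colon \ol{A'_1} \to \ol{A_1}$ and $N_\lambda \colon \ol{W_1} \to \ol{B_1}$ are degree-one homeomorphisms, the conjugacy lifts uniquely to $\ol{A'_1} \to \ol{W_1}$. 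On the small Julia set $K_a$ I would set $\psi_a = (\sigma^\lambda)^{-1} \circ \sigma_a$, which makes sense because both straightenings target a neighborhood of the same $K(P_c)$; this conjugates $f_a^k|_{K_a}$ to $N_\lambda^k|_{K^\lambda}$ and propagates by the dynamics to each iterate $f_a^j(K_a) \to N_\lambda^j(K^\lambda)$ for $0 \le j < k$. The delicate point is that these two definitions must agree at the intersection $\ol{A_1} \cap K_a = \{\delta_a(t/2)\}$: via the B\"ottcher conjugacy, $\delta_a(t/2)$ maps to the landing point of $R_1(t/2)$, i.e.\ to the point $\partial B_1 \cap K^\lambda$; via the straightening, $\delta_a(t/2)$ is the ``$\beta$-type'' periodic point of $f_a^k$ on $K_a$ (the unique one accessible from outside $K_a$ through $A_1$), which $\sigma_a$ sends to the $\beta$-fixed point of $P_c$, and $(\sigma^\lambda)^{-1}$ sends back to the corresponding $\beta$-type periodic point of $N_\lambda^k$ on $K^\lambda$, i.e.\ to $\partial B_1 \cap K^\lambda$. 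Thus the two prescriptions coincide; analogous checks at the finitely many intersections of further iterates $f_a^j(K_a)$ with $\ol{A'_1}$ follow by transporting via the dynamics. Once defined on $\Lambda_a^0$, $\psi_a$ is extended to $\Lambda_a^n = f_a^{-n}(\Lambda_a^0)$ by induction on $n$: for $z \in \Lambda_a^n$, pick the unique preimage of $\psi_a(f_a(z))$ under $N_\lambda$ that lies in the component of $\Lambda_\lambda^n$ combinatorially corresponding to the component of $\Lambda_a^n$ containing $z$; the combinatorial correspondence between preimage components is inherited from the bijection between $\Gamma_a$ and $\Gamma_\lambda$.

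\textbf{Part (2) and itinerary preservation.} Part (2) is analogous but lighter: both $A_2, A_3$ of $f_\dbas$ and $B_2, B_3$ of $N_\lambda$ are super-attracting basins of degree-two fixed critical points, so B\"ottcher coordinates provide conjugating homeomorphisms $\ol{A_i} \to \ol{B_i}$ for $i = 2, 3$. These agree at $\ol{A_2} \cap \ol{A_3} = \{p_\dbas\}$ and $\ol{B_2} \cap \ol{B_3} = \{\infty\}$, because both intersections correspond to angle $0$ under the respective B\"ottcher coordinates (the rays $R^2_\dbas(0), R^3_\dbas(0)$ land at $p_\dbas$, while $R_2(0), R_3(0)$ land at $\infty$). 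We then extend by pullback exactly as in part (1). For the itinerary-preservation identity, observe that by construction $\psi_a$ (resp.\ $\psi_\dbas$) sends the portion of $\Gamma_a$ (resp.\ $\Gamma_\dbas$) lying in its domain onto the corresponding portion of $\Gamma_\lambda$ and respects the labelling of the connected components $\Delta^a_i$ (resp.\ $\Delta^\dbas_i$) against $\Delta^{N_\lambda}_i$. Since puzzle pieces $P^a_{\epsilon_0 \cdots \epsilon_n}$ and $P^{N_\lambda}_{\epsilon_0 \cdots \epsilon_n}$ are obtained by cutting the dynamical planes by the graphs along with equipotentials (which the B\"ottcher conjugacies preserve), it follows that $z \in \ol{P^a_{\epsilon_0 \cdots \epsilon_n}}$ iff $\psi_a(z) \in \ol{P^{N_\lambda}_{\epsilon_0 \cdots \epsilon_n}}$, which is the claimed equivalence (and likewise for $f_\dbas$). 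The main obstacle throughout is the compatibility at $\delta_a(t/2)$ in part (1), where the B\"ottcher and straightening constructions must coincide; this is precisely where the synchronization $\lambda = \NN(a)$ via the homeomorphisms $\chi_t, \chi^N_t$ does its work.
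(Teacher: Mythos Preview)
Your proposal is correct and follows essentially the same route as the paper: B\"ottcher coordinates on the fixed basins, the composition $(\sigma^\lambda)^{-1}\circ\sigma_a$ on the small Julia set, a gluing check at $\delta_a(t/2)$, and then an inductive pullback. The paper differs only in that it makes the pullback mechanism explicit by using the partition $\{\Delta_j^a\}$ (resp.\ $\{\Delta_j^{N_\lambda}\}$): since each $\Delta_j$ is simply connected and critical-point free, $f_a$ and $N_\lambda$ are injective there, so for $z\in\Lambda_a^{n+1}\setminus\Lambda_a^n$ lying in $\Delta_j^a$ one declares $\psi_a(z)$ to be the unique $N_\lambda$-preimage of $\psi_a(f_a(z))$ lying in $\Delta_j^{N_\lambda}$; this simultaneously pins down the lift and builds in the statement $z\in\overline{\Delta_j^a}\Leftrightarrow\psi_a(z)\in\overline{\Delta_j^{N_\lambda}}$, from which the itinerary-preservation identity follows by a short induction on the recursive definition of $\Delta_{\epsilon_0\cdots\epsilon_n}$ (the paper also notes that the points in $\bigcap_n\overline{P_{\epsilon_0\cdots\epsilon_n}}$ lie in the Julia set, so the equipotential part of the puzzle boundary is irrelevant and only the $\Delta$-partition matters). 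Your phrase ``combinatorially corresponding component of $\Lambda_\lambda^n$'' is exactly this, just less explicit.
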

\proof  We explain the proof for $f_a$ since  is goes similarly   (and easier) for $f_\dbas$. In a first step we define $\psi_a$ on $\Lambda_a^0$.
Using that the boundary of  $A_1$ (the immediate  basin of attraction for the polynomial  $f_a$) and  of $B_1 $ (for the Newton map) are Jordan curves, we have  extended the B\"ottcher coordinates to the closure of these basins. The composition of these extended  maps gives  the desired   homeomorphims   $\psi_a: \overline{A_1} \rightarrow \overline{B_1}$,   it is a  conjugacy between $f_a$ and $N_\l$.

Now, we  extend $\psi_a$   to $K_a$ using the  straightening maps. Let $\psi_a(z)=(\sigma^\l)^{-1}\circ \sigma_a(z)$, it is defined on a neighborhood of $K_a$ and conjugates the maps $f_a$ and $N_\l$ on this neighbourhood. It defines a homeomorphism   between $K_a$ and $K^\l$ which  agrees with $\psi_a$ on $\partial A_1$  because  $K_a\cap \partial A_1=\delta_a(t/2)$ and $\beta_\l=K^\l\cap \partial B_1$, which is the landing point of the ray $R_1(t/2)$. From the formula $\psi_a(f_a(z))=N_\l(\psi_a(z))$ we extend the conjugacy $\psi_a$ on the forward images  $\cup_{0\le n\le k}f_a^{n}(\ol A_1\cup K_a)$. To extend $\psi_a$ ´to $A'_1$  there is no ambiguity since $A'_1$  is the  only  preimage of $A_1$ and the same holds for $N_\l$, i.e. $W_1$ is the only preimage of $B_1$.
The map $\psi_a$ is clearly an homeomorphism from  $\Lambda_a^0$  to $\Lambda_\l^0$ and it defines  a conjugacy between $f_a$ and $N_\l$.

An important remark is  that, by construction,  a point  $z\in \Lambda_a^0$   belonging to $\overline \Delta_j^a$ has its image $\psi_a(z)$ in  $\overline \Delta_j^\l$, and vice versa.

The second step  now is to extend $\psi_a$  by induction on  $\Lambda_a^\infty$.
The third (and last) step will be  verify  the property of $\psi_a$ on  $\Lambda_a^\infty\cap (\bigcap_{n\in \N} \overline{P^a_{\epsilon_0\cdots \epsilon_n}})$ (still by induction).

 We  want to extend $\psi_a$   by pull back  using the conjugacy formula
$N_\l(\psi_a(z))=\psi_a(f_a(z))$. Since neither $N_\l$ nor $f_a$ is injective,
we should precise which preimage  of $\psi_a(f_a(z))$  under  $N_\l$ we associate to $z$.
This is done by induction.  Note that the map $f_a$ is injective in any of the $\Delta_j^a$ since $\Delta_j^a$ is simply connected and does not contain  critical points.  Thus,   any point in $K(f_a)\setminus f_a(\Lambda_a^0)$ has at most  one preimage in each $\Delta_j^a$. Moreover the three preimages have to be in $\cup_{j=1,2,3}\Delta_j^a$ (because the graph $\Gamma_a$ is forward invariant) hence there is exactly one preimage in each $\Delta_j^a$.  The same holds for $N_\l$ and $\Delta_j^\l$.
Now assume that $\psi_a$ is defined on $\Lambda_a^n$ as a continuous conjugacy between $f_a$ and $N_\l$.
Then any point $z\in \Lambda_a^{n+1}\setminus \Lambda_a^n$  belongs to some $\Delta_a^j$, so we define   $\psi_a(z)$ as the preimage by $N_\l$ of $\psi_a(f_a(z))$ belonging  to $\Delta_\l^j$. This is possible since  $\psi_a(f_a(z))\notin \Lambda_\l^0$ because $f_a(z)\notin \Lambda_a^0$ (using the bijection $\psi_a$ on  $\Lambda_a^0$).

We prove the continuity  of $\psi_a$ by induction: Let $z\in \Lambda_a^{n+1}$, and $u=f_a(z) \in \Lambda_a^n$. The map $f_a$ is continuous   from a neigbourhood of $z$ to a neighborhood of $u$, the conjugacy $\psi_a$ is continuous on $\Lambda_a^n$ on a neighborhood of $u$ (by induction), and finally $N_\l$ is a homeomorphism from a neighbourhood of $\psi_a(z)$ to a neighborhood of $\psi_a(u)$ (since the critical points are on the graph). So the continuity of $\psi_a$ at $z$, follows by composition.

Now in the last step we verify that
$$z\in \Lambda_a^\infty\cap (\bigcap_{n\in \N} \overline{P^a_{\epsilon_0\cdots \epsilon_n}})  \iff \psi_a( z)\in \Lambda_\l^\infty\cap (\bigcap_{n\in \N} \overline{P^{N_\l}_{\epsilon_0\cdots \epsilon_n}})\quad  \forall n\ge 0.$$
Note that  the points considered are in the Julia set so  we need only to check now that
$$z\in \Lambda_a^\infty\cap (\bigcap_{n\in \N} \ol{ \Delta^a_{\epsilon_0\cdots \epsilon_n}})  \iff \psi_a( z)\in \Lambda_\l^\infty\cap (\bigcap_{n\in \N} \ol{ \Delta^{N_\l}_{\epsilon_0\cdots \epsilon_n}})\quad  \forall n\ge 0.$$

By the previous  construction of $\psi_a$ on $\Lambda_a^\infty$, we have chosen
$\psi_a(z)$ such that $z$ and $\psi_a(z)$  belong respectively  to  $\ol \Delta_j^a$ and
$\ol \Delta^{N_\l}_j$   with the same $j$. Therefore,  $z\in \Lambda_a^\infty\cap \ol \Delta_j^a$   if and only if   $\psi_a(z) \in  \Lambda_\l^\infty\cap \ol \Delta^{N_\l}_j$.
  Recall the definition of $\Delta_{\epsilon_0\cdots \epsilon_n}$:
 $$\Delta^a_ {\epsilon_0\cdots \epsilon_n}= f_a^{-n}(\Delta^a_{\epsilon_n})\cap \Delta^a_ {\epsilon_0\cdots \epsilon_{n-1}}\quad \hbox{ and } \Delta^{N_\l}_ {\epsilon_0\cdots \epsilon_n}= N_\l^{-n}(\Delta^{N_\l}_{\epsilon_n})\cap \Delta^{N_\l}_ {\epsilon_0\cdots \epsilon_{n-1}}$$

Now we have,
 \begin{align} z\in \Lambda_a^\infty\cap( \bigcap_{n\ge 0}  \ol {P^a_{\epsilon_0\cdots \epsilon_n}})  &\iff z\in \Lambda_a^\infty \cap \ol {P^a_{\epsilon_0\cdots \epsilon_n}}  \quad \forall n\ge 0
 \nonumber \\
&\iff z\in \Lambda_a^\infty \cap \ol {\Delta^a_{\epsilon_0\cdots \epsilon_n}}  \quad \forall n\ge 0
 \nonumber \\
 &\iff \forall n\ge 0, \  f_a^n(z)\in  \Lambda_a^\infty\cap \ol \Delta_{\epsilon_n}^a \hbox{ and }  z\in \Lambda_a^\infty \cap \ol {\Delta^a_{\epsilon_0\cdots \epsilon_{n-1}}}   \nonumber\\
& \iff
 \forall n\ge 0, \  \psi_a(f_a^n(z))\in  \Lambda_{\l}^\infty\cap
 \ol \Delta^{N_\l}_{\epsilon_n}  \hbox{ and } \psi_a(z)\in \Lambda_\l^\infty \cap \ol {\Delta^{N_\l}_{\epsilon_0\cdots \epsilon_{n-1}}}.
 \nonumber
\end{align}
Using the conjugacy relation $\psi_a(f_a^l(z))=N_\l^l(\psi_a(z))$ the last statement is equivalent to
 \begin{align}
 \forall n\ge 0,\  N_\l^n( \psi_a(z))\in  \Lambda_\l^\infty\cap \ol \Delta^{N_\l}_{\epsilon_n} &\hbox{ and } \psi_a(z)\in \Lambda_\l^\infty \cap
 \ol {\Delta^{N_\l}_{\epsilon_0\cdots \epsilon_{n-1}}} \nonumber \\
 &\iff  \psi_a(z)\in \Lambda_\l^\infty \cap \ol {\Delta^{N_\l}_{\epsilon_0\cdots \epsilon_n}}  \quad \forall n\ge 0
 \nonumber \\
  &\iff \psi_a( z)\in \Lambda_\l^\infty\cap (\bigcap_{n\in \N} \overline{P^{N_\l}_{\epsilon_0\cdots \epsilon_n}}).\nonumber
  \end{align}
\cqfd

{\bf Proof of proposition~\ref{p:itineraryclass}:} Proof of ($\Rightarrow$)
Take  two different itineraries  $\epsilon$ and $\epsilon'$,  which are in the same itinerary class for $f_a$: there exists $z\in J(f_a)$ such that $\epsilon$ and $\epsilon'$ belong to $\epsilon_a(z)$. Then by Corollary~\ref{c:biacc}  the angles $\theta=\theta(\epsilon)\neq\theta'=\theta(\epsilon')$    define rays landing at $z$, so
 the  point  $z$  is multiply accessible. This is only possible if  $z$ belongs to a preimage of $K_a$ by Lemma~\ref{l:multiaccess}.  So $z\in \Lambda^\infty_a$ and we already proved in  Lemma \ref{l:conjugacy} that $\psi_{a}$ is a bijection   satisfying
$$z\in \Lambda_a^\infty\cap (\bigcap_{n\in \N} \overline{P^a_{\epsilon_0\cdots \epsilon_n}})  \iff \psi_a( z)\in \Lambda_\l^\infty\cap (\bigcap_{n\in \N} \overline{P^{N_\l}_{\epsilon_0\cdots \epsilon_n}}).$$ The proof for $f_\dbas$ is similar.

Now we prove the converse ($\Leftarrow$). Assume that  two different itineraries  $\epsilon$ and $\epsilon'$ are in the same itinerary
class for the Newton map $N_\l$: $$\bigcap_{n\ge 0} \overline{P^{N_\l}_{\epsilon_0\cdots \epsilon_n}}=\bigcap_{n\ge 0} \overline{P^{N_\l}_{\epsilon'_0\cdots \epsilon'_n}}.$$ Let $z$ be the point at the intersection. Since interior of different puzzle pieces are disjoint, the point $z$  (which belongs to $\ol{P^{N_\l}_{\epsilon_0\cdots \epsilon_n}}$ and $\ol{P^{N_\l}_{\epsilon'_0\cdots \epsilon'_n}}$) has to be at the boundary of these puzzle pieces. This means that $z$ belongs to some preimage of the graph $\Gamma_\l$.
   Then $z$ either belongs to either $\bigcup_{n\ge 0}N_\l^{-n}(\ol B_2\cup \ol B_3)$ or to $\Lambda_\l^\infty$.

First case: assume that $z\in \Lambda_\l^\infty$.  We already proved in  Lemma \ref{l:conjugacy} that $\psi_{a}$  is a bijection   satisfying
$$u\in \Lambda_a^\infty\cap (\bigcap_{n\in \N} \overline{P^a_{\epsilon_0\cdots \epsilon_n}})  \iff \psi_a( u)\in \Lambda_\l^\infty\cap (\bigcap_{n\in \N} \overline{P^{N_\l}_{\epsilon_0\cdots \epsilon_n}}).$$
So $\psi_a^{-1}(z)\in  \bigcap_{n\in \N} \overline{P^a_{\epsilon_0\cdots \epsilon_n}}$ and $\psi_a^{-1}(z)\in  \bigcap_{n\in \N} \overline{P^a_{\epsilon'_0\cdots \epsilon'_n}}$. Then Lemma~\ref{l:itifa} implies that $\epsilon, \epsilon' \in  \epsilon_a(\psi_a^{-1}(z))$  and so  $\epsilon$ and $\epsilon'$  are in the same itinerary class for $f_a$.

The second case is similar: Assume that  that $z\in \bigcup_{n\ge 0}N_\l^{-n}(\ol B_2\cup \ol B_3)$.
Again Lemma \ref{l:conjugacy} gives that $\psi_{\dbas}$  is a bijection   satisfying
$$w\in \bigcup_{n\ge 0}f_\dbas^{-n}(\ol A_2\cup \ol A_3)  \cap (\bigcap_{n\in \N} \overline{P^\dbas_{\epsilon_0\cdots \epsilon_n}})  \iff \psi_\dbas( w)\in \bigcup_{n\ge 0}N_\l^{-n}(\ol B_2\cup \ol B_3)\cap (\bigcap_{n\in \N} \overline{P^{N_\l}_{\epsilon_0\cdots \epsilon_n}}).$$ The proof finishes as in the previous case using Lemma~\ref{l:itidbas}. \cqfd

\begin{lem}\label{l:semiconjugacy} The conjugacies $\psi_a$ and  $\psi_\dbas$ extend  to semi-conjugacies defined on  $K(f_a)$ and
 on $K_\dbas$. Moreover, they are conformal in the interior  of $K(f_a)$ and  $K_\dbas$ respectively.
\end{lem}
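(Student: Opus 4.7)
The plan is to extend $\psi_a$ (and analogously $\psi_\dbas$) to all of $K(f_a)$ by means of the puzzle-piece correspondence supplied by Proposition~\ref{p:itineraryclass}. First observe that the interior of $K(f_a)$ is already contained in $\Lambda_a^\infty$: every bounded Fatou component of $f_a$ is an iterated preimage either of $A_1$ or of one of the interior components of $K_a$, both of which lie in $\Lambda_a^\infty$. Similarly the interior of $K(f_\dbas)$ is exhausted by iterated preimages of $A_2$ and $A_3$. Hence conformality on the interior is already secured by Lemma~\ref{l:conjugacy}: on $A_1$, $\psi_a$ is the boundary extension of the composition of two B\"ottcher isomorphisms; on the interior of $K_a$, it is $(\sigma^\l)^{-1}\circ \sigma_a$, which is conformal on interiors by Douady--Hubbard straightening; both descriptions propagate conformally to preimages of these components via the functional equation of Lemma~\ref{l:conjugacy}.

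For the extension, let $z\in J(f_a)\setminus\Lambda_a^\infty$. By Lemma~\ref{l:multiaccess} such a point is accessible by a single external ray, and so by Corollary~\ref{c:biacc} its itinerary class $\epsilon_a(z)$ contains a single element $\epsilon\in\Sigma$. I define $\psi_a(z)$ to be the unique point of $\bigcap_{n\ge 0}\overline{P^{N_\l}_{\epsilon_0\cdots\epsilon_n}}$, whose existence and uniqueness is provided by Lemma~\ref{Newtpuzzle}. This value is independent of the representative of $\epsilon_a(z)$ by Lemma~\ref{l:puzzleitin}, and on $\Lambda_a^\infty$ it coincides with the previous definition by Lemma~\ref{l:conjugacy}(1) combined with Proposition~\ref{p:itineraryclass}. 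The semi-conjugacy relation $\psi_a\circ f_a=N_\l\circ\psi_a$ then follows immediately from the fact that the shift of itineraries models the dynamics on both sides. The analogous construction defines $\psi_\dbas$ on $K(f_\dbas)$, using Lemma~\ref{biaccess} in place of Lemma~\ref{l:multiaccess} and Lemma~\ref{l:conjugacy}(2) for the agreement on the overlap.

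The main obstacle is continuity of $\psi_a$ on all of $K(f_a)$, especially at points of $\Lambda_a^\infty$ where the B\"ottcher/straightening definition has to glue with the puzzle-piece definition. Given $z_m\to z$ in $K(f_a)$ and a depth $N$, the point $z$ lies in the closure of finitely many puzzle pieces $\overline{P^a_{\epsilon^i_0\cdots\epsilon^i_N}}$---a single one if $z\notin\Lambda_a^\infty$. Any sufficiently small plane-neighbourhood of $z$ is covered by the union of these closures, so for $m$ large $z_m$ lies in one of them; by the matching of the graphs $\Gamma_a$ and $\Gamma_\l$ under $\psi_a$ on $\Lambda_a^\infty$ (Lemma~\ref{l:conjugacy}) together with the puzzle-piece definition on the complement, $\psi_a(z_m)$ lies in the union of the corresponding Newton pieces $\overline{P^{N_\l}_{\epsilon^i_0\cdots\epsilon^i_N}}$. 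By Proposition~\ref{p:itineraryclass} each such piece contains $\psi_a(z)$ in its closure, and by Lemma~\ref{Newtpuzzle} each shrinks to $\psi_a(z)$ as $N\to\infty$; letting $N$ grow yields $\psi_a(z_m)\to \psi_a(z)$. The argument for $\psi_\dbas$ is identical, using Lemma~\ref{l:itidbas} in place of Lemma~\ref{l:itifa}.
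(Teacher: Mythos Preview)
Your proof is correct and follows essentially the same route as the paper's: extend $\psi_a$ via the puzzle--piece correspondence $z\mapsto\bigcap_n\overline{P^{N_\l}_{\epsilon_0\cdots\epsilon_n}}$, invoke Proposition~\ref{p:itineraryclass} for well-definedness, and prove continuity by observing that the finitely many closed depth-$N$ pieces adjacent to $z$ form a neighbourhood of $z$ in $K(f_a)$ whose Newton counterparts shrink to $\psi_a(z)$. The only organizational differences are that the paper defines the extension uniformly on all of $K(f_a)$ via puzzle pieces (and then implicitly checks it agrees with the old $\psi_a$ on $\Lambda_a^\infty$), whereas you keep the original definition on $\Lambda_a^\infty$ and extend only on the complement; and for the semi-conjugacy relation the paper argues by density of $\bigcup_n\Gamma_n^a$ in $K(f_a)$ plus continuity, while you argue directly from the shift action on itineraries. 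Both are equally valid.
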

\proof We explain the extension for $\psi_a$, it is similar for $\psi_\dbas$.
We define the extension of $\psi_a$ as follows.
Let $z$ be a point of $K(f_a)$. It is in the closure of a decreasing sequence of puzzle pieces:
$z\in \ol{P^a_{\epsilon_0\cdots \epsilon_n}}$. Then we can extend previous definition by taking $\psi_a (z)$ to be   $$\displaystyle \psi_a (z):=\bigcap_{n\in \N} \ol{P^{N_\l}_{\epsilon_0\cdots \epsilon_n}}.$$

This makes $\psi_a$ well defined because if there are two different itineraries  $\epsilon$ and $\epsilon'$,  which are in the same itinerary class, then the nest of pieces  $\ol{P_{\epsilon_0\cdots \epsilon_n}^{a}}$ and $\ol{P_{\epsilon'_0\cdots \epsilon'_n}^{a}}$ shrink to the same point $z$.  Then by Proposition~\ref{p:itineraryclass} the two intersections $\bigcap_{n\in \N} \ol{P^{N_\l}_{\epsilon_0\cdots \epsilon_n}}$ and $\bigcap_{n\in \N} \ol{P^{N_\l}_{\epsilon'_0\cdots \epsilon'_n}}$ coincide.

With this definition of $\psi_a$,  we still have the property  $$z\in \ol{P^a_{\epsilon_0\cdots \epsilon_n}} \iff \psi_a(z)\in \ol{P^{N_\l}_{\epsilon_0\cdots \epsilon_n}}$$

We prove now the continuity of $\psi_a$.
Take  $z_0 \in K(f_a)$ with   $\psi_a(z_0) = \zeta_0$ and let $\alpha  > 0$ be given.  For $\epsilon=\epsilon(z_0)$, we have that the nest $\ol{P_{\epsilon_0\cdots \epsilon_n}^{a}}$ dicreases to the point $z_0$.

If  $z_0$ lies in the interior of the puzzle pieces: $z_0\in P_{\epsilon_0\cdots \epsilon_n}^{a}$ then   $\zeta_0$ lies in the interior of the puzzle pieces  $P^{N_\l}_{\epsilon_0\cdots \epsilon_n}$
since $\psi_a$ map the graph $\Gamma_a$ to the graph $\Gamma_\l$.   The sequence  $P^{N_\l}_{\epsilon_0\cdots \epsilon_n}$ shrinks to the point $\zeta_0$, so  for  some $n_0>0$ we have that  $P^{N_\l}_{\epsilon_0\cdots \epsilon_{n_0}}  \subset B(\zeta_0,\alpha)$ 
For this $n_0$ there exists $\delta > 0$ such that $B(z_0,\delta) \subset  P^a_{\epsilon_0\cdots \epsilon_{n_0}}$.
Therefore, if  $z$  satisfies $|z-z_0| < \delta$  then  we have $|\psi_a(z)-\psi_a(z_0)| < \alpha$. This proves the continuity at points which are not on the graph.

Now assume that  $z_0$ belongs to the boundary of the  puzzle pieces defining its nest.   There are finitely many such nests $P_{\epsilon^1}^a, P_{\epsilon^2}^a, \ldots, P_{\epsilon^k}^a$, which all shrink to $z_0$ (note that this can only happen if $z_0$ is in a preimage of $K_a$). Let $Q_n$ be the union of the closure of the puzzle pieces in the nests up to time $n$, i.e
\[
Q_n^a = \overline{P}_{\epsilon_0^1, \ldots, \epsilon_n^1}^a \cup \ldots \cup \overline{P}_{\epsilon_0^k, \ldots, \epsilon_n^k}^a.
\]
Let $Q_n^\l$ be the corresponding union   for the Newton map. Of course, $z_0$ lies in the interior of $Q_n^a $ and  $\zeta_0 $ lies in the interior of $ Q_n^\l$.
Now apply the same argument as before; i.e. let $\alpha > 0$ be given. Choose $n_0$ so large so that $Q_{n_0}^\l \subset B(\zeta_0,\alpha)$ and let $\delta > 0$ satisfy $B(z_0,\delta) \subset Q_{n_0}^a$. Then $|z-z_0| < \delta$ implies $|\psi_a(z)-\psi_a(z_0)| < \alpha$. This proves that $\psi_a$ is continuous.

Note that $K(f_{a})=\ol{\bigcup_{n\ge 0}\Gamma_n^a}$.
By construction,  $\psi_a$ is a conjugacy on $\Gamma_n^a$ for all $n$ so on the union  $\bigcup_{n\ge 0}\Gamma_n^a$. Now, by continuity,  the map $\psi_{a}$ is still a conjugacy on the closure so on $K(f_a)$.

 The map $\psi_a$ has been defined in $A_1$ using the B\"ottcher coordinate, so it is conformal in $A_1$. Now, defined by pullback, $\psi_a$ is also conformal in the preimages of $A_1$ since the maps $f_a$ and $N_\l$ are conformal on the preimages of $A_1$ and $B_1$ respectively.
 Any other connected component of the interior of $K(f_a)$ has to be a Fatou component in a preimage of $K_a$.  But the straightening map $\sigma_a$ is conformal in the interior of $K_a$ and similarly for
 $\sigma_\l$. Therefore, $\psi_a$ is conformal on the interior of $K_a$ and by pullback on the interior of any preimage of $K_a$. The result follows.
\cqfd

\section{Ray equivalence}
First recall that for the map $f_\dbas$, if the ray $R_\dbas^\infty(t)$ lands at a point $w$ then $\epsilon(-t)\in \epsilon_\dbas(w)$  whereas for the map $f_a$, if the ray $R_a^\infty(t)$ lands at a point $z$ then $\epsilon(t)\in \epsilon_a(z)$.

The ray equivalence relation $\sim_r$ then can be express with itinerary classes   as follows:

For  two points $u,v$ in $J(f_a)\cup J(f_\dbas)$,  the relation   $u \sim_r v $ means  that there exist angles  $t_1, \ldots, t_n$, points $u=z_0,z_1, \ldots, z_{n-1},z_n=v$ in  $J(f_a)\cup J(f_\dbas)$  and $\alpha, \beta$ in $\{a, \dbas\}$ such that
\begin{align}\label{raychain}
&\epsilon(t_1) \in \epsilon_{\alpha_0}(z_0),   \\
& \epsilon(t_1), \epsilon(t_2) \in \epsilon_{\alpha_1}(z_1), \nonumber \\
& \epsilon(t_2), \epsilon(t_3) \in \epsilon_{\alpha_2}(z_2), \nonumber \\
&... \nonumber \\
& \epsilon(t_{n-1}), \epsilon(t_n) \in \epsilon_{\alpha_{n-1}}(z_{n-1}), \nonumber \\
& \epsilon(t_n) \in \epsilon_{\alpha_{n}}(z_{n}), \nonumber
\end{align}
where $\alpha_k = \alpha$ for even $k$ and $\alpha_k = \beta$ for odd $k$.

The following lemma follows from the definition of itinerary classes.
\begin{lem}\label{l:lemma1}
Let   $u \in J(f_{\alpha})$ and $v \in J(f_{\beta})$. Then,
  \begin{align}\epsilon_{\alpha}(z) \cap \epsilon_{\beta}(w) \neq \emptyset \iff& \exists t \in [0,1], \hbox{ such that }  \epsilon(t) \in \epsilon_{\alpha}(u) \cap \epsilon_{\beta}(v) \nonumber \\ \iff &\exists t \in [0,1], \hbox{ such that }  \partial R_{\alpha}^{\infty}(t) = \gamma_{\alpha}(t) = u, R_{\beta}^{\infty}(-t) = \gamma_{\beta}(-t) = v . \nonumber \end{align}

In particular, $\epsilon_{\alpha}(u) \cap \epsilon_{\beta}(v) \neq \emptyset$ implies that $u \sim_r v$.
\end{lem}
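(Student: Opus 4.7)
The plan is to prove both chains of equivalences by unpacking definitions, then obtain the ray‑equivalence conclusion from the gnomic‑projection picture of the formal mating.

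First I would prove the equivalence \(\epsilon_\alpha(u)\cap\epsilon_\beta(v)\neq\emptyset\Longleftrightarrow\exists t\colon\epsilon(t)\in\epsilon_\alpha(u)\cap\epsilon_\beta(v)\). The reverse implication is tautological. For the forward direction, pick any sequence class \([\underline{\epsilon}]\) in the intersection. Since \(\theta\colon\Sigma\to\R/\Z\) is the bijection constructed in Section~2.1, every element of \(\Sigma\) is of the form \(\epsilon(t)\) for a unique \(t\in\R/\Z\); taking this \(t\) gives what we want.

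Next I would prove the second equivalence by pushing everything through the B\"ottcher parametrisations \(\gamma_\alpha,\gamma_\beta\) of \(\partial A_\infty\). By Corollary~\ref{c:angleitin} (and its analogue for the double basilica, where by Definition~\ref{d:itidbas} the itinerary class of \(w\in J(f_\dbas)\) consists of \(\epsilon(-t)\) for \(t\in\gamma_\dbas^{-1}(w)\)), saying \(\epsilon(t)\in\epsilon_\alpha(u)\) with \(\alpha=a\) means exactly that \(R_a^{\infty}(t)\) lands at \(u\), while saying \(\epsilon(t)\in\epsilon_\dbas(v)\) means exactly that \(R_\dbas^{\infty}(-t)\) lands at \(v\). (The symmetric case \(\alpha=\dbas\), \(\beta=a\) is identical up to swapping the sign convention.) Putting these together gives the second equivalence. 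The only subtle point, and the only place one can slip up, is keeping the sign \(-t\) on the \(\dbas\)-side consistent; everything else is formal.

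Finally, for the last statement, suppose \(\epsilon_\alpha(u)\cap\epsilon_\beta(v)\neq\emptyset\) and take the angle \(t\) just produced. The ray \(R_\alpha^{\infty}(t)\) lands at \(u\) in one hemisphere and \(R_\beta^{\infty}(-t)\) lands at \(v\) in the other. By the construction of the formal mating recalled in the introduction, \(\nu_1(R_\alpha^{\infty}(t))\) and \(\nu_2(R_\beta^{\infty}(-t))\) both accumulate on the equator at the single point \((e^{2\pi i t},0)\); hence the closures of these two rays lie in a common ray‑equivalence class, giving \(u\sim_r v\) directly from the definition of \(\sim_r\).

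The argument is essentially bookkeeping with definitions, so there is no serious obstacle: the only thing to watch is the asymmetric sign convention between \(\epsilon_a\) and \(\epsilon_\dbas\) that comes from gluing the southern hemisphere with the orientation‑reversing projection \(\nu_2\).
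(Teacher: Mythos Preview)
Your proposal is correct and matches the paper's approach: the paper simply states that the lemma ``follows from the definition of itinerary classes,'' and your argument is exactly that unpacking of Definitions~\ref{d:itangle}, \ref{d:itidbas}, \ref{d:itidfa} together with the bijectivity of $\theta\colon\Sigma\to\R/\Z$ and the formal-mating description of $\sim_r$. Your care with the sign convention on the $\dbas$-side is the one nontrivial point, and you handle it correctly.
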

Note however that the converse of the last statement if not true; $u \sim_r v$ does not necessarily imply that $\epsilon_{\alpha}(u) \cap \epsilon_{\beta}(v) \neq \emptyset$.

\begin{lem}\label{l:lemma2}
Suppose that $z \in J(f_{\alpha})$, $w \in J(f_{\beta})$ and that $ \epsilon_{\alpha}(z) \cap \epsilon_{\beta}(w) \neq \emptyset$. Then $\psi_{\alpha}(z) = \psi_{\beta}(w)$.
\end{lem}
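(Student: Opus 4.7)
\medskip

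\noindent\textbf{Proof plan for Lemma \ref{l:lemma2}.}
The plan is to unfold the definitions and reduce the statement to the way the semi-conjugacies $\psi_a$ and $\psi_\dbas$ were extended from $\Lambda^\infty$ to the full Julia set in Lemma~\ref{l:semiconjugacy}, namely via nested puzzle pieces indexed by itineraries in $\Sigma$.

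First I would pick an element in the intersection of the two itinerary classes: by hypothesis there exists a class $[(\epsilon_i)_{i \in \mathbf{N}}] \in \Sigma$ lying in both $\epsilon_\alpha(z)$ and $\epsilon_\beta(w)$. By Lemma~\ref{l:itifa} (in the case $\alpha = a$) or Lemma~\ref{l:itidbas} (in the case $\alpha = \dbas$), the intersection
\[
\bigcap_{n \in \mathbf{N}} \overline{P^{\alpha}_{\epsilon_0 \cdots \epsilon_n}}
\]
is a single point whose itinerary class contains $[(\epsilon_i)]$; since the point $z$ also has $[(\epsilon_i)] \in \epsilon_\alpha(z)$ and since no two distinct points share an itinerary (this is the uniqueness half of those lemmas, together with the fact that itinerary classes stratify $J(f_\alpha)$), $z$ must be that point. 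The same argument for $\beta$ and $w$ gives
\[
z = \bigcap_{n \in \mathbf{N}} \overline{P^{\alpha}_{\epsilon_0 \cdots \epsilon_n}}, \qquad w = \bigcap_{n \in \mathbf{N}} \overline{P^{\beta}_{\epsilon_0 \cdots \epsilon_n}}.
\]

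Then I would simply invoke the definition of the extended $\psi_\alpha$ and $\psi_\beta$ from Lemma~\ref{l:semiconjugacy}: for any point lying in a nest of puzzle pieces with itinerary $[(\epsilon_i)]$, the image under $\psi_\alpha$ (resp. $\psi_\beta$) is defined to be the corresponding nested intersection of Newton puzzle pieces with the same indexing sequence,
\[
\psi_\alpha(z) = \bigcap_{n \in \mathbf{N}} \overline{P^{N_\l}_{\epsilon_0 \cdots \epsilon_n}} = \psi_\beta(w).
\]
The well-definedness of this assignment, already established in Lemma~\ref{l:semiconjugacy} via Proposition~\ref{p:itineraryclass}, ensures that the choice of representative $(\epsilon_i)$ within the class does not matter, and that the two intersections on either side of the equality coincide as a single point of $J(N_\lambda)$.

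The proof is essentially formal once the right definitions are lined up, so I do not expect a genuine obstacle; the only care needed is to keep track of which statements in the previous section assert existence versus uniqueness of the point realizing a given itinerary, and to use Proposition~\ref{p:itineraryclass} to ensure that the class (not just a chosen representative) determines the same Newton-map point regardless of whether it was reached from the $f_a$-side or the $f_\dbas$-side. That is exactly what makes the two nested intersections on the Newton side agree.
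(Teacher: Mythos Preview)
Your proposal is correct and follows essentially the same route as the paper: pick a common itinerary $[(\epsilon_i)]$, recall from Lemma~\ref{l:semiconjugacy} that $\psi_\alpha(z)$ and $\psi_\beta(w)$ are both \emph{defined} as the Newton-side nested intersection $\bigcap_n \overline{P^{N_\l}_{\epsilon_0\cdots\epsilon_n}}$ for any itinerary in the respective class, and conclude they coincide. Your intermediate step identifying $z$ and $w$ as the polynomial-side nested intersections is a bit more explicit than the paper (which absorbs this into the definition of the extended $\psi_\alpha$), but the argument is the same.
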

\proof
By Proposition~\ref{p:itineraryclass} and  definition of $\psi_a$ and $\psi_\dbas$ of Lemma~\ref{l:semiconjugacy}, $\psi_{\alpha}(z) = \bigcap_{n\in \N} P_{\epsilon_0\cdots \epsilon_n}^{N_\l}$ for all $\epsilon=[\{\epsilon_0, \cdots, \epsilon_n\}] \in \epsilon_{\alpha}(z)$ and
$\psi_{\beta}(w) = \bigcap_{n\in \N} P_{\epsilon'_0\cdots \epsilon'_n}^{N_\l}$ for all $\epsilon' \in \epsilon_{\beta}(w)$.
Since $  \epsilon_{\alpha}(z) \cap \epsilon_{\beta}(w) \neq \emptyset$  there exists $\epsilon''$ in the intersection. So we must have that
\[
\bigcap_{n\in \N} P_{\epsilon_0\cdots \epsilon_n}^{N_\l} = \bigcap_{n\in \N} P_{\epsilon''_0\cdots \epsilon''_n}^{N_\l}= \bigcap_{n\in \N} P_{\epsilon'_0\cdots \epsilon'_n}^{N_\l}
\]
and hence $\psi_{\alpha}(z) = \psi_{\beta}(w)$.
\endproof

\begin{prop}\label{p:rayequivalence}
Suppose that $z \in J(f_{\alpha})$ and $w \in J(f_{\beta})$. Then $z \sim_r w$ if and only if $\psi_{\alpha}(z) = \psi_{\beta}(w)$.
\end{prop}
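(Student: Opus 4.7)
The forward direction will follow by iterating Lemma~\ref{l:lemma2} along the defining chain for $\sim_r$. If $z \sim_r w$, unpacking~(\ref{raychain}) produces points $z = z_0, z_1, \ldots, z_n = w$ alternating between $J(f_\alpha)$ and $J(f_\beta)$, together with angles $t_1, \ldots, t_n$ satisfying $\epsilon(t_k) \in \epsilon_{\alpha_{k-1}}(z_{k-1}) \cap \epsilon_{\alpha_k}(z_k)$ for each $k$. Each consecutive pair therefore meets the hypothesis of Lemma~\ref{l:lemma2}, giving $\psi_{\alpha_{k-1}}(z_{k-1}) = \psi_{\alpha_k}(z_k)$, and telescoping yields $\psi_\alpha(z) = \psi_\beta(w)$.

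For the backward direction I plan to reverse this process. Suppose $\psi_\alpha(z) = \psi_\beta(w) = y \in J(N_\lambda)$. By the construction of $\psi$ in Lemma~\ref{l:semiconjugacy} together with Proposition~\ref{p:itineraryclass}, the itinerary sets $\epsilon_\alpha(z)$ and $\epsilon_\beta(w)$ both sit inside the single Newton itinerary class $\epsilon_N(y)$. If $\epsilon_\alpha(z) \cap \epsilon_\beta(w) \neq \emptyset$, Lemma~\ref{l:lemma1} immediately yields $z \sim_r w$; assume from now on that the intersection is empty.

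To construct a chain I will use the following observation: for every $\epsilon'' \in \epsilon_N(y)$, Lemmas~\ref{l:itidbas} and~\ref{l:itifa} produce unique points $z^a_{\epsilon''} \in J(f_a)$ and $z^\dbas_{\epsilon''} \in J(f_\dbas)$ with $\epsilon''$ in both of their itinerary classes, and then Lemma~\ref{l:lemma1} gives $z^a_{\epsilon''} \sim_r z^\dbas_{\epsilon''}$. Moreover, whenever two itineraries $\epsilon_1, \epsilon_2 \in \epsilon_N(y)$ share a common $\epsilon_a$- or $\epsilon_\dbas$-class, the corresponding points on that side coincide. Writing $z = z^\alpha_\epsilon$ and $w = z^\beta_{\epsilon'}$ for suitable $\epsilon, \epsilon'$, the task reduces to showing that the graph $G(y)$ on vertex set $\epsilon_N(y)$, with edges ``share a common $\epsilon_a$- or $\epsilon_\dbas$-class'', is connected: any edge translates to a coincidence on one side, and the ray-equivalence $z^a_{\epsilon''} \sim_r z^\dbas_{\epsilon''}$ bridges between sides.

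The connectivity of $G(y)$ is, I expect, the main obstacle. By the final lemma of Section~2 describing $\epsilon_N$, the set $\epsilon_N(y)$ has at most three elements---three only when $y$ is an iterated preimage of $\infty$, at most two otherwise---so the verification reduces to a finite case analysis. The cases where $y$ has a single itinerary, or where $y$ lies in an iterated preimage of $K^\lambda$ and has two, are handled directly using Lemmas~\ref{biaccess} and~\ref{l:multiaccess} which describe the multiply accessible points of $f_\dbas$ and $f_a$ respectively. The most delicate case is $y \in \bigcup_n N_\lambda^{-n}(\infty)$: here one must compare the three Newton itineraries of $y$ against the explicit itinerary partitions of the corresponding preimage points in $J(f_a)$ and $J(f_\dbas)$, and check that the two partitions together span all of $\epsilon_N(y)$.
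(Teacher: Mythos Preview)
Your forward direction is exactly the paper's argument: telescope Lemma~\ref{l:lemma2} along the chain~(\ref{raychain}).

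For the backward direction you take a substantially longer route than the paper, and the detour stems from not exploiting the full force of Proposition~\ref{p:itineraryclass}. You invoke that proposition only to conclude that $\epsilon_\alpha(z)$ and $\epsilon_\beta(w)$ sit inside the common Newton class $\epsilon_N(y)$, and then propose to prove by case analysis that the graph $G(y)$ on $\epsilon_N(y)$ (edges given by shared $\epsilon_a$- or $\epsilon_\dbas$-class) is connected. But Proposition~\ref{p:itineraryclass} is a biconditional: it also says that any two itineraries in the \emph{same} Newton class lie in the same $\epsilon_a$-class or the same $\epsilon_\dbas$-class. This means your graph $G(y)$ is not merely connected but \emph{complete}, and the case analysis on the size and structure of $\epsilon_N(y)$ is unnecessary.

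The paper's argument is accordingly very short. Pick any $\epsilon \in \epsilon_\alpha(z)$ and $\epsilon' \in \epsilon_\beta(w)$; since $\psi_\alpha(z)=\psi_\beta(w)$, these lie in the same Newton class, so Proposition~\ref{p:itineraryclass} gives that $\epsilon,\epsilon'$ lie together in some $\epsilon_a$-class or some $\epsilon_\dbas$-class. When $\{\alpha,\beta\}=\{a,\dbas\}$ this forces $\epsilon_\alpha(z)\cap\epsilon_\beta(w)\neq\emptyset$ directly, and Lemma~\ref{l:lemma1} finishes. (When $\alpha=\beta$ one routes through a single intermediate point on the other side, a chain of length two.) Your plan would ultimately succeed, but you are re-proving a special case of Proposition~\ref{p:itineraryclass} by hand instead of quoting it.
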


\begin{proof}
($\Leftarrow$) By definition
\[
\psi_{\alpha}(z) = \bigcap_{n\in \N} P_{\epsilon_0\cdots \epsilon_n}^{N_\l} = \bigcap_{n\in \N} P_{\epsilon'_0\cdots \epsilon'_n}^{N_\l} = \psi_{\beta}(w),
\]
for some itineraries $\epsilon \in \epsilon_{\alpha}(z)$ and $\epsilon' \in \epsilon_{\beta}(w)$. By Proposition 4.1
we have that either $\epsilon, \epsilon' \in \epsilon_{\alpha}(z)$ or $\epsilon, \epsilon' \in \epsilon_{\beta}(w)$. In both cases we have $\epsilon_{\alpha}(z) \cap \epsilon_{\beta}(w) \neq \emptyset$. Lemma~\ref{l:lemma1} now gives that $z \sim_r w$.

($\Rightarrow$) Now the chain relation  (\ref{raychain})  holds,  so for all $2 \leq k \leq n-1$ we have
\begin{align}
\epsilon(t_{k-1}),  \epsilon(t_{k}) \in \epsilon_{\alpha_{k-1}}(z_{k-1}) \nonumber  \\
\epsilon(t_{k}), \epsilon(t_{k+1}) \in \epsilon_{\alpha_{k}}(z_{k}). \nonumber
\end{align}
Obviously $\epsilon_{\alpha_{k-1}}(z_{k-1}) \cap \epsilon_{\alpha_k}(z_{k}) \neq \emptyset$ so Lemma~\ref{l:lemma2} gives that $\psi_{\alpha_{k-1}}(z_{k-1}) = \psi_{\alpha_k}(z_{k})$. Since this holds for all $2 \leq k \leq n-1$ we must have
$\psi_{\alpha}(z) = \psi_{\beta}(w)$.
\end{proof}

\section{Proof of the main theorem}

We have defined the sets $RC $ and $RN $ in the parameter plane section and the map $\NN$ in Definition~\ref{mapN}.
 We proved that it is a bijection. Now we shall prove Theorem \ref{resultat1} which we recall here:

 \begin{thmii}{\bf\ref{resultat1}}
For any parameter $a\in RC$ the polynomials  $f_a$ and $f_{\dbas}$ are conformally mateable if $J(f_a)$ is locally connected.
Moreover, $\NN(f_a)$ is the mating of  the  polynomials  $f_a$ and $f_{\dbas}$. 
\end{thmii}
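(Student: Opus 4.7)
The plan is to verify the definition of conformal mateability from Section 1 by taking $R = N_\lambda$ with $\lambda = \NN(a)$, $\phi_1 = \psi_a$, and $\phi_2 = \psi_\dbas$, where $\psi_a, \psi_\dbas$ are the semi-conjugacies constructed in Lemma \ref{l:semiconjugacy}. That lemma already establishes continuity, the semi-conjugacy relation $\psi_\alpha \circ f_\alpha = N_\lambda \circ \psi_\alpha$ for $\alpha \in \{a,\dbas\}$, and conformality on the interior of each filled Julia set. Moreover, Proposition \ref{p:rayequivalence} supplies the ray-equivalence identification $\psi_\alpha(z) = \psi_\beta(w) \iff z \sim_r w$ for all pairs $(z,w) \in J(f_\alpha) \times J(f_\beta)$, $\alpha,\beta \in \{a,\dbas\}$. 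The remaining work therefore splits into two tasks: extending the identification condition to pairs involving an interior point, and verifying surjectivity $\psi_a(K(f_a)) \cup \psi_\dbas(K(f_\dbas)) = \hat{\C}$.

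For the interior case, I would observe that by construction the images of $\psi_a$ and $\psi_\dbas$ on the interior of the respective filled Julia sets land in pairwise disjoint families of Fatou components of $N_\lambda$: $\psi_a$ sends $A_1$ and its iterated preimages, and the interior Fatou components of $K_a$ and their iterated preimages, conformally onto the corresponding components attached to $B_1$ and to $K^\lambda$, while $\psi_\dbas$ sends $A_2, A_3$ and their iterated preimages conformally onto $B_2, B_3$ and their iterated preimages. These two families of Fatou components of $N_\lambda$ are disjoint, and each $\psi_\alpha$ is injective on the interior by conformality. Since interior points are not on the closure of any external ray, they form trivial $\sim_r$-classes, so the equivalence $\psi_\alpha(z) = \psi_\beta(w) \iff z \sim_r w$ holds vacuously or tautologically in all cases involving interior points.

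For surjectivity, I would decompose $\hat{\C}$ into the Fatou set and Julia set of $N_\lambda$. Every bounded Fatou component of $N_\lambda$ is either an iterated preimage of $B_1$, $B_2$, or $B_3$, or a small Fatou component inside an iterated preimage of $K^\lambda$, and by the previous paragraph all such components are covered by $\psi_a(K(f_a)) \cup \psi_\dbas(K(f_\dbas))$. For $J(N_\lambda)$, every point $\zeta$ is the unique intersection of a nest of puzzle pieces $\bigcap_n \overline{P^{N_\l}_{\epsilon_0\cdots\epsilon_n}}$ by Lemma \ref{Newtpuzzle}. Lemma \ref{l:itifa} then produces $z \in J(f_a)$ with $[\epsilon] \in \epsilon_a(z)$, and by the definition of the extended $\psi_a$ we have $\psi_a(z) = \zeta$; Lemma \ref{l:itidbas} likewise produces a preimage under $\psi_\dbas$. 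Hence each Julia point sits in both images.

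The most delicate part of the argument is the consistency between the three combinatorial models for $f_a$, $f_\dbas$ and $N_\lambda$, and in particular the coherence of the itinerary-class identifications across the two sides of the mating; but this is precisely the content of Proposition \ref{p:itineraryclass}, which is already proved. Once these pieces are assembled, $N_\lambda = N_{\NN(a)}$ is, by definition, a conformal mating of $f_a$ and $f_\dbas$, establishing both the mateability and the Newton-map realization claimed in the theorem.
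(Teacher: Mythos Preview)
Your proposal is correct and follows essentially the same route as the paper: invoke Lemma~\ref{l:semiconjugacy} for the semi-conjugacies and their conformality, Proposition~\ref{p:rayequivalence} for the ray-equivalence identification, and then check surjectivity by splitting $\hat{\C}$ into $J(N_\lambda)$ (handled via itineraries and the shrinking of puzzle nests) and the Fatou set (handled via the explicit bijections of Lemma~\ref{l:conjugacy}). If anything you are slightly more explicit than the paper in treating the interior case of the identification $\psi_\alpha(z)=\psi_\beta(w)\iff z\sim_r w$, which the paper asserts for all of $K(f_a)\times K(f_\dbas)$ while Proposition~\ref{p:rayequivalence} is stated only for Julia points.
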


\proof
Let us prove that $f_a$ and $f_\dbas$  are mateable for $a\in RC$ if  $J(f_a)$ is locally connected.  In Lemma \ref{l:semiconjugacy}  we constructed continuous maps \begin{eqnarray}
\psi_{\dbas}:& K(f_{\dbas}) \rightarrow \hat{\C} \\
\psi_a:& K(f_a) \rightarrow \hat{\C}.
\end{eqnarray}
which are   semi-conjugacies with $N_\l$ when  $\l=\NN(a)$, {\it i.e;}
\[
N_{\lambda} \circ \psi_{a} (z) = \psi_{a} \circ f_{a}(z) \hbox{ for } a\in K(f_a)\quad \hbox{ and }  N_{\lambda} \circ \psi_{\dbas} (z) = \psi_{\dbas} \circ f_{\dbas}(z) \hbox{ for }z \in K(f_\dbas).
\]
These semi-conjugacies are conformal on the interior or the filled in Julia sets.

Moreover, in Proposition~\ref{p:rayequivalence} we proved that
$$\forall (z,w)\in K(f_a)\times K(f_\dbas), \quad \psi_a(z)=\psi_\dbas(w)\iff z\sim_r w.$$

To finish, we should  prove that  $\psi_a( K(f_a))\cup \psi_\dbas( K(f_\dbas))=\widehat \C$.
Any point $u\in J(N_\l)$ has an itinerary class $\epsilon_N(u)$ with respect to the Newton map $N_\l$ (Definition~\ref{d:itNewt}): $$[\{\epsilon_i\}_{i=0}^\infty]\in  \epsilon_N(u)\iff z= \bigcap_{n\ge 0} \overline{P^{N_\l}_{\epsilon_0\cdots\epsilon_n}}.$$
Then let $$z= \bigcap_{n\ge 0} \overline{P^a_{\epsilon_0\cdots\epsilon_n}}\hbox{ and } w= \bigcap_{n\ge 0} \overline{P^\dbas_{\epsilon_0\cdots\epsilon_n}}.$$
By lemma~\ref{l:semiconjugacy},    $\psi_a(z)=u$ and $\psi_\dbas(w)=u$.

Now, any point $u\in \widehat\C\setminus J(N_\l)$ belongs to a Fatou component which is a preimage of either $B_1$, $B_2$, $B_3$ or a Fatou component in $K^\l$.

  If $u$ is in a preimage of $B_1 \cup K^\l$, then
 $u\in \Lambda_\l^\infty$.  We already proved in  Lemma \ref{l:conjugacy} that $\psi_{a}$  is a bijection   satisfying
$$x\in \Lambda_a^\infty\cap (\bigcap_{n\in \N} \overline{P^a_{\epsilon_0\cdots \epsilon_n}})  \iff \psi_a( x)\in \Lambda_\l^\infty\cap (\bigcap_{n\in \N} \overline{P^{N_\l}_{\epsilon_0\cdots \epsilon_n}}).$$
So we get a preimage $z$ of $u$ under $\psi_a$.

Now, if  $u$ is in a preimage of $B_2\cup B_3$,
as well  Lemma \ref{l:conjugacy} gives that $\psi_{\dbas}$  is a bijection   satisfying
$$x\in \bigcup_{n\ge 0}f_\dbas^{-n}(\ol A_2\cup \ol A_3)  \cap (\bigcap_{n\in \N} \overline{P^\dbas_{\epsilon_0\cdots \epsilon_n}})  \iff \psi_\dbas( x)\in \bigcup_{n\ge 0}N_\l^{-n}(\ol B_2\cup \ol B_3)\cap (\bigcap_{n\in \N} \overline{P^{N_\l}_{\epsilon_0\cdots \epsilon_n}}).$$ So we get a preimage $w$ of $u$ under $\psi_\dbas$.
This completes the proof that $\NN(f_a)$ is the mating of $f_a$ and $f_\dbas$.

\section{The boundary of $\HH_0$}
In this section we prove Theorem \ref{resultat2}. Recall that
$$NRC = \{ a \in \partial \HH_0 : \text{$a$ does not belong to a Mandelbrot copy} \}.$$ Then Theorem \ref{resultat2} will be a consequence of:
\begin{prop} \label{propH0} The map $\NN$ extends to $NRC$.
It defines a map from  $NRC$ to $\partial \Omega_-$ such that $\NN(f)$ is the mating of $f$ with $f_\dbas$.
\end{prop}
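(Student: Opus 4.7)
\emph{Plan.} The strategy is to repeat the proof of Theorem \ref{resultat1} with the small Julia sets $K_a$ and $K^{\lambda}$ replaced throughout by the singleton critical points $\{-a\}$ and $\{0\}$. The crucial compatibility comes from Propositions \ref{Cubicrays} and \ref{Newtrays}: for $a = a(t) \in NRC$ and $\lambda = \lambda(t)$, the free critical points $-a$ and $0$ are exactly the landing points of the internal rays $R_a^0(t/2)$ and $R_1(t/2)$ of the \emph{same} angle $t/2$. This matched-angle property is what allows the base conjugacy on $\overline{A_1}$ to send $-a$ to $0$, which in turn makes all subsequent pullback arguments work.

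First I would extend $\NN$ to $NRC$ by setting $\NN(a(t)) := \lambda(t)$ for $t \in [0,1]$ with $t/2$ not periodic under doubling. The definition is compatible with $\NN|_{RC}$ at cusps of Mandelbrot copies, since both $\chi_t$ and $\chi_t^N$ send $c = 1/4 \in \partial\M$ to $a(t)$ and $\lambda(t)$ respectively, so the extended $\NN : RC \cup NRC \to RN \cup (\partial \HH_- \cap \overline{\Omega_-})$ is well-defined; bijectivity onto its image follows from injectivity of the parametrizations $t \mapsto a(t)$ and $t \mapsto \lambda(t)$ together with the already-known bijectivity of $\NN : RC \to RN$. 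Next I would redefine the graphs of Section 2 by replacing the contribution of $K_a$ (resp. $K^\lambda$) in $\Gamma_a$ (resp. $\Gamma_\lambda$) by an appropriate internal ray in $A_1$ (resp. $B_1$) landing at the critical point, keeping forward invariance of the graph. Regions $\Delta^a_{\epsilon_0 \cdots \epsilon_n}$, $\Delta^{N_\lambda}_{\epsilon_0 \cdots \epsilon_n}$ and puzzle pieces $P^a_{\epsilon_0 \cdots \epsilon_n}$, $P^{N_\lambda}_{\epsilon_0 \cdots \epsilon_n}$ are then defined verbatim as before.

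The only step whose proof genuinely differs is the shrinking of nested puzzle pieces to points (analogs of Lemmas \ref{l:itifa} and \ref{Newtpuzzle}): since $f_a$ is non-renormalizable around $-a$ and $N_\lambda$ around $0$, the straightening crutch is gone. Instead I would invoke directly the local-connectivity machinery of \cite{RoeschENS} for $f_a$ on $\partial\HH_0 \setminus \{\pm 4i/3\}$ and of \cite{RoeschAnnals} for $N_\lambda$ on the corresponding part of $\partial\HH_-$. These references show that finer auxiliary puzzles shrink to points, and the refinement-plus-pullback argument used in the proofs of Lemmas \ref{l:itifa} and \ref{Newtpuzzle} then transfers shrinking to the original puzzles; here there is no ``small Julia set'' alternative to handle, so the second half of each of those proofs drops out entirely.

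With puzzle shrinking established, the remaining construction is essentially mechanical. The base conjugacy of Lemma \ref{l:conjugacy} is given on $\overline{A_1'} \cup \overline{A_1}$ (resp. on $\overline{A_2} \cup \overline{A_3}$ for $f_\dbas$) by B\"ottcher coordinates, and automatically sends $-a$ to $0$ by the matched-angle property; extension by pullback to $\Lambda_a^\infty$, and then to $K(f_a)$ via nested puzzle pieces as in Lemma \ref{l:semiconjugacy}, proceeds word for word. Proposition \ref{p:itineraryclass}, Lemma \ref{l:lemma2}, Proposition \ref{p:rayequivalence}, and the final surjectivity $\psi_a(K(f_a)) \cup \psi_\dbas(K(f_\dbas)) = \hat{\C}$ hold verbatim, yielding the desired conformal mating $N_\lambda = f_a \uplus f_\dbas$. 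The principal obstacle is therefore the puzzle-shrinking step: one must carefully extract from \cite{RoeschENS,RoeschAnnals} the precise local-connectivity statement needed for the graphs used here, with $K_a$ and $K^\lambda$ absent.
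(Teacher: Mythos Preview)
Your proposal is correct and follows essentially the same approach as the paper: extend $\NN$ by $\NN(a(t))=\lambda(t)$, replace $K_a$ and $K^\lambda$ in the graphs by the critical points (reached by the internal rays of angle $t/2$), reprove puzzle shrinking via the local-connectivity results of \cite{RoeschENS} and \cite{RoeschAnnals} in the non-renormalizable setting, and then rerun Lemmas~\ref{l:conjugacy}--\ref{l:semiconjugacy} and Propositions~\ref{p:itineraryclass}--\ref{p:rayequivalence} verbatim with $\Lambda_a^0=\overline{A_1'}\cup\overline{A_1}$. The one point where the paper is more explicit than you is the mechanism for forward invariance of the modified graphs: rather than just adding an internal ray, the paper adjoins all of $\overline{A_1}$ (resp.\ $\overline{B_1}$) to $\Gamma_a$ (resp.\ $\Gamma_\lambda$), which absorbs the forward orbit of the new rays; and for the Newton puzzle shrinking it specifically invokes the type~$III$ graph of \cite[Definition~8.13]{RoeschAnnals} adapted to the non-renormalizable case.
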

The proof is very similar to the previous Main Theorem \ref{resultat1}. To avoid lengthy repetitions of lemmas and propositions of
previous sections we give instead references to such results where the proofs are analoguous. The only major difference compared
to the Main Theorem is that $f_a$ is not anymore renormalizable around the critial point $-a$ for $a \in \partial \HH_0$
(not in a Mandelbrot copy) and therefore the graph for $f_a$ will look different; the small Julia set $K_a$ reduces to a point $-a$.
Moreover, $-a$ belongs to the boundary of $A_1$ and $A_1'$ meet $A_1$ precisely at $-a$. It also turns out that the original graphs for $f_a$ and $N_\l$ are not forward invariant anymore. We will slightly extend these graphs to make them forward invariant.

We define now the map $\NN$ in $\partial \HH_0\cap Q$.
\begin{dfn}
For $a\in \partial \HH_0\cap Q$, there exists some $t\in \R/\Z$ such that $a=a(t)$. Let $\NN(a)$ be the point $\lambda(t) \in \partial \HH_-$.
\end{dfn}
\begin{rmk} When $t\in T$, that is when $t/2$ is periodic by multiplication by $2$,
the point $a$ belongs to a copy of $\MM$ called $\MM_t$ and $\NN(a)$ as already been defined.
It is clear that the two definitions coincides.
\end{rmk}
Now we should only consider the case where $t\notin T$.
The following lemma is a consequence of Propositions \ref{Newtrays} and \ref{Cubicrays}.
\begin{lem}For $a\in \partial \HH_0\cap Q$, the critical point $-a$ is the landing point of the ray $R^a_0(t/2)$. For $\l=\l(t)=\NN(a)$,
the critical point $x_0=0$ is the landing point of the ray   $R_1(t/2)$.
\end{lem}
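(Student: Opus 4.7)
The plan is to invoke directly the non-renormalizable branches of Propositions \ref{Cubicrays} and \ref{Newtrays}, which is all this lemma really needs. The setup has already been arranged for us: in the paragraph just preceding the statement the authors have reduced to the case $t \notin T$, and by the definition of $T$ this means precisely that $t/2$ is not periodic under multiplication by $2$ modulo $1$. So both propositions apply in their first alternative, and we just have to check that the two parametrizations $t \mapsto a(t)$ and $t \mapsto \lambda(t)$ are coupled by $\NN$ in the way that makes this compatibility meaningful.

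First I would handle the cubic polynomial. Setting $a_0 := a = a(t) \in \partial \HH_0 \cap Q$ and applying the first case of Proposition \ref{Cubicrays}, one reads off directly that the free critical point $-a$ is the landing point of $R_{a}^0(t/2)$, and that $f_a$ is not renormalizable around $-a$. Next, on the Newton side, the extension of $\NN$ defined just above the lemma gives $\lambda = \NN(a) = \lambda(t)$, so that the Newton parameter $\lambda \in \partial \HH_-$ is labeled by exactly the same angle $t$. Applying the first case of Proposition \ref{Newtrays} with $\lambda_0 := \lambda$ yields that $x_0 = 0$ is the landing point of $R_1(t/2)$, and that $N_\lambda$ is not renormalizable around $0$.

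There is no genuine obstacle in the argument; the substance has already been done in \cite{RoeschENS}, \cite{RoeschAnnals} and \cite{Roesch-thesis}, where the dichotomies of Propositions \ref{Cubicrays} and \ref{Newtrays} are proved. The only point worth flagging is that one must use the same angle $t$ on both sides, which is built into the definition of $\NN$ (see Definition \ref{mapN} and the extension in the present section via $a(t) \mapsto \lambda(t)$), and that the assumption $a \in \partial \HH_0 \cap Q$ with $t \notin T$ puts us precisely in the non-renormalizable alternative of both propositions simultaneously.
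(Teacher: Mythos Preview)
Your proposal is correct and matches the paper's own treatment: the paper simply states that the lemma is a consequence of Propositions~\ref{Newtrays} and~\ref{Cubicrays}, and you have correctly unpacked this by invoking the non-renormalizable alternative of each proposition (since $t\notin T$ means $t/2$ is not periodic under multiplication by~$2$) together with the definition $\NN(a(t))=\lambda(t)$.
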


\begin{dfn} Consider the following graphs:
$$\Gamma_a=\overline{A_1} \cup \ol{R_a^0(t/2)}\cup \ol{R_a^0(1/2)}\cup \ol{R_a^0(0)}\cup\ol{ R_a^\infty(0)}\cup R_a^\infty (1/3)\cup\ol{ R_a^\infty(2/3)}\cup\ol{ R'_a(0)}\cup\ol{ R'_a(t)},$$
$$\Gamma_\l=\overline{B_1} \cup \ol{R_1(0)}\cup \ol{R_1(t/2)}\cup \ol{R_2(1/2)}\cup R_2(0)\cup\ol{ R_3(0)}\cup R_3 (1/2)\cup\ol{ R'_1(0)}\cup\ol{ R'_1(t)}\cup\ol{ R_1(2t)}.$$
\end{dfn}
Note that these graphs are the original graphs for $f_a$ and $N_\l$ with the small Julia sets replaced by the free critical point, and moreover, for $f_a$ we have added $\overline{A_1}$ and for $N_\l$ we have added $\overline{B_1}$ so that the graphs are still forward invariant. Hence we can define puzzle pieces analoguously.

We have the following by construction.
\begin{lem}Each  graph is connected and defines 3 connected  components in its complement.
\end{lem}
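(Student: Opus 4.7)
The plan is to adapt Lemma~\ref{component} to the non-renormalizable setting: the small Julia set has collapsed to the critical point on $\partial A_1$ (resp.\ on $\partial B_1$), and the basin $\overline{A_1}$ (resp.\ $\overline{B_1}$) has been folded directly into the graph.

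\textbf{Connectedness of $\Gamma_a$.} The set $\overline{A_1}$ is a closed topological disk (its boundary is a Jordan curve by the earlier Proposition) and it already contains the three internal rays $R_a^0(0),R_a^0(1/2),R_a^0(t/2)$. By Corollary~\ref{correal} and the preceding landing lemma in $Q$, the external rays $\overline{R_a^\infty(0)}$ and $\overline{R_a^\infty(2/3)}$ land on $\partial A_1$ at $\delta_a(0)$ and $\delta_a(1/2)$, attaching them to $\overline{A_1}$. In $A_1'$, the two rays $R'_a(0)$ and $R'_a(t)$ share the unique preimage of $0$ as a common base point; $R'_a(t)$ lands at the critical point $-a=\delta_a(t/2)\in\partial A_1\cap\partial A_1'$ (the only preimage of $\delta_a(t)=f_a(-a)$ lying on $\partial A_1'$, since $f_a$ has local degree $2$ at $-a$), while $R'_a(0)$ shares its landing on $\partial A_1'$ with $\overline{R_a^\infty(1/3)}$ (both land at the preimage of $\delta_a(0)$ on $\partial A_1'$). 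Thus $\overline{R_a^\infty(1/3)}\cup\overline{R'_a(0)}\cup\overline{R'_a(t)}$ is a single arc running from $\infty$ to $-a\in\partial A_1$, and $\Gamma_a$ is connected.

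\textbf{Three components of $\hat{\C}\setminus\Gamma_a$.} View $\hat{\C}\setminus\overline{A_1}$ as an open topological disk $D$ on the sphere with $\infty$ as interior point. The three arcs
\[
E_1=\overline{R_a^\infty(0)},\quad E_2=\overline{R_a^\infty(2/3)},\quad E_3=\overline{R_a^\infty(1/3)}\cup\overline{R'_a(0)}\cup\overline{R'_a(t)}
\]
are pairwise disjoint except at $\infty$, and each joins $\infty$ to a distinct point of $\partial A_1$. A successive application of the Jordan curve theorem shows that three such arcs split $D$ into three simply connected open sectors, which are exactly the three components of $\hat{\C}\setminus\Gamma_a$.

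\textbf{The graph $\Gamma_\lambda$.} The argument is parallel, with $\overline{B_1}$ replacing $\overline{A_1}$, except that $\infty$ now lies on $\partial B_1$ rather than in the interior of the complementary disk. Concretely, $R_2(0)\cup R_2(1/2)$ forms an arc from $\infty$ to the common landing of $R_1(1/2)=R_2(1/2)$, passing through the centre of $B_2$, and $R_3(0)\cup R_3(1/2)\cup R'_1(0)\cup R'_1(t)$ forms a second arc from $\infty$ to the critical point $0$, passing through the centre of $B_3$, a common preimage of $\infty$ on $\partial W_1$, and the centre of $W_1$. Two disjoint chords in a closed disk sharing one boundary endpoint divide that disk into three regions, giving the complement count. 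The main obstacle is verifying the two Newton-side landing identifications --- that $R_3(1/2)$ and $R'_1(0)$ land at a common preimage of $\infty$ on $\partial W_1$, and that $R'_1(t)$ lands at $0$ --- both of which follow from the univalence of $N_\lambda|_{W_1}$ together with the local degree $2$ of $N_\lambda$ at the critical point $0\in\partial B_1\cap\partial W_1$.
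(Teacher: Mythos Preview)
Your proposal is correct and is precisely the intended adaptation of Lemma~\ref{component}; the paper in fact offers no proof here beyond the one-line ``by construction,'' so you have supplied the details the paper omits. One small remark: the landing identification $R_3(1/2)\cap R'_1(0)$ that you flag as the main obstacle is already recorded (modulo an evident typo) in the definition block following Lemma~\ref{tworays}, so you need not rederive it from local-degree considerations.
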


\begin{dfn} Let $\Delta_0^a$ , $\Delta_1^a$ , $\Delta_2^a$   denote the component  of $\C\setminus \Gamma_a$ containing the rays $R_a^{\infty}(t)$ for $t\in (0, 1/3)$, for $t\in ( 1/3, 2/3)$, for $t\in ( 2/3, 1)$ respectively.
Likewise, for the Newton map, let $\Delta_0^{N_\l}$ , $\Delta_1^{N_\l}$ , $\Delta_2^{N_\l}$   denote the component  of $\C\setminus \Gamma_\l$ containing the rays $R_3(t)$ for $t \in (1/2,1)$, the rays $R_3(t)$ for $t \in (0, t/2)$, and the rays $R_2(t)$ for $t\in (0, 1/2)$ respectively.  \end{dfn}

\begin{dfn} For any sequence $(\epsilon_i)_{i\in \N}\in \{0,1,2\}^\N$   we define $\Delta^a_{\epsilon_0\ldots \epsilon_n}$  by the relation $$\Delta^a_{\epsilon_0\ldots \epsilon_n} = f_{a}^{-n}(  \Delta^a_{\epsilon_n})\cap\Delta^{a}_{\epsilon_{0} \ldots \epsilon_{n-1}}.$$
Analogously, for any sequence $(\epsilon_i)_{i\in \N}\in \{0,1,2\}^\N$   we define $\Delta^{N_\l}_{\epsilon_0\ldots \epsilon_n}$  by the relation $$\Delta^{N_\l}_{\epsilon_0\ldots \epsilon_n} = {N_\l}^{-n}(  \Delta^{N_\l}_{\epsilon_n})\cap\Delta^{{N_\l}}_{\epsilon_{0} \ldots \epsilon_{n-1}}.$$
\end{dfn}

\begin{lem} For any sequence $(\epsilon_i)_{i\in \N}\in \{0,1,2\}^\N$   the intersection  $\bigcap_{n\ge 0}(\overline{\Delta^a_{\epsilon_0\ldots \epsilon_n}}\cap V_n)$  reduces to a point, as well as the intersection $\bigcap_{n\ge 0}(\overline{\Delta^{N_\l}_{\epsilon_0\ldots \epsilon_n}} \cap W_n)$.
\end{lem}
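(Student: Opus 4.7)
The plan is to mimic the renormalizable-case arguments of Lemma \ref{l:itifa} and Lemma \ref{Newtpuzzle}, but with a simplification: in the non-renormalizable setting $t\notin T$, the ``small Julia set'' collapses to a single point (the critical point $-a$ on $\partial A_1$, or $x_0=0$ on $\partial B_1$), so the obstruction that previously forced us to invoke the straightening theorem disappears. Throughout I work with $f_a$; the argument for $N_\lambda$ is the verbatim analogue, using puzzles of type $I$ and $II$ from \cite{RoeschAnnals} in place of the graph $\tilde{\Gamma}_0$ from \cite{RoeschENS, DR}.

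First I would verify that the new graphs $\Gamma_a$ and $\Gamma_\l$ are forward invariant. This is the reason $\overline{A_1}$ and $\overline{B_1}$ have been inserted: because $-a=\delta_a(t/2)\in\partial A_1$, the ray $R_a^0(t/2)$ lands on the critical point, whose image $f_a(-a)\in A_1\subset\Gamma_a$. The other pieces are forward invariant by the same verifications as in Lemma \ref{component}. Consequently each open component $\Delta^a_{\epsilon_0\cdots\epsilon_n}\cap V_n$ is simply connected, avoids both critical points $0$ and $-a$, and $f_a^n$ restricts to a univalent map on it.

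Next I would set up the refinement. From \cite{RoeschENS, DR}, local connectivity of $J(f_a)$ gives that every nest of pieces $S_n(z)$ built from the graph $\tilde{\Gamma}_0$ shrinks to a point or to an iterated preimage of $K_a$; but in our setting $K_a=\{-a\}$, so iterated preimages of $K_a$ are isolated points, and hence \emph{every} nest of $\tilde{\Gamma}_0$-puzzle pieces shrinks to a point. Now consider the refined puzzle whose pieces are components of the complement of the union $\Gamma_a\cup\tilde\Gamma_0$ intersected with $V_n$. Nests of refined pieces still shrink to points, being contained in the shrinking $S_n$-nests.

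Finally I would transfer shrinking from the refined puzzle to the original one, exactly as in the final paragraphs of the proofs of Lemma \ref{l:itifa} and Lemma \ref{Newtpuzzle}. Since $f_a^n$ is univalent on each original piece $\overline{\Delta^a_{\epsilon_0\cdots\epsilon_n}}\cap V_n$, the number of refined pieces it contains is bounded by a universal constant $K$ equal to the number of refined pieces in a depth-$0$ original piece. Hence the original nest is the union of at most $K$ refined nests, each shrinking to a point, so its intersection is a finite set; connectedness (as a decreasing intersection of closed connected sets) forces it to be a single point. The main delicate point to check is precisely this univalence: since the newly added pieces of $\Gamma_a$ (notably $\overline{A_1}$ and the ray $\overline{R'_a(t)}$) pass through the critical point $-a$ and its preimages, one must confirm that no depth-$n$ piece contains a critical value of $f_a^n$ in its interior, which follows from the forward invariance of $\Gamma_a$. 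The same bookkeeping applies verbatim to $N_\lambda$, replacing $K_a$ by $\{0\}$ and $\tilde{\Gamma}_0$ by the type $I$/$II$ graphs.
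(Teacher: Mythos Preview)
Your approach is essentially the paper's: refine the original puzzle by an auxiliary puzzle whose nests are already known to shrink, then transfer shrinking via the univalence/bounded-refined-piece-count argument. For $f_a$ this matches what the paper does (it simply refers back to Lemma~\ref{l:itifa} and leaves the details to the reader); your observation that $K_a$ degenerates to the single point $-a$, so that the straightening step becomes unnecessary, is exactly the simplification one expects.

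For the Newton map there is one discrepancy that may matter. You propose to reuse the type~$I$/$II$ puzzles and argue that, since the small Julia set collapses to $\{0\}$, the exceptional set of iterated preimages becomes discrete and shrinking follows. But the shrinking result for type~$I$/$II$ puzzles quoted in Lemma~\ref{Newtpuzzle} (Proposition~8.13 of \cite{RoeschAnnals}) is established under the renormalizability hypothesis; it is not clear one may simply invoke it in the non-renormalizable regime with $K_\l$ replaced by a point. The paper instead switches to a type~$III$ graph---the union of a type~$I$ and a type~$II$ graph, Definition~8.13 of \cite{RoeschAnnals}---which is precisely the construction in \cite{RoeschAnnals} designed to handle shrinking when $N_\l$ is not renormalizable around $0$. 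So for the second half of the lemma you should refine by the type~$III$ puzzle rather than type~$I$/$II$; once you do, the rest of your bookkeeping (forward invariance, univalence on original pieces, bounded refined-piece count, connectedness of the nested intersection to pass from a finite set to a single point) goes through exactly as you describe.
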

\proof The proof of the first part of the lemma follows an analogous argument to the proof of Lemma \ref{l:itifa}. No major changes are needed; we leave the details to the reader.

In the non-renormalizable case for the Newton map (i.e. when $N_\lambda$ is not renormalizable around its free critical point $0$)
an additional graph of type $III$ is constructed
(see Definition 8.13 in \cite{RoeschAnnals}) which is a union of a graph of type $I$ and type $II$. Analogously, puzzles pieces of type $III$
(defined by the graph of type $III$) also shrink to points or iterated preimages of the small Julia set.
To prove that the puzzle pieces $P_{\epsilon_0 \ldots \epsilon_n}^{N_\l}$ refined by a puzzle of type $III$ instead of type $I$ or
$II$ also shrink to points or iterated preimages of the small Julia set, precisely the same argument is used as in the proof of
Lemma \ref{Newtpuzzle}.
\endproof

We want to prove that this map $\NN(a)=N_\l$ is the mating of $f_a$ and $f_\dbas $.
\begin{lem}There exists an homeomorphism $\psi_a: \bigcup_{n\ge 0} f_a^{-n}(\ol {A_1})\to  \bigcup_{n\ge 0} N_\l^{-n}(\ol{B_1})$,
which is a conjugacy   between   $f_a $ and $N_\lambda$ satisfying the following: For any itinerary  $\epsilon$,
$$z\in \bigcup_{n\ge 0} f_a^{-n}(\ol {A_1})\cap (\bigcap_{n\in \N} \overline{\Delta^a_{\epsilon_0\cdots \epsilon_n}})  \iff \psi_a( z)\in \bigcup_{n\ge 0} N_\l^{-n}(\ol{B_1})\cap (\bigcap_{n\in \N} \overline{\Delta^{N_\l}_{\epsilon_0\cdots \epsilon_n}}) \quad \forall n\ge 0.$$
\end{lem}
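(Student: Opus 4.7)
The plan is to imitate the proof of Lemma~\ref{l:conjugacy}(1), with the simplification that in the non-renormalizable setting on $\partial \HH_0$ the small Julia sets $K_a$ and $K^\l$ collapse to the free critical points $-a\in \partial A_1$ and $x_0=0\in \partial B_1$, so the straightening step disappears and the role of $\Lambda_a^\infty$ is played simply by $\bigcup_n f_a^{-n}(\overline{A_1})$. The three ingredients I will assemble, in order, are: (i) defining $\psi_a$ on $\overline{A_1}$ using the extended B\"ottcher coordinates; (ii) extending through the unique other preimage $A_1'$; (iii) propagating to all iterated preimages of $\overline{A_1}$ by induction on the level, respecting the labels of the $\Delta_j$-partition.

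For (i), since $\partial A_1$ and $\partial B_1$ are Jordan curves, the B\"ottcher coordinates $\phi_a:A_1\to \D$ and $\phi_\l:B_1\to \D$ extend to homeomorphisms of the closures, and I set $\psi_a := \phi_\l^{-1}\circ \phi_a$. This is a homeomorphism $\overline{A_1}\to \overline{B_1}$ conjugating $f_a$ to $N_\l$. The critical compatibility --- that $\psi_a$ sends $-a\in \partial A_1$ to $0\in \partial B_1$ --- holds precisely because $\NN$ is defined through the boundary parametrizations $a(t)\leftrightarrow \lambda(t)$, and by Propositions~\ref{Newtrays} and \ref{Cubicrays} the rays $R^0_a(t/2)$ and $R_1(t/2)$ land on the respective critical points. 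For (ii), the restriction $f_a:\overline{A_1'}\to \overline{A_1}$ is a topological bijection (it has degree $1$; the free critical point $-a$ lies only on $\partial A_1 \cap \partial A_1'$, with unique preimage $-a$ in $\overline{A_1'}$), and analogously for $N_\l:\overline{W_1}\to \overline{B_1}$, so the pullback formula $N_\l\circ \psi_a = \psi_a\circ f_a$ extends $\psi_a$ uniquely to a homeomorphism $\overline{A_1'}\to \overline{W_1}$.

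For (iii), assuming $\psi_a$ is constructed on $\bigcup_{k\le n}f_a^{-k}(\overline{A_1})$ as a continuous conjugacy respecting the $\Delta_j$-partition labels, each connected component $U$ of $f_a^{-(n+1)}(\overline{A_1})\setminus \bigcup_{k\le n}f_a^{-k}(\overline{A_1})$ avoids the critical set (which sits in $\overline{A_1}\cup \overline{A_1'}$), and by forward invariance of $\Gamma_a$ is contained in the closure of a unique $\Delta_j^a$. I define $\psi_a(z)$ for $z\in U$ as the $N_\l$-preimage of $\psi_a(f_a(z))$ lying in the closure of the matching $\Delta_j^{N_\l}$; continuity at each level follows by the same composition argument as in Lemma~\ref{l:conjugacy}. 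The stated equivalence
$$z\in \bigcup_n f_a^{-n}(\overline{A_1})\cap \bigcap_n \overline{\Delta^a_{\epsilon_0\cdots\epsilon_n}} \iff \psi_a(z)\in \bigcup_n N_\l^{-n}(\overline{B_1})\cap \bigcap_n \overline{\Delta^{N_\l}_{\epsilon_0\cdots\epsilon_n}}$$
then follows by the identical chain of equivalences appearing at the end of the proof of Lemma~\ref{l:conjugacy}, since by construction $z\in \overline{\Delta_j^a}$ iff $\psi_a(z)\in \overline{\Delta_j^{N_\l}}$ and $\psi_a\circ f_a^n = N_\l^n\circ \psi_a$.

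The one genuinely new subtlety compared to the renormalizable case lies in (i)--(ii): because the free critical point is forced onto $\partial A_1$ rather than sitting inside a renormalization locus away from $\overline{A_1}$, the identification of the two critical points under $\psi_a$ is no longer automatic from a straightening argument and must instead be read off from the dynamical parametrizations of $\partial\HH_0$ and $\partial\HH_-$. I expect this step --- together with the matching fact that $A_1'$ meets $A_1$ precisely at $-a$ while $W_1$ meets $B_1$ precisely at $0$, so that the pullback across the critical value is unambiguous --- to be the main obstacle; once it is set up correctly, the inductive pullback and the itinerary computation are routine adaptations of the renormalizable proof.
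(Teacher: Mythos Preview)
Your proposal is correct and follows essentially the same approach as the paper: define $\psi_a$ on $\overline{A_1}$ via the extended B\"ottcher coordinates, extend to $\overline{A_1'}$ by the unique pullback (the paper phrases this as ``uniquely defined because the critical point is on the boundary of $A_1$''), and then mimic the inductive pullback and itinerary verification of Lemma~\ref{l:conjugacy} with $\Lambda_a^0 = \overline{A_1'}\cup\overline{A_1}$, omitting the straightening step. Your explicit remark that the critical-point matching $\psi_a(-a)=0$ must be read off from the boundary parametrizations $a(t)\leftrightarrow\lambda(t)$ via Propositions~\ref{Newtrays} and~\ref{Cubicrays} is exactly the point the paper leaves implicit.
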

\proof The proof mimics the proof of  Lemma~\ref{l:conjugacy}.
First, the map $\psi_a$ is defined by the extended B\"ottcher coordinates on $\ol {A_1}$ to $\ol{B_1}$. Then by pullback it is uniquely defined  because the critical point is on the boundary of $A_1$ (and $B_1$ for the Newton map).

If we then let
\begin{equation}
\Lambda_a^0 = \overline{A_1'} \cup \overline{A_1}  \text{ and } \Lambda_a^{\lambda} = \overline{B_1'} \cup \overline{B_1}
\end{equation}
in Lemma~\ref{l:conjugacy}, the proof goes through in the same way, apart from the fact that one do not need the construction of conjugacies between the small Julia sets $K_a$ and $K_\lambda$ (since they do not exist in this case).
 \endproof

Similar to Lemma~\ref{l:itfa} we have:
\begin{lem}\label{l:itfabis} For any  point $z\in J(f_a) $  we have $$[(\epsilon_i)_{i\in \mathbf N}]\in \epsilon_a(z)
\iff z\in \bigcap_{n\in \mathbf N} \overline{\Delta^a_{\epsilon_0\ldots \epsilon_n}}.$$
 \end{lem}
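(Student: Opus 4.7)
The plan is to follow the proof of Lemma~\ref{l:itfa} essentially verbatim, since the adaptation to the boundary setting requires only notational changes. The three ingredients used in that proof remain in force here: (i) $f_a$ is a proper map, so that
$$\overline{\Delta^a_{\epsilon_0 \ldots \epsilon_n}} = f_a^{-n}(\overline{\Delta^a_{\epsilon_n}}) \cap \overline{\Delta^a_{\epsilon_0 \ldots \epsilon_{n-1}}};$$
(ii) the modified graph $\Gamma_a$ introduced in this section is still forward invariant, which is precisely why $\overline{A_1}$ and the ray $R_a^0(t/2)$ (which lands at the critical point $-a$) were appended to it. Consequently each connected component of $f_a^{-1}(\Delta_i^a)$ is contained in a unique $\Delta_j^a$, and the symbolic labeling by $\{0,1,2\}$-sequences is unambiguous. (iii) As in Corollary~\ref{c:angleitin}, the condition $[\{\epsilon_i\}] \in \epsilon_a(z)$ is equivalent to the external ray $R_a^\infty(t)$ with $t = \theta(\{\epsilon_i\})$ landing at $z$.

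For the direction $(\Rightarrow)$, I would fix $z \in J(f_a)$, pick $[\{\epsilon_i\}] \in \epsilon_a(z)$, and set $t = \theta(\{\epsilon_i\})$. Since $f_a^j(R_a^\infty(t)) = R_a^\infty(3^j t) \subset \overline{\Delta^a_{\epsilon_j}}$ for every $j \ge 0$, an induction on $n$ will show that $R_a^\infty(t) \subset \overline{\Delta^a_{\epsilon_0 \ldots \epsilon_n}}$: the base case $n=0$ is immediate, and the inductive step combines the inductive hypothesis $R_a^\infty(t) \subset \overline{\Delta^a_{\epsilon_0 \ldots \epsilon_{n-1}}}$ with $f_a^n(R_a^\infty(t)) \subset \overline{\Delta^a_{\epsilon_n}}$ via the closure formula in (i). Passing to the landing point of $R_a^\infty(t)$ yields $z \in \bigcap_n \overline{\Delta^a_{\epsilon_0 \ldots \epsilon_n}}$.

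For the converse $(\Leftarrow)$, I would use the lemma immediately preceding this statement, which guarantees that $\bigcap_n (\overline{\Delta^a_{\epsilon_0 \ldots \epsilon_n}} \cap V_n)$ reduces to a single point. Combined with the $(\Rightarrow)$ direction applied to any external ray landing at that point, this forces the sequence $[\{\epsilon_i\}]$ to coincide with one of the itineraries already present in $\epsilon_a(z)$, hence $[\{\epsilon_i\}] \in \epsilon_a(z)$.

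I do not anticipate a substantive obstacle: the only conceptual novelty, compared with Lemma~\ref{l:itfa}, is that the free critical point $-a$ now lies in $\partial A_1$ rather than inside a small Julia set, and so sits on the graph $\Gamma_a$. Far from being a difficulty, this is exactly what guarantees that $f_a$ is injective on each $\Delta_j^a$ and that preimage components distribute cleanly among the three partition elements, reproducing the mechanism used in the renormalizable case. The lemma is therefore a faithful transcription of Lemma~\ref{l:itfa} and no genuinely new idea is required.
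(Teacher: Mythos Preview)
Your proposal is correct and matches the paper's approach exactly: the paper gives no proof beyond the prefatory remark ``Similar to Lemma~\ref{l:itfa} we have,'' and your sketch faithfully transcribes that earlier argument to the boundary setting, with the minor addition of spelling out the $(\Leftarrow)$ direction via the shrinking lemma that immediately precedes the statement in Section~7.
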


\begin{dfn} We say that two itineraries $\epsilon=[ \{\epsilon_n\}_{n\in \N}]$ and $\epsilon'=[ \{\epsilon'_n\}_{n\in \N}]$  are in the same itinerary class  for $N_\l$ if and only if  $$J(N_\l)\cap
\bigcap_{n\in \N} \overline{\Delta^{N_\l}_{\epsilon_0\cdots \epsilon_n}}=J(N_\l)\cap\bigcap_{n\in \N} \overline{\Delta^{N_\l}_{\epsilon'_0\cdots \epsilon'_n}}$$
\end{dfn}

\begin{prop}\label{p:itineraryclassbis} Let $\epsilon, \epsilon'$ be itineraries in $\Sigma$.
Then, $\epsilon, \epsilon'$ belong  to the same itinerary class for $f_a$ or for $f_\dbas$    if and only if they
 belong  to the same itinerary class for $N_\l$.
\end{prop}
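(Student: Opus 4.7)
The plan is to mirror the proof of Proposition~\ref{p:itineraryclass}, using the conjugacy $\psi_a$ from the preceding lemma (defined on $\bigcup_{n\ge 0} f_a^{-n}(\ol{A_1})$) together with the conjugacy $\psi_\dbas$ of part~(2) of Lemma~\ref{l:conjugacy}, which applies verbatim in this setting since its construction never invoked a small filled Julia set. The only structural change from Proposition~\ref{p:itineraryclass} is that the role previously played by the set of iterated preimages of $K_a$ (the locus of multiply accessible points of $J(f_a)$) is now played by the set $\bigcup_{n\ge 0} f_a^{-n}(\ol{A_1})$, because in the non-renormalizable case $K_a$ degenerates to the single critical point $-a\in\partial A_1$.

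For the direction ($\Rightarrow$), suppose two distinct itineraries $\epsilon,\epsilon'$ lie in the same class for $f_a$, so by Lemma~\ref{l:itfabis} there is $z\in J(f_a)$ lying in $\bigcap_n \overline{\Delta^a_{\epsilon_0\cdots \epsilon_n}}\cap \bigcap_n \overline{\Delta^a_{\epsilon'_0\cdots \epsilon'_n}}$. Since $\epsilon\neq\epsilon'$, the angles $\theta(\epsilon)\neq\theta(\epsilon')$ give two distinct external rays landing at $z$, so $z$ is multiply accessible. I will then invoke the non-renormalizable analogue of Lemma~\ref{l:multiaccess} (see paragraph below) to conclude $z\in\bigcup_{n\ge 0} f_a^{-n}(\ol{A_1})$, which is exactly the domain of $\psi_a$. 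The extension property of $\psi_a$ from the preceding lemma then gives $\psi_a(z)\in \bigcap_n\overline{\Delta^{N_\l}_{\epsilon_0\cdots\epsilon_n}}\cap\bigcap_n\overline{\Delta^{N_\l}_{\epsilon'_0\cdots\epsilon'_n}}$, so $\epsilon,\epsilon'$ define the same class for $N_\l$. The case of $f_\dbas$ proceeds identically through $\psi_\dbas$ and is taken verbatim from Proposition~\ref{p:itineraryclass}.

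For ($\Leftarrow$), pick $u\in J(N_\l)$ in the intersection $\bigcap_n\overline{\Delta^{N_\l}_{\epsilon_0\cdots\epsilon_n}}\cap\bigcap_n\overline{\Delta^{N_\l}_{\epsilon'_0\cdots\epsilon'_n}}$. Since interiors of distinct puzzle pieces are disjoint, $u$ lies on $\partial \Delta^{N_\l}_{\epsilon_0\cdots\epsilon_n}$ for every $n$, hence on some iterated preimage of the graph $\Gamma_\l$. Inspecting the pieces of $\Gamma_\l$, either $u\in\bigcup_{n\ge 0} N_\l^{-n}(\ol{B_1})$, in which case $\psi_a^{-1}(u)$ realizes the two itineraries as elements of $\epsilon_a(\psi_a^{-1}(u))$, or $u\in\bigcup_{n\ge 0} N_\l^{-n}(\ol{B_2}\cup\ol{B_3})$, in which case $\psi_\dbas^{-1}(u)$ realizes them in $\epsilon_\dbas(\psi_\dbas^{-1}(u))$. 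Each case finishes by Lemma~\ref{l:itfabis} (respectively Lemma~\ref{l:itidbas}).

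The main obstacle is to establish the non-renormalizable analogue of Lemma~\ref{l:multiaccess}: any multiply accessible point of $J(f_a)$ must lie in $\bigcup_{n\ge 0} f_a^{-n}(\ol{A_1})$. The argument of Lemma~\ref{l:multiaccess} carries over once one replaces the role of $K_a$ by $\ol{A_1}$. Namely, if two external rays co-land at $x$, then some preimage graph $\Gamma_n=f_a^{-n}(\Gamma_a)$ must cross the curve they form; pushing forward, $f_a^n(x)$ lands in the image of $\Gamma_a$ at a point where two distinct external rays accumulate. In the extended graph of this section, $\Gamma_a$ contains $\ol{A_1}$ in place of $K_a$, and by Corollary~\ref{correal} together with the fact that $-a\in\partial A_1$ is the only critical point on $\Gamma_a$ accessible from the basin of infinity, the only such accumulation locus is $\ol{A_1}$. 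Hence $f_a^n(x)\in \ol{A_1}$ as required; the rest of the proof then proceeds as in Proposition~\ref{p:itineraryclass}.
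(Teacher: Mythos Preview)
Your proposal is correct and follows essentially the same approach as the paper's own proof, which simply says that the argument of Proposition~\ref{p:itineraryclass} goes through after replacing $K_a$ by $-a$ and setting $\Lambda_a^0=\overline{A_1'}\cup\overline{A_1}$, $\Lambda_\l^0=\overline{W_1}\cup\overline{B_1}$. You spell out more detail than the paper does, in particular by explicitly formulating and justifying the non-renormalizable analogue of Lemma~\ref{l:multiaccess} (that multiply accessible points now fall into $\bigcup_{n\ge 0} f_a^{-n}(\overline{A_1})$), which the paper leaves implicit in its reference back to Proposition~\ref{p:itineraryclass}.
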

\proof The proof goes as in the proof of Proposition~\ref{p:itineraryclass}. Replace $K_a$ with $-a$ (also in the graph $\Gamma_\l$) and use
\begin{equation}
\Lambda_a^0 = \overline{A_1'} \cup \overline{A_1}  \text{ and } \Lambda_a^{\lambda} = \overline{B_1'} \cup \overline{B_1}.
\end{equation}
Moreover, it is straightforward to see that the proof of Lemma \ref{l:itifa} (Lemma \ref{l:itifa} is used in the proof of the Proposition) goes
through analogously with the new definition of the graph $\Gamma_a$, when $a \in \HH_0$.
\endproof

\begin{lem}\label{l:semiconjugacybis} The conjugacies $\psi_a$  extends  to semi-conjugacy defined on  $K(f_a)$. Moreover, it is conformal in the interior  of $K(f_a)$.
\end{lem}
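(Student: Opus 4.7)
The plan is to follow the template of Lemma~\ref{l:semiconjugacy}, with two simplifications and one new subtlety due to the non-renormalizable setting. Here the critical point $-a$ lies on $\partial A_1 \cap \partial A_1'$, so there is no small Julia set and no straightening step; however the graph $\Gamma_a$ and its pre-images now meet at many iterated preimages of $-a$. Define the extension of $\psi_a$ to $z \in K(f_a)$ by choosing any representative $[\{\epsilon_i\}] \in \epsilon_a(z)$ and setting
\[
\psi_a(z) := \bigcap_{n \ge 0} \bigl(\overline{\Delta^{N_\l}_{\epsilon_0\cdots\epsilon_n}} \cap W_n\bigr),
\]
which is a single point by the preceding shrinking lemma for the Newton puzzle. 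Well-definedness is immediate from Proposition~\ref{p:itineraryclassbis}: two representatives of $\epsilon_a(z)$ lie in the same itinerary class for $N_\l$, so the Newton-side nests collapse to a common point. Agreement with the already-constructed $\psi_a$ on $\bigcup_n f_a^{-n}(\overline{A_1})$ is part of the conclusion of the previous lemma.

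Continuity is shown as in Lemma~\ref{l:semiconjugacy}. Write $\zeta_0 := \psi_a(z_0)$ and let $\alpha > 0$. If $z_0$ lies in the open interior of the original puzzle at every level, pick $n_0$ with the Newton piece of level $n_0$ inside $B(\zeta_0, \alpha)$, and then $\delta > 0$ so that $B(z_0, \delta)$ is contained in the corresponding piece at level $n_0$ for $f_a$. For $z_0$ on the graph (an iterated preimage of $-a$, of the fixed point $p_a$, or of a preimage of the fixed point), only finitely many itinerary classes $\epsilon^{(1)}, \ldots, \epsilon^{(k)}$ occur, and the same argument applies with $f_a$-side union $Q_n^a$ and Newton-side union $Q_n^\l$ of the $k$ puzzle pieces at level $n$; Proposition~\ref{p:itineraryclassbis} is precisely what guarantees that all $k$ Newton-side nests share the common limit $\zeta_0$. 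The conjugacy relation $N_\l \circ \psi_a = \psi_a \circ f_a$ holds on the dense set $\bigcup_n f_a^{-n}(\overline{A_1})$ by the previous lemma and therefore extends by continuity to $K(f_a)$; equivalently, it can be read off from the shift on itineraries.

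For conformality on the interior of $K(f_a)$, the non-renormalizability of $f_a$ around $-a$ (since $a \in NRC$ is in no Mandelbrot copy) implies that every interior Fatou component of $K(f_a)$ is an iterated preimage of $A_1$. On $A_1$ the map $\psi_a$ is the composition of the extended B\"ottcher coordinates of $A_1$ and $B_1$, hence conformal. For any component $U$ of $f_a^{-n}(A_1)$ with $n \ge 1$, the iterate $f_a^n|_U : U \to A_1$ is a biholomorphism (the critical points $0$ and $-a$ lie on the boundary, never in the interior of a preimage component), and the relation $N_\l^n \circ \psi_a = \psi_a \circ f_a^n$ expresses $\psi_a|_U$ as a composition of conformal maps.

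The main obstacle is the handling of the exceptional points where $\overline{A_1}$ meets $\overline{A_1'}$, namely the iterated preimages of $-a$, where several puzzle nests meet at a single point. Proposition~\ref{p:itineraryclassbis} is the essential input that controls this case; with it in hand the rest of the argument is a direct transcription of the renormalizable proof with the small Julia sets replaced by the single critical orbit.
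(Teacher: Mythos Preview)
Your proposal is correct and follows essentially the same approach as the paper, which simply says the proof is the same as that of Lemma~\ref{l:semiconjugacy} modulo using the graphs from \cite{RoeschENS} and \cite{RoeschAnnals} (i.e., the shrinking lemma you invoke). Your write-up fills in the details the paper omits, including the correct observation that in the non-renormalizable case every bounded Fatou component is an iterated preimage of $A_1$, which disposes of the straightening step and makes conformality immediate.
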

\proof  The proof is the same as in the proof of Lemma~\ref{l:semiconjugacy} except that we  have to consider the graphs defined in~\cite{RoeschENS} and in~\cite{RoeschAnnals}.\cqfd

Using the same ideas as in the proof of Proposition~\ref{p:rayequivalence} we get the following analogue of that Proposition. We omit the proof.
\begin{prop}\label{p:rayequivalencebis}
Suppose that $z \in J(f_{\alpha})$ and $w \in J(f_{\beta})$. Then $z \sim_r w$ if and only if $\psi_{\alpha}(z) = \psi_{\beta}(w)$.
\end{prop}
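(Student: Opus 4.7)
The plan is to mirror the proof of Proposition~\ref{p:rayequivalence} verbatim, replacing its three inputs by the versions already established in this section: Proposition~\ref{p:itineraryclass} is replaced by Proposition~\ref{p:itineraryclassbis}, while the counterparts of Lemmas~\ref{l:lemma1} and~\ref{l:lemma2} must be restated and reverified in this boundary setting. Once both are in place, the two implications follow from the same telescoping chain argument used before.

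The analogue of Lemma~\ref{l:lemma1} is purely symbolic: it relates the existence of a common itinerary in $\epsilon_\alpha(z) \cap \epsilon_\beta(w)$ to the existence of an angle $t$ with $R_\alpha^\infty(t)$ and $R_\beta^\infty(-t)$ landing at $z$ and $w$ respectively. Since the definitions of $\epsilon_a$ and $\epsilon_\dbas$ via landing angles and the ray chain~(\ref{raychain}) defining $\sim_r$ are unchanged on $\partial \HH_0$, this lemma transfers without modification. For the analogue of Lemma~\ref{l:lemma2}, I would combine Lemma~\ref{l:semiconjugacybis} with Lemma~\ref{l:itfabis} (and its $f_\dbas$ counterpart) to write, for any $\epsilon \in \epsilon_\alpha(z)$,
\[
\psi_\alpha(z) \;=\; \bigcap_{n \ge 0} \overline{P^{N_\l}_{\epsilon_0 \cdots \epsilon_n}},
\]
and likewise for $\psi_\beta(w)$; any common $\epsilon'' \in \epsilon_\alpha(z) \cap \epsilon_\beta(w)$ then forces these two intersections to coincide, so $\psi_\alpha(z) = \psi_\beta(w)$.

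For the $(\Rightarrow)$ direction, I would expand $z \sim_r w$ into a chain of the form~(\ref{raychain}) and apply the analogue of Lemma~\ref{l:lemma2} at each consecutive link to obtain $\psi_{\alpha_{k-1}}(z_{k-1}) = \psi_{\alpha_k}(z_k)$; concatenation then yields $\psi_\alpha(z) = \psi_\beta(w)$. For the $(\Leftarrow)$ direction, I would select any itineraries $\epsilon \in \epsilon_\alpha(z)$ and $\epsilon' \in \epsilon_\beta(w)$; the hypothesis together with Lemma~\ref{l:semiconjugacybis} forces the nests $\overline{P^{N_\l}_{\epsilon_0 \cdots \epsilon_n}}$ and $\overline{P^{N_\l}_{\epsilon'_0 \cdots \epsilon'_n}}$ to shrink to the same point of $J(N_\l)$, i.e.\ $\epsilon$ and $\epsilon'$ lie in the same itinerary class for $N_\l$; Proposition~\ref{p:itineraryclassbis} then gives $\epsilon_\alpha(z) \cap \epsilon_\beta(w) \neq \emptyset$, and the analogue of Lemma~\ref{l:lemma1} closes the argument. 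The one delicate point I expect is verifying that $\psi_\alpha(z)$ depends only on $z$ and not on the chosen representative $\epsilon \in \epsilon_\alpha(z)$; this is precisely the puzzle-shrinking lemma stated immediately after the new definition of $\Gamma_a$, combined with Proposition~\ref{p:itineraryclassbis} applied to the trivial chain $z = w$.
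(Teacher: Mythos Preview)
Your proposal is correct and follows exactly the approach the paper indicates: the paper omits the proof entirely, saying only ``Using the same ideas as in the proof of Proposition~\ref{p:rayequivalence} we get the following analogue of that Proposition.'' Your outline faithfully carries out that transfer, correctly identifying Proposition~\ref{p:itineraryclassbis} and Lemma~\ref{l:semiconjugacybis} as the replacement inputs and noting that Lemma~\ref{l:lemma1} is purely symbolic and needs no modification.
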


Again, following the proof of Theorem~\ref{resultat1}, we get the following which concludes the proof of Proposition \ref{propH0}, and from which Theorem \ref{resultat2} follows. 
 \begin{thmii}
For any parameter $a\in \partial \HH_0\cap Q$ the polynomials  $f_a$ and $f_{\dbas}$ are conformally mateable. Moreover, $\NN(f_a)$ is the mating of  the  polynomials  $f_a$ and $f_{\dbas}$. 
\end{thmii}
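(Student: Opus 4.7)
The plan is to mirror the strategy used for Theorem \ref{resultat1}, substituting the renormalization picture by the degenerate one in which the ``small Julia set'' $K_a$ collapses to the single point $\{-a\}$ sitting on $\partial A_1 \cap \partial A_1'$, and similarly for $N_\lambda$ where $K^\lambda$ collapses to the free critical point $\{0\}\subset \partial B_1\cap\partial W_1$. First I would verify that the map $\NN\colon\partial\HH_0\cap Q\to\partial\HH_-$ defined by $\NN(a(t))=\lambda(t)$ is well-posed and agrees with the previously defined $\NN$ on cusps $a(t)\in T$. Then I would use the forward-invariant graphs $\Gamma_a$ and $\Gamma_\lambda$ introduced in Section~7 (obtained from the original graphs by adjoining $\overline{A_1}$ and $\overline{B_1}$ respectively) to cut out puzzle pieces and recover the partition into $\Delta^a_{\epsilon_0\cdots\epsilon_n}$ and $\Delta^{N_\lambda}_{\epsilon_0\cdots\epsilon_n}$, in exactly the same formal way as before. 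The fact that these graphs are forward invariant with the critical points on the graph is what makes the combinatorial bookkeeping still work.

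Next I would prove the two key combinatorial ingredients. The shrinking-to-points Lemma in the non-renormalizable case is exactly the content of the lemma stated in Section 7; for $f_a$ it follows from \cite{RoeschENS, DR} since $-a$ is accessible from $A_1$ via $R_a^0(t/2)$, and for $N_\lambda$ one invokes the type~$III$ graph of \cite{RoeschAnnals} instead of types $I/II$ and refines the original nest as in Lemma~\ref{Newtpuzzle}. Once nests of puzzle pieces shrink to points, Lemma~\ref{l:itfabis} identifies itinerary classes with dynamical positions exactly as in Lemma~\ref{l:itfa}.

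With combinatorics in hand, I would build the semi-conjugacies. Set $\Lambda_a^0=\overline{A_1'}\cup\overline{A_1}$, $\Lambda_\lambda^0=\overline{B_1'}\cup\overline{B_1}$, and extend the B\"ottcher conjugacy $\overline{A_1}\to\overline{B_1}$ by pulling back along $f_a$ and $N_\lambda$; uniqueness of the pullback is automatic because $\Delta^a_j$ is univalent for $f_a$ and similarly for $N_\lambda$. This yields a continuous conjugacy $\psi_a\colon\bigcup_n f_a^{-n}(\overline{A_1})\to\bigcup_n N_\lambda^{-n}(\overline{B_1})$ which, together with the shrinking lemma, extends continuously to a semi-conjugacy $\psi_a\colon K(f_a)\to\hat\C$ that is conformal on the interior (the proof is the analogue of Lemma~\ref{l:semiconjugacy}, slightly simpler since no straightening is needed). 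The construction of $\psi_\dbas$ is unchanged from the previous sections.

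Finally, Proposition~\ref{p:itineraryclassbis} translates the equality of semi-conjugate images into agreement of itinerary classes, and the ray-chain argument of Proposition~\ref{p:rayequivalence} yields Proposition~\ref{p:rayequivalencebis}: $z\sim_r w\iff \psi_\alpha(z)=\psi_\beta(w)$. Surjectivity $\psi_a(K(f_a))\cup\psi_\dbas(K(f_\dbas))=\hat\C$ is then obtained as in Section~6: any $u\in J(N_\lambda)$ is recovered from a nested intersection of puzzle pieces via the shrinking lemma, and any Fatou point belongs to a preimage of either $B_1$ (handled by $\psi_a$) or $B_2\cup B_3$ (handled by $\psi_\dbas$). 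Together with the conformality on interiors, this gives the conformal mating and identifies it with $N_{\NN(a)}$. I expect the main obstacle to be the shrinking lemma in the absence of renormalization, because the original puzzle pieces can accumulate on the degenerate ``small Julia set'' $\{-a\}$ and on its preimages; one must show that passing to the refinement with the type $III$ graph of \cite{RoeschAnnals} combined with the fact that the original pieces remain univalent under $f_a^n$ (respectively $N_\lambda^n$) forces the accumulation to be a single point, and this is where the care taken in defining the extended, forward-invariant graphs pays off.
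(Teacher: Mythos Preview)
Your proposal is correct and follows essentially the same route as the paper: the paper's proof of this theorem is nothing more than the instruction ``follow the proof of Theorem~\ref{resultat1}'' after having set up in Section~7 the degenerate graphs (with $K_a$ collapsed to $\{-a\}$ and $\overline{A_1}$, $\overline{B_1}$ adjoined for forward invariance), the simplified $\Lambda_a^0=\overline{A_1'}\cup\overline{A_1}$, the shrinking lemma via the type~$III$ graph of \cite{RoeschAnnals}, and the analogues of Proposition~\ref{p:itineraryclass} and Proposition~\ref{p:rayequivalence}. You have identified each of these pieces and their role accurately, including the point that no straightening map is needed in the construction of $\psi_a$.
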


\newpage

\end{document}